\newtheorem{theorem}{Theorem}
\newtheorem{lemma}{Lemma}
\newtheorem{definition}{Definition}
\newtheorem{prop}{Proposition}
\newtheorem{remark}{Remark}
\newtheorem{corollary}{Corollary}
\newcommand{\R}{\mathbb{R}}
\newcommand{\N}{\mathbb{N}}
\newcommand{\F}{\mathscr{F}}
\newcommand{\E}{\mathbb{E}}
\DeclareMathOperator{\supp}{\text{supp}}
    \edef\sign{\pgfmathresult}%
    \edef\x{\pgfmathresult}%
    \edef\t{\pgfmathresult}%
     \edef\y{\pgfmathresult}%
\begin{document}

\title[Phase Transitions with Microscopic Heterogeneities I]{Diffuse Interface Energies with Microscopic Heterogeneities I: Homogenization}
\author[P.S.\ Morfe and C.\ Wagner]{Peter S.\ Morfe and Christian Wagner}
\address{%
  Max Planck Institute for Mathematics in the Sciences,
  Inselstrasse 22-26,
  04103 Leipzig, Germany; now affiliated at: Pennsylvania State University, 219B McAllister Building, University Park, State College, PA 16802
}
\email{pmorfe@psu.edu}
\address{%
  Max Planck Institute for Mathematics in the Sciences,
  Inselstrasse 22-26,
  04103 Leipzig, Germany; now affiliated at: Institute for Science and Technology Austria (ISTA), Am Campus 1, 3400 Klosterneuburg, Austria
}
\email{christian.wagner@ist.ac.at}

\begin{abstract} We analyze Allen-Cahn functionals with stationary ergodic coefficients in the regime where the length scale $\delta$ of the heterogeneities is much smaller (microscopic) than the interface width $\epsilon$ (mesoscopic). In the main result of this paper, we prove that if the ratio $\delta \epsilon^{-1}$ decays fast enough compared to $\epsilon$, then homogenization effects dominate, and the $\Gamma$-limit of the energy is the same as if the coefficients had been replaced by their homogenized values. As a byproduct of the proof, this implies that homogenization holds in the periodic setting whenever $\delta \epsilon^{-1}$ vanishes with $\epsilon$, no matter how slowly. In a companion paper, we prove this is sharp: if $\delta \epsilon^{-1}$ decays too slowly, then improbable or atypical local configurations of the medium begin to play a role, and the $\Gamma$-limit may be smaller than the one predicted by homogenization theory. We refer to this as the rare events regime, and we prove that it can occur in both random and almost periodic media. \end{abstract}

\date{\today}

\maketitle

\section{Introduction}

In this paper, we analyze the effect of microscopic heterogeneities on the macroscopic behavior of (mesoscopic) diffuse interfaces. Specifically, we consider the $\Gamma$-limit of the following Allen-Cahn-type energy functional with diffuse interface width $\epsilon$ and heterogeneity length scale $\delta$ in the regime $\delta \ll \epsilon$:
	\begin{equation} \label{E: our energy}
		\mathscr{F}_{\epsilon,\delta}(u; U) = \int_{U} \left( \frac{\epsilon}{2} a(\delta^{-1} x) \nabla u \cdot \nabla u + \frac{1}{\epsilon} \theta(\delta^{-1}x) W(u) \right) \, dx.
	\end{equation}
Here $a$ is a uniformly elliptic matrix field and $\theta$, a positive function, both of which are taken to be samples of stationary, ergodic random fields. The nonlinearity $W$ is a double-well potential with minima at $1$ and $-1$, the prototypical choice being $W(u) = (1 - u^{2})^{2}$. 

Energies of the form \eqref{E: our energy} appear in materials science, where they provide a phenomenological, mesoscopic-scale description of phase transitions.  In our context, since $W$ has two minima, the material is composed of two phases.  The so-called phase field $u$ is a function taking values in some compact interval, say, $[-1,1]$, defined in such a way that the set $\{u \approx 1\}$ corresponds to the bulk of the first phase, $\{u \approx -1\}$ corresponds to the bulk of the second phase, and the remainder of space is understood to be the transition region.  For more information on diffuse interface modeling, we refer the interested reader to the books \cite{emmerich2003diffuse,glicksman2010principles,PresuttiBook} and the article \cite{langer1992introduction}.

When the underlying material is spatially homogeneous, the coefficients $a$ and $\theta$ are constant, and then classical results of Modica and Mortola \cite{modica_mortola,modica} imply that the energy $\Gamma$-converges in the limit $\epsilon \to 0$, in such a way that the limiting phase field $u$ takes values in $\{-1,1\}$ and the limiting energy is proportional to the surface area of the interface $\partial \{u = 1\}$.  If $a$ and $\theta$ are stationary ergodic fields and the length scales $\delta$ and $\epsilon$ are commensurate in the sense that $\delta \epsilon^{-1} \sim c$ for some positive $c > 0$, it is known that $\Gamma$-convergence still holds, the only difference being that the limiting surface energy is anisotropic; this was proved by Ansini, Braides, and Chiad\`{o}-Piat \cite{ansini_braides_chiado-piat} and Cristoferi, Fonseca, Hagerty, and Popovici \cite{cristoferi_fonseca_hagerty_popovici} in periodic media and the first author \cite{morfe} and Marziani \cite{marziani} in the stationary ergodic setting.  In this work, we tackle the case when $\epsilon^{-1} \delta \to 0$ as $\epsilon \to 0$.

Since $\delta \ll \epsilon$, it is natural to expect that the homogenization ($\delta \to 0$) limit takes precedence over the sharp interface ($\epsilon \to 0$) limit. This leads to the guess that the limiting sharp interface energy functional ought to have the form
	\begin{equation}\label{eqn:gamma-limit}
		\bar{ \mathscr{E} } (u; U) =  \int_{\partial \{u = 1\} \cap U} \bar{\sigma}( \nu_{ \{ u = 1 \} } )  \, d \mathcal{H}^{d-1},
		\quad { \rm where } \quad 
		\bar{\sigma}( \nu )^2 = \sigma_{W}^2 \bar{\theta} \nu \cdot \bar{a} \nu.
	\end{equation}
Above $\bar{a}$ is the homogenized matrix associated with the gradient term in \eqref{E: our energy}, $\bar{\theta}$ is the expected value of $\theta$, and $\sigma_{W}$ is a constant determined by $W$ (see formula \eqref{E: homogeneous surface tension} below).

Indeed, we prove in the appendix that, in general, the homogenized energy $ \bar{ \mathscr{E} } $ always serves as an upper bound:
	\begin{align} \label{E: gamma upper bound}
		\Gamma\text{-}\limsup_{\epsilon \to 0} \F_{\epsilon,\delta(\epsilon)}(u;U) \leq \bar{ \mathscr{E} }(u;U);
	\end{align}
see Appendix \ref{A: upper bound} for the proof.  In the main result of the paper, we prove that, as long as the ratio $\epsilon^{-1} \delta$ vanishes fast enough, the energy $\F_{\epsilon,\delta}$ $\Gamma$-converges to $ \bar{ \mathscr{E} } $ as expected. This generalizes previous results of Ansini, Braides, and Chiad\`{o}-Piat \cite{ansini_braides_chiado-piat}, Hagerty \cite{hagerty}, and Cristoferi, Fonseca, and Ganedi \cite{cristoferi_fonseca_ganedi_supercritical} in the periodic setting. We refer to this as the \emph{homogenization regime}.

At the same time, if the ratio $\epsilon^{-1} \delta$ vanishes more slowly, it is possible that the $\Gamma$-limit of $\F_{\epsilon,\delta}$ is strictly smaller than $ \bar{ \mathscr{E} }$. We refer to this as the \emph{rare events regime}, and it is the subject of the companion paper \cite{part2}, which treats both random and almost periodic counterexamples.  We use the term ``rare events" because, in this regime, improbable or atypical configurations of the medium become relevant.  This reflects the competition between averaging and energy minimization inherent in the problem: the former drives the energy towards the typical behavior of the coefficients $a$ and $\theta$, while the latter seeks to exploit favorable deviations from the mean.

\subsection{Assumptions} \label{S: assumptions}We assume that there is a complete probability space $(\Omega,\mathcal{F},\mathbb{P})$ supporting a measurable action $\{\tau_{x}\}_{x \in \mathbb{R}^{d}}$ of $\mathbb{R}^{d}$ such that, for any $(x,\omega) \in \mathbb{R}^{d} \times \Omega$,
	\begin{equation*}
		a(x,\omega) = A(\tau_{x}\omega), \quad \theta(x,\omega) =\Theta(\tau_{x}\omega) .
	\end{equation*}
Here $A$ is a symmetric matrix-valued random variable and $\Theta$, a positive random variable.

Concerning $A$, we assume uniform ellipticity, that is, there are constants $\lambda, \Lambda > 0$ such that 
	\begin{equation*}
		\mathbb{P}\{\lambda \text{Id} \leq A \leq \Lambda \text{Id}\} = 1.
	\end{equation*}
Similarly, we assume that $\Theta$ is almost surely bounded above and below by positive constants $\theta_{*},\theta^{*} > 0$:
	\begin{equation*}
		\mathbb{P}\{\theta_{*} \leq \Theta \leq \theta^{*}\} = 1 .
	\end{equation*}

The action $\{\tau_{x}\}_{x \in \mathbb{R}^{d}}$ is assumed to be a bonafide group action, that is,
	\begin{equation*}
		\tau_{0} = \text{Id}, \quad \tau_{x + y} = \tau_{x} \circ \tau_{y} \quad \text{for each} \quad x,y \in \mathbb{R}^{d}.
	\end{equation*} 
We also assume that the map $(\omega,x) \mapsto \tau_{x} \omega$ is measurable with respect to the product $\sigma$-algebra $\mathcal{F} \otimes \mathscr{B}(\mathbb{R}^{d})$, where $\mathscr{B}(\mathbb{R}^{d})$ is the Borel $\sigma$-algebra of $\mathbb{R}^{d}$. Finally, the action is assumed to be both stationary and ergodic. Thus, for any event $E \in \mathcal{F}$ and any $x \in \mathbb{R}^{d}$,
	\begin{equation*}
		\mathbb{P}(\tau_{x}^{-1}(E)) = \mathbb{P}(E),
	\end{equation*}
and if $E \in \mathcal{F}$ satisfies the following invariance assumption
	\begin{equation*}
		\tau_{x}^{-1} (E) = E \quad \text{for each} \quad x \in \mathbb{R}^{d},
	\end{equation*}
then $\mathbb{P}(E) \in \{0,1\}$.

\begin{remark} As is well-known, the above assumptions are general enough to include the case of periodic and almost periodic media, see, for instance, the discussion in \cite{papanicolaou_varadhan}.  (However, care needs to be taken when interpreting probabilistic statements.) \end{remark}

Note that by Fubini's Theorem and stationarity, the assumptions on $A$ and $\Theta$ above imply that, on an event of probability one,
    \begin{align*}
        \lambda \text{Id} \leq a(y) \leq \Lambda \text{Id} \quad \text{and} \quad \theta_{*} \leq \theta(y) \leq \theta^{*} \quad \text{for a.e.} \, \, y \in \mathbb{R}^{d}.
    \end{align*}
We will restrict to this event in what follows.

Finally, here are the assumptions on the potential $W : \mathbb{R} \to [0,\infty)$. First, we assume that $ W $ is continuous and additionally
	\begin{gather*}
		W \, \, \text{is} \, \, C^{2} \, \, \text{in a neighborhood of} \, \, [-1,1], \\
		\{ u \in \mathbb{R} \, \mid \, W(u) = 0\} = \{-1,1\}.
	\end{gather*}
Following \cite{ansini_braides_chiado-piat}, we assume the following growth condition on W at infinity
\begin{align}\label{eqn:w-growth-at-inf}
\liminf _{|u| \to \infty} |u|^{-p} W(u) > 0
\quad { \rm for ~ some ~ } p \geq 2,
\end{align}
This technical assumption ensures that only functions taking values in the interval $ [-1,1] $ contribute to the $ \Gamma $-limit.
	
In addition, we make a weak nondegeneracy assumption at the minima $-1$ and $1$. Namely, we assume there is a $ \kappa \in \mathbb{N}$ such that $W$ is differentiable to order $2\kappa$ at $-1$ and $1$ and, furthermore,
	\begin{equation}\label{eqn:w-non-degeneracy}
		\left\{ \begin{array}{c}
			W^{(2k)}(-1) = W^{(2k)}(1) = 0 \quad \text{for each} \quad k \in \{0,1,\dots,\kappa - 1\}, \\
			\min\{W^{(2\kappa)}(-1),W^{(2\kappa)}(1)\} > 0.
		\end{array} \right.
	\end{equation}
This assumption is typically made in the PDE literature with $\kappa = 1$. Of course, all these assumptions are satisfied by the standard choice $W(u) = (1 - u^{2})^{2}$.

\begin{remark} In several places in this paper, we will significantly weaken the assumptions on $W$ above.  In Theorem \ref{T: planar homogenization theorem upper bound}, proved in Appendix \ref{A: upper bound} below, we only need to assume that $W$ is a nonnegative continuous function such that $\{u \in \mathbb{R} \, \mid \, W(u) = 0\} = \{-1,1\}$. Similarly, in Theorem \ref{T: planar homogenization theorem reduction}, which is proved in Appendix \ref{sec:gamma-convergence-in-prob}, we operate under this same assumption and, in addition, require that the (super)quadratic growth assumption \eqref{eqn:w-growth-at-inf} holds.  \end{remark}

Throughout the paper, $ \delta $ is microscopic, while $ \epsilon $ is mesoscopic, meaning that
\begin{align*}
\frac{\delta( \epsilon ) }{\epsilon} \rightarrow 0 \quad { \rm as } \quad \epsilon \rightarrow 0.
\end{align*}
This is a standing assumption; we will usually not restate it later.

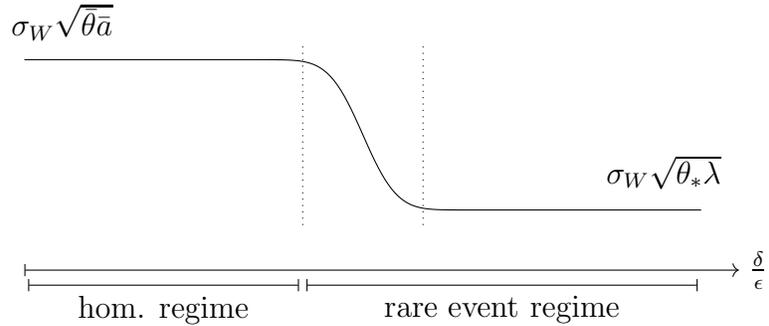
\begin{figure}

\centering

\begin{tikzpicture}

	\draw[|->] (-4.5, -0.8) -- node[at end, right] {$ \frac{\delta}{\epsilon} $} (5, -0.8);

	\draw plot[domain = -4.5 : 4.5, samples=200] (\x, { 1 - erf( 2 * \x) });

	\draw [dotted] (-0.8, -0.2) -- (-0.8, 2.2);
	\draw [dotted] (0.8, -0.2) -- (0.8, 2.2);

	\draw [|-|] ( -4.45, -1 ) -- node[below] {hom. regime} (-0.85, -1);
	\draw [|-|] ( -0.75, -1 ) -- node[below] {rare events regime} ( 4.45, -1);

	\node at (-4, 2.5) { $ \sigma_{W} \sqrt{\bar{\theta} \bar{a}} $ };
	\node at ( 4, 0.5) { $ \sigma_{W} \sqrt{\theta_{*} \lambda}$ };

\end{tikzpicture}
	
\caption{A depiction of the transition between the homogenization and rare events regimes:  Our main theorem shows that if $\delta \epsilon^{-1}$ vanishes rapidly enough, then the relevant effective interfacial energy density is $\sigma_{W} \sqrt{\bar{\theta} \bar{a}}$.  The examples in \cite{part2} show that if $\delta \epsilon^{-1}$ vanishes slowly enough, the minimum values $\lambda$ and $\theta_{*}$ may instead determine the macroscopic energy.  As suggested in the figure, we expect that intermediate values between $\sqrt{\bar{\theta} \bar{a}}$ and $\sqrt{\theta_{*} \lambda}$ are generically also relevant.  In the case of the 1D random checkerboard (with $a \equiv 1$), it is possible to make this diagram precise, see \cite[Theorem 1 \& Figure 1]{part2}. }

\end{figure}

\subsection{Homogenization Regime} Before stating the main result, recall the notion of gradient correctors $\phi$ and flux correctors $\sigma$ from the homogenization theory of elliptic operators in divergence form. For any $\xi \in \mathbb{R}^{d}$, $\phi_{\xi}$ and $\sigma_{\xi}$ are random fields with stationary, mean-zero gradients such that	
	\begin{align} \label{E: PDE for the correctors}
		- \nabla \cdot (a(x) (\xi + \nabla \phi_{\xi})) = 0, \quad - \Delta \sigma_{\xi} = \nabla \times q_{\xi}(x) \quad \text{in} \, \, \mathbb{R}^{d},
	\end{align}
where $q_{\xi} \coloneqq a(x) (\xi + \nabla \phi_{\xi})$ and $ ( \nabla \times q )_{ j k } \coloneqq \partial_j q_k - \partial_k q_j $.  Here $\phi_{\xi}$ is a scalar field and $\sigma_{\xi}$, a matrix field, both of which are unique up to additive constants.  Following the convention of Gloria, Neukamm, and Otto \cite{gloria_neukamm_otto}, we write $\phi$ and $\sigma$ for the vector field and $3$-tensor given by $\phi = (\phi_{e_{1}},\dots,\phi_{e_{d}})$ and $\sigma = (\sigma_{e_{1}},\dots,\sigma_{e_{d}})$, where $\{e_{1},\dots,e_{d}\}$ is the standard orthonormal basis.

In recent work in the field, the quantification of the sublinear growth of $\phi$ and $\sigma$ plays a recurring role, particularly as it pertains to estimating the rate of homogenization.  Toward that end, we define the stationary field ${\rm Sub}(\cdot)$ by
\begin{equation}\label{eqn:defn-sub}
{ \rm Sub }_{x} ( r ) \coloneqq \sup_{ R \ge r } \frac{1}{R} \left(  \fint_{ Q_{R}(x) } | ( \phi, \sigma ) - \fint_{ Q_{R}(x)} ( \phi, \sigma ) |^2 \right)^{ \frac{1}{2} }.
\end{equation}
This field is closely related to the so-called minimal radius of \cite{gloria_neukamm_otto}, which is a key ingredient in the large-scale regularity theory developed therein.  Specifically, if we define the minimal radius $r_{*}(\nu)$ for a given $\nu > 0$ by 
\begin{align} \label{E: minimal radius}
		r_{*}(\nu) = \inf \Big\{ ~ r > 0  ~\Big|~ \frac{ 1 }{ R } \Big( \fint_{ Q_{R} } \Big| (\phi,\sigma) - \fint_{ Q_{R} } (\phi,\sigma) \Big|^{2} \Big)^{\frac{1}{2}} \leq \nu \quad \text{for each} \, \, R \geq r ~ \Big\} ,
\end{align}
then
\begin{align*}
    r_{*}(\nu) > r \quad \text{if and only if} \, \, {\rm Sub}_{0}(r) > \nu .
\end{align*}
We will show below that the rate of sublinear growth of the correctors also plays an important role in the present work.

Next, we track the oscillations of $ \theta $ measured in the $ H^{ -1 } $ norm, more precisely,
\begin{equation}\label{eqn:defn-osc}
{ \rm Osc }_{x} ( r ) \coloneqq \sup_{ R \geq r } \frac{1}{R} \Vert \theta - \bar \theta \Vert_{ H^{-1}( Q_{R}(x) ) },
\end{equation}
where we recall that $ \bar \theta = \E[ \theta(0) ]$. Here we use a suitably scaled $H^{-1}$ norm; see \eqref{E: H minus one norm} below for the definition.

In the statement of our results, it will be convenient to note that there is a metric $d_{\Gamma}$ on a suitable space of lower semicontinuous functionals in $L^{1}$ such that, for any bounded Lipschitz open set $U \subseteq \mathbb{R}^{d}$ and any sequences $(\epsilon_{j})_{j \in \mathbb{N}}$ and $(\delta_{j})_{j \in \mathbb{N}}$ such that $\epsilon_{j} \to 0$, we have that 
	\begin{equation*}
		\mathscr{F}_{\epsilon_{j},\delta_{j}}(\cdot \, ; U) \overset{\Gamma}{\to} \bar{ \mathscr{E} }(\cdot \, ; U) \quad \text{if and only if} \quad d_{\Gamma}(\mathscr{F}_{\epsilon_{j},\delta_{j}}(\cdot \, ; U), \bar{ \mathscr{E} }(\cdot \, ; U)) \to 0,
	\end{equation*}
see Appendix \ref{sec:gamma-convergence-in-prob} for a review of the relevant details.

	\begin{theorem} \label{T: homogenization theorem} Assume that $a$, $\theta$, and $W$ satisfy the assumptions of Section \ref{S: assumptions}. If the microscopic length scale $\epsilon \mapsto \delta(\epsilon)$ is chosen in such a way that, for any $\nu > 0$
		\begin{gather}
			\lim_{\epsilon \to 0} \frac{1}{ \epsilon^{d} } \mathbb{P} \left\{  { \rm Sub }_{0} \left(  \frac{\epsilon}{\delta(\epsilon)} \right) > \nu \right\} = 0, \label{E: sublinear part} \\
			\lim_{\epsilon \to 0}  \frac{1}{\epsilon^{d} } \mathbb{P} \left\{ { \rm Osc }_{0}  \left(  \frac{\epsilon}{\delta(\epsilon)} \right) > \nu \right\} = 0, \label{E: averaging part}
		\end{gather}
	then $\mathscr{F}_{\epsilon,\delta} \overset{\Gamma}{\to} \bar{\mathscr{E}}$ in probability. More precisely, for any $\nu > 0$ and any bounded Lipschitz open set $U \subseteq \mathbb{R}^{d}$, 
		\begin{align*}
			\lim_{\epsilon \to 0} \mathbb{P}\{ d_{\Gamma}(\mathscr{F}_{\epsilon,\delta(\epsilon)}(\cdot \, ; U), \bar{ \mathscr{E} }(\cdot \, ; U)) > \nu\} = 0.
		\end{align*}
	\end{theorem}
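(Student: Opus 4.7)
The $\Gamma$-limsup bound is provided by \eqref{E: gamma upper bound}, so only the matching $\Gamma$-liminf inequality in probability remains. Given any sequence $u_\epsilon \to u$ in $L^1(U)$ along which $\mathscr{F}_{\epsilon,\delta(\epsilon)}(u_\epsilon;U)$ stays bounded, the growth assumption \eqref{eqn:w-growth-at-inf} together with standard Modica-Mortola compactness forces $u \in BV(U;\{-1,1\})$, and the task is to show
\[
\liminf_{\epsilon \to 0}\mathscr{F}_{\epsilon,\delta(\epsilon)}(u_\epsilon;U) \geq \bar{\mathscr{E}}(u;U)
\]
on an event whose probability tends to $1$. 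The plan is to compare $\mathscr{F}_{\epsilon,\delta}$ to the constant-coefficient \emph{homogenized diffuse} energy
\[
\bar{\mathscr{F}}_\epsilon(v;U) \coloneqq \int_U \Bigl(\frac{\epsilon}{2}\,\bar{a}\,\nabla v\cdot\nabla v + \frac{1}{\epsilon}\,\bar{\theta}\, W(v)\Bigr)\,dx,
\]
whose $\Gamma$-limit is exactly $\bar{\mathscr{E}}$ by the classical anisotropic Modica-Mortola theorem (a constant linear change of variables reduces it to the isotropic case). Thus it suffices to prove $\mathscr{F}_{\epsilon,\delta(\epsilon)}(u_\epsilon;U) \geq \bar{\mathscr{F}}_\epsilon(u_\epsilon;U) - o(1)$ with probability tending to $1$.

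To establish this comparison I would partition $U$ into a grid of cubes $\{Q_i\}$ of macroscopic side length comparable to $\epsilon$ (or slightly larger, to be tuned). After rescaling by $\delta$, each such cube has microscopic side length at least $\epsilon/\delta(\epsilon)$, which is precisely the argument of ${\rm Sub}_0$ and ${\rm Osc}_0$ appearing in the hypotheses. Since the cardinality of the cover is at most $O(\epsilon^{-d})$, the prefactor $\epsilon^{-d}$ in \eqref{E: sublinear part}-\eqref{E: averaging part}, together with the stationarity of the fields ${\rm Sub}_x$ and ${\rm Osc}_x$ under $\{\tau_x\}$, is exactly what a union bound needs in order to conclude that, with probability tending to $1$, every cube in the cover is simultaneously ``good,'' meaning that its local values of ${\rm Sub}$ and ${\rm Osc}$ at scale $\epsilon/\delta$ are uniformly small.

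On each good cube, the two terms of $\mathscr{F}_{\epsilon,\delta}$ are handled separately. For the gradient one uses the correctors $\phi_\xi$ and flux correctors $\sigma_\xi$ from \eqref{E: PDE for the correctors} together with a two-scale identity to obtain
\[
\int_{Q_i} a(\delta^{-1}x)\,\nabla u_\epsilon \cdot \nabla u_\epsilon\, dx \geq \int_{Q_i} \bar{a}\,\nabla u_\epsilon \cdot \nabla u_\epsilon\, dx - C\,{\rm Sub}_0\!\bigl(\tfrac{\epsilon}{\delta}\bigr) \int_{Q_i} |\nabla u_\epsilon|^2\, dx,
\]
while for the potential, $H^{-1}$-$H^1$ duality gives
\[
\Bigl|\int_{Q_i}\bigl(\theta(\delta^{-1}x) - \bar{\theta}\bigr) W(u_\epsilon)\, dx\Bigr| \leq C\,{\rm Osc}_0\!\bigl(\tfrac{\epsilon}{\delta}\bigr) \|\nabla W(u_\epsilon)\|_{L^2(Q_i)},
\]
with the right-hand side controlled by the local Modica-Mortola energy via the chain rule and the boundedness of $W'$ near $[-1,1]$. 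The main technical obstacle is that $u_\epsilon$ concentrates its variation in the $\epsilon$-neighborhood of $\partial\{u=1\}$, so that on cubes meeting the transition layer both $\|\nabla u_\epsilon\|_{L^2(Q_i)}^2$ and $\|\nabla W(u_\epsilon)\|_{L^2(Q_i)}^2$ blow up like $\epsilon^{-1}$, whereas ${\rm Sub}_0$ and ${\rm Osc}_0$ are only qualitatively small under the hypotheses. Carefully tuning the side length of the $Q_i$ so that this concentration is compensated by the smallness of the correctors, uniformly over the cover, is where the bulk of the technical work lies. Once $\mathscr{F}_{\epsilon,\delta(\epsilon)}(u_\epsilon;U) \geq \bar{\mathscr{F}}_\epsilon(u_\epsilon;U) - o(1)$ has been secured on the good event, the classical $\Gamma$-convergence $\bar{\mathscr{F}}_\epsilon \xrightarrow{\Gamma} \bar{\mathscr{E}}$ immediately finishes the proof.
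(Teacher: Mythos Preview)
Your proposal has a genuine gap in the gradient-term inequality
\[
\int_{Q_i} a(\delta^{-1}x)\,\nabla u_\epsilon \cdot \nabla u_\epsilon\, dx \;\geq\; \int_{Q_i} \bar{a}\,\nabla u_\epsilon \cdot \nabla u_\epsilon\, dx \;-\; C\,{\rm Sub}_0\!\bigl(\tfrac{\epsilon}{\delta}\bigr) \int_{Q_i} |\nabla u_\epsilon|^2\, dx.
\]
This is false for an \emph{arbitrary} sequence $u_\epsilon$. Homogenization compares minimal energies, not energies of the same test function: there is no pointwise ordering $a\gtrsim\bar a$, and correctors only relate $\int a^\gamma|\nabla u|^2$ to $\int \bar a|\nabla \bar u|^2$ when $u$ and $\bar u$ solve the respective Euler--Lagrange equations with common boundary data. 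A concrete obstruction: take $u_\epsilon(x)=x_1+\delta(\epsilon)\,\phi_{e_1}(\delta(\epsilon)^{-1}x)$. Then $\int_{Q_i} a(\delta^{-1}x)|\nabla u_\epsilon|^2\to \bar a_{11}|Q_i|$, while $\int_{Q_i}\bar a|\nabla u_\epsilon|^2\to \bar a_{11}\bigl(1+\mathbb{E}|\nabla\phi_{e_1}|^2\bigr)|Q_i|$, which is strictly larger whenever $\nabla\phi_{e_1}\not\equiv 0$; yet ${\rm Sub}_0(\epsilon/\delta)\to 0$. So the displayed bound cannot hold with an error that vanishes. The same issue makes ``$\mathscr{F}_{\epsilon,\delta}(u_\epsilon)\geq \bar{\mathscr{F}}_\epsilon(u_\epsilon)-o(1)$ for the \emph{same} $u_\epsilon$'' unreachable as a $\Gamma$-liminf strategy.

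The paper circumvents this by never attempting such a functionwise comparison. It first reduces $\Gamma$-convergence in probability to convergence of the energy in the \emph{planar cell problems} (Theorem~\ref{T: planar homogenization theorem reduction}, via the compactness and representation machinery in Appendix~\ref{sec:gamma-convergence-in-prob}). For the cell problem one then works with the \emph{minimizer} $u$ of $\mathscr{F}_\gamma$; this has De Giorgi--Nash--Moser regularity and solves the Euler--Lagrange equation, which is exactly the input needed for a two-scale expansion. The comparison (Theorem~\ref{T: elliptic term cube decomposition} and Proposition~\ref{prop:homogenization-interior-qubes}) is between $\mathscr{F}_\gamma(u;Q)$ and $\overline{\mathscr{F}}(\bar u;Q)$, where $\bar u$ is the minimizer of the \emph{homogenized} energy with the \emph{same boundary values} as $u$ on each subcube: a minimizer-to-minimizer estimate, not a same-function estimate. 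The union bound over $O(\epsilon^{-d})$ cubes that you correctly identified is then used at that stage (proof of Theorem~\ref{T: planar homogenization theorem lower bound}). If you want to push your direct approach, the fix is to replace $u_\epsilon$ on each subcube by the local $\overline{\mathscr{F}}_\epsilon$-minimizer $\tilde u_\epsilon$ with boundary data $u_\epsilon$, and then prove $\mathscr{F}_{\epsilon,\delta}(u_\epsilon;Q_i)\geq \overline{\mathscr{F}}_\epsilon(\tilde u_\epsilon;Q_i)-o(1)$; but carrying that out requires the full two-scale analysis of Section~\ref{S: homogenization 1}, and in particular the PDE regularity of \emph{minimizers}, which is unavailable for generic $u_\epsilon$.
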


In Appendix \ref{sec:gamma-convergence-in-prob}, we reduce the proof of the above theorem to proving convergence of the energy in an auxiliary family of cell problems, c.f.~Theorem \ref{T: planar homogenization theorem reduction}. This is the same reduction previously employed in \cite{ansini_braides_chiado-piat, cristoferi_fonseca_hagerty_popovici, morfe, marziani}, which will be explained in greater detail in Section \ref{section:  homogenization introduction}. After some preparation, most notably Section \ref{S: homogenization 1}, we will prove the convergence of the energy in these cell problems in Section \ref{S: homogenization proof},~c.f.~Theorem \ref{T: planar homogenization theorem lower bound}.

Estimates on the probability $\mathbb{P} \{ {\rm Sub}_{0}(R) > \nu \}$ can be found, for instance, in the previously mentioned work \cite{gloria_neukamm_otto} and the recent book \cite{armstrong_kuusi_book} of Armstrong and Kuusi under the assumption that the law of $a$ satisfies certain mixing conditions.  These can be used to derive concrete sufficient conditions ensuring that assumption \eqref{E: sublinear part} holds, see Section \ref{S: quantitative checkerboard -1} for the case of the random checkerboard.  We expect that similar arguments can be used to quantify $\mathbb{P} \{ { \rm Osc}_{0}(R) > \nu\}$ (see also Section \ref{S: quantitative checkerboard}).

In general, even in the setting of qualitative homogenization (i.e., without imposing mixing conditions), the next result shows that the previous theorem is never vacuous: there is always a choice of scale $ \epsilon \mapsto \delta(\epsilon)$ satisfying \eqref{E: sublinear part} and \eqref{E: averaging part}.

	\begin{prop}\label{prop:existence-of-scales} Given any stationary ergodic medium $(a,\theta)$ as in Section \ref{S: assumptions}, there is a choice of scale $\epsilon \mapsto \delta_{*}(\epsilon)$ such that if $\delta(\epsilon) \leq \delta_{*}(\epsilon)$ for all $\epsilon$ close to zero, then the scale $\epsilon \mapsto \delta(\epsilon)$ satisfies both \eqref{E: sublinear part} and \eqref{E: averaging part}.\end{prop}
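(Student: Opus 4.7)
The plan is a diagonal construction that exploits the monotonicity of ${\rm Sub}_0$ and ${\rm Osc}_0$ in $r$—immediate from the $\sup_{R \ge r}$ in \eqref{eqn:defn-sub} and \eqref{eqn:defn-osc}—together with their almost-sure decay to $0$ as $r \to \infty$. For ${\rm Sub}_0$, this is the classical sublinearity of the correctors $(\phi,\sigma)$ in the stationary ergodic setting, obtained by applying Birkhoff's ergodic theorem to the stationary, mean-zero gradients $(\nabla\phi,\nabla\sigma)$. For ${\rm Osc}_0$, it is a qualitative form of the ergodic theorem for $\theta - \bar\theta$ measured in the suitably scaled $H^{-1}$ norm.

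Given these two inputs, for each $n \in \mathbb{N}$ I would pick $R_n$ large enough that
\begin{equation*}
\mathbb{P}\{{\rm Sub}_0(R_n) > 1/n\} + \mathbb{P}\{{\rm Osc}_0(R_n) > 1/n\} \le 2^{-n},
\end{equation*}
arranging additionally that $R_n \uparrow \infty$. Next, I would fix a strictly decreasing sequence $\epsilon_n \downarrow 0$ satisfying $\epsilon_n^d \ge 2^{-n/2}$ (for instance, $\epsilon_n = 2^{-n/(2d)}$) and define
\begin{equation*}
\delta_*(\epsilon) := \frac{\epsilon}{R_n} \quad \text{for } \epsilon \in (\epsilon_{n+1}, \epsilon_n].
\end{equation*}
Since $\delta_*(\epsilon)/\epsilon = 1/R_n \to 0$, the scale $\delta_*$ is itself microscopic, as required.

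Suppose now that $\delta(\epsilon) \le \delta_*(\epsilon)$ for all $\epsilon$ small enough. For $\epsilon \in (\epsilon_{n+1}, \epsilon_n]$ we have $\epsilon/\delta(\epsilon) \ge R_n$, so monotonicity gives ${\rm Sub}_0(\epsilon/\delta(\epsilon)) \le {\rm Sub}_0(R_n)$ (likewise for ${\rm Osc}_0$). Given $\nu > 0$, pick $n_0$ with $1/n_0 \le \nu$. Then for every $n \ge n_0$ and every $\epsilon \in (\epsilon_{n+1}, \epsilon_n]$,
\begin{equation*}
\frac{1}{\epsilon^d} \mathbb{P}\{{\rm Sub}_0(\epsilon/\delta(\epsilon)) > \nu\} \le \frac{1}{\epsilon_{n+1}^d}\mathbb{P}\{{\rm Sub}_0(R_n) > 1/n\} \le \frac{2^{-n}}{\epsilon_{n+1}^d} \le 2^{-(n-1)/2},
\end{equation*}
which tends to $0$ as $\epsilon \to 0$. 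The identical bound for ${\rm Osc}_0$ follows by the same reasoning, verifying \eqref{E: sublinear part} and \eqref{E: averaging part}.

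The only nontrivial ingredients are the two almost-sure decay statements, and I expect these are the real substance of the proof. Both are standard in stationary ergodic homogenization, but the ${\rm Osc}_0$ statement in particular rests on the scaling convention for the $H^{-1}$ norm being sharp enough that a qualitative ergodic theorem indeed yields $R^{-1}\|\theta - \bar\theta\|_{H^{-1}(Q_R)} \to 0$ almost surely; unpacking the definition in the forthcoming display is therefore the one bookkeeping step that cannot be skipped. Once these decays are in hand, the construction above is just a diagonal selection and presents no further obstacle.
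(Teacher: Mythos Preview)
Your proposal is correct and follows essentially the same diagonal construction as the paper's proof: choose $R_n$ so that the tail probabilities of ${\rm Sub}_0(R_n)$ and ${\rm Osc}_0(R_n)$ beat a summable sequence, then define $\delta_*(\epsilon)=\epsilon/R_n$ on dyadic-type $\epsilon$-intervals chosen so that $\epsilon^{-d}$ grows more slowly than the tail probabilities decay. The only cosmetic differences are that the paper uses thresholds $2^{-j}$ rather than $1/n$ and treats ${\rm Sub}$ and ${\rm Osc}$ separately before taking the minimum; also, the decay statements you flag as ``the real substance'' are not proved inside this proposition but are recalled earlier (equations \eqref{E: sublinearity averaging section} and \eqref{E: averaging theta averaging section} in Section~\ref{S: homogenization preliminaries}) as standard consequences of the ergodic theorem, with references.
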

	
The proof of Proposition \ref{prop:existence-of-scales} is located in Section \ref{S: existence of scales}.
	
It is possible to upgrade from convergence in probability to almost-sure convergence provided one works with sequences and demands slightly faster decay. This is the subject of the next corollary.

	\begin{corollary}\label{cor:upgrade-to-as-convergence} Let $a$ and $\theta$ be as in Theorem \ref{T: homogenization theorem}. If $(\epsilon_{j})_{j \in \mathbb{N}}$ is any sequence converging to zero and if $(\delta_{j})_{j \in \mathbb{N}}$ is a sequence such that $\frac{\delta_{j}}{\epsilon_{j}} \to 0$ as $j \to \infty$ and, for any $\nu > 0$
		\begin{gather}
			\sum_{j = 1}^{\infty} \frac{1}{ \epsilon_j^{d} } \mathbb{P} \left\{ { \rm Sub }_{0}  \left(  \frac{\epsilon_j}{\delta_j} \right) > \nu \right\} < \infty, \label{E: sublinear summable} \\
			\sum_{j = 1}^{\infty} \frac{1}{\epsilon_j^{d} } \mathbb{P} \left\{ { \rm Osc }_{0} \left(  \frac{\epsilon_j}{\delta_j} \right) > \nu \right\} < \infty, \label{E: averaging summable}
		\end{gather}
	then $\mathscr{F}_{\epsilon_{j},\delta_{j}} \overset{\Gamma}{\to} \bar{ \mathscr{E} }$ almost surely. More precisely, on an event of probability one, we have that, for any bounded Lipschitz open set $U \subseteq \mathbb{R}^{d}$, 
		\begin{equation*}
			\lim_{j \to \infty} d_{\Gamma}(\mathscr{F}_{\epsilon_{j},\delta_{j}}(\cdot \, ; U), \bar{ \mathscr{E} }(\cdot \, ; U)) = 0.
		\end{equation*}
	\end{corollary}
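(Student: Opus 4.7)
The plan is to apply the Borel--Cantelli lemma to the quantitative probability estimate that is implicit in the proof of Theorem \ref{T: homogenization theorem}.

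First, I would revisit the proof of Theorem \ref{T: homogenization theorem} to extract an explicit probability bound. That argument reduces (via Theorem \ref{T: planar homogenization theorem reduction}) to control of auxiliary cell problems and then proceeds by covering the domain $U$ with $\sim \epsilon^{-d}$ microscopic cells of side $\epsilon$; on each such cell, smallness of $\mathrm{Sub}$ and $\mathrm{Osc}$ at the center yields smallness of the contribution to the cell energy, and a union bound combined with stationarity produces the factor $\epsilon^{-d}$ appearing in \eqref{E: sublinear part} and \eqref{E: averaging part}. Consequently, the same proof yields, for each $\nu > 0$ and each bounded Lipschitz open set $U$, a bound of the form
\begin{equation*}
\P\bigl\{d_{\Gamma}(\F_{\epsilon,\delta}(\cdot\,;U), \bar{\mathscr{E}}(\cdot\,;U)) > \nu\bigr\} \leq C(U,\nu)\Bigl[\epsilon^{-d}\, \P\bigl\{\mathrm{Sub}_{0}(\epsilon/\delta) > \eta(\nu)\bigr\} + \epsilon^{-d}\, \P\bigl\{\mathrm{Osc}_{0}(\epsilon/\delta) > \eta(\nu)\bigr\}\Bigr],
\end{equation*}
for some $\eta(\nu) > 0$ obtained by tracking the thresholds through the cell-problem estimates.

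With such a bound in hand, the summability hypotheses \eqref{E: sublinear summable} and \eqref{E: averaging summable} imply $\sum_{j} \P\bigl\{d_{\Gamma}(\F_{\epsilon_{j},\delta_{j}}(\cdot;U),\bar{\mathscr{E}}(\cdot;U)) > \nu\bigr\} < \infty$ for every fixed bounded Lipschitz $U$ and every $\nu > 0$. The Borel--Cantelli lemma then implies that, almost surely, $d_{\Gamma}(\F_{\epsilon_{j},\delta_{j}}(\cdot;U),\bar{\mathscr{E}}(\cdot;U)) \leq \nu$ for all $j$ sufficiently large. Intersecting the resulting full-probability events over $\nu = 1/k$ with $k \in \N$ yields a full-probability event $\Omega_{U}$ on which $d_{\Gamma} \to 0$.

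To obtain a single full-probability event that works for \emph{all} bounded Lipschitz open sets $U$ simultaneously, I would appeal once more to Theorem \ref{T: planar homogenization theorem reduction}, which reduces $d_{\Gamma}$-convergence for arbitrary $U$ to the convergence of a family of cell-problem energies indexed by directions $\nu \in S^{d-1}$. Running the Borel--Cantelli argument above on these cell problems and exploiting equicontinuity in $\nu$ (obtained by bi-Lipschitz change of coordinates between rotated unit cubes, which allows one to test only a countable dense subset of directions) produces a single universal event $\Omega_{\infty}$ of full probability on which convergence holds for every bounded Lipschitz $U$.

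The main obstacle is the first step: one must confirm that the proof of Theorem \ref{T: homogenization theorem} genuinely produces the displayed quantitative estimate, with a tractable dependence $\nu \mapsto \eta(\nu)$, rather than merely the qualitative convergence in probability as stated. This is essentially a bookkeeping exercise within that proof rather than a new argument, but it is where the real work lies; once it is verified, the remaining Borel--Cantelli upgrade and the reduction to a countable family of cell problems are standard.
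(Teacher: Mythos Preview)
Your Borel--Cantelli strategy is correct and matches the paper, but you have located the quantitative estimate in the wrong place. The displayed bound on $\P\{d_{\Gamma}(\F_{\epsilon,\delta}(\cdot;U),\bar{\mathscr{E}}(\cdot;U)) > \nu\}$ is \emph{not} extractable from the proof of Theorem \ref{T: homogenization theorem}: the passage from cell-problem convergence to $d_{\Gamma}$-convergence (Theorem \ref{T: planar homogenization theorem reduction}, proved in Appendix \ref{sec:gamma-convergence-in-prob}) goes through an abstract compactness and subsequence argument, not a quantitative inequality, so no estimate of the displayed form falls out of bookkeeping. This is not merely an obstacle to be overcome; it is the wrong target.

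The quantitative bound lives instead at the level of the planar cell problem, and this is where the paper applies Borel--Cantelli (Corollary \ref{C: planar homogenization corollary}, Section \ref{S: almost sure version}). The proof of Theorem \ref{T: planar homogenization theorem lower bound} gives, for each fixed $(x,\varrho)$, an explicit bound of order $\epsilon_j^{-d}\,\mathbb{P}\{\mathrm{Sub}_0(\epsilon_j/\delta_j) + \mathrm{Osc}_0(\epsilon_j/\delta_j) > \eta(\nu)\}$ on the probability that the cell energy falls more than $\nu$ below its homogenized value; the summability hypotheses then yield almost-sure convergence of each cell energy. Combining with the almost-sure upper bound (Theorem \ref{T: planar homogenization theorem upper bound}) and rotational invariance (Proposition \ref{P: rotated fields}) gives almost-sure convergence of all cell problems. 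From there, the passage to $d_\Gamma$-convergence for \emph{every} bounded Lipschitz $U$ is entirely deterministic via Proposition \ref{P: general convergence}, so no separate probabilistic handling of ``all $U$'' is required. Your instinct in the penultimate paragraph to run Borel--Cantelli on the cell problems is exactly right; the point is that this is where the whole argument should begin, not where it should end.
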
 

The corollary will essentially follow from the proof of Theorem \ref{T: homogenization theorem}. We comment on the details in Section \ref{S: almost sure version}.
	
Our results apply in the full generality of stationary, ergodic media without the need for quantitative assumptions. In particular, they apply to periodic and almost periodic media. Of course, in the periodic case, the probabilities above are more-or-less trivial, hence we obtain an unconditional homogenization result for periodic media. We make this precise next in Corollary \ref{cor:periodic-homogenization}.

Actually, we can go slightly beyond periodicity. Recall that, for $p \geq 1$, the Besicovitch space $B^{p}(\mathbb{R}^{d})$ of almost periodic functions is the closed span of the (generalized) trigonometric polynomials with respect to the norm
	\begin{align*}
		\|f\|_{B^{p}(\mathbb{R}^{d})} = \left\{ \begin{array}{r l}
							\left( \lim_{R \to \infty} \fint_{Q_{R}} |f(x)|^{p} \, dx \right)^{\frac{1}{p}}, & \text{if} \, \, p < \infty, \\
							\|f\|_{L^{\infty}(\mathbb{R}^{d})}, & \text{if} \, \, p = \infty.
						\end{array} \right.
	\end{align*}
Of particular interest for us will be the case when $p = \infty$. It is well-known that if $f \in B^{\infty}(\mathbb{R}^{d})$, then cubical averages of $f$ converge to the mean uniformly with respect to the center point, that is, there is a real number $\bar{f}$ such that
	\begin{align} \label{E: uniform averaging}
		\lim_{R \to \infty} \sup \left\{ \left| \fint_{Q_{R}(x)} f(y) \, dy - \bar{f} \right| \, \, \middle| \, \, x \in \mathbb{R}^{d} \right\} = 0.
	\end{align}
This is not necessarily true for functions in $B^{p}(\mathbb{R}^{d})$ with $p < \infty$ (c.f.\ \cite[Remark 5]{part2}), hence this property distinguishes $B^{\infty}(\mathbb{R}^{d})$ as a space of functions that are, in this sense at least, more like periodic functions.  Note that such functions are also sometimes simply called \emph{uniformly almost periodic}.

For $B^{\infty}$ coefficients, it is not hard to see that the uniformity property \eqref{E: uniform averaging} extends to ${ \rm Osc}(\cdot)$, and it is also possible to prove it for a suitable localized version of ${ \rm Sub }(\cdot)$; see Section \ref{S: periodic almost periodic section} for more details and Remark \ref{R: sub remark} for a discussion of ${ \rm Sub }(\cdot)$ specifically.  We thus obtain unconditional homogenization results in both the periodic and $B^{\infty}$ settings.

	\begin{corollary}\label{cor:periodic-homogenization} Let $a$ and $\theta$ be as in Theorem \ref{T: homogenization theorem} and assume that $a$ and $\theta$ are each either $\mathbb{Z}^{d}$-periodic or in the Besicovitch space $B^{\infty}(\mathbb{R}^{d})$. If $\epsilon \mapsto \delta(\epsilon)$ is any function such that 
		\begin{equation*}
			\lim_{\epsilon \to 0} \frac{ \delta(\epsilon) }{ \epsilon } = 0,
		\end{equation*}
	then $\mathscr{F}_{\epsilon,\delta} \overset{\Gamma}{\to} \bar{ \mathscr{E} }$ as $\epsilon \to 0$. \end{corollary}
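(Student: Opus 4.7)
The plan is to reduce the corollary to Theorem~\ref{T: homogenization theorem} by verifying the probabilistic hypotheses~\eqref{E: sublinear part} and~\eqref{E: averaging part}. In fact, I would aim for the stronger statement that, in both the $\mathbb{Z}^d$-periodic and $B^{\infty}$ settings, the quantities ${\rm Sub}_{x}(r)$ and ${\rm Osc}_{x}(r)$ tend to $0$ as $r \to \infty$ uniformly in $x \in \mathbb{R}^{d}$. Granting this, the hypothesis $\delta(\epsilon)/\epsilon \to 0$ forces $\epsilon/\delta(\epsilon) \to \infty$, so for any fixed $\nu > 0$ and all $\epsilon$ sufficiently small we have ${\rm Sub}_{0}(\epsilon/\delta(\epsilon)) \le \nu$ and ${\rm Osc}_{0}(\epsilon/\delta(\epsilon)) \le \nu$ pointwise in $\omega$. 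The probabilities in~\eqref{E: sublinear part} and~\eqref{E: averaging part} are then identically zero for $\epsilon$ small, so both conditions hold trivially (the $\epsilon^{-d}$ prefactor is harmless), and Theorem~\ref{T: homogenization theorem} delivers the conclusion.

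For the periodic case the uniformity is essentially classical. Under $\mathbb{Z}^d$-periodicity of $a$, the corrector PDEs~\eqref{E: PDE for the correctors} admit $\mathbb{Z}^d$-periodic solutions $\phi$ and $\sigma$, so the integrals in~\eqref{eqn:defn-sub} are bounded by $C \|(\phi,\sigma)\|_{L^{\infty}}^{2}$, and a straightforward Riemann-sum argument shows that $\frac{1}{R}(\fint_{Q_R(x)}|(\phi,\sigma) - \fint_{Q_R(x)}(\phi,\sigma)|^2)^{1/2} \to 0$ as $R \to \infty$, uniformly in $x$. The same reasoning, applied to the periodic function $\theta - \bar\theta$ tested against $H^1$ functions on $Q_R(x)$, yields ${\rm Osc}_{x}(r) \to 0$ uniformly in $x$.

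The $B^{\infty}$ case is the real content. The bound on ${\rm Osc}_{x}(r)$ follows from the uniform averaging property~\eqref{E: uniform averaging}: representing $\theta - \bar\theta$ via a Poincar\'e-type test function on $Q_R(x)$ one converts control of the deviations $|\fint_{Q_R(y)}\theta - \bar\theta|$ uniformly in $y$ into the claimed $H^{-1}$ estimate. For ${\rm Sub}_{x}(r)$ the argument is more delicate, as one must first upgrade the existence and sublinear growth of correctors from the stationary ergodic setting to a uniform-in-$x$ statement. The approach is to show that, when $a \in B^\infty(\mathbb{R}^d)$, the gradients $\nabla \phi_\xi$ and $\nabla \sigma_\xi$ themselves lie in $B^\infty$, so that their $B^\infty$ means vanish (they are stationary and mean-zero by construction) and hence $\fint_{Q_R(x)} \nabla \phi_\xi \to 0$ uniformly in $x$. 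Combined with a Caccioppoli/Poincar\'e estimate on $Q_R(x)$ applied to the corrector equations, this propagates uniform decay of gradient averages to the $L^2$-oscillation quantity in~\eqref{eqn:defn-sub}.

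The main obstacle, as just outlined, is the $B^{\infty}$ regularity of $(\nabla \phi, \nabla \sigma)$ and the passage from uniform averaging of gradients to uniform sublinear growth of the correctors themselves. I would handle this by combining the elliptic regularity estimates already used to construct the stationary correctors with the algebraic closure of $B^\infty$ under products and the uniform ergodic theorem~\eqref{E: uniform averaging}, appealing where necessary to the almost-periodic homogenization literature in the spirit of Kozlov. Once this uniformity is established, the reduction to Theorem~\ref{T: homogenization theorem} described in the first paragraph closes the argument for both regimes simultaneously.
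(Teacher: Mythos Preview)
Your overall architecture---establish uniform-in-$x$ decay of ${\rm Sub}_x(r)$ and ${\rm Osc}_x(r)$, then feed this into the machinery behind Theorem~\ref{T: homogenization theorem}---matches the paper's, and the periodic case is handled essentially as the paper does in Section~\ref{S: periodic}. The reduction you describe in the first paragraph is fine (though to get a genuinely deterministic statement rather than convergence in probability one should note, as the paper does, that once ${\rm Sub}$ and ${\rm Osc}$ decay uniformly, the estimate of Theorem~\ref{T: elliptic term cube decomposition} becomes deterministic).

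The gap is in your treatment of ${\rm Sub}_x$ in the $B^\infty$ case. You propose to show that $\nabla\phi_\xi$ and $\nabla\sigma_\xi$ lie in $B^\infty(\mathbb{R}^d)$ and then invoke the uniform averaging property~\eqref{E: uniform averaging}. But with only $a\in B^\infty$ (bounded measurable, uniformly almost periodic) the corrector gradients are in general only known to be in $B^2$---this is exactly what the paper records in Section~\ref{S: uniformly almost periodic}---and, as the paper stresses in Remark~\ref{R: no uniformity}, functions in $B^p$ for $p<\infty$ need \emph{not} satisfy~\eqref{E: uniform averaging}. The ``elliptic regularity estimates'' you invoke do not give $L^\infty$ control of $\nabla\phi_\xi$ without extra smoothness on $a$, and ``algebraic closure of $B^\infty$ under products'' does not help because $\nabla\phi_\xi$ is obtained by solving a PDE, not by an algebraic operation on $a$. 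So the step ``$\nabla\phi_\xi\in B^\infty$, hence its spatial averages vanish uniformly'' is not justified and is likely false at this level of generality.

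The paper circumvents this entirely: rather than working with the correctors directly, it controls ${\rm Sub}_x$ through the variational subadditive quantities $a(Q_R(x))$ and $a_*(Q_R(x))$ (as in Armstrong--Smart and Armstrong--Kuusi), shows these are continuous functions on the Bohr compactification, and then applies a \emph{semi-uniform} subadditive ergodic theorem to obtain $a(Q_R(x)),\,a_*(Q_R(x))\to\bar a$ uniformly in $x$. Uniform sublinearity of $(\phi,\sigma)$ then follows from the large-scale regularity theory linking these quantities to the corrector growth. For ${\rm Osc}_x$, the paper uses trigonometric approximation of $\theta-\bar\theta$ in the $L^\infty$ norm rather than the Poincar\'e-type duality you sketch; your description there is vague but salvageable, whereas the ${\rm Sub}$ argument needs a genuinely different idea.
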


The proof of this corollary is given in Section \ref{S: proof of periodic corollary}.
	
\subsection{Outlook: Rare Events Regime} \label{S: intro rare events} To reiterate, some restriction on the microscale $\delta$ is necessary in Theorem \ref{T: homogenization theorem}. As the ratio $ \epsilon^{-1}\delta(\epsilon)$ increases, improbable or atypical local configurations become more and more relevant. This is the main content of the companion paper \cite{part2}, which in this section we put into the context of the above results.

In fact, in the case of the random checkerboard in dimension $d = 1$, the assumptions of Theorem \ref{T: homogenization theorem} are sharp. By the random checkerboard, we mean the medium $(a,\theta)$ defined by
	\begin{equation} \label{E: one d random checkerboard}
		(a(x),\theta(x)) = \sum_{z \in \mathbb{Z}^{d}} (A_{z},\Theta_{z}) \boldsymbol{1}_{Q_{1}(z)}(x),
	\end{equation}
where $Q_{1}(z)$ denotes the cube with unit sidelength centered at $z$ and $\{(A_{z},\Theta_{z})\}_{z \in \mathbb{Z}}$ are i.i.d.\ random vectors such that
	\begin{equation} \label{E: bernoulli}
		\mathbb{P} \{ \lambda \text{Id} \leq A_{z} \leq \Lambda \text{Id} \quad \text{and} \quad \theta_{*} \leq \Theta_{z} \leq \theta^{*} \quad \text{for each} \, \, z \in \mathbb{Z}^{d} \} = 1 .
	\end{equation}
Recall that, for such i.i.d.\ fields, large deviations theory says that atypical events at scale $R \gg 1$ occur with probabilities that scale like $\exp(-CR^{d})$. 

Indeed, building on earlier work of Armstrong and Smart \cite{armstrong_smart}, the work of Gloria, Neukamm, and Otto \cite{gloria_neukamm_otto} implies just such an estimate for the sublinear growth of the correctors $(\phi,\sigma)$ of the random checkerboard: namely, for any $\nu > 0$ small enough,
	\begin{align*}
		- \log \mathbb{P} \left\{ { \rm Sub }_{0} \left( R \right) > \nu \right\} \sim_{\nu} R^{d}.
	\end{align*}
(Above $\sim_{\nu}$ means the ratio of the left- and right-hand sides is bounded above and below by positive constants depending only on $\nu$.)  We refer the reader also to \cite[Chapter 6]{armstrong_kuusi_book} for another proof of the lower bound.  Using standard large deviations estimates, one can similarly quantify the oscillations of $\theta$ in this setting: for any $\nu > 0$ small enough,
	\begin{align*}
		- \log \mathbb{P} \left\{ { \rm Osc }_{0} \left( R \right) > \nu \right\} \sim_{\nu} R^{d}.
	\end{align*}
For more details, the reader is referred to Sections \ref{S: quantitative checkerboard -1} and \ref{S: quantitative checkerboard}.

In view of the previous two estimates, Theorem \ref{T: homogenization theorem} implies the following sufficient condition for convergence of $\mathscr{F}_{\epsilon,\delta(\epsilon)}$ to $\overline{\mathscr{E}}$:
	\begin{equation*}
		\lim_{ \epsilon \rightarrow 0 } \frac{ \delta( \epsilon ) | \log \epsilon |^{1/d} }{ \epsilon } = 0.
	\end{equation*}
In the companion paper \cite{part2}, this condition is proved to also be necessary when $d = 1$ and the matrix $a$ is constant: if the limit above equals some nonzero constant $\kappa$, then $\mathscr{F}_{\epsilon,\delta(\epsilon)}$ will converge to a limiting energy $\check{\mathscr{E}}_{\kappa}$ with surface tension $\check{\sigma}_{\kappa}$ depending nontrivially on $\kappa$. This shows that, in this very particular case, the additional assumptions \eqref{E: sublinear part} and \eqref{E: averaging part} in Theorem \ref{T: homogenization theorem} are both necessary and sufficient. This is in sharp contrast to the periodic case, see Corollary \ref{cor:periodic-homogenization}. 

More generally, in \cite{part2}, both random and almost periodic counterexamples are described in arbitrary dimensions $d \geq 1$ in which $\mathscr{F}_{\epsilon,\delta}$ does not converge to $\bar{\mathscr{E}}$.

\subsection{Outline of the Paper} The outline of the proof of Theorem \ref{T: homogenization theorem} is presented in Section \ref{section: homogenization introduction}.  As explained there, the bulk of the work involves proving a $\Gamma\text{-}\liminf$-style inequality, which is our Theorem \ref{T: planar homogenization theorem lower bound}.  The core deterministic arguments involved in Theorem \ref{T: planar homogenization theorem lower bound} are treated in Section \ref{S: homogenization 1}.  These are combined with averaging arguments in Section \ref{S: homogenization 2} to complete the proof of the theorem, the main step of the proof of which appears in Section \ref{S: homogenization proof}.

Section \ref{S: periodic almost periodic section} discusses the periodic and (uniformly) almost periodic settings.  The proof of unconditional homogenization for such media (Corollary \ref{cor:periodic-homogenization}) appears at the end of this section.

There are three appendices.  Appendix \ref{A: upper bound} establishes what we call the homogenization upper bound, which asserts that $\Gamma$-limit of $\mathscr{F}_{\epsilon,\delta(\epsilon)}$ is always no larger than the homogenized surface energy $\bar{\mathscr{E}}$ (see \eqref{E: gamma upper bound}).  Appendix \ref{sec:gamma-convergence-in-prob} develops a notion we call $\Gamma$-convergence in probability.  Using this notion, we prove, following \cite{ansini_braides_chiado-piat} and \cite{morfe}, that if the energies of the planar cell problems converge in probability to the homogenized surface tension $\bar{\sigma}$, then $\mathscr{F}_{\epsilon,\delta(\epsilon)}$ $\Gamma$-converges in probability to the homogenized surface energy $\bar{\mathscr{E}}$.
		
\subsection{Related Literature} Here we cite the most relevant works related to homogenization of Allen-Cahn-type energies with heterogeneous coefficients.  At the end, we also very briefly discuss some related models as well as the relevant background from the homogenization theory for divergence-form elliptic equations.

Some of the works that we cite consider vector-valued phase fields $u$, unlike the present work, which only considers the scalar case.  For brevity, we will not emphasize this point here.  Suffice it to say, though, that we do use techniques (particularly the De Giorgi-Nash-Moser estimate) that are not available in the vectorial setting.  

\subsubsection{$\Gamma$-Convergence of Allen-Cahn-type Energies: Deterministic Heterogeneities} The $\Gamma$-limit of the (constant-coefficient) Allen-Cahn functional was first proved by Modica and Mortola \cite{modica_mortola,modica}.  Expository accounts of this and related results can be found in the lecture notes of Alberti \cite{AlbertiLectureNotes} and the book of Braides \cite{braides_free-discontinuity_book}.

Extensions to periodic media were first treated by Ansini, Braides, and Chiad\`{o}-Piat \cite{ansini_braides_chiado-piat}, who considered energies with gradient terms more general than ours, but without an oscillatory contribution from the potential $W$.  They were able to prove $\Gamma$-convergence and characterize the $\Gamma$-limit in the regimes $\epsilon \sim \delta$, $\epsilon \ll \delta$, and $\delta \ll \epsilon^{\frac{3}{2}}$.  In the latter case, they also showed that the surface tension $\bar{\sigma}$ coincides with what one obtains by first sending $\delta \to 0$ and then $\epsilon \to 0$.\footnote{Notice that in the companion paper \cite{part2} we show that, in the random setting, the condition $\delta \ll \epsilon^{\frac{3}{2}}$ given in \cite{ansini_braides_chiado-piat} is no longer a sufficient condition for the homogenized surface tension $\bar{\sigma}$ to appear in the limit.}  In all three cases, they first studied the convergence of the normalized energy in a a family of planar cell problems (see Section \ref{section: homogenization introduction} below) and then combined more abstract compactness arguments with integral representation results to argue that this characterizes the $\Gamma$-limit. 

Shortly thereafter, Dirr, Lucia, and Novaga \cite{dirr_lucia_novaga_1,dirr_lucia_novaga_2} considered the problem when $a \equiv \text{Id}$ and the standard potential $\epsilon^{-1} W(u)$ is replaced by a singular perturbation of the form $\epsilon^{-1} W(u) + \epsilon^{-\alpha} g(\epsilon^{-\alpha} x) u$ for some $\alpha \in (0,1]$ and a suitable periodic function $g$. In case $\alpha \in (0,1)$, this is a two-scale problem with, in our notation, $\delta \gg \epsilon$, and, indeed, the results in \cite{dirr_lucia_novaga_1} establish that the $\Gamma$-limit coincides with what one might expect by first sending $\epsilon \to 0$ and then $\delta \to 0$.  When $\alpha = 0$, the scale of the oscillations of $g$ coincides with the diffuse interface width.  In \cite{dirr_lucia_novaga_2}, the authors proved $\Gamma$-convergence in the case $\alpha = 0$ assuming that $g = \Delta v$ for a suitably small $v$ (in the $W^{1,\infty}$ norm).  In contrast with the present contribution, in both these works, the constant functions $1$ and $-1$ are no longer global minima of the functional.  Instead, the assumptions of \cite{dirr_lucia_novaga_1,dirr_lucia_novaga_2} imply that there are two ``pure phases," that is, nonconstant, periodic global minimizers close to $1$ and $-1$, the oscillations of which need to be considered in the proofs.  This is a complication that is not encountered in the present work or \cite{ansini_braides_chiado-piat}.

In a similar vein, Fonseca and coauthors recently revisited the $\Gamma$-convergence problem in the setting in which only the potential term oscillates.  In contrast to \cite{dirr_lucia_novaga_1,dirr_lucia_novaga_2}, they considered general potentials of the form $\epsilon^{-1} W(\delta^{-1} x,u)$.  The work of Cristoferi, Fonseca, Hagerty, and Popovici \cite{cristoferi_fonseca_hagerty_popovici, cristoferi_fonseca_hagerty_popovici_erratum} treated the case $\epsilon \sim \delta$ in the setting where $\{W(y,\cdot) = 0\} = \{-1,1\}$ so $1$ and $-1$ are once again global minimizers.  They followed a strategy of proof that is similar to the one in \cite{ansini_braides_chiado-piat}.

Since then, Cristoferi, Fonseca, and Ganedi \cite{cristoferi_fonseca_ganedi_supercritical} considered the case when $\delta \ll \epsilon$ in periodic media, again under the assumption $\{W(y,\cdot) = 0\} = \{-1,1\}$.  They proved, as in our Corollary \ref{cor:periodic-homogenization}, that the $\Gamma$-limit is determined by the mean of the potential $W(y,u)$ in the $y$ variable.  (This improved earlier work of Hagerty \cite{hagerty}, who had imposed the additional restriction $\delta \ll \epsilon^{\frac{3}{2}}$ as in \cite{ansini_braides_chiado-piat}.)  The strategy in \cite{cristoferi_fonseca_ganedi_supercritical} is philosophically similar to ours, but quite different mathematically.  At a high level, both our work and theirs is based on the idea that one should locally replace a phase field $u$ by a another function $\tilde{u}$ that incorporates the behavior of the medium at or above scale $\delta$.  In \cite{cristoferi_fonseca_ganedi_supercritical}, the argument proceeds by dividing up the domain into cubes of size $\delta$ and then using a slicing argument to compare $u$ to a candidate for a certain cell problem that approximates $\int_{\mathbb{T}^{d}} W(y,u) \, dy$.  By contrast, we decompose the domain into cubes of size $\epsilon$.  In each cube, we compare a minimizer $u$ of $\mathscr{F}_{\epsilon,\delta}$ to a minimizer $\bar{u}$ of the homogenized energy with the same boundary values in that cube, obtaining a bound on the difference of the energies using homogenization error estimates and elliptic regularity results.

We also mention another work of Cristoferi, Fonseca, and Ganedi \cite{cristoferi_fonseca_ganedi_subcritical}, wherein they considered the case when $\delta \gg \epsilon$ and the potential $W(y,u)$ has space-dependent wells (i.e., the zero set $\{W(y,\cdot) = 0\}$ varies with $y$).  They study the volume scaling of the energy, hence a functional of the form
	\begin{align*}
		\mathscr{G}_{\epsilon,\delta}(u; U) = \int_{U} \left( \frac{\epsilon^{2}}{2} |\nabla u|^{2} + W(\delta^{-1} x, u) \right) \, dx.
	\end{align*}
(We refer to this as the volume scaling since the macroscopic energy in the box $Q_{r}$ scales like $r^{d}$ rather than $r^{d-1}$.)
They showed that, under certain assumptions, if $\{W(y,\cdot) = 0\} = \{a(y), b(y)\}$ for two periodic functions $a$ and $b$, then $\mathscr{G}_{\epsilon,\delta(\epsilon)}$ $\Gamma$-converges to a functional $G^{0}$ of the form $G^{0}(u; U) = \int_{U} W^{\text{hom}}(u) \, dx$ (Theorem 4.3), the minimizers of which are, in a sense that can be made precise, mixtures of $a$ and $b$ (Corollary 4.4).  Taking this a step further, they also fully characterized the first-order (in $\delta^{-1} \epsilon$) correction to $\mathscr{G}_{\epsilon,\delta}$ for a natural class of minimizers (Theorem 5.8).

Finally, since this work first appeared as a preprint, Cristoferi and Pignatelli \cite{cristoferi_pignatelli} extended the result of \cite{cristoferi_fonseca_ganedi_supercritical} to situations involving multiple (microscopic) scales.

\subsubsection{$\Gamma$-Convergence of Allen-Cahn-type Energies: Random Heterogeneities} \hfill The \\ first random results of which we are aware were contributed by Dirr and Orlandi \cite{dirr_orlandi}.  They considered a perturbation of the standard, constant-coefficient Allen-Cahn functional, in which the potential $\epsilon^{-1} W(u)$ is replaced by $\epsilon^{-1} W(u) + \epsilon^{-1} \delta(\epsilon) g(\epsilon^{-1}x) u$, where $g$ is a random checkerboard and $\delta(\epsilon)^{-1} = \zeta |\log(\epsilon)|$ for some $\zeta > 0$.  To simplify the problem, they studied the functional restricted to the torus $\mathbb{T}^{d}$ in dimensions $d \geq 3$.  They proved the existence of two random functions $u^{+}_{\epsilon}$ and $u^{-}_{\epsilon}$, which minimize the energy in $H^{1}(\mathbb{T}^{d})$ and converge to the unperturbed minima $1$ and $-1$, respectively, as $\epsilon \to 0$ (Theorem 2.1).  They also showed that, after shifting by the energy of $u^{+}_{\epsilon}$, the energy $\Gamma$-converges to the perimeter, as if $g$ were identically zero (Theorem 2.3).

More recently, the first author \cite{morfe} considered functionals similar to the ones studied here, albeit in the regime $\epsilon \sim \delta$ and with stationary ergodic oscillations only in the gradient term.  (In fact, only $\delta = \epsilon$ is considered there, but the arguments readily generalize to the case when $\epsilon^{-1} \delta(\epsilon) \to c$ for some $c > 0$.)  That work, which followed the strategy of \cite{ansini_braides_chiado-piat}, highlighted the fact that it is useful to consider the energy of the planar cell problem as an almost-monotone function in the normal variable and a subadditive process in the transversal variables.  That point-of-view is also useful in the present context, as it forms the basis for our proof of the (unconditional) homogenization upper bound (see Appendix \ref{A: upper bound} below).

The results of \cite{ansini_braides_chiado-piat, cristoferi_fonseca_hagerty_popovici, morfe}  were generalized by Marziani \cite{marziani}, who considered the general stationary ergodic setting with oscillations both in the gradient and well terms, still only in the regime $\epsilon \sim \delta$ and with the assumption that $1$ and $-1$ are global minima of the functional.  The strategy of proof is the same as in \cite{ansini_braides_chiado-piat}.  In contrast with \cite{morfe}, in \cite{marziani}, the convergence of the planar cell problem is proved by adapting the arguments of Cagnetti, Dal Maso, Scardia, and Zeppieri \cite{cagnetti_dal-maso_scardia_zeppieri}, who had earlier considered surface energy functionals with stationary ergodic coefficients.

We also mention the recent work of Dos Santos, Rodiac, and Sandier \cite{dos-santos_rodiac_sandier}, who considered specific classes of Ginzburg-Landau- and Allen-Cahn-type functionals with stationary ergodic (or periodic) coefficients.  In the Allen-Cahn case, they restricted to $a \equiv \text{Id}$ and considered potentials of the form $W(\delta^{-1}x,u) = (u^{2} - a(\delta^{-1} x))^{2}$, where $a$ is stationary ergodic (or periodic).  Using the special form of $W$, they were able to characterize the limiting behavior of volume-constrained minimizers in a bounded domain while assuming only that $\epsilon^{-1} \delta(\epsilon) \to 0$ as $\epsilon \to 0$ (Theorem 7.1).

\subsubsection{Related Models} There is a growing literature on $\Gamma$-convergence results for heterogeneous energy functionals incorporating surface effects.  Without delving deeply into this body of work, we mention the recent contributions of Bach, Esposito, Marziani, and Zeppieri \cite{bach_esposito_marziani_zeppieri_one_d, bach_esposito_marziani_zeppieri_higher_d}, who studied variants of the Ambrosio-Tortorelli functional with heterogeneous coefficients.  As in the literature on Allen-Cahn-type functionals reviewed above, in their work, the heterogeneity length scale $\delta(\epsilon)$ is coupled to the singular perturbation scale $\epsilon$, and they were able to characterize the $\Gamma$-limit in the periodic setting in the three regimes $\delta \gg \epsilon$, $\delta \sim \epsilon$, and $\delta \ll \epsilon$, the last case being the most difficult (and involving some extra assumptions).  When $\delta \sim \epsilon$, stochastic homogenization was proved by Bach, Marziani, and Zeppieri in \cite{bach_marziani_zeppieri_singular}, but, as far as we know, for this class of functionals, the $\delta \ll \epsilon$ regime in the random case remains a challenging open problem.


\subsubsection{Stochastic Homogenization of Divergence-Form Elliptic Equations} This paper takes advantage of recent developments in the theory of stochastic homogenization of divergence-form elliptic operators.  We refer the reader to the article of Josien and Otto \cite{JosienOtto} and the books of Armstrong, Kuusi, and Mourrat \cite{armstrong_kuusi_mourrat_book} and Armstrong and Kuusi \cite{armstrong_kuusi_book} both for historical background and the state-of-the-art.

Perhaps the most notable place where we benefit from recent insights is in the utilization of the quantities ${\rm Sub}(\cdot)$ and ${\rm Osc}(\cdot)$ defined in \eqref{eqn:defn-sub} and \eqref{eqn:defn-osc} above.  The quantity ${\rm Sub}(\cdot)$ is closely related to what has been dubbed the minimal radius by Gloria, Neukamm, and Otto \cite{gloria_neukamm_otto}, and, as is apparent in the proofs below, it turns out to be a very convenient tool for measuring how close the gradient energy term is to its homogenized form.  Our use of the augmented corrector $(\phi,\sigma)$ and ${\rm Sub}(\cdot)$ is inspired by the work of Otto and collaborators, particularly \cite{gloria_neukamm_otto} and \cite{JosienOtto}.

\subsection{Notation and Terminology} Throughout the paper, we denote by $\{e_{1},\dots,e_{d}\}$ the standard orthonormal basis of $\mathbb{R}^{d}$.  We write $Q_{r}$ for the cube
	\begin{equation*}
		Q_{r} = \left( - \frac{r}{2}, \frac{r}{2} \right)^{d} = \left\{y \in \mathbb{R}^{d} \, \mid \, |y \cdot e_{i}| < \frac{r}{2} \, \, \text{for each} \, \, i \in \{1,\dots,d\} \right\}.
	\end{equation*}
The translated copy of $Q_{r}$ centered at $x \in \mathbb{R}^{d}$ is denoted by $Q_{r}(x)$. 

Throughout the paper, given two families of real numbers $(A_{\eta})_{\eta}$ and $(B_{\eta})_{\eta}$ depending on some parameter $\eta$, we write
	\begin{equation*}
		A_{\eta} \lesssim B_{\eta}
	\end{equation*}
if there is a constant $C > 0$ such that $A_{\eta} \leq C B_{\eta}$ for all values of $\eta$.  If the constant $C$ is determined by some other parameters, say, $\mu$ and $\varrho$, then we indicate this by writing $A_{\eta} \lesssim_{\mu,\varrho} B_{\eta}$.  Our usage of the symbol $\gtrsim$ is entirely analogous.  The symbol $\sim$ means that both $\lesssim$ and $\gtrsim$ hold.

We use a particular normalization of the $ H^{-1} $ norm, which for $ f \in L^2_{ loc }( \R^d ) $ is given by
	\begin{equation} \label{E: H minus one norm}
		\|f\|_{H^{-1}(U)} = \sup \left\{ \fint_{U} f v \, \mid \, v \in H^{1}_{0}(U), \, \, \fint_{U} | \nabla v |^{2} \leq 1 \right\}.
	\end{equation}

Occasionally, we abbreviate
\begin{align}\label{eqn:defn-min}
m( \F, U, g) \coloneqq \min \{ \F(v,U) ~|~ v \in g + H_{0}^{1}(U) \}.
\end{align}

Finally, we say that a function $\omega : [0,\infty) \to [0,\infty)$ is a \emph{modulus of continuity} if $\omega$ is nondecreasing, continuous at zero, and $\omega(0) = 0$.

\addtocontents{toc}{\protect\setcounter{tocdepth}{0}}
\section*{Acknowledgements}  

We thank Felix Otto for organizing a stimulating research environment at the MPI in Leipzig, where we first met and completed this project.  The first author thanks Annika Bach for suggesting he revisit this problem and acknowledges the support of NSF Grant DMS-2202715. 

\addtocontents{toc}{\protect\setcounter{tocdepth}{2}}

\tableofcontents

\section{Proof of Theorem \ref{T: homogenization theorem}}\label{section:  homogenization introduction}

In this section, we outline the proof of our main theorem on the homogenization regime, Theorem \ref{T: homogenization theorem}.  We use the same reduction as in \cite{ansini_braides_chiado-piat,morfe,cristoferi_fonseca_hagerty_popovici,marziani}, exploiting the fact that $ \Gamma $-convergence is equivalent to the convergence of the (suitably scaled) energy in a certain family of planar cell problems.  Precisely, we fix a function $q : \mathbb{R} \to \R $ satisfying
	\begin{gather} 
		\sup_{ s \in \R } | q'(s) | +  \int_{ - \infty }^{ \infty } q '( s )^2 + W(q(s)) \, ds < \infty, \label{E: finite width} \\
	 	-1 \leq q(s) \leq 1,  \quad q(s) \rightarrow 1 \quad \text{as} ~ s \rightarrow \infty, \quad { \rm and } \quad  q(s) \rightarrow -1 \quad \text{as} ~ s \rightarrow - \infty.  \label{E: finite width 2}
	\end{gather}
	Notice that the function $q(\gamma^{-1} \cdot)$ converges to $ \boldsymbol{1}_{[0,\infty)} - \boldsymbol{1}_{(-\infty,0)}$ in $L^{1}_{\text{loc}}(\mathbb{R})$ as $\gamma \to 0$. In order to prove $\Gamma$-convergence, up to rotation and translation, it suffices to prove that
	\begin{align*}
		\lim_{\epsilon \to 0} \min \left\{ \frac{1}{ \varrho^{d-1} } \mathscr{F}_{\epsilon,\delta(\epsilon)}(u;Q_{\varrho}) \, \mid \, u(y) = q(\epsilon^{-1}y) \, \, \text{for each} \, \, y \in \partial Q_{\varrho} \right\} = \bar{\sigma}(e_{1}).
	\end{align*}
Here $\bar{\sigma} : S^{d-1} \to (0,\infty)$ is the homogenized surface tension defined by the relation
	\begin{equation*}
		\bar{\sigma}(e)^{2} = \sigma_{W}^{2} \bar{\theta} e \cdot \bar{a} e.
	\end{equation*}
Above $\bar{a}$ is the homogenized matrix associated with $a$, the definition of which is recalled in Section \ref{S: homogenization preliminaries}; $\bar{\theta} = \mathbb{E}[\theta(0)]$ is the mean of $\theta$; and the constant $\sigma_{W}$ is the surface tension of the constant-coefficient Allen-Cahn functional in case $a \equiv \text{Id}$ and $\theta \equiv 1$.  It is well-known that $\sigma_{W}$ is characterized by the variational formula
	\begin{equation} \label{E: homogeneous surface tension}
		\sigma_{W} = \min \left\{ \int_{-\infty}^{\infty} \left( \frac{1}{2} u'(s)^{2} + W(u(s)) \right) \, ds \, \mid \, \lim_{s \to \pm \infty} u(s) = \pm 1 \right\}.
	\end{equation}
For a proof of this variational principle, we refer to the notes of Alberti \cite{AlbertiLectureNotes}.

In order to prove convergence, we begin by observing that the homogenized norm $\bar{\sigma}$ of \eqref{eqn:gamma-limit} is always an upper bound. Note that in contrast to Theorem \ref{T: planar homogenization theorem lower bound} below, the next result does not assume any smoothness of $W$.

	\begin{theorem} \label{T: planar homogenization theorem upper bound}  Assume that the medium $(a,\theta)$ satisfies the assumptions of Section \ref{S: assumptions} and the potential $W : \mathbb{R} \to [0,\infty)$ is a nonnegative continuous function such that $W(u) = 0$ if and only if $u \in \{-1,1\}$.  If $\epsilon \mapsto \delta(\epsilon)$ is any scaling such that $\epsilon^{-1} \delta(\epsilon) \to 0$ as $\epsilon \to 0$, then, for any $x_{0} \in \mathbb{R}^{d}$ and any $\varrho > 0$, with probability one,
		\begin{align*}
			\limsup_{\epsilon \to 0} \min \left\{ \frac{1}{  \varrho^{d-1} } \mathscr{F}_{\epsilon,\delta(\epsilon)}(u; Q_{\varrho}(x_{0})) \, \mid \, u - q(\epsilon^{-1} (\cdot - x_{0}) \cdot e_{1}) \in H^{1}_{0}(Q_{\varrho}(x_{0}) \right\} \leq \bar{\sigma}(e_{1}).
		\end{align*}\end{theorem}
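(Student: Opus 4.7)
The plan is a Modica--Mortola-type recovery sequence built from the optimal one-dimensional Allen--Cahn profile twisted by the gradient corrector $\phi_{e_1}$ and glued to the boundary data in a thin lateral layer. After translating so that $x_0 = 0$, fix $\eta > 0$ and choose $v_\ast \in C^1(\mathbb{R})$ with $v_\ast \equiv \pm 1$ outside some compact interval $[-M_\eta, M_\eta]$ satisfying
\[
\int_{\mathbb{R}}\left(\tfrac12 v_\ast'(s)^2 + W(v_\ast(s))\right)ds \leq \sigma_W + \eta
\]
(possible by \eqref{E: homogeneous surface tension} and truncation). Set $\alpha := \sqrt{\bar\theta/(e_1 \cdot \bar{a} e_1)}$. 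The interior ansatz is
\[
\tilde u_\epsilon(x) := v_\ast\!\left(\alpha \epsilon^{-1}\bigl[x \cdot e_1 + \delta\, \phi_{e_1}(\delta^{-1}x)\bigr]\right),
\]
with $\phi_{e_1}$ possibly replaced by a standard mollified/truncated version to guarantee local boundedness; any such regularization is undone in a subsequent limit.

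A direct computation gives $\nabla\tilde u_\epsilon = v_\ast'(\cdots)\, \alpha\epsilon^{-1}(e_1 + \nabla\phi_{e_1}(\delta^{-1}x))$, so
\[
a(\delta^{-1}x)\nabla\tilde u_\epsilon \cdot \nabla\tilde u_\epsilon = v_\ast'(\cdots)^2\, \alpha^2\epsilon^{-2}\, f(\delta^{-1}x), \qquad f(y) := (e_1 + \nabla\phi_{e_1}(y)) \cdot a(y)(e_1 + \nabla\phi_{e_1}(y)),
\]
and the classical corrector identity from \eqref{E: PDE for the correctors} yields $\mathbb{E}[f(0)] = e_1 \cdot \bar{a} e_1$; also $\theta$ is stationary with mean $\bar\theta$. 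To decouple the $\epsilon$-scale variation of $v_\ast$ from the $\delta$-scale averaging of $f$ and $\theta$, partition $Q_\varrho$ by hyperplanes orthogonal to $e_1$ into slabs of thickness $r(\epsilon)\epsilon$ with $r(\epsilon) \to 0$ slowly enough that $r(\epsilon)\epsilon/\delta(\epsilon) \to \infty$ (possible since $\delta/\epsilon \to 0$). On each slab, $v_\ast$ and $v_\ast'$ are nearly constant; Birkhoff's ergodic theorem--applied almost surely to $f$ and $\theta$--replaces $f(\delta^{-1}\cdot)$ and $\theta(\delta^{-1}\cdot)$ by their means up to vanishing error. After the change of variables $s = \alpha\epsilon^{-1}x \cdot e_1$, the surviving one-dimensional integral produces
\[
\limsup_{\epsilon \to 0}\mathscr{F}_{\epsilon,\delta(\epsilon)}(\tilde u_\epsilon; Q_\varrho) \leq \varrho^{d-1}\sqrt{\bar\theta \cdot e_1 \cdot \bar{a} e_1}\, (\sigma_W + \eta).
\]

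For admissibility, modify $\tilde u_\epsilon$ in a layer of width $h_\epsilon = \sqrt\epsilon$ adjacent to $\partial Q_\varrho$ via a smooth convex interpolation with $q(\epsilon^{-1}x \cdot e_1)$. The compactly-supported-modulo-$\pm 1$ choice of $v_\ast$ (combined with local boundedness of the regularized corrector) ensures $\tilde u_\epsilon \equiv \pm 1$ for $\epsilon$ small outside an $O(\epsilon)$ slab around $\{x \cdot e_1 = 0\}$, while $q(\epsilon^{-1} \cdot) \to \pm 1$ at $\pm\infty$ by \eqref{E: finite width 2} with $|q'| \in L^\infty$ by \eqref{E: finite width}. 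A routine estimate using the non-degeneracy \eqref{eqn:w-non-degeneracy} of $W$ then bounds the interpolation energy by $o(\varrho^{d-1})$; the only non-trivial contribution comes from the intersection of the transition slab with the lateral boundary layer, of volume $O(\epsilon h_\epsilon \varrho^{d-2})$, which contributes $O(h_\epsilon \varrho^{d-2}) = o(\varrho^{d-1})$ for $d \geq 2$ (vacuously for $d = 1$). Letting $\eta \to 0$ afterwards completes the argument.

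The main obstacle is the two-scale separation: the gradient corrector $\phi_{e_1}$ is only qualitatively $L^p_{\loc}$-sublinear, so the ansatz $\tilde u_\epsilon$ does not in general converge pointwise, and naive error estimates would require a quantitative rate on $\phi_{e_1}$ that is unavailable under the bare hypothesis $\delta/\epsilon \to 0$. This is handled by regularizing $\phi_{e_1}$ at an auxiliary scale and sending the regularization parameter to its limit after $\epsilon \to 0$, with cross-term errors controlled by a combination of Birkhoff's ergodic theorem and the flux corrector identity derived from \eqref{E: PDE for the correctors}. Crucially, only qualitative a.s.\ sublinearity of $\phi_{e_1}$ is used, which is why this upper bound holds under the minimal assumption $\delta/\epsilon \to 0$, rather than the sharper hypotheses \eqref{E: sublinear part}--\eqref{E: averaging part} of Theorem \ref{T: homogenization theorem}.
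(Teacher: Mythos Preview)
Your approach---an explicit recovery sequence twisted by the corrector---is natural, but it has several genuine gaps that the paper's argument avoids.

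First, the reduction ``after translating so that $x_0 = 0$'' is not valid for the almost-sure statement. Translating $x \mapsto x - x_0$ replaces the medium by $\tau_{\delta(\epsilon)^{-1}x_0}\omega$, and since $\delta(\epsilon)^{-1}x_0 \to \infty$, this is a moving realization; stationarity only buys you convergence in probability. The paper treats $x_0 \neq 0$ separately (via Egorov plus the ergodic theorem to locate a nearby ``good'' translate), and this step is not optional.

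Second, the slab averaging does not follow from Birkhoff as stated. With $r(\epsilon)\to 0$ you have an unbounded number of slabs, and in microscopic coordinates each slab is a box whose center moves to infinity at a rate comparable to (or faster than) its own extent; the standard multiparameter ergodic theorem applies to boxes anchored at the origin, not to a growing family of moving boxes, and you give no maximal inequality to control the supremum. Relatedly, the corrector regularization is under-specified: spatial mollification does not make $\phi_{e_1}$ bounded, truncation destroys stationarity of $\nabla\phi^{\rho}$ (so the field $f^{\rho}$ you wish to average is no longer stationary), and a stationary $L^2$-bounded substitute such as the massive corrector is not $L^\infty$-bounded, so the claim ``$\tilde u_\epsilon \equiv \pm 1$ outside an $O(\epsilon)$ slab'' is unjustified. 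These are precisely the points where your proposal hand-waves (``cross-term errors controlled by \ldots\ the flux corrector identity'') without saying how.

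The paper's route is structurally different and sidesteps all of this. After the mesoscopic rescaling $R = \epsilon^{-1}$, $\gamma = \delta/\epsilon$, it exploits transversal subadditivity to decompose $\mathcal{C}(R,h)$ into subcylinders $k + \mathcal{C}(L,h)$ of \emph{fixed} size $L$; in each such box the minimum $M_\gamma = m(\mathscr{F}_\gamma, \mathcal{C}(L,h), q)$ converges almost surely to $m(\overline{\mathscr{F}},\mathcal{C}(L,h),q)$ by classical $\Gamma$-convergence of the Dirichlet form plus uniform convergence of the potential---no explicit competitor, no corrector regularity. The remaining spatial average over $k \in \{0\}\times L\mathbb{Z}^{d-1}$ is handled by a $(d-1)$-dimensional ergodic theorem for the fixed random variable $M_\gamma$ (their Proposition~\ref{P: ergodic theorem continuous egorov}, proved via Egorov), and sending $L \to \infty$ recovers $\bar\sigma(e_1)$. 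This decoupling---homogenize first in a fixed box, then average over translates---is what makes the argument go through under the bare hypothesis $\delta/\epsilon \to 0$.
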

		
Since this is a basic observation, applicable in both the homogenization and rare events regimes, the proof is relegated to Appendix \ref{A: upper bound}.
		
Next, we prove that $\bar{\sigma}$ is also a lower bound provided the two assumptions \eqref{E: sublinear part} and \eqref{E: averaging part} both hold.

	\begin{theorem} \label{T: planar homogenization theorem lower bound}
		Assume that the medium $(a,\theta)$ and the potential $W$ satisfy the assumptions of Section \ref{S: assumptions}. If the scale $\epsilon \mapsto \delta(\epsilon)$ is chosen so that \eqref{E: sublinear part} and \eqref{E: averaging part} both hold, then, for any $x_{0} \in \mathbb{R}^{d}$ and any $ \varrho > 0 $,
		\begin{align*}
			\bar{\sigma}(e_{1}) \leq \liminf_{\epsilon \to 0} \min \left\{ \frac{1}{  \varrho^{d-1} } \mathscr{F}_{\epsilon,\delta(\epsilon)}(u; Q_{\varrho}(x_{0})) \, \mid \, u - q(\epsilon^{-1} (\cdot - x_{0}) \cdot e_{1}) \in H^{1}_{0}(Q_{\varrho}(x_{0})) \right\}
		\end{align*}
	in probability. \end{theorem}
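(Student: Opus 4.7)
The plan is to reduce the cell minimization for $\mathscr{F}_{\epsilon, \delta(\epsilon)}$ on $Q_\varrho(x_0)$ to the corresponding minimization for the constant-coefficient homogenized functional
\begin{equation*}
\bar{\mathscr{F}}_{\epsilon}(v; U) \coloneqq \int_U \left( \frac{\epsilon}{2} \bar a \nabla v \cdot \nabla v + \frac{1}{\epsilon} \bar\theta \, W(v) \right) dx,
\end{equation*}
and then invoke the classical Modica-Mortola lower bound, whose surface tension in direction $e_1$ is precisely $\sigma_W \sqrt{\bar\theta \, e_1 \cdot \bar a \, e_1} = \bar\sigma(e_1)$.

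Let $u_\epsilon$ be a near-minimizer of the cell problem; the upper bound of Theorem \ref{T: planar homogenization theorem upper bound} together with truncation via \eqref{eqn:w-growth-at-inf} let us assume $|u_\epsilon| \leq 1$ and $\mathscr{F}_{\epsilon, \delta}(u_\epsilon; Q_\varrho(x_0)) \lesssim \varrho^{d-1}$. Tile the interior of $Q_\varrho(x_0)$ by subcubes $\{Q^k\}$ of side of order $\epsilon$. Global (near-)minimality passes to each $Q^k$, so $u_\epsilon|_{Q^k}$ is a (near-)minimizer of $\mathscr{F}_{\epsilon, \delta}(\cdot; Q^k)$ with its own boundary trace. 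The key per-cube estimate is
\begin{equation*}
m(\mathscr{F}_{\epsilon, \delta}, Q^k, u_\epsilon) \geq \bar{\mathscr{F}}_\epsilon(\bar v^k; Q^k) - E^k_\epsilon,
\end{equation*}
where $\bar v^k$ denotes the minimizer of the homogenized problem on $Q^k$ with trace $u_\epsilon$ on $\partial Q^k$. The gradient contribution to the error $E^k_\epsilon$ is controlled, via integration by parts against the augmented correctors $(\phi, \sigma)$ at microscale $\delta$ in the style of Gloria, Neukamm, and Otto, applied to the regular (Lipschitz, by De Giorgi-Nash-Moser) minimizer $\bar v^k$, by a quantity of order $\mathrm{Sub}_{x_k/\delta}(\epsilon/\delta)$ times the $\bar v^k$-energy, with $x_k$ the center of $Q^k$. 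The potential contribution is controlled via $H^{-1}$-duality, $\int (\theta(\delta^{-1} \cdot) - \bar\theta) W(\bar v^k) \lesssim \|\theta - \bar\theta\|_{H^{-1}(Q^k)} \|W(\bar v^k)\|_{H^1(Q^k)}$, with the first factor controlled by $\mathrm{Osc}_{x_k/\delta}(\epsilon/\delta)$ and the second by the energy bound. Since the $\bar v^k$ all agree with $u_\epsilon$ on each internal $\partial Q^k$, they glue into a single $\bar u_\epsilon \in H^1(Q_\varrho(x_0))$ that still carries the trace $q(\epsilon^{-1}(\cdot - x_0) \cdot e_1)$ on $\partial Q_\varrho(x_0)$.

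Summing the per-cube inequalities gives $\mathscr{F}_{\epsilon, \delta}(u_\epsilon; Q_\varrho(x_0)) \geq \bar{\mathscr{F}}_\epsilon(\bar u_\epsilon; Q_\varrho(x_0)) - \sum_k E^k_\epsilon$, and the total error is negligible in probability: there are $O(\epsilon^{-d})$ cubes, and by stationarity the probability that $\mathrm{Sub}_{x_k/\delta}(\epsilon/\delta)$ or $\mathrm{Osc}_{x_k/\delta}(\epsilon/\delta)$ exceeds any fixed $\nu > 0$ equals the probability at the origin, which is $o(\epsilon^d)$ by \eqref{E: sublinear part} and \eqref{E: averaging part}; a union bound then shows $\sum_k E^k_\epsilon = o(\varrho^{d-1})$ on an event of probability tending to one. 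Since $\bar u_\epsilon$ has bounded energy and trace $q(\epsilon^{-1}(\cdot - x_0) \cdot e_1) \to \mathrm{sign}((\cdot - x_0) \cdot e_1)$, any subsequential $L^1$-limit $\bar u_0$ takes values $\pm 1$ almost everywhere with the sign trace, so its perimeter inside $Q_\varrho(x_0)$ is at least $\varrho^{d-1}$, and the classical Modica-Mortola lower bound delivers $\liminf_\epsilon \bar{\mathscr{F}}_\epsilon(\bar u_\epsilon; Q_\varrho(x_0)) \geq \bar\sigma(e_1) \varrho^{d-1}$.

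The main obstacle is the per-cube homogenization estimate. The competitor $u_\epsilon$ itself oscillates at microscale $\delta$ and cannot be substituted into a two-scale expansion in a strong norm, so one has to compare the two variational problems $m(\mathscr{F}_{\epsilon, \delta}, Q^k, \cdot)$ and $m(\bar{\mathscr{F}}_\epsilon, Q^k, \cdot)$ by testing each with a corrector-adjusted competitor built from the other's minimizer; this works precisely because the homogenized minimizer $\bar v^k$ enjoys good interior regularity, making the corrector-based error small. A secondary technical point is the boundary layer along $\partial Q_\varrho(x_0)$, where the diffuse trace $q(\epsilon^{-1} \cdot)$ produces steep gradients; this is absorbed by working in a slightly shrunken effective cube and estimating the annular contribution separately.
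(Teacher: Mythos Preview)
Your proposal is correct and follows essentially the same route as the paper: mesoscopic cube decomposition, per-cube comparison of $\mathscr{F}_{\epsilon,\delta}$ with the constant-coefficient minimum $\bar{\mathscr{F}}_\epsilon$ via a two-scale expansion applied to the \emph{regular} homogenized minimizer $\bar v^k$, relative error controlled by ${\rm Sub}$ and ${\rm Osc}$, a union bound over the $O(\epsilon^{-d})$ cubes using \eqref{E: sublinear part}--\eqref{E: averaging part}, gluing, and the classical Modica--Mortola lower bound for $\bar{\mathscr{F}}_\epsilon$.

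Two technical points the paper handles that your sketch glosses over: first, the per-cube error is not merely bounded by ${\rm Sub}+{\rm Osc}$ times the energy, but by a modulus of continuity $\omega({\rm Sub}+{\rm Osc})$ times the energy \emph{plus} a $(\int|\nabla u|^p)^{2/p}$ term, which is dealt with via Meyer's estimate (Lemma~\ref{lemma:meyers-estimate}); the relative (multiplicative) structure is what makes the summed error scale like $\varrho^{d-1}$ rather than $\varrho^d$. Second, De Giorgi--Nash--Moser gives only H\"older continuity; since $\bar v^k$ solves a constant-coefficient semilinear equation, the paper uses interior Schauder estimates (Lemma~\ref{lemma:homogeneous-interior-schauder-estimate}) to get the $C^2$ control needed in the two-scale expansion, and a regularized potential $W_{\rm reg}$ together with small-scale convexity (Proposition~\ref{prop:convexity}) to pass from $H^1$-closeness to energy-closeness.
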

	
The proof of Theorem \ref{T: planar homogenization theorem lower bound} is the subject of the next two sections, Sections \ref{S: homogenization 1} and \ref{S: homogenization 2}.  As we prove in the companion paper \cite{part2}, there are counterexamples showing that the theorem fails to hold without \eqref{E: sublinear part} or \eqref{E: averaging part}.
	
For concreteness, we have fixed the direction of the transition in the results above to equal the first coordinate vector $e_{1}$.  In fact, these results remain true if we replace $e_{1}$ by any unit vector, as, indeed, our assumptions are rotationally invariant.

	\begin{prop} \label{P: rotated fields}  If the medium $(a,\theta)$ satisfies the assumptions of Section \ref{S: assumptions}, then, for any orthogonal transformation $\mathcal{O} : \mathbb{R}^{d} \to \mathbb{R}^{d}$, the rotated coefficients $(a^{\mathcal{O}},\theta^{\mathcal{O}})$ given by 
		\begin{equation*}
			a^{\mathcal{O}}(x) = a(\mathcal{O}(x)), \quad \theta^{\mathcal{O}}(x) = \theta(\mathcal{O}(x)),
		\end{equation*}
	satisfy the same assumptions with the group action $(\tau_{x})_{x \in \mathbb{R}^{d}}$ replaced by the action $(\tau^{\mathcal{O}}_{x})_{x \in \mathbb{R}^{d}}$ given by $\tau^{\mathcal{O}}_{x} = \tau_{\mathcal{O}(x)}$.  Furthermore, we have
		\begin{itemize}
			\item[(i)] $(a,\theta)$ satisfies the assumptions \eqref{E: sublinear part} and \eqref{E: averaging part} if and only if $(a^{\mathcal{O}},\theta^{\mathcal{O}})$ does.
			\item[(ii)] $\bar{a}^{\mathcal{O}} = \mathcal{O}^{-1} \bar{a} \mathcal{O}$ and $\bar{\theta}^{\mathcal{O}} = \bar{\theta}$.
		\end{itemize}
	\end{prop}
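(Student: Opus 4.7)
The plan is to verify each claim of the proposition in turn: the structural properties of $(\tau^{\mathcal{O}}_x)_{x \in \mathbb{R}^d}$, the preservation of the ellipticity and positivity bounds, the equivalence of the assumptions \eqref{E: sublinear part}--\eqref{E: averaging part} in part (i), and the transformation laws in part (ii). The bulk of the argument consists of changes of variable in (a) the definitions of $\mathrm{Sub}$ and $\mathrm{Osc}$ and (b) the corrector PDE for the matrix field.

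I first check that $(\tau^{\mathcal{O}}_x)_{x \in \mathbb{R}^d}$ inherits all the required structure. The group law $\tau^{\mathcal{O}}_{x+y} = \tau_{\mathcal{O}(x+y)} = \tau^{\mathcal{O}}_x \circ \tau^{\mathcal{O}}_y$ is immediate from the linearity of $\mathcal{O}$; joint measurability of $(\omega, x) \mapsto \tau^{\mathcal{O}}_x \omega$ follows from that of $\tau$ composed with the continuous map $x \mapsto \mathcal{O}x$; stationarity is automatic since $\mathbb{P}((\tau^{\mathcal{O}}_x)^{-1} E) = \mathbb{P}(\tau_{\mathcal{O}x}^{-1} E) = \mathbb{P}(E)$; and ergodicity holds because any $\tau^{\mathcal{O}}$-invariant event is invariant under $\tau_y$ for every $y = \mathcal{O}(\mathcal{O}^{-1} y)$, i.e.\ for every $y \in \mathbb{R}^d$, so the ergodicity of $\tau$ forces $\mathbb{P}(E) \in \{0, 1\}$. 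The ellipticity bounds on $A^{\mathcal{O}}$ and positivity bounds on $\Theta^{\mathcal{O}}$ are inherited without change because, as random variables, $A^{\mathcal{O}} = A$ and $\Theta^{\mathcal{O}} = \Theta$.

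For part (i), the key observation is that the corrector problem for $a^{\mathcal{O}}$ relative to $\tau^{\mathcal{O}}$ is obtained from the corrector problem for $a$ via the pathwise change of variables $y = \mathcal{O}x$. Under this substitution, the $L^2$-oscillations of $(\phi^{\mathcal{O}}, \sigma^{\mathcal{O}})$ over an axis-aligned cube $Q_R(x_0)$ correspond, up to constant factors, to the $L^2$-oscillations of a suitably transformed version of $(\phi, \sigma)$ over the rotated cube $\mathcal{O}(Q_R(x_0))$. Any rotated cube is sandwiched between axis-aligned cubes of comparable side length (differing by at most $\sqrt{d}$), so $\mathrm{Sub}^{\mathcal{O}}_0(R)$ is comparable, with dimensional constants, to $\mathrm{Sub}_0(cR)$ for some $c = c(d) > 0$; the same geometric comparison together with the scale-invariance of the $H^{-1}$-norm in \eqref{E: H minus one norm} handles $\mathrm{Osc}$. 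Combined with the stationarity of $\mathbb{P}$ (which makes probabilities of oscillation events translation-invariant in the center point), this yields the equivalence of the assumptions \eqref{E: sublinear part} and \eqref{E: averaging part} for the two media.

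For part (ii), $\bar{\theta}^{\mathcal{O}} = \mathbb{E}[\theta^{\mathcal{O}}(0)] = \mathbb{E}[\theta(0)] = \bar{\theta}$ is immediate. For the matrix, I perform the change of variables $y = \mathcal{O}x$ in the corrector equation $-\nabla_x \cdot (a^{\mathcal{O}}(x)(\xi + \nabla_x \phi^{\mathcal{O}}_\xi)) = 0$, using the chain-rule identity $\nabla_x \cdot (g \circ \mathcal{O})(x) = (\nabla_y \cdot (\mathcal{O} g))(\mathcal{O} x)$. Combining the resulting PDE in $y$-coordinates with the variational representation $\bar{a} \eta \cdot \eta = \mathbb{E}[(\eta + \nabla \phi_\eta(0)) \cdot a(0)(\eta + \nabla \phi_\eta(0))]$ lets me express $\bar{a}^{\mathcal{O}} \xi \cdot \xi$ in terms of $\bar{a}$ applied to $\mathcal{O}\xi$, giving $(\mathcal{O}^{-1} \bar{a} \mathcal{O}) \xi \cdot \xi$; the claimed identity then follows by symmetry of $\bar{a}^{\mathcal{O}}$. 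The main obstacle will be keeping the bookkeeping of $\mathcal{O}$ versus $\mathcal{O}^T$ consistent throughout this change of variables, since it is very easy to misapply the chain rule when transforming both a gradient field and a divergence simultaneously; by contrast, the geometric comparison underlying part (i) is routine.
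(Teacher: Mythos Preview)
Your approach matches the paper's: the structural properties are checked by inspection, part~(i) uses the cube sandwich $Q_{C^{-1}r}\subseteq Q^{\mathcal{O}}_r\subseteq Q_{Cr}$ together with dilation (the paper packages exactly this as Propositions~\ref{P: dilational invariance} and~\ref{P: rotational invariance}, proved immediately before and then cited), and part~(ii) goes through the variational formula for $\bar a$.

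One warning about your change-of-variables sketch for~(i): under $y=\mathcal{O}x$, the corrector equation for $a^{\mathcal{O}}(x)=a(\mathcal{O}x)$ does \emph{not} become the corrector equation for $a$; it becomes the corrector equation for the conjugated field $\mathcal{O}\,a\,\mathcal{O}^{T}$ in the direction $\mathcal{O}\xi$.  So your phrase ``a suitably transformed version of $(\phi,\sigma)$'' hides a nontrivial identification, and the claim that ${\rm Sub}$ for $a^{\mathcal{O}}$ is directly comparable to ${\rm Sub}$ for $a$ requires more than a pathwise substitution.  The paper's own proof is equally terse here---it simply cites Proposition~\ref{P: rotational invariance}, which compares oscillations of the \emph{same} correctors over rotated versus axis-aligned cubes and does not explicitly construct the correctors of $a^{\mathcal{O}}$---so you are not missing anything the paper provides, but be aware that filling in this step carefully will involve the $\mathcal{O}$-conjugation you anticipated worrying about in part~(ii).
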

	
In view of the proposition, we have proved that, in any given direction $e$, the energy of the planar cell problem converges to $\bar{\sigma}(e)$ in probability as the size of the cell goes to infinity.  The next result asserts that this implies $\Gamma$-convergence in probability.  Toward that end, it is again convenient to work with orthogonal transformations.  Given an orthogonal transformation $\mathcal{O} \in O(d)$, define the rotated cube $Q^{\mathcal{O}}_{r}$ by 
	\begin{equation*}
		Q^{\mathcal{O}}_{r} = \mathcal{O}(Q_{r}).
	\end{equation*}
Similarly, let $Q^{\mathcal{O}}_{r}(x) = x + Q^{\mathcal{O}}_{r}$ denote the cube centered at $x$ with respect to these rotated axes.  Since the map $\mathcal{O} \mapsto \mathcal{O}(e_{1})$ maps surjectively onto $S^{d-1}$, in this way we cover all possible directions.  

\begin{theorem}\label{T: planar homogenization theorem reduction} Assume that the medium $(a,\theta)$ satisfies the assumptions of Section \ref{S: assumptions} and, in addition, that $W$ is a nonnegative continuous function such that $\{u \in \mathbb{R} \, \mid \, W(u) = 0\} = \{-1,1\}$ and satisfying the (super)quadratic growth assumption \eqref{eqn:w-growth-at-inf}. 

Let $\sigma : \mathbb{R}^{d} \to [0,\infty)$ be a positively one-homogeneous convex function. Suppose that the scale $\epsilon \mapsto \delta(\epsilon)$ is chosen in such a way that, for any $(x,\varrho,\mathcal{O}) \in \mathbb{R}^{d} \times (0,\infty) \times O(d)$, we have that 
		\begin{align} \label{E: conv in prob part of the proof}
			&\lim_{\epsilon \to 0} \min \left\{ \frac{\mathscr{F}_{\epsilon,\delta(\epsilon)}(u; Q^{\mathcal{O}}_{\varrho}(x))}{ \varrho^{d-1}} \, \mid \, u - q(\epsilon^{-1}(\cdot - x) \cdot \mathcal{O}(e_{1})) \in H^{1}_{0}(Q^{\mathcal{O}}_{\varrho}(x)) \right\} \\
			&\qquad \qquad \qquad \qquad \qquad \qquad \qquad \qquad \qquad \qquad \qquad \qquad \qquad  = \sigma(\mathcal{O}(e_{1})). \nonumber
		\end{align}
in probability.  Then, for any bounded Lipschitz open set $U \subseteq \mathbb{R}^{d}$ and any $\nu > 0$, 
		\begin{equation*}
			\lim_{\epsilon \to 0} \mathbb{P} \left\{d_{\Gamma}(\mathscr{F}_{\epsilon,\delta(\epsilon)}(\cdot \, ;U), \mathscr{E}_{\sigma}(\cdot \, ;U)) > \nu \right\} = 0,
		\end{equation*}
	where $\mathscr{E}_{\sigma}$ is the anisotropic surface energy determined by $\sigma$ defined in \eqref{E: limit functional}. \end{theorem}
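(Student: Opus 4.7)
The plan is to follow the strategy of \cite{ansini_braides_chiado-piat,morfe,marziani}: reduce $\Gamma$-convergence in probability to almost-sure $\Gamma$-convergence along subsequences, and then identify the limit via an abstract integral representation together with the hypothesis \eqref{E: conv in prob part of the proof}. Concretely, using the metric $d_\Gamma$ recalled in Appendix \ref{sec:gamma-convergence-in-prob}, convergence in probability is equivalent to the property that along any sequence $\epsilon_j \to 0$ one can extract a further subsequence $\epsilon_{j_k}$ and an event $\Omega_0$ of full probability on which $d_\Gamma(\F_{\epsilon_{j_k},\delta(\epsilon_{j_k})}(\cdot;U),\mathscr{E}_\sigma(\cdot;U)) \to 0$. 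Thus the task is deterministic after passing to a good subsequence.

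To produce such a subsequence, I would fix a countable dense set $\mathcal{D} \subseteq \R^d \times (0,\infty) \times O(d)$ and apply hypothesis \eqref{E: conv in prob part of the proof} together with a diagonal argument to obtain a subsequence along which the cell-problem energies converge, almost surely and simultaneously for every $(x,\varrho,\mathcal{O}) \in \mathcal{D}$, to $\sigma(\mathcal{O}(e_1))$. On this almost-sure event I would then invoke standard abstract $\Gamma$-compactness (see, e.g., \cite{braides_free-discontinuity_book}) to extract yet another subsubsequence with some $\Gamma$-limit $\mathscr{E}^\ast(\cdot;U)$. The growth hypothesis \eqref{eqn:w-growth-at-inf} plus the usual coercivity argument confines $\mathscr{E}^\ast$ to characteristic functions $u \in BV(U;\{-1,1\})$, so that a Bouchitt\'e--Braides--Buttazzo-type integral representation theorem gives
\begin{align*}
\mathscr{E}^\ast(u;U) = \int_{\partial^\ast\{u=1\} \cap U} \sigma^\ast(x,\nu_{\{u=1\}}(x)) \, d\mathcal{H}^{d-1}(x)
\end{align*}
for some Borel density $\sigma^\ast$ that is positively one-homogeneous and convex in the second argument.

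The last step is to identify $\sigma^\ast(x,\nu) = \sigma(\nu)$. Since \eqref{E: conv in prob part of the proof} is assumed at every base point $x$, a standard blow-up argument at a point of $\partial^\ast\{u=1\}$, carried out on the rotated cube $Q_\varrho^\mathcal{O}(x)$ with boundary trace $q(\epsilon^{-1}(\cdot - x)\cdot \mathcal{O}(e_1))$, identifies $\sigma^\ast(x,\mathcal{O}(e_1))$ with the almost-sure limit of the cell-problem energies, namely $\sigma(\mathcal{O}(e_1))$, for every $(x,\varrho,\mathcal{O}) \in \mathcal{D}$. Density of $\mathcal{D}$ together with the convexity of $\nu \mapsto \sigma^\ast(x,\nu)$ (and positive one-homogeneity) propagates this equality to all $\nu \in S^{d-1}$ and all $x$, so that $\mathscr{E}^\ast = \mathscr{E}_\sigma$. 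Because every $\Gamma$-convergent subsubsequence has the same limit, the entire subsequence $\epsilon_{j_k}$ $\Gamma$-converges on $\Omega_0$ to $\mathscr{E}_\sigma$, which yields the claimed convergence in probability with respect to $d_\Gamma$.

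The main obstacle will be reconciling the Dirichlet-type cell problem in \eqref{E: conv in prob part of the proof}, with the explicit smooth boundary trace $q(\epsilon^{-1}\cdot)$, with the intrinsic surface density $\sigma^\ast(x,\nu)$ supplied by the integral representation. The standard resolution is a fundamental-estimate / De Giorgi-type cut-off construction, which shows that a minimizer for the free interior problem can be joined to the prescribed boundary trace at negligible energy cost. Carrying this out carefully in the present setting — where the coefficients oscillate at the microscale $\delta$ and the cut-off must respect both the diffuse-interface scale $\epsilon$ and the macroscopic geometry of $U$ — is the technical core of the reduction and is addressed in detail in Appendix \ref{sec:gamma-convergence-in-prob}.
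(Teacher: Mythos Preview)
Your overall architecture matches the paper's proof closely: reduce to almost-sure $\Gamma$-convergence along subsequences (Proposition~\ref{P: almost sure is enough}), upgrade the cell-problem convergence in probability to almost-sure convergence on a countable dense parameter set via a diagonal argument (Step~1 of Proposition~\ref{P: subsequential cell problem}), and then use the abstract compactness/integral-representation result of \cite{ansini_braides_chiado-piat} (Theorem~\ref{T: compactness result}) to identify the limit.

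The one place where your order of operations diverges from the paper's is the extension from the countable dense set $\mathcal{D}$ to all $(x,\nu)$. You propose to identify $\sigma^\ast(x,\nu)=\sigma(\nu)$ on $\mathcal{D}$ and then ``propagate by density and convexity.'' This works in the $\nu$-variable, since $\nu\mapsto\sigma^\ast(x,\nu)$ is convex and positively one-homogeneous, hence continuous. But it does \emph{not} work in the $x$-variable: the integral-representation theorem only gives you a Borel density in $x$, so knowing $\sigma^\ast(x,\nu)=\sigma(\nu)$ on a countable set of base points says nothing about the remaining $x$, and equality of the functionals $\mathscr{E}_{\sigma^\ast}=\mathscr{E}_\sigma$ does not follow.

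The paper avoids this by reversing the order: it first extends the almost-sure cell-problem convergence from the countable dense set to \emph{all} $(x,R,\mathcal{O})$ (Step~2 of Proposition~\ref{P: subsequential cell problem}), using exactly the fundamental-estimate/cut-off construction you allude to at the end of your proposal (specifically \cite[Proposition~10]{morfe}, squeezing $Q^{\mathcal{O}}_R(x)$ between nearby cubes with parameters in $\mathcal{D}$). Only then does it identify the density $\varphi(x,e)=\sigma(e)$ pointwise at every $(x,e)$ via the derivation formula~\eqref{E: derivation formula} (Proposition~\ref{P: general convergence}). So the fundamental estimate is not merely the tool for reconciling the Dirichlet cell problem with the intrinsic density; it is also what lets you escape the dense set of base points.
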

	
Since this result is very similar to related results in \cite{morfe} and \cite{marziani}, its proof is deferred to Appendix \ref{sec:gamma-convergence-in-prob}.

Finally, combining these intermediate results, we arrive at the proof of our main theorem on the homogenization regime.

\begin{proof}[Proof of Theorem \ref{T: homogenization theorem}] In view of Proposition \ref{P: rotated fields}, the limit \eqref{E: conv in prob part of the proof} holds for any $(x,R,\mathcal{O}) \in \mathbb{R}^{d} \times (0,\infty) \times O(d)$ if and only if it holds for any $(x,\varrho) \in \mathbb{R}^{d} \times (0,\infty)$ with $\mathcal{O} = \text{Id}$ held fixed.  Taken together, Theorems \ref{T: planar homogenization theorem upper bound} and \ref{T: planar homogenization theorem lower bound} imply that this is indeed the case.  Therefore, Theorem \ref{T: planar homogenization theorem reduction} implies that, for any bounded Lipschitz open set $U \subseteq \mathbb{R}^{d}$,
	\begin{align*}
		\lim_{\epsilon \to 0} \mathbb{P} \left\{d_{\Gamma}(\mathscr{F}_{\epsilon,\delta(\epsilon)}(\cdot;U), \bar{\mathscr{E}}(\cdot;U)) > \nu \right\} = 0,
	\end{align*}
which is the desired conclusion.\end{proof}

\subsection{Proof of Corollary \ref{cor:upgrade-to-as-convergence}} As in the introduction, under slightly stronger assumptions, convergence in probability can be upgraded to almost-sure convergence.  Again, as is explained in the appendix, this reduces to proving almost-sure convergence of the energy in the planar cell problems.  The next corollary covers the corresponding improvement of Theorem \ref{T: planar homogenization theorem lower bound}:

	\begin{corollary} \label{C: planar homogenization corollary} Assume that the medium $(a,\theta)$ and the potential $W$ satisfy the assumptions of Section \ref{S: assumptions}. If $(\epsilon_{j})_{j \in \mathbb{N}}$ is a sequence of positive numbers converging to zero and $(\delta_{j})_{j \in \mathbb{N}}$ is such that $\epsilon_{j}^{-1} \delta_{j} \to 0$ as $j \to \infty$ and conditions \eqref{E: sublinear summable} and \eqref{E: averaging summable} both hold, then, for any $x \in \mathbb{R}^{d}$ and any $ \varrho > 0$,
		\begin{align*}
			\bar{\sigma}(e_{1}) \leq \liminf_{\epsilon \to 0} \min \left\{ \frac{1}{\varrho^{d-1}} \mathscr{F}_{\epsilon_{j},\delta_{j}}(u; Q_{\varrho}(x)) \, \mid \, u - q(\epsilon_{j}^{-1} (\cdot - x) \cdot e_{1}) \in H^{1}_{0}(Q_{\varrho}(x)) \right\}
		\end{align*}
	with probability one. \end{corollary}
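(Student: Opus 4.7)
The plan is to upgrade the in-probability lower bound from Theorem \ref{T: planar homogenization theorem lower bound} to an almost-sure statement by extracting a quantitative estimate from its proof and combining it with the Borel-Cantelli lemma. The key observation is that the summability conditions \eqref{E: sublinear summable} and \eqref{E: averaging summable} are precisely the almost-sure analogues of \eqref{E: sublinear part} and \eqref{E: averaging part}, so once the argument behind Theorem \ref{T: planar homogenization theorem lower bound} is written quantitatively, the upgrade is essentially forced.

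The first step is to revisit the proof of Theorem \ref{T: planar homogenization theorem lower bound} and track the dependence of the failure probability on the medium. I expect the argument to proceed by covering $Q_{\varrho}(x)$ by a grid of $\sim (\varrho/\epsilon)^d$ subcubes of scale comparable to $\epsilon$, and to show that, on the event where ${\rm Sub}_y(\epsilon/\delta) \le \mu$ and ${\rm Osc}_y(\epsilon/\delta) \le \mu$ hold simultaneously at each grid point $y$, the cell-problem energy is at least $\bar{\sigma}(e_1) - \nu$, where $\mu = \mu(\nu) > 0$ depends only on $\nu$ and the fixed parameters. By stationarity and a union bound over the grid, this translates into a quantitative bound of the form
\begin{align*}
& \P\left\{ \tfrac{1}{\varrho^{d-1}} m(\F_{\epsilon_j, \delta_j}, Q_\varrho(x), q(\epsilon_j^{-1}(\cdot-x)\cdot e_1)) < \bar{\sigma}(e_1) - \nu \right\} \\
& \qquad \lesssim_{\varrho,\nu} \tfrac{1}{\epsilon_j^d}\P\{{\rm Sub}_0(\epsilon_j/\delta_j) > \mu\} + \tfrac{1}{\epsilon_j^d}\P\{{\rm Osc}_0(\epsilon_j/\delta_j) > \mu\}.
\end{align*}

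Given this quantitative bound, the second step is immediate: the summability hypotheses \eqref{E: sublinear summable} and \eqref{E: averaging summable}, applied with $\nu$ replaced by $\mu(\nu)$, render the right-hand side summable in $j$, and the Borel-Cantelli lemma implies that on an event of full probability the violating event occurs only finitely often. Consequently, for each fixed $\nu > 0$, the cell-problem energy satisfies $\liminf_{j \to \infty}(\cdot) \ge \bar{\sigma}(e_1) - \nu$ almost surely. A standard diagonalization over a countable sequence $\nu_k \downarrow 0$ and intersection of the resulting full-measure events then yields $\liminf \ge \bar{\sigma}(e_1)$ on a single event of probability one.

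The main obstacle is purely a matter of bookkeeping: one must verify that the proof of Theorem \ref{T: planar homogenization theorem lower bound}, presented as an in-probability result, can be cast in the quantitative form displayed above. I do not expect this to require ideas beyond those appearing in Sections \ref{S: homogenization 1}--\ref{S: homogenization proof}; rather, one checks that every appeal to the randomness of the medium enters through the smallness of ${\rm Sub}_y$ and ${\rm Osc}_y$ on the $\epsilon$-grid, and that the number of "bad" grid points is dominated by the union bound producing the factor $\epsilon_j^{-d}$. Once that is unpacked, Borel-Cantelli closes the argument.
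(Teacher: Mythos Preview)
Your proposal is correct and follows essentially the same approach as the paper. The paper's proof in Section \ref{S: almost sure version} does exactly what you describe: it revisits the proof of Theorem \ref{T: planar homogenization theorem lower bound}, extracts the quantitative bound
\[
\mathbb{P}\bigl\{ m(\F_{\gamma_j}, Q_{(2K_j+1)r_c}(R_j x), q) < m(\overline{\F}, \ldots) - \nu((2K_j+1)r_c)^{d-1} \bigr\} \lesssim \epsilon_j^{-d}\,\mathbb{P}\bigl\{ {\rm Sub}_0(r_c\epsilon_j/\delta_j) + {\rm Osc}_0(r_c\epsilon_j/\delta_j) > \omega^{-1}(C^{-1}\nu) \bigr\}
\]
via the same union bound over the $\sim \epsilon_j^{-d}$ grid cubes, applies Borel--Cantelli using \eqref{E: sublinear summable} and \eqref{E: averaging summable}, and concludes by sending $\nu \to 0$.
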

	
For definiteness, the next proof explains how to deduce almost-sure $\Gamma$-convergence from the previous corollary:

	\begin{proof}[Proof of Corollary \ref{cor:upgrade-to-as-convergence}] Fix sequences $(\epsilon_{j})_{j \in \mathbb{N}}$ and $(\delta_{j})_{j \in \mathbb{N}}$ such that $\epsilon_{j} \to 0$ as $j \to \infty$, $\epsilon_{j}^{-1} \delta_{j} \to 0$ as $j \to \infty$, and for which the assumptions \eqref{E: sublinear summable} and \eqref{E: averaging summable} both hold.  Combining the results of Corollary \ref{C: planar homogenization corollary} and Theorem \ref{T: planar homogenization theorem upper bound}, we conclude that, for any $x \in \mathbb{R}^{d}$ and $\varrho > 0$,
		\begin{equation*}
			\lim_{j \to \infty} \min \left\{ \frac{1}{\varrho^{d-1}} \mathscr{F}_{\epsilon_{j},\delta_{j}}(u; Q_{\varrho}(x)) \, \mid \, u - q(\epsilon_{j}^{-1} (\cdot - x) \cdot e_{1}) \in H^{1}_{0}(Q_{\varrho}(x)) \right\} = \bar{\sigma}(e_{1})
		\end{equation*}
	with probability one.  Furthermore, by rotational invariance (Proposition \ref{P: rotated fields}), this remains true if the direction $e_{1}$ and the cube $Q_{\varrho}(x)$ are rotated.  Therefore, by Proposition \ref{P: general convergence} in the appendix (which is the almost-sure version of Theorem \ref{T: planar homogenization theorem reduction}), 
		\begin{align*}
			\lim_{j \to \infty} d_{\Gamma}(\mathscr{F}_{\epsilon_{j},\delta_{j}}(\cdot;U),\bar{\mathscr{E}}(\cdot;U)) = 0 \quad \text{with probability one.}
		\end{align*}  \end{proof}

\section{Relative Error Estimates} \label{S: homogenization 1}

This section and the next treat the proof of Theorem \ref{T: planar homogenization theorem lower bound}. As in the previous section, we begin by fixing a one-dimensional planar boundary condition $ q : \mathbb{R} \to \R $, which is only assumed to satisfy \eqref{E: finite width} and \eqref{E: finite width 2}. Writing $e_{1} = (1,0,,\dots,0) \in \mathbb{R}^{d}$ for the first standard basis vector, recall that our interest is in the analysis of the following limit:
	\begin{align} \label{E: planar limit we want}
		\lim_{\epsilon \to 0} \min \left\{ \frac{1}{\varrho^{d-1}}  \mathscr{F}_{\epsilon,\delta(\epsilon)}(u; Q_{\varrho}(x)) \, \mid \, u - q(\epsilon^{-1} (\cdot - x) \cdot e_{1}) \in H^{1}_{0}(Q_{\varrho}(x)) \right\}.
	\end{align}
The prefactor of $ \varrho^{d-1} $ is natural since heuristically, we expect that the minimizer transitions between the boundary in a neighborhood of a $ (d - 1) $-dimensional surface. Indeed, using the boundary datum as a competitor, we readily obtain an $\epsilon$-independent upper bound of order $\varrho^{d-1}$.

Our analysis begins by rewriting \eqref{E: planar limit we want} via a mesoscopic rescaling. In particular, blowing up space by a factor $ \epsilon^{-1} $, we define
\begin{equation}\label{eqn:length-scales-trafo}
R \coloneqq \frac{1}{\epsilon}
\quad { \rm and } \quad
\gamma(R) \coloneqq \frac{ \delta(\epsilon) }{ \epsilon }
\end{equation}
so that we are now interested in the limit as $R \to \infty$ and the quantity in \eqref{E: planar limit we want} becomes 
	\begin{equation} \label{E: problem of interest to us}
		\min \left\{ \frac{1}{ (\varrho R)^{d-1}} \mathscr{F}_{\gamma(R)}(u; Q_{\varrho R}(Rx)) \, \mid \, u - q((\cdot - Rx) \cdot e_{1}) \in H_{0}^{1}(Q_{\varrho R}(Rx)) \right\}.
	\end{equation}
Here $\mathscr{F}_{\gamma}$ is the functional
	\begin{equation}\label{eqn:rescaled-energy}
		\mathscr{F}_{\gamma}(u; U) = \frac{1}{2} \int_{U} a(\gamma^{-1}x) \nabla u \cdot \nabla u \, dx + \int_{U} \theta(\gamma^{-1}x) W(u) \,dx = \mathscr{F}_{1,\gamma}(u;U)
	\end{equation}
for any open set $ U \subset \R^d $.

The analysis of \eqref{E: problem of interest to us} proceeds in two steps. We begin by defining a homogenized functional $\overline{\mathscr{F}}$ as follows:
	\begin{align*}
		\overline{\mathscr{F}}(u; U) &= \frac{1}{2} \int_{U} \bar{a} \nabla u \cdot \nabla u \, dx + \int_{U} \bar{\theta} W(u) \, dx
	\end{align*}
Well-known homogenization results imply that $\mathscr{F}_{\gamma}(\cdot;U) \overset{\Gamma}{\to} \overline{\mathscr{F}}(\cdot;U)$ for any fixed bounded open set $U \subseteq \mathbb{R}^{d}$ as $\gamma \to 0$.

In the first step, contained in the rest of the present section, we establish a deterministic estimate of the difference $\mathscr{F}_{\gamma} - \overline{\mathscr{F}}$ measured using the correctors associated with the operator $-\nabla \cdot (a(x) \nabla)$ and the $H^{-1}$ norm of $\theta(\gamma^{-1} \cdot) - \bar{\theta}$.  It bears emphasizing that this part of the proof does not involve any probabilistic arguments.  For this reason, a set of distilled, deterministic assumptions on the medium $(a,\theta)$ is imposed in Section \ref{S: deterministic assumptions} that replace those in Section \ref{S: assumptions} in this section only.  Later, in Section \ref{S: homogenization 2}, we will prove that the probabilistic assumptions of Section \ref{S: assumptions} imply that those of Section \ref{S: deterministic assumptions} hold with probability one.

In the second step, carried out in Section \ref{S: homogenization 2}, we use the statistical or self-averaging properties of the medium to show that the error becomes negligible provided $R$ does not grow too fast.  At a purely qualitative level, the soft criteria \eqref{E: sublinear part} and \eqref{E: averaging part} are sufficient for this purpose.

\begin{remark} \label{R: abuse of notation for q} Throughout this section, to lighten the notation, we abuse notation by identifying $q$ with its extension $x \mapsto q(x \cdot e_{1})$ to $\mathbb{R}^{d}$.  \end{remark}

\subsection{(Deterministic) Assumptions} \label{S: deterministic assumptions} As mentioned above, throughout this section, the arguments are entirely deterministic.  No properties of the underlying probability space are used, and, in particular, probabilistic (or ergodic theoretic) arguments can be postponed until the next section. Toward that end, it is convenient to make precise exactly the properties of the medium $(a,\theta)$ that are used here.

\begin{remark} In what follows, we allow ourselves the flexibility to choose correctors depending on the domain, that is, we work with correctors $(\phi_{Q},\sigma_{Q})$ that vary with the cube $Q \subseteq \mathbb{R}^{d}$.  This is convenient in the almost-periodic setting for technical reasons, see the discussion in Section \ref{S: uniformly almost periodic}, particularly Remark \ref{R: sub remark}. \end{remark}

We assume that, in addition to the medium $(a,\theta)$ and effective coefficients $\bar{a}$ and $\bar{\theta}$, for each axis-aligned cube $Q \subseteq \mathbb{R}^{d}$ and each $\xi \in \mathbb{R}^{d}$, there are functions 
	\begin{align*}
		\phi_{Q,\xi} : Q \to \mathbb{R}, \quad \sigma_{Q,\xi} : Q \to \mathbb{R}^{d \times d}
	\end{align*}
and a constant matrix $\bar{a}(Q) \in \text{Sym}(d)$ such that the following conditions hold:

\subsubsection{Bounds on $(a,\theta,\bar{a},\bar{a}(Q),\bar{\theta})$} We assume the following pointwise bounds on $a$ and $\theta$
	\begin{equation}\label{eqn:det-bounds-a-theta}
		\lambda \text{Id} \leq a(y) \leq \Lambda \text{Id}, \quad \theta_{*} \leq \theta(y) \leq \theta^{*} \quad \text{for each} \quad y \in \mathbb{R}^{d},
	\end{equation}
together with the identical bounds on the constants $\bar{a}$, $\bar{a}(Q)$, and $\bar{\theta}$
	\begin{equation}\label{eqn:det-bounds-bara-bartheta}
		\lambda \text{Id} \leq \bar{a},\bar{a}(Q) \leq \Lambda \text{Id}, \quad \theta_{*} \leq \bar{\theta} \leq \theta^{*}.
	\end{equation}
	
\subsubsection{Helmholtz-type Decomposition} We assume $\phi_{Q, \xi} \in H^{1}(Q)$; $\sigma_{Q,\xi} \in L^{2}(Q; \mathbb{R}^{d \times d}) $ is a skew-symmetric matrix field with $\nabla \cdot \sigma_{Q,\xi} \in L^{2}(Q; \mathbb{R}^{d})$; and they combine to provide a Helmholtz-type decomposition of the vector field $a(y) \xi$:
	\begin{align}\label{eqn:a-helmholtz-decomposition}
		a(y) \xi = \bar{a}(Q) \xi - a(y) \nabla \phi_{Q,\xi}(y) + (\nabla \cdot \sigma_{Q,\xi})(y) \quad \text{for a.e.} \quad y \in Q.
	\end{align}
Above the divergence $\nabla \cdot \sigma$ of a matrix field $\sigma$ is given by $(\nabla \cdot \sigma)_{i} = \sum_{j = 1}^{d} \frac{\partial \sigma_{ji}}{\partial x_{j}}$. 

Additionally, we assume that the map $\xi \mapsto (\phi_{Q,\xi},\sigma_{Q,\xi})$ is linear in $\xi$, that is, for any $\xi_{1}, \xi_{2} \in \mathbb{R}^{d}$ and any $\alpha \in \mathbb{R}$,
	\begin{gather*}
		(\phi_{Q,\alpha \xi_{1} + \xi_{2}},\sigma_{Q,\alpha \xi_{1} + \xi_{2}}) = (\alpha \phi_{Q,\xi_{1}} + \phi_{Q,\xi_{2}},\alpha \sigma_{Q,\xi_{1}} + \sigma_{Q,\xi_{2}}) \quad \text{a.e. in} \, \, Q.
	\end{gather*} 

\subsection{${\rm Sub}(Q)$ and ${\rm Osc}(Q)$} \label{S: further notation} As in the introduction, we concatenate the correctors by introducing the vector-valued $\phi_{Q} \coloneqq (\phi_{Q,e_{1}},\dots,\phi_{Q,e_{d}})$ and tensor-valued $\sigma_{Q} \coloneqq (\sigma_{Q,e_{1}},\dots,\sigma_{Q,e_{d}})$.  We then define ${ \rm{ Sub } }(Q)$, and ${\rm { Osc } } ( Q )$ by	
	\begin{align*}
		{ \rm{ Sub } } ( Q ) &= \frac{1}{|Q|^{\frac{1}{d}}}  \left( \fint_{Q} | (\phi_{Q}(y),\sigma_{Q}(y)) - \fint_{Q} (\phi_{Q}(y'),\sigma_{Q}(y')) \, dy' |^{2} \, dy \right)^{\frac{1}{2}}, \\
		{ \rm Osc }(Q) &= \frac{1}{|Q|^{\frac{1}{d}}} \| \theta - \bar \theta \|_{ H^{-1} (Q) }, \phantom{\int} 
	\end{align*}
	where the $ H^{-1} $ norm is normalized as in \eqref{E: H minus one norm}.

Notice that the assumption \eqref{eqn:a-helmholtz-decomposition} is invariant under subtracting the mean from $ \phi_{Q} $ and $ \sigma_{Q} $. This motivates subtracting the mean in the definition of $ { \rm Sub } (Q) $.

\subsection{Invariance under Rescaling} It is important to note that the assumptions of Section \ref{S: deterministic assumptions} are invariant under rescaling.  In particular, for any $\gamma > 0$, if we define the rescaled fields $ a^{\gamma} $ and $\theta^{\gamma}$ by 
	\begin{align} \label{E: rescaled fields}
		a^{\gamma}(x) = a(\gamma^{-1} x ), \quad \theta^{\gamma}(x) = \theta(\gamma^{-1}x),
	\end{align} 
then the correctors should also be rescaled by defining $\phi^{\gamma}_{Q}$ and $\sigma^{\gamma}_{Q}$  by 
	\begin{align} \label{E: rescaled correctors}
		 \quad \phi_{Q}^{\gamma}(x) = \gamma \phi_{\gamma^{-1} Q}(\gamma^{-1} x), \quad \sigma_{Q}^{\gamma}(x) = \gamma \sigma_{\gamma^{-1} Q}(\gamma^{-1} x).
	\end{align}
An immediate computation then shows that, for each $i \in \{1,2,\dots,d\}$,
	\begin{align} \label{E: helmholtz}
		a^{\gamma}(y) e_{i} = \bar{a}(Q) e_{i} - a^{\gamma}(y) \nabla \phi_{Q,e_{i}}^{\gamma}(y) + ( \nabla \cdot \sigma_{Q,e_{i}}^{\gamma} )(y) \quad \text{for a.e.} \quad y \in Q.
	\end{align}

\subsection{Regularized Functional}\label{sec: regularized_potential_reg} In the first step of the proof, we utilize a regularized functional $\mathscr{F}^{\rm reg}_{\gamma}$ obtained from $\mathscr{F}_{\gamma}$ by replacing $W$ with a suitably regularized version $W_{\rm reg}$. We must emphasize that this is possible because the minimizers in the planar cell problems take values in $[-1,1]$. This is made precise in the next lemma.


\begin{lemma}\label{L: minus one one} Given $R > 0$, if $u \in H^{1}(Q_{R})$ is such that 
	\begin{align*}
		\mathscr{F}_{\gamma}(u; Q_{R}) = \min \left\{ \mathscr{F}_{\gamma}(v; Q_{R}) \, \mid \, v(y) = q(y \cdot e_{1}) \, \, \text{for each} \, \, y \in \partial Q_{R} \right\},
	\end{align*}
then $-1 \leq u \leq 1$ in $Q_{R}$. \end{lemma}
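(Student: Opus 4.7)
The plan is a standard truncation argument: I will show that the truncation $\tilde u \coloneqq (u \wedge 1) \vee (-1)$ is an admissible competitor that has no larger energy than $u$, and strictly less energy unless $|u| \le 1$ already.

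First, since the boundary datum satisfies $-1 \leq q(y \cdot e_1) \leq 1$ by \eqref{E: finite width 2}, we have $\tilde u = u$ on $\partial Q_R$ in the trace sense. Hence $\tilde u$ is admissible. Next, by the chain rule for Sobolev truncations,
\begin{equation*}
\nabla \tilde u = \nabla u \cdot \boldsymbol{1}_{\{-1 < u < 1\}} \quad \text{a.e. in } Q_R,
\end{equation*}
so that, using the nonnegativity of $a(\gamma^{-1}\cdot)$ (from \eqref{eqn:det-bounds-a-theta} or from the standing assumption $\lambda \mathrm{Id} \le a$),
\begin{equation*}
\tfrac{1}{2}\int_{Q_R} a(\gamma^{-1}x)\nabla \tilde u \cdot \nabla \tilde u \, dx \;\le\; \tfrac{1}{2}\int_{Q_R} a(\gamma^{-1}x)\nabla u \cdot \nabla u \, dx.
\end{equation*}

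For the potential term, note that $W(\tilde u) = W(u)$ on $\{-1 < u < 1\}$, while on $\{u \ge 1\} \cup \{u \le -1\}$ the truncated function equals $\pm 1$, hence $W(\tilde u) = 0$. Since $W \ge 0$ and $\theta \ge \theta_* > 0$, this yields
\begin{equation*}
\int_{Q_R} \theta(\gamma^{-1}x) W(\tilde u) \, dx \;\le\; \int_{Q_R} \theta(\gamma^{-1}x) W(u) \, dx.
\end{equation*}
Combining the two estimates gives $\mathscr{F}_\gamma(\tilde u; Q_R) \le \mathscr{F}_\gamma(u; Q_R)$. By minimality of $u$, both inequalities must be equalities. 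In particular, since $\{u = 0\} = \{-1,1\}$ for the zero set of $W$, the second inequality is strict whenever the set $\{|u| > 1\}$ has positive Lebesgue measure; thus this set must be null. The main (and only mildly delicate) step is the measure-theoretic chain rule for $\nabla \tilde u$, which is standard for Lipschitz truncations of Sobolev functions. The rest is bookkeeping using the nonnegativity of $a$ and $W$ and the fact that $\pm 1$ are the unique zeros of $W$.
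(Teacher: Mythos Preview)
Your proof is correct and follows exactly the same truncation argument as the paper: show that $\tilde u = \max\{\min\{u,1\},-1\}$ is admissible (since $q$ takes values in $[-1,1]$) and has strictly smaller energy unless $\{|u|>1\}$ is null. The paper's version is more terse, but the content is identical; note the small slip where you wrote ``$\{u=0\}=\{-1,1\}$'' when you meant $\{W=0\}=\{-1,1\}$.
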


	\begin{proof} By the assumptions of Section \ref{S: assumptions}, the potential $W$ is strictly positive outside of the set $[-1,1]$.  Thus, $\mathscr{F}_{\gamma}(\max\{\min\{u,1\},-1\}, Q_{R}) \leq \mathscr{F}_{\gamma}(u;Q_{R})$, with strict inequality if $ | u | > 1 $ on a set of positive measure. At the same time, by assumption \eqref{E: finite width 2}, the boundary datum $q$ takes values in $[-1,1]$, so $\max\{\min\{u,1\},-1\} \in q + H^{1}_{0}(Q_{R})$.  Therefore, by minimality, $u = \max\{\min\{u,1\},-1\}$ a.e. \end{proof}

Due to the fact that $u$ takes values in $[-1,1]$, we are free to change the definition of $W$ outside of that interval without changing the energy.  In this way, it will be convenient to work with a modification of $\mathscr{F}_{\gamma}$ that restricts to a strictly convex functional at sufficiently small length scales --- or, put differently, the gradient term dominates the potential term at these scales.  A convenient way to do this is to replace $W$ by a regularized potential $W_{\text{reg}}$ such that
	\begin{gather}
		W_{\text{reg}} = W \, \, \text{in a neighborhood of} \, \, [-1,1], \nonumber \\
		[W_{\text{reg}}']_{C^{0,1}(\mathbb{R})} < \infty, \quad \liminf_{|u| \to \infty} W_{\text{reg}}(u) = \infty, \label{E: regularized_potential_reg} \\
		W_{\text{reg}}'(u) = W_{\text{reg}}'(2) \quad \text{if} \quad u \geq 2, \quad W_{\text{reg}}'(u) = W_{\text{reg}}'(-2) \quad \text{if} \quad u \leq -2. \nonumber
	\end{gather}
We define the regularized energy $\mathscr{F}^{\text{reg}}_{\gamma}$ by using $W_{\text{reg}}$ as the potential
	\begin{equation*}
		\mathscr{F}^{\text{reg}}_{\gamma}(u;U) = \frac{1}{2} \int_{U}  a^{\gamma} \nabla u \cdot \nabla u + \int_{U} \theta^{\gamma} W_{\text{reg}}(u).
	\end{equation*}
Define $\overline{\mathscr{F}}^{\text{reg}}$ similarly.  This is a convenient modification for two reasons.  On the one hand, the modified functional coincides with the original one when restricted to functions between $-1$ and $1$, that is,	
	\begin{equation*}
		\mathscr{F}_{\gamma}^{\text{reg}}(u;U) = \mathscr{F}_{\gamma}(u;U) \quad \text{if} \quad -1 \leq u \leq 1 \quad \text{in} \, \, U.
	\end{equation*}
By Lemma \ref{L: minus one one}, this applies, in particular, to minimizers in our planar cell problem. On the other hand, in the homogenization arguments given below, certain error estimates involving the potential $W$ trivialize when it is replaced by $W_{\text{reg}}$.

The existence of a $W^{\text{reg}}$ as above follows readily from the assumptions in Section \ref{S: assumptions}. See Figure \ref{F: regularized well} for a depiction of the construction.

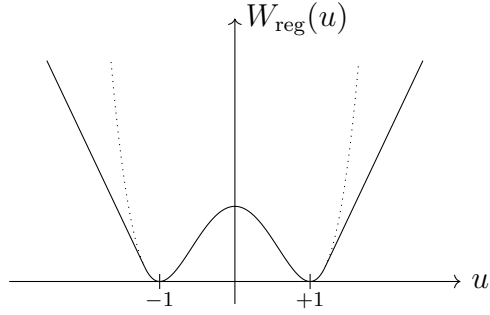
\begin{figure}

\centering

\begin{tikzpicture}

	\draw[->] (-3,0) -- node[at end, right] { $ u $ } (3, 0);
	\draw[->] (0,-0.3) -- node[at end, right] { $ W_{ \rm reg } ( u ) $ } (0, 3.5);

	\draw plot[domain = -1.2 : 1.2, samples=200] (\x, { ( 1 - \x * \x ) * ( 1 - \x * \x ) });
	\draw plot[domain = 1.2 : 2.5 ] (\x, { 2.112 * ( \x - 1.2 ) + 0.1936 });
	\draw plot[domain = -2.5 : -1.2 ] (\x, { - 2.112 * ( \x + 1.2 ) + 0.1936 });

	\draw[dotted] plot[domain = -1.6458 : -1.2, samples=200] (\x, { ( 1 - \x * \x ) * ( 1 - \x * \x ) });
	\draw[dotted] plot[domain = 1.2 : 1.6458, samples=200] (\x, { ( 1 - \x * \x ) * ( 1 - \x * \x ) });

	\draw [-] ( -1, -0.1 ) -- node[below] {\tiny $ - 1 $ } ( -1,  0.1 );
	\draw [-] ( 1, -0.1 ) -- node[below] {\tiny $ +1 $ } (  1,  0.1 );

\end{tikzpicture}

\caption{A possible regularization $ W_{ \rm reg } $ of the classical quartic well $ W(u) = (1-u^2)^2 $ (dotted) by a piecewise linear extension (line).}
\label{F: regularized well}
\end{figure}

Concerning the strict convexity of $\mathscr{F}_{\gamma}^{\text{reg}}$ and $\overline{\mathscr{F}}^{\text{reg}}$ at small scales, we use the following proposition, which is relatively well-known.

\begin{prop}\label{prop:convexity} There is an $r_{c} > 0$ depending only on $\lambda$, $ \theta^* $ and $W_{\text{reg}}$ such that if $Q$ is a cube of side length $ r \leq r_{c}$ and $u \in H^{1}(Q)$, then
		\begin{equation*}
			\mathscr{F}_{\gamma}^{ \rm reg }(v; Q) \geq \mathscr{F}^{\text{reg}}_{\gamma}(u; Q) + \langle D\mathscr{F}^{\text{reg}}_{\gamma}(u;Q), v - u \rangle + \frac{\lambda}{4} \int_{Q} |\nabla v - \nabla u|^{2} \, dx
		\end{equation*}
	for any $v \in u + H_{0}^{1}(Q)$. The same conclusion holds true for $ \overline{\F}^{ \rm reg } $.
\end{prop}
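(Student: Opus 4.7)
The plan is to expand $\mathscr{F}^{\rm reg}_\gamma(v;Q) - \mathscr{F}^{\rm reg}_\gamma(u;Q) - \langle D\mathscr{F}^{\rm reg}_\gamma(u;Q), v-u\rangle$ term by term and show that the (quadratic) gradient contribution dominates the (potentially nonconvex) contribution from the regularized well once the cube is small enough. Concretely, since the gradient term is a quadratic form in $u$, its second-order expansion is exact and, using the lower bound $a^\gamma \geq \lambda \mathrm{Id}$ from \eqref{eqn:det-bounds-a-theta}, it contributes
\[
\frac{1}{2} \int_Q a^\gamma \nabla(v-u)\cdot\nabla(v-u)\,dx \;\ge\; \frac{\lambda}{2}\int_Q |\nabla(v-u)|^2\,dx.
\]
For the potential term, I would apply the first-order Taylor estimate with remainder to $W_{\rm reg}$: since $W'_{\rm reg}$ is globally Lipschitz with constant $L \coloneqq [W']_{C^{0,1}(\mathbb{R})} < \infty$ by \eqref{E: regularized_potential_reg}, one has pointwise
\[
\bigl| W_{\rm reg}(v) - W_{\rm reg}(u) - W'_{\rm reg}(u)(v-u) \bigr| \;\le\; \frac{L}{2}(v-u)^2,
\]
so the potential contribution is bounded below by $-\tfrac{L\theta^*}{2}\int_Q (v-u)^2\,dx$, using the upper bound on $\theta$ from \eqref{eqn:det-bounds-a-theta}.

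Next, the key observation is that $v - u \in H^1_0(Q)$, so the Poincaré inequality on a cube of side length $r$ gives
\[
\int_Q (v-u)^2 \,dx \;\le\; C_P\, r^2 \int_Q |\nabla(v-u)|^2\,dx
\]
for a universal constant $C_P$ (the Poincaré constant of the unit cube, rescaled). Combining the three bounds,
\[
\mathscr{F}^{\rm reg}_\gamma(v;Q) - \mathscr{F}^{\rm reg}_\gamma(u;Q) - \langle D\mathscr{F}^{\rm reg}_\gamma(u;Q), v-u\rangle \;\ge\; \left(\frac{\lambda}{2} - \frac{C_P L \theta^*}{2}\, r^2\right)\int_Q |\nabla(v-u)|^2\,dx.
\]
It then suffices to choose $r_c > 0$ so that $\tfrac{C_P L \theta^*}{2} r_c^2 \le \tfrac{\lambda}{4}$, i.e.\ $r_c^2 = \tfrac{\lambda}{2 C_P L \theta^*}$, which depends only on $\lambda$, $\theta^*$, and $W_{\rm reg}$ (through $L$), as required.

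The same argument applies verbatim to $\overline{\mathscr{F}}^{\rm reg}$ using the bounds $\bar a \ge \lambda \mathrm{Id}$ and $\bar\theta \le \theta^*$ from \eqref{eqn:det-bounds-bara-bartheta}. There is no real obstacle here: the argument is a clean interplay between ellipticity, Lipschitz regularity of $W'_{\rm reg}$, and Poincaré on small cubes. The only point requiring a small amount of care is ensuring that one genuinely uses the \emph{homogeneous} $H^1_0$-Poincaré inequality, which is why the result holds on cubes but would need adjustment on arbitrary bounded sets; this is harmless since the application in the sequel is to cubes of side $\epsilon$.
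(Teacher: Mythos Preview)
Your proof is correct and follows essentially the same approach as the paper: both exploit ellipticity of $a^\gamma$, the global Lipschitz bound on $W'_{\rm reg}$, and the Poincar\'e inequality on small cubes to show that the quadratic gradient term dominates the potentially nonconvex potential contribution. The paper packages this via a lower bound on the second Fr\'echet derivative $D^2\mathscr{F}^{\rm reg}_\gamma(u)(h,h)$ and then appeals to the standard implication from Hessian lower bounds to strong convexity, whereas you compute the first-order Taylor remainder directly; these are equivalent, and your version is arguably slightly cleaner since it only uses the Lipschitz constant $[W'_{\rm reg}]_{C^{0,1}}$ rather than $\sup W''_{\rm reg}$. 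One minor inaccuracy in your closing remark: in the paper the result is applied to cubes of side $r_c \sim 1$ (the mesoscopic scale), not cubes of side $\epsilon$.
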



\begin{proof}
For simplicity let us write $ \F = \F_{\gamma}^{ \rm reg } (\cdot, Q) $.  It is well known that under the condition \eqref{E: regularized_potential_reg}, $ \mathscr{F} $ is twice Fréchet differentiable with
\begin{align}\label{eq:derivative-energy}
\langle D\F(u), h \rangle &= \int_{Q} a^{\gamma} \nabla u \cdot \nabla h + \int_{Q} \theta^{\gamma} W_{\text{reg}}'(u) h, \\
D^2\F (u)( h, k ) &= \int_{Q} a^{\gamma} \nabla h \cdot \nabla k + \int_{Q} \theta^{\gamma} W_{\text{reg}}''(u) h k , \nonumber
\end{align}
where $ h, k \in H^1_0(Q) $. Hence
$$
\begin{aligned}
D^2\F(u)( h, h )
&\ge \lambda \int_{Q} | \nabla h |^2 - \theta^* \sup W_{\text{reg}}'' \int_{Q} | h |^2 \\
&\ge \left( \lambda - r^2 \, C_{ \text{Poincaré} } \, \theta^* \, \sup W_{\text{reg}}'' \right) \int_{Q} | \nabla h |^2,
\end{aligned}
$$
where $ r$ is the side length of $Q$. Choosing $ r \leq r_c $ sufficiently small yields 
$$
D^2\F(u)( h, h ) \ge \frac{\lambda}{2} \int_{Q} | \nabla h |^2,
$$
which implies the desired estimate.
\end{proof}

As hinted already above, the regularized functional $\mathscr{F}^{\rm reg}_{\gamma}$ will be useful when we compare the minimizer $u$ of \eqref{eqn: T: elliptic term cube decomposition variation problem} to its two-scale expansion; see the proof of Proposition \ref{prop:homogenization-interior-qubes} below.  For now, the next result already hints at the utility of our regularization.

	\begin{prop} \label{P: uniqueness comparison} If $Q$ is a cube of side length $r_{c}$ and $g \in H^{1}(Q)$ satisfies $- 1 \leq g \leq 1$ in $Q$, then there are unique $u, \bar{u} \in H^{1}(Q)$ such that 
		\begin{align*}
			\mathscr{F}_{\gamma}(u; Q) &= \min \left\{ \mathscr{F}_{\gamma}(v; Q) \, \mid \, v \in g + H^{1}_{0}(Q) \right\}, \\
			\overline{\mathscr{F}}_{\gamma}(\bar{u}; Q) &= \min \left\{ \overline{\mathscr{F}}_{\gamma}(v; Q) \, \mid \, v \in g + H_{0}^{1}(Q) \right\}.
		\end{align*}
	Furthermore, $-1 \leq u, \bar{u} \leq 1$ in $Q$ and 
		\begin{align*}
			\frac{\max\{\lambda,\theta_{*}\}}{\min\{\Lambda,\theta^{*}\}} \leq \frac{\mathscr{F} (u;Q)}{\overline{\mathscr{F}}(\bar{u};Q)} \leq \frac{\max\{\Lambda,\theta^{*}\}}{\min\{\lambda,\theta_{*}\}}.
		\end{align*}
	\end{prop}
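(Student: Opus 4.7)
The plan is to handle existence and the sign bounds $-1 \leq u, \bar u \leq 1$ together via the direct method and the truncation trick, then to deduce uniqueness directly from the quadratic strict convexity of the regularized functional (Proposition \ref{prop:convexity}), and finally to read off the ratio of energies from an elementary pointwise comparison of the integrands.

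For existence and the sign bounds, both $\mathscr{F}_{\gamma}(\cdot;Q)$ and $\overline{\mathscr{F}}(\cdot;Q)$ are convex and weakly lower semicontinuous on $H^{1}(Q)$, and coercive on $g + H^{1}_{0}(Q)$ by the ellipticity bounds \eqref{eqn:det-bounds-a-theta}--\eqref{eqn:det-bounds-bara-bartheta} together with the $L^{p}$-growth \eqref{eqn:w-growth-at-inf}; the direct method therefore produces minimizers $u$ and $\bar u$. The truncation $T(v) := \max\{\min\{v, 1\}, -1\}$ preserves the boundary datum (since $|g| \leq 1$), never increases the Dirichlet term, and never increases $\int_{Q} W(v)$ (strictly so on $\{|v| > 1\}$); hence every minimizer satisfies $-1 \leq u, \bar u \leq 1$, exactly as in the proof of Lemma \ref{L: minus one one}.

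For uniqueness, let $u_{1},u_{2}$ be two minimizers of $\mathscr{F}_{\gamma}$ over $g + H^{1}_{0}(Q)$. By the previous step both $u_{1},u_{2}$, and therefore also $w := (u_{1}+u_{2})/2$, take values in $[-1,1]$. Since $W = W_{\text{reg}}$ on $[-1,1]$, the three energies $\mathscr{F}_{\gamma}(u_{1};Q)$, $\mathscr{F}_{\gamma}(u_{2};Q)$, $\mathscr{F}_{\gamma}(w;Q)$ coincide with the corresponding values of $\mathscr{F}^{\text{reg}}_{\gamma}$. With side length $r = r_{c}$, Proposition \ref{prop:convexity} applies, and adding the two quadratic lower bounds for $\mathscr{F}^{\text{reg}}_{\gamma}(u_{i})$ expanded around $w$ yields, after cancellation of the first-order terms (since $(u_1 - w) + (u_2 - w) = 0$),
\begin{equation*}
\mathscr{F}^{\text{reg}}_{\gamma}(u_{1};Q) + \mathscr{F}^{\text{reg}}_{\gamma}(u_{2};Q) \geq 2\,\mathscr{F}^{\text{reg}}_{\gamma}(w;Q) + \frac{\lambda}{8}\int_{Q} |\nabla u_{1} - \nabla u_{2}|^{2}.
\end{equation*}
The left-hand side equals twice the minimum value of $\mathscr{F}_{\gamma}$, while $\mathscr{F}^{\text{reg}}_{\gamma}(w;Q) = \mathscr{F}_{\gamma}(w;Q)$ is at least that minimum, forcing $\nabla u_{1} = \nabla u_{2}$ a.e., hence $u_{1} = u_{2}$. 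The same argument handles $\bar u$.

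For the energy ratio, I would compare both $\mathscr{F}_{\gamma}$ and $\overline{\mathscr{F}}$ to the unit-coefficient auxiliary functional $\mathcal{E}(v) := \frac{1}{2} \int_{Q} |\nabla v|^{2} + \int_{Q} W(v)$. The pointwise bounds \eqref{eqn:det-bounds-a-theta}--\eqref{eqn:det-bounds-bara-bartheta} immediately give
\begin{equation*}
\min\{\lambda, \theta_{*}\}\, \mathcal{E}(v) \,\leq\, \mathscr{F}_{\gamma}(v;Q),\, \overline{\mathscr{F}}(v;Q) \,\leq\, \max\{\Lambda, \theta^{*}\}\, \mathcal{E}(v) \quad \text{for every } v \in H^{1}(Q),
\end{equation*}
and evaluating at $v = \bar u$ together with minimality of $u$ for $\mathscr{F}_{\gamma}$ yields
\begin{equation*}
\mathscr{F}_{\gamma}(u;Q) \,\leq\, \mathscr{F}_{\gamma}(\bar u;Q) \,\leq\, \max\{\Lambda, \theta^{*}\}\,\mathcal{E}(\bar u) \,\leq\, \frac{\max\{\Lambda, \theta^{*}\}}{\min\{\lambda, \theta_{*}\}}\, \overline{\mathscr{F}}(\bar u;Q),
\end{equation*}
with the matching lower bound following by swapping the roles of the two functionals. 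None of these steps is especially hard; the main subtlety is keeping the identification $\mathscr{F}_{\gamma} = \mathscr{F}^{\text{reg}}_{\gamma}$ valid on $[-1,1]$ throughout the uniqueness argument.
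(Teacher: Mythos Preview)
Your proof is correct and follows essentially the same route as the paper: truncation for the sign bounds, strict convexity of the regularized functional $\mathscr{F}^{\rm reg}_{\gamma}$ for uniqueness (the paper just invokes this in one line; your midpoint computation makes it explicit), and pointwise coefficient comparison for the energy ratio.

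Two minor remarks. First, drop the word ``convex'' from the existence step: neither $\mathscr{F}_{\gamma}$ nor $\overline{\mathscr{F}}$ is convex on $H^{1}(Q)$, since $W$ is a double-well; weak lower semicontinuity (convex gradient part plus compact embedding for the potential part) and coercivity are what you need, and they hold. Second, your symmetric swap gives the lower bound $\tfrac{\min\{\lambda,\theta_{*}\}}{\max\{\Lambda,\theta^{*}\}}$ rather than the printed $\tfrac{\max\{\lambda,\theta_{*}\}}{\min\{\Lambda,\theta^{*}\}}$; this is not a flaw in your argument but evidently a typo in the statement (the printed lower bound can exceed $1$, which is impossible), and your constant is exactly what the paper's own one-line justification from the pointwise bounds on $a,\bar a,\theta,\bar\theta$ produces.
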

	
\begin{proof}
	Any two minimizers $u$ and $\bar{u}$ solve the Euler-Lagrange equation
	\begin{align}\label{eqn:prop-on-minimiality-1}
	- \nabla \cdot a^{\gamma} \nabla u + \theta^{\gamma} W'( u ) = 0 = - \nabla \cdot \bar a \nabla \bar u + \bar \theta W'( \bar u ) \quad { \rm in } ~ Q;
	\quad u = \bar u = g \quad { \rm on ~ } Q.
	\end{align}
	Since $ - 1 \leq g \leq 1 $, the proof of Lemma \ref{L: minus one one} implies that $ -1 \leq u, \bar u \leq 1 $. Hence, by \eqref{E: regularized_potential_reg}, we may replace $ W $ by $ W_{ \rm reg } $ in \eqref{eqn:prop-on-minimiality-1}. Therefore $ u $, $ \bar u $ are also minimizers of the strictly convex energies $ \mathscr{F}^{\text{reg}}_{\gamma} $ and $ \overline{\mathscr{F}}^{\text{reg}} $, and are therefore unique.
	
	Concerning the estimate on the ratio of the energies, this follows from the fact that $\lambda \text{Id} \leq a, \bar{a} \leq \Lambda \text{Id}$ and $\theta_{*} \leq \theta, \bar{\theta} \leq \theta^{*}$ by the assumptions of Section \ref{S: deterministic assumptions}.
\end{proof}

\begin{figure}

\centering

\begin{tikzpicture}
	\draw (0,0) rectangle (5 + 0.5, 5.5);
	\draw [dotted] (-1, 5.5) -- (0, 5.5);
	\draw [dotted] (-1, 0) -- (0, 0);
	\draw [|-|] (-1,0) -- node[left] { $ \varrho R $ } (-1, 5.5);

	\foreach \x in {0, ..., 10}
		\foreach \y in {0, ..., 10}
			\draw (\x / 2, \y / 2) rectangle (\x / 2 + 1 / 2, \y /2 + 1 / 2);

	\draw [dotted] (5.5, 4.5) -- (6.5, 4.5);
	\draw [dotted] (5.5, 4) -- (6.5, 4);
	\draw [|-|] (6.5, 4.5) -- node[right] { $ r_c \sim 1 $ } (6.5, 4);

	\draw (7.75 + 0.5, 3.04) -- (7.75 + 0.5, 0.45);
	\draw (6.25 + 0.5, 3.04) -- (6.25 + 0.5, 0.45);
	\draw (5.70 + 0.5, 2.50) -- (8.29 + 0.5, 2.50);
	\draw (5.70 + 0.5, 1.00) -- (8.29 + 0.5, 1.00);

	\foreach \x in {1, ..., 9}
		\draw (6.5 + \x / 10 + 0.5, 2.25) -- (6.5 + \x / 10 + 0.5, 1.25);
	\foreach \y in {1, ..., 9}
		\draw (6.5 + 0.5, 1.25 + \y / 10) -- (7.5 + 0.5, 1.25 + \y / 10);

	\draw [dotted] (7.5 + 0.5, 1.75 + 1/10) -- (9 + 0.5, 1.75 + 1/10);
	\draw [dotted] (7.5 + 0.5, 1.75) -- (9 + 0.5, 1.75);
	\draw [|-|] (9 + 0.5,1.75 + 1/10) -- node[right] { $ \gamma $ } (9 + 0.5,1.75);

	\draw[color=lightgray] (3.75 + 0.5, 1.75) circle (0.5);
	\draw[color=lightgray] (7 + 0.5, 1.75) circle (1.5);
\end{tikzpicture}

\caption{Cube decomposition \eqref{eqn:partition-of-small-cubes} and a schematic picture of the three length scales $ \varrho R \gg 1 $, $ r_c \sim 1 $, and $ \gamma \ll 1 $}
\label{fig:cube-decomposition}

\end{figure}
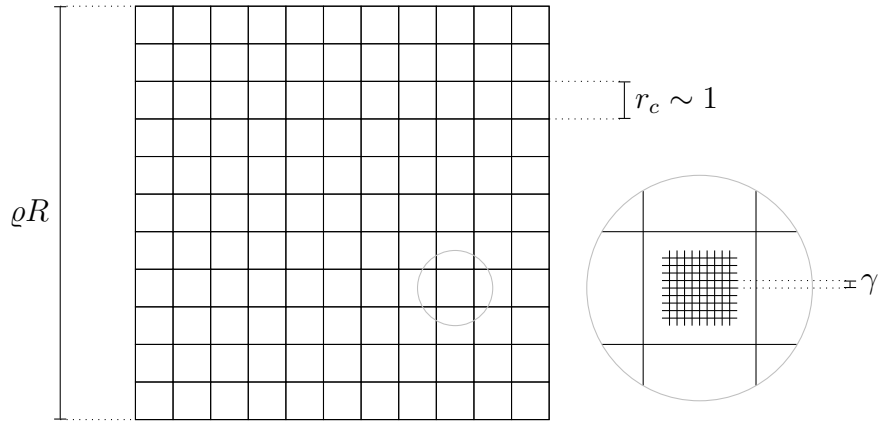

\subsection{Cube Decomposition}The length scale $ r_{c} $ of the previous result suggests a cube decomposition as depicted in Figure \ref{fig:cube-decomposition}. It involves three parameters, corresponding to the macroscopic, mesoscopic, and microscopic scales. There is the cube size $ \varrho R \gg 1 $; the mesh size $ r_c $, which is fixed from now on, so that it is of order one; and the length scale $ \gamma \ll 1 $ of the underlying medium. Our strategy to establish the closeness of $ \mathscr{F}_{ \gamma } $ and $ \overline{ \mathscr{F} } $ involves homogenization error estimates employed on each subcube. The main difficulty comes from the surface scaling $ (\varrho R)^{d-1} $ of the energy in \eqref{E: problem of interest to us}. Since we use a cover consisting of approximately $ r_{c}^{-d} ( \varrho R )^{d} $ cubes, it is crucial that these estimates are relative to the energy in the cube under consideration. Otherwise, their sum leads to the wrong scaling.

Before giving the precise statements, let us fix some notation regarding the decomposition. Given the cube $Q_R $ with $ R = (2K+1) r_{c} $ for some $K \in \mathbb{N} $, we observe that we can write $Q_R $ as a union of subcubes of side length $r_{c}$:
	\begin{equation}\label{eqn:partition-of-small-cubes}
		Q_R = \bigcup_{ z \in \mathbb{Z}^{d} \cap [-K,K]^{d} } Q_{r_{c}}(r_{c} z)
	\end{equation}
The estimate below, comprising the first step of the proof of Theorem \ref{T: planar homogenization theorem lower bound}, shows that the macroscopic error is controlled by the maximum of the mesoscopic averaging errors in each subcube. As suggested already above, this is a relative estimate: the error equals a small multiple of the total energy.  In particular, it scales like $R^{d-1}$.  

	\begin{theorem} \label{T: elliptic term cube decomposition} There is a modulus of continuity $\omega : [0,\infty) \to [0,\infty)$ with the following property: given any $K \in \mathbb{N}$, if $R = ( 2K + 1) r_{c} $ and if $ u \in H^{1}(Q_R) $ is such that 
		\begin{align}\label{eqn: T: elliptic term cube decomposition variation problem} 
			\mathscr{F}_{\gamma}(u; Q_R) = \min \left\{ \mathscr{F}_{\gamma}(v; Q_R) \, \mid \, v - q \in H^{1}_{0}(Q_R) \right\}, \quad u - q \in H^{1}_{0}(Q_R),
		\end{align}
	then there is a $\tilde{u} \in q + H^{1}_{0}(Q_R) $ such that
		\begin{equation*}
		\begin{aligned}
			& \frac{ |\mathscr{F}_{\gamma}(u; Q_R) - \overline{\mathscr{F}}(\tilde{u}; Q_R)| }{ \mathscr{F}_{\gamma}(u ; Q_{R} ) } \\
			&\qquad \leq \max_{ z \in \mathbb{Z}^{d} \cap [-K,K]^{d}} \omega \big( { \rm Sub } (Q_{r_{c}\gamma^{-1}}(r_{c}z)) + { \rm Osc}(Q_{r_{c}\gamma^{-1}}(r_{c}z)) + | \bar a  ( Q_{r_{c}\gamma^{-1}}(r_{c}z) ) - \bar a | \big),
		\end{aligned}
		\end{equation*}
	where ${ \rm Sub }(\cdot) $ is defined by \eqref{eqn:defn-sub} and $ { \rm Osc }(\cdot) $ by \eqref{eqn:defn-osc}.
	\end{theorem}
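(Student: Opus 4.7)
The plan is to use the cube decomposition \eqref{eqn:partition-of-small-cubes} to split $Q_R$ into $Q_z := Q_{r_c}(r_c z)$, construct $\tilde u$ by solving the homogenized Dirichlet problem on each subcube with the trace of $u$, and reduce the theorem to a \emph{relative} per-subcube comparison. Concretely, since Lemma \ref{L: minus one one} gives $-1 \leq u \leq 1$, Proposition \ref{P: uniqueness comparison} produces a unique minimizer $\bar u_z$ of $\overline{\mathscr{F}}(\cdot; Q_z)$ with boundary data $u|_{\partial Q_z}$, again with $-1 \leq \bar u_z \leq 1$; the definition $\tilde u|_{Q_z} := \bar u_z$ yields a member of $q + H^1_0(Q_R)$ since adjacent subcubes share boundary traces. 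Because both energies decompose additively over $\{Q_z\}$, the theorem follows once one establishes a single modulus $\omega$ (independent of $z$ and $R$) for which
\begin{equation*}
\bigl|\mathscr{F}_\gamma(u; Q_z) - \overline{\mathscr{F}}(\bar u_z; Q_z)\bigr| \leq \omega\!\left({\rm Sub}_{r_c \gamma^{-1} z}(r_c/\gamma) + {\rm Osc}_{r_c \gamma^{-1} z}(r_c/\gamma)\right) \mathscr{F}_\gamma(u; Q_z).
\end{equation*}
It is essential that the estimate be relative rather than absolute: there are $\sim (R/r_c)^d$ subcubes, whereas the total energy scales as $R^{d-1}$, so any absolute per-subcube bound would destroy the scaling.

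\textbf{Building the per-subcube estimate.} Fix $z$ and drop it from notation. Since $u, \bar u$ both take values in $[-1,1]$, one may freely replace $W$ by $W_{\rm reg}$ throughout (Section~\ref{sec: regularized_potential_reg}), and Proposition \ref{prop:convexity} furnishes strict convexity of both $\mathscr{F}_\gamma^{\rm reg}(\cdot; Q_z)$ and $\overline{\mathscr{F}}^{\rm reg}(\cdot; Q_z)$ at the fixed scale $r_c$. I would introduce a two-scale expansion competitor $v = \bar u + \eta \, \gamma \, \phi^\gamma \cdot \nabla \bar u$, where $\eta$ is a cut-off equal to $1$ away from a $\gamma$-thin strip near $\partial Q_z$ (so that $v - u \in H^1_0(Q_z)$) and the rescaled correctors are those of \eqref{E: rescaled correctors}. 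Applying convexity against both $u$ and $v$, then using the Helmholtz decomposition \eqref{E: helmholtz} and integrating by parts against $\sigma^\gamma - \langle \sigma^\gamma \rangle_{Q_z}$ controls the gradient part of the discrepancy by local $L^2$-oscillations of $(\phi^\gamma, \sigma^\gamma)$, which, after the rescaling \eqref{E: rescaled correctors} and in view of \eqref{eqn:defn-sub}, are bounded by $r_c^2 \, {\rm Sub}_{r_c \gamma^{-1} z}(r_c/\gamma)^2$. For the potential part, Taylor-expanding $W_{\rm reg}(v)$ around $\bar u$ and invoking $H^{-1}$--$H^1$ duality reduces matters to $\|\theta^\gamma - \bar\theta\|_{H^{-1}(Q_z)}$, which by \eqref{E: H minus one norm} and \eqref{eqn:defn-osc} is $r_c \, {\rm Osc}_{r_c \gamma^{-1} z}(r_c/\gamma)$.

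\textbf{Relativizing the bound.} I expect the main obstacle to be converting these absolute bounds into a multiple of $\mathscr{F}_\gamma(u; Q_z)$. The raw expressions come with large factors such as $\|\nabla \bar u\|_{L^2(Q_z)}^2$, $\|\nabla^2 \bar u\|_{L^2(Q_z)}$, and $\|W'_{\rm reg}(\bar u) \nabla \bar u\|_{L^2(Q_z)}$. The first is at most $\lambda^{-1} \overline{\mathscr{F}}^{\rm reg}(\bar u; Q_z)$ and hence a constant multiple of $\mathscr{F}_\gamma(u; Q_z)$ by Proposition \ref{P: uniqueness comparison}. For the higher-order factors, one exploits that $\bar u$ solves the constant-coefficient Euler--Lagrange equation $-\nabla \cdot \bar a \nabla \bar u + \bar\theta W'_{\rm reg}(\bar u) = 0$ with bounded Lipschitz nonlinearity and $\|\bar u\|_{L^\infty} \leq 1$; since $r_c$ is a fixed scale, standard elliptic (Calder\'on--Zygmund) regularity for the homogenized operator upgrades the $L^2$ control on $\nabla \bar u$ to $H^2$ control on slightly interior regions, with the remaining boundary-strip contribution absorbed by the $O(\gamma)$ prefactor arising from the cut-off $\eta$. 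Packaging all these estimates into a single composite modulus $\omega$, depending only on $\lambda, \Lambda, \theta_*, \theta^*, r_c,$ and $W_{\rm reg}$, then closes the per-subcube estimate and, via the reduction above, completes the proof.
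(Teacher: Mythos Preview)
Your overall architecture matches the paper's---cube decomposition, $\tilde u$ built from the homogenized Dirichlet minimizers on each subcube, cut-off two-scale expansion, and convexity to pass from $H^1$ estimates to energy estimates. The gap is in the boundary layer. You claim a $\gamma$-thin strip is ``absorbed by the $O(\gamma)$ prefactor,'' but the offending contribution is of the form $\int_{\{1-\eta>0\}}|\nabla\bar u|^2$, and with only $L^2$ control on $\nabla\bar u$ a thin support gains nothing; nor can you invoke interior Schauder there, since the constants in Lemma~\ref{lemma:homogeneous-interior-schauder-estimate} degenerate as ${\rm dist}(\supp\eta,\partial Q_z)\to 0$. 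The paper's resolution is substantively different: take a \emph{fixed-width} (order $r_c$) boundary layer, invoke Meyer's estimate (Lemmas~\ref{lemma:meyers-estimate} and~\ref{lemma:homogeneous-calderon-zygmund-estiamte}) to bound the strip contribution by $\bigl(\int_{Q_z}|\nabla u|^{p}\bigr)^{2/p}$ for some $p>2$, and accept this as an \emph{extra} term on the right of the per-subcube estimate (Proposition~\ref{prop:homogenization-interior-qubes}). This extra term is \emph{not} controlled by $\mathscr{F}_\gamma(u;Q_z)$ locally, so the purely relative inequality you target is stronger than what is actually established. The extra terms are handled only after summing over subcubes: Meyer bounds each by $\mathscr{F}_\gamma(u;2Q_z\cap Q_R)$ plus boundary-data contributions, the former sum is $\lesssim\mathscr{F}_\gamma(u;Q_R)$ by finite overlap, and the latter, of order $R^{d-1}$, is absorbed via the lower bound $\mathscr{F}_\gamma(u;Q_R)\gtrsim R^{d-1}$. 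Your purely local reduction misses this global step.

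A secondary point: ``standard Calder\'on--Zygmund regularity'' alone does not make $\|\nabla^2\bar u\|$ or $\|W'(\bar u)\|$ relative to the energy in the small-energy regime; the paper needs the clearing-out lemma (Lemma~\ref{lemma:clearing-out}) together with a Moser-type iteration (proof of Lemma~\ref{lemma:homogeneous-interior-schauder-estimate}) to obtain $\sup_{\varrho Q_z}|W'(\bar u)|\lesssim\overline{\mathscr{F}}(\bar u;Q_z)^{1/2}$, which is what makes all the interior bounds genuinely relative. Also, with the paper's convention $\phi^\gamma_i=\gamma\,\phi_{e_i}(\gamma^{-1}\cdot)$, the correct competitor is $\bar u+\eta\,\phi^\gamma_i\,\partial_i\bar u$, without the extra $\gamma$ you wrote.
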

	
Above we abuse the notation for $q$, following Remark \ref{R: abuse of notation for q}.

In the theorem above, it really is necessary to consider the maximum of the mesoscopic homogenization errors $ { \rm Sub }_{ r_c z } \left( \gamma^{-1} r_c \right) $ and $  { \rm Osc }_{ r_c z } \left( \gamma^{-1} r_c \right) $ --- the counterexamples of the companion paper \cite{part2} show that homogenization may fail if the medium is far from its average even in a finite string of mesoscopic cubes. Of course, in order to control the suprema above in full generality, it is necessary to ensure that the macroscopic length $R =  ( 2 K + 1 )  r_{c}$ does not grow too fast relative to the inverse of the microscopic parameter $\gamma$. This is the reason why homogenization may fail if $\delta \epsilon^{-1}$ decays too slowly with $\epsilon$.

With Theorem \ref{T: elliptic term cube decomposition} in hand, all that is left in the proof of homogenization is to establish that if $K$ does not grow too fast relative to $\gamma^{-1}$, then 
	\begin{equation} \label{E: what we want to prove averaging}
		\sup_{z \in \mathbb{Z}^{d} \cap [-K,K]^d } { \rm Osc } ( Q _{r_{c} \gamma^{-1}} (r_{c} z) ) + { \rm Sub } ( Q_{r_{c} \gamma^{-1}} ( r_{c} z ) ) + | \bar{a}(Q_{r_{c} \gamma^{-1}}(r_{c} z) - \bar{a}| \to 0
	\end{equation}
in probability. The details will be made precise in Section \ref{S: homogenization 2}.

\subsection{Properties of Minimizers on Subcubes} In the proof of Theorem \ref{T: elliptic term cube decomposition}, we will need some elementary properties of and estimates on the minimizer $ u $ of \eqref{eqn: T: elliptic term cube decomposition variation problem} restricted to the subcubes in the decomposition \eqref{eqn:partition-of-small-cubes}. For the reader's convenience, we collect them in this subsection.

First, our proof relies on the following PDE lemma, which shows that $ u $ is uniformly (in the parameter $ \gamma $) Hölder continuous at mesoscopic scales.

\begin{lemma}\label{lemma:dg-n-m}
Suppose $ u \in H^1(Q_R) $ with $ - 1 \le u \le 1 $ solves the equation $ - \nabla \cdot a^{\gamma}  \nabla u + \theta^{\gamma} W'(u) = 0 $ in $ Q_R $; $ u(x) = q(x \cdot e_{1})$ on $ \partial Q_R $. Then there exist constants $ 0 < \alpha < 1 $ and $ C_{ \text{Hölder} } > 0 $, which depend only on $ q $, $ \lambda $, $ \Lambda $, $ \theta_{*} $, $ \theta^{*} $, $ W $ and the dimension $ d $, such that
$$
[ u ]_{ C^{0,\alpha} (Q) } \leq C_{ \text{Hölder} }
$$
for every cube $ Q \subset Q_{ R }(x) $ of side length $ r_c $.
\end{lemma}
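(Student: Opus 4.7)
The plan is to view the Euler--Lagrange equation as a linear divergence-form equation with a uniformly bounded right-hand side, then invoke the interior and boundary De Giorgi--Nash--Moser theorems on each subcube.

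First I would observe that the right-hand side of the equation is uniformly bounded. Since $-1 \leq u \leq 1$ and $W$ is $C^{2}$ in a neighborhood of $[-1,1]$, the function $f \coloneqq \theta^{\gamma} W'(u)$ satisfies
\[
\|f\|_{L^{\infty}(Q_{R})} \leq \theta^{*} \sup_{[-1,1]} |W'|,
\]
a bound depending only on $\theta^{*}$ and $W$. Moreover, $a^{\gamma}$ is uniformly elliptic with the same constants $\lambda, \Lambda$ as $a$. Hence $u$ solves the equation $-\nabla \cdot (a^{\gamma} \nabla u) = -f$ on $Q_{R}$ with coefficients and source term bounded independently of $\gamma$ and $R$.

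For a subcube $Q$ of side length $r_{c}$ with $\mathrm{dist}(Q, \partial Q_{R}) \geq r_{c}$, the interior De Giorgi--Nash--Moser theorem applied on a slightly enlarged cube yields an exponent $\alpha_{0} \in (0,1)$ and a constant $C_{0}$, depending only on $d, \lambda, \Lambda$, and $r_{c}$, such that
\[
[u]_{C^{0,\alpha_{0}}(Q)} \leq C_{0}\bigl( \|u\|_{L^{\infty}(Q_{R})} + \|f\|_{L^{\infty}(Q_{R})} \bigr) \leq C_{0}\bigl( 1 + \theta^{*} \sup_{[-1,1]} |W'| \bigr),
\]
where I have used that $\|u\|_{L^{\infty}(Q_{R})} \leq 1$. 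For a subcube $Q$ that meets $\partial Q_{R}$, the interior estimate is unavailable, and I would instead invoke the boundary De Giorgi--Nash--Moser theorem. The boundary datum $x \mapsto q(x \cdot e_{1})$ is globally Lipschitz by \eqref{E: finite width}, with Lipschitz constant $\sup |q'|$ depending only on $q$, and the cube $Q_{R}$ has Lipschitz boundary with Lipschitz character independent of $R$. The standard boundary Hölder regularity theorem thus gives a bound of the same form, with constants now additionally depending on $\sup |q'|$ (hence on $q$).

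Taking $\alpha$ to be the smaller of the two exponents and $C_{\text{Hölder}}$ to be the larger of the two constants produces the desired bound. The main delicate point is verifying that the boundary Hölder constant does not degenerate as $R \to \infty$. This follows because the boundary estimate is local in character: one can flatten each face of $\partial Q_{R}$ independently in a neighborhood of the boundary piece intersected by $Q$, and after the flattening the resulting coefficients remain uniformly elliptic with constants depending only on the local geometry of a single face, which is identical for all $R$.
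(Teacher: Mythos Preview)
Your proposal is correct and follows essentially the same approach as the paper: both treat the nonlinear term $\theta^{\gamma}W'(u)$ as a uniformly bounded source (using $|u|\le 1$ and the $C^{2}$ regularity of $W$ near $[-1,1]$), then apply the interior and boundary De Giorgi--Nash--Moser theorems (the paper cites Theorems 8.24 and 8.29 of Gilbarg--Trudinger) on subcubes of side $r_{c}$. The paper phrases the right-hand side in $L^{q}$ with $q>d/2$ rather than $L^{\infty}$, but since the source is bounded this is the same argument; your explicit remark that the boundary estimate is local and hence uniform in $R$ makes the scale-independence slightly more transparent than in the paper.
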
 

The lemma is an application of the classical De Giorgi-Nash-Moser theorem. Its proof is given in Section \ref{section:pde-estimates}.

In our proof of Theorem \ref{T: elliptic term cube decomposition}, we will use the lemma above together with local and global energy bounds that we state precisely for the reader's convenience in the next proposition.

\begin{prop}\label{prop:assumptions-homogenization-interior-qubes}
	Let $ u $ be a minimizer of the variational problem \eqref{eqn: T: elliptic term cube decomposition variation problem}. Then $ u $ satisfies the bound
	\begin{equation*}
	\quad \frac{\lambda}{2} \int_{ Q_R } | \nabla u |^2 + \theta_{*} \int_{Q_{R}} W(u)
	\lesssim R^{d-1}.
	\end{equation*}
	Furthermore, on every cube $ Q $ of side length $ r_c $
	\begin{equation}\label{eqn:hom-int-cubes-assumption}
	-1 \leq u \leq 1 \quad \text{in} \, \, Q,
	\quad [ u ]_{ C^{0,\alpha}( \overline{Q} ) } \le C_{ \text{Hölder} },
	\quad \int_{ Q } | \nabla u |^2 + \int_{Q} W(u) \leq C_{ \text{Energy} },
	\end{equation}
	where the constants $ C_{ \text{Hölder} } $ and $ C_{ \text{Energy} } $ only depend $ q $, $ \lambda $, $ \Lambda $, $ \theta_{*} $, $ \theta^{*} $, $ W $ and $ r_c $.
\end{prop}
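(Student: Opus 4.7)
My approach is to verify the four properties in order; three of them follow essentially immediately from results stated above, while the uniform local energy bound requires a short Caccioppoli-type computation, which is where the only real work lies.

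For the global energy bound, I would compare $u$ with the planar competitor $x \mapsto q(x \cdot e_1)$: the one-dimensional integrability assumption \eqref{E: finite width} immediately yields $\mathscr{F}_\gamma(q; Q_R) \lesssim R^{d-1}$ (the transverse $(d-1)$ directions contribute the factor $R^{d-1}$, and the remaining one-dimensional integral is finite), and minimality of $u$ together with the pointwise lower bounds $a^\gamma \geq \lambda\,\mathrm{Id}$ and $\theta^\gamma \geq \theta_*$ then give the claim. The bound $-1 \leq u \leq 1$ in $Q_R$ is just Lemma \ref{L: minus one one}. Using this, $u$ solves the Euler-Lagrange equation $-\nabla \cdot (a^\gamma \nabla u) + \theta^\gamma W'(u) = 0$ with right-hand side bounded uniformly in $\gamma$ and $R$ (because $W$ is $C^2$ near $[-1,1]$), so the H\"older estimate is a direct invocation of Lemma \ref{lemma:dg-n-m}.

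The substantive step is the local energy bound. The potential contribution is immediate from $|u| \leq 1$: $\int_Q W(u) \leq r_c^d \sup_{[-1,1]} W$. For the gradient contribution, I plan to test the Euler-Lagrange equation with $\varphi = (u - q)\eta^2$, where $\eta$ is a smooth cutoff equal to $1$ on $Q$, supported in a slightly enlarged cube $Q^* \supset Q$, with $|\nabla \eta| \lesssim 1/r_c$. Expanding and applying ellipticity, Cauchy-Schwarz with absorption into the left-hand side, and the bounds $|u|, |q| \leq 1$ together with $\sup_s |q'(s)| < \infty$ from \eqref{E: finite width}, I expect to obtain $\int_Q |\nabla u|^2 \lesssim 1$, with constant depending only on $r_c$, the ellipticity constants, $\theta^*$, $q$, and $W$.

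The only subtlety, which is not really an obstacle, is ensuring that the Caccioppoli computation is uniform for cubes $Q$ touching $\partial Q_R$. This is handled automatically by the choice of test function: $\varphi = (u-q)\eta^2$ lies in $H_0^1(Q_R)$ regardless of where $Q$ sits, since $u - q \in H_0^1(Q_R)$ and $\eta^2$ is bounded and Lipschitz. Thus a single argument works uniformly for interior and boundary subcubes.
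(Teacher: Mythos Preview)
Your proposal is correct and follows essentially the same route as the paper: the global bound via the planar competitor $q(x\cdot e_1)$ and \eqref{E: finite width}, the $L^\infty$ and H\"older bounds via Lemmas \ref{L: minus one one} and \ref{lemma:dg-n-m}, and the local energy bound via Caccioppoli (the paper just cites \eqref{eq:interior-caccioppoli} and \eqref{eq:boundary-caccioppoli}, which are proved later). Your observation that a single test function $(u-q)\eta^2$ handles interior and boundary subcubes uniformly is a mild streamlining of the paper's presentation, which formally distinguishes the two cases.
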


	\begin{proof} Considering the function $v(x) = q(x \cdot e_{1})$ as a competitor, one deduces that $\mathscr{F}_{\gamma}(u;Q_{R}) \leq R^{d-1} \left( \frac{ \Lambda }{ 2 } \int_{ - \infty }^{ \infty } | q' |^2 + \theta^* \int_{ - \infty }^{ \infty } W'(q) \right)$, the constant in the parentheses being finite by assumption \eqref{E: finite width}.  
	
The $L^{\infty}$ and H\"{o}lder bounds are direct consequences of Lemmas \ref{L: minus one one} and \ref{lemma:dg-n-m}, and the local energy bound follows from an application of Caccioppoli's inequailty, see, e.g., \eqref{eq:interior-caccioppoli} and \eqref{eq:boundary-caccioppoli} below.
\end{proof}

In addition to the basic estimates of the previous proposition, we need some higher integrability of $\nabla u$ which is provided by the next lemma.

\begin{remark} To simplify the notation in our estimates, we adapt the convention that for any cube $ Q \subseteq \mathbb{R}^{d}$, we denote by $ \varrho Q $ the cube that has the same center as $ Q $ but with the radius multipled by $ \varrho $. \end{remark}

\begin{lemma}[Meyer's estimate]\label{lemma:meyers-estimate}
	Let $ u \in H^1(Q_R) $ be a minimizer of \eqref{eqn: T: elliptic term cube decomposition variation problem} and denote by $ Q $ a cube of side length $ r_c $. There exists some $ p = p(d) > 2 $ such that:

	\begin{enumerate}
	\item If $ 2 Q \subset Q_R $, then
	$$
	\left( \int_{Q} |\nabla u|^p \right)^{\frac{1}{p}}
	\lesssim \left( \int_{2 Q} |\nabla u|^2 + \int_{2Q} W(u) \right)^{\frac{1}{2}} .
	$$

	\item If $ Q \subset Q_R $ and $ \partial Q \cap \partial Q_R \neq \emptyset $, then
	$$
	\begin{aligned}
	\left( \int_{Q \cap Q_R} |\nabla u|^p \right)^{\frac{1}{p}}
	&\lesssim \left( \int_{2 Q \cap Q_R} |\nabla u|^2 + \int_{2Q \cap Q_R} W(u) \right)^{\frac{1}{2}} \\
	&\quad\quad + \left( \int_{2 Q \cap Q_R} |\nabla q|^p \right)^{\frac{1}{p}}.
	\end{aligned}
	$$
	\end{enumerate}
\end{lemma}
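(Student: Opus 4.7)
The proof will follow the classical Caccioppoli--Sobolev--Gehring scheme applied to the Euler--Lagrange equation
\begin{equation*}
-\nabla \cdot (a^\gamma \nabla u) + \theta^\gamma W'(u) = 0 \quad \text{in } Q_R, \qquad u = q \text{ on } \partial Q_R,
\end{equation*}
taking advantage of the bounds $|u| \leq 1$ and $|W'(u)| \lesssim 1$ provided by Lemma \ref{L: minus one one} together with the smoothness of $W$ in a neighborhood of $[-1,1]$.

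\textbf{Interior case.} For a cube $Q$ with $2Q \subset Q_R$, I would fix a cutoff $\eta \in C_c^\infty(2Q)$ with $\eta \equiv 1$ on $Q$ and $|\nabla\eta| \lesssim r_c^{-1}$, and test the Euler--Lagrange equation against $\eta^2(u - u_{2Q})$.  Using ellipticity, Young's inequality on the gradient cross term, and the pointwise bound $|W'(u)||u - u_{2Q}| \lesssim W(u) + (u - u_{2Q})^2$ to absorb the potential contribution, one obtains a Caccioppoli--type reverse Poincar\'e inequality
\begin{equation*}
\fint_Q |\nabla u|^2 \lesssim \frac{1}{r_c^2}\fint_{2Q}(u - u_{2Q})^2 + \fint_{2Q} W(u).
\end{equation*}
Combined with the Sobolev--Poincar\'e inequality at exponent $2_* = \tfrac{2d}{d+2}$ (or its $d \le 2$ analogue), this becomes the reverse H\"older inequality
\begin{equation*}
\left(\fint_Q |\nabla u|^2\right)^{1/2} \lesssim \left(\fint_{2Q} |\nabla u|^{2_*}\right)^{1/2_*} + \left(\fint_{2Q} W(u)\right)^{1/2}.
\end{equation*}
Gehring's lemma with right--hand side then produces an exponent $p = p(d) > 2$ and the first displayed estimate of the lemma, after the Gehring output $\bigl(\fint W(u)^{p/2}\bigr)^{1/p}$ is reduced to $\bigl(\fint W(u)\bigr)^{1/2}$ using the pointwise bound $W(u) \leq \sup_{[-1,1]} W < \infty$ and the fixed size $r_c$ of the cubes.

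\textbf{Boundary case.} When $\partial Q \cap \partial Q_R \neq \emptyset$, I would rerun the same scheme on the function $v = u - q$, extended by zero across $\partial Q_R$.  Testing the equation against $\eta^2 v$ yields a boundary Caccioppoli inequality with an additional term $\fint_{2Q \cap Q_R} |\nabla q|^2$ coming from the inhomogeneous datum.  Since $v$ vanishes on $\partial(2Q \cap Q_R) \cap \partial Q_R$, the boundary form of Sobolev--Poincar\'e still applies, and Gehring carries through to give the second estimate of the lemma with the extra contribution $\bigl(\fint |\nabla q|^p\bigr)^{1/p}$ from the datum.

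\textbf{Main obstacle.} The essential point is that all constants are $\gamma$-independent: the Caccioppoli and Sobolev--Poincar\'e constants depend only on $\lambda, \Lambda, \theta^*, W, r_c, d$, while Gehring's exponent $p$ depends only on the ellipticity ratio, which is itself $\gamma$-independent.  The only non-mechanical step is the bookkeeping of the $W(u)$ term through Gehring--it naturally comes out as $\bigl(\fint W(u)^{p/2}\bigr)^{1/p}$ and must be repackaged as $\bigl(\fint W(u)\bigr)^{1/2}$--which relies on the uniform bound $W(u) \in L^\infty$ from Lemma \ref{L: minus one one} together with Hölder/interpolation on the fixed-size cubes.
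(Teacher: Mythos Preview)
Your overall strategy (Caccioppoli $\Rightarrow$ reverse H\"older $\Rightarrow$ Gehring) is exactly the paper's, and your treatment of the potential term in the Caccioppoli step is in fact slightly slicker than the paper's: the paper keeps $r^2\int W'(u)^2$ on the right-hand side of Caccioppoli, whereas you absorb it immediately via $|W'(u)||u-u_{2Q}|\lesssim W(u)+(u-u_{2Q})^2$. That pointwise bound is correct but not entirely innocent---it amounts to $|W'(u)|^2\lesssim W(u)$ on $[-1,1]$, which holds precisely because of the nondegeneracy assumption \eqref{eqn:w-non-degeneracy} (near the wells $|W'|^2\sim|u\mp 1|^{4\kappa-2}\lesssim|u\mp 1|^{2\kappa}\sim W$ since $\kappa\ge 1$).

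The genuine gap is in your last step. Interpolation between $L^1$ and $L^\infty$ only gives
\[
\Bigl(\fint W(u)^{p/2}\Bigr)^{1/p}\;\le\;\|W(u)\|_{L^\infty}^{\frac{p-2}{2p}}\Bigl(\fint W(u)\Bigr)^{1/p},
\]
and since $1/p<1/2$, the right-hand side is \emph{not} controlled by $(\fint W(u))^{1/2}$ when $\fint W(u)$ is small. The fixed cube size does not help here. This is exactly the small-energy regime, and it is where the paper does real work: it invokes the clearing-out lemma (Lemma~\ref{lemma:clearing-out}) to force $|u\mp 1|\ll 1$, then uses the polynomial structure \eqref{eqn:w-poly} and a Sobolev embedding on $(u\mp 1)^{2\kappa-1}$ to convert the Gehring output back to the energy (see \eqref{eqn:meyer02}). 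The same clearing-out/Sobolev argument applied to $(u\mp 1)^{\kappa}$ would repair your version, but the $L^\infty$-interpolation shortcut you propose does not.
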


Once again, in the previous lemma, we abuse the notation for $q$ as in Remark \ref{R: abuse of notation for q}.

Of importance for us are only estimates in terms of energetic quantities on the right hand side. On a technical level this boils down to replacing $ W'(u) $ terms (that come from standard linear PDE estimates) by $ W(u) $. For the classical double-well potential $ W(u) = (1 - u^2)^2 $ one expects
$$
\left( \int | W'(u) |^p \right)^{\frac{1}{p}} \approx \left( \int | W(u) |^{\frac{p}{2}} \right)^{\frac{1}{p}}
$$
away from the transition of $ u $. This step is made rigorous in the proof of Lemma \ref{lemma:meyers-estimate} in Section \ref{section:pde-estimates} by using the so-called \emph{clearing-out property}, which is well known in the literature,~cf.~Theorem 6 in \cite{Bethuel}, and is adapted to our setting in the next lemma. 

\begin{lemma}[Clearing-out property]\label{lemma:clearing-out}
Let $ Q $ be a cube of side length $ r_c $. For every $ \varepsilon > 0 $ there exists a $ \delta > 0 $ such that, for every $ u \in H^1(Q)  $ that satisfies \eqref{eqn:hom-int-cubes-assumption}, the following implication holds:
$$
\int_{Q} | \nabla u |^2 + \int_{Q} W(u) < \delta
\quad\Rightarrow\quad \min \{ \sup_{Q} | u - 1 |, \sup_{Q} | u + 1 | \} < \varepsilon.
$$
Moreover, the same conclusion holds true for minimizers $ \bar u $ of the energy functional $ \overline{ \mathscr{F} } (\cdot ; Q) $ on $ u + H^1_0(Q) $.
\end{lemma}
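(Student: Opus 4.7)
The argument I propose is by compactness and contradiction. Suppose the conclusion fails for some $\varepsilon_0 > 0$: then there exists a sequence $(u_n)_{n \in \mathbb{N}} \subset H^1(Q)$ satisfying the hypotheses \eqref{eqn:hom-int-cubes-assumption} with
$$
\int_Q |\nabla u_n|^2 + \int_Q W(u_n) \to 0,
\qquad
\min\{\sup_Q |u_n - 1|,\, \sup_Q |u_n + 1|\} \geq \varepsilon_0
$$
for every $n$. Since the $u_n$ take values in $[-1,1]$ and satisfy a uniform Hölder bound $[u_n]_{C^{0,\alpha}(\overline Q)} \leq C_{\text{Hölder}}$, the Arzelà-Ascoli theorem yields, along a subsequence (not relabeled), a limit $u_\infty \in C^{0,\alpha}(\overline Q)$ with $u_n \to u_\infty$ uniformly on $\overline Q$.

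Next, I would combine this uniform convergence with the vanishing energy. Weak lower semicontinuity of the Dirichlet integral applied to $u_n \rightharpoonup u_\infty$ in $H^1(Q)$ gives $\int_Q |\nabla u_\infty|^2 \leq \liminf_n \int_Q |\nabla u_n|^2 = 0$, so $u_\infty$ is constant on $Q$. Simultaneously, continuity of $W$ combined with uniform convergence and dominated convergence yields $\int_Q W(u_\infty) = \lim_n \int_Q W(u_n) = 0$, so $W(u_\infty) = 0$ and hence $u_\infty \in \{-1, +1\}$ by the assumptions on $W$ in Section \ref{S: assumptions}. Uniform convergence then gives $\sup_Q |u_n - u_\infty| \to 0$, contradicting the lower bound on $\min\{\sup_Q |u_n - 1|, \sup_Q |u_n + 1|\}$.

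For the second part of the lemma, the same contradiction argument applies provided the minimizer $\bar u$ of $\overline{\mathscr{F}}(\cdot, Q)$ on $u + H^1_0(Q)$ satisfies analogs of the three bounds in \eqref{eqn:hom-int-cubes-assumption}, uniformly in the input $u$. The first two are straightforward: $-1 \leq \bar u \leq 1$ follows by truncation as in Lemma \ref{L: minus one one}, and the bound on $\int_Q |\nabla \bar u|^2 + \int_Q W(\bar u)$ is controlled by $\overline{\mathscr{F}}(u;Q)$ via $\overline{\mathscr{F}}(\bar u;Q) \leq \overline{\mathscr{F}}(u;Q)$, hence by a constant multiple of $\delta$. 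The Hölder bound on $\bar u$ is obtained from boundary regularity applied to the Euler-Lagrange equation $-\nabla \cdot \bar a \nabla \bar u = -\bar\theta W'(\bar u)$, which has constant (hence smooth) coefficients, a uniformly bounded right-hand side (since $\bar u \in [-1,1]$ and $W$ is $C^2$ in a neighborhood of $[-1,1]$), and Hölder boundary data $u|_{\partial Q}$ with a uniform Hölder constant.

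The genuinely delicate step is this last one: I need the Hölder constant of $\bar u$ to depend only on the parameters $\lambda, \Lambda, \theta_*, \theta^*, W, r_c, C_{\text{Hölder}}$, and not on the specific sequence. For interior Hölder regularity this follows directly from De Giorgi-Nash-Moser as in Lemma \ref{lemma:dg-n-m}, but up to the boundary one needs classical boundary Schauder or De Giorgi-type estimates for uniformly elliptic equations with Hölder boundary data on a Lipschitz domain; once this uniform estimate is in hand, the contradiction argument of the first two paragraphs carries over verbatim, with $(u_n)$ replaced by a sequence of minimizers $(\bar u_n)$.
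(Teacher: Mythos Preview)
Your proof is correct but follows a different route than the paper. The paper gives a direct, constructive argument: it defines the ``bad set'' $A_{\varepsilon/2} = \{|u-1| \geq \varepsilon/2 \text{ and } |u+1| \geq \varepsilon/2\}$, uses the lower bound on $W$ away from $\pm 1$ to show $|A_{\varepsilon/2}| \lesssim_\varepsilon \int_Q W(u) < \delta$, and then uses the uniform H\"older bound to find a radius $r=r(\varepsilon)$ such that any ball $B_r$ centered in $Q\setminus A_{\varepsilon/2}$ stays inside $\{|u-1|<\varepsilon\} \cup \{|u+1|<\varepsilon\}$. Choosing $\delta$ so small that $A_{\varepsilon/2}$ cannot contain a ball of radius $r$, every point of $Q$ lies within $r$ of $Q\setminus A_{\varepsilon/2}$, and continuity of $u$ forces $Q$ entirely into one of the two sets. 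Your compactness-by-contradiction argument via Arzel\`a--Ascoli is equally valid and perhaps conceptually cleaner, but it is soft: it gives no explicit dependence of $\delta$ on $\varepsilon$, whereas the paper's argument in principle does (even if the paper does not spell it out). For the $\bar u$ part you proceed exactly as the paper does, invoking boundary De~Giorgi--Nash--Moser regularity (the paper cites \cite[Theorem~8.29]{GilbargTrudinger}) to transfer the uniform H\"older bound from $u$ to $\bar u$; your observation that the equation for $\bar u$ has constant coefficients is correct but not needed, since the cited result applies to measurable coefficients.
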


The clearing out property will turn out to be helpful due to the following observation. By \eqref{eqn:w-non-degeneracy} both $ W $ and $ W' $ look like a polynomial near the minima of $ W $. Indeed, let us focus on the minimum at $ u = 1 $, where
\begin{align*}
W(u) = W^{ ( 2 \kappa ) }(1) ( u - 1 )^{ 2 \kappa } + r(u) ( u - 1 )^{ 2 \kappa }
\quad { \rm with } \quad \lim_{ u \rightarrow 1} r(u) = 0,
\end{align*}
so that
\begin{align}\label{eqn:w-poly}
\frac{ W(u) }{ W^{ ( 2 \kappa ) }(1) | u - 1 |^{ 2 \kappa } } \sim 1, \quad
\frac{ W'(u) }{ W^{ ( 2 \kappa ) }(1) | u - 1 |^{ 2 \kappa -1 } } \sim 1 \quad { \rm near } \quad u = 1.
\end{align}
In the small energy regime, the clearing-out lemma shows that we can appeal to \eqref{eqn:w-poly}.

Lastly, when comparing $ \F_{\gamma} $ to $ \overline{\F} $ using homogenization techniques, we will also need some estimates for the solution of the homogeneous problem on each subcube.

\begin{lemma}[Meyer's estimate]\label{lemma:homogeneous-calderon-zygmund-estiamte}
	Let $ Q $ be a cube of side length $ r_c $ and suppose $ \bar u \in H^1(Q) $ minimizes $ \overline{\mathscr{F}}( \cdot ; Q) $ on $ u + H^1_0(Q) $, where the boundary datum $ u $ satisfies \eqref{eqn:hom-int-cubes-assumption}. Then
	$$
	\left( \int_Q | \nabla \bar u |^p \right)^{\frac{1}{p}} \lesssim \left( \int_{Q} | \nabla \bar u |^2 + \int_{Q} W(\bar u) \right)^{\frac{1}{2}} + \left( \int_{Q} |\nabla u |^p \right)^{\frac{1}{p}}
	$$
	for some $ p = p(d) > 2 $.
\end{lemma}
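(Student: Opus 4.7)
The plan is to follow the same strategy as in Lemma \ref{lemma:meyers-estimate}, namely to derive a reverse Hölder inequality from the Euler-Lagrange equation and then apply Gehring's lemma. By the same argument as in Lemma \ref{L: minus one one}, any minimizer $\bar u$ satisfies $-1 \leq \bar u \leq 1$ in $Q$ (since $u$ takes values in $[-1,1]$ by \eqref{eqn:hom-int-cubes-assumption} and truncation strictly decreases $\overline{\mathscr{F}}$ outside this interval), and it solves
\begin{equation*}
-\nabla \cdot \bar a \nabla \bar u + \bar \theta W'(\bar u) = 0 \quad \text{in } Q, \qquad \bar u = u \quad \text{on } \partial Q.
\end{equation*}

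First I would establish an interior reverse Hölder inequality. For any ball $B_r(x_0)$ with $B_{2r}(x_0) \subset Q$, test the equation with $(\bar u - c)\eta^2$ where $\eta$ is the standard cutoff and $c = \fint_{B_r(x_0)} \bar u$. Using ellipticity of $\bar a$ and Cauchy-Schwarz, this yields a Caccioppoli-type estimate of the form
\begin{equation*}
\int_{B_{r/2}(x_0)} |\nabla \bar u|^2 \lesssim \frac{1}{r^2} \int_{B_r(x_0)} |\bar u - c|^2 + \int_{B_r(x_0)} |W'(\bar u)||\bar u - c|.
\end{equation*}
By Sobolev-Poincaré on the first term, one obtains the reverse Hölder form $\fint_{B_{r/2}} |\nabla \bar u|^2 \lesssim \bigl(\fint_{B_r} |\nabla \bar u|^{2_*}\bigr)^{2/2_*} + \fint_{B_r} W(\bar u)$ after the $W'$-term is converted to a $W$-term (see the next paragraph).

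For cubes $Q' = B_r(x_0)$ meeting $\partial Q$, use the admissible test function $(\bar u - u)\eta^2 \in H^1_0(Q)$. This yields the analogous boundary Caccioppoli
\begin{equation*}
\int_{B_{r/2} \cap Q} |\nabla \bar u|^2 \lesssim \frac{1}{r^2}\int_{B_r \cap Q} |\bar u - u|^2 + \int_{B_r \cap Q} W(\bar u) + \int_{B_r \cap Q} |\nabla u|^2,
\end{equation*}
which, combined with the Sobolev-Poincaré inequality applied to the $H^1_0$ function $\bar u - u$, becomes a reverse Hölder inequality with the additional term $\bigl(\fint_{B_r \cap Q} |\nabla u|^{2_*}\bigr)^{2/2_*}$ on the right. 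Gehring's lemma then upgrades the $L^2$ integrability of $\nabla \bar u$ to $L^p$ integrability for some $p = p(d) > 2$, yielding the stated bound after summing over a finite covering of $Q$ by interior and boundary balls of scale comparable to $r_c$.

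The main obstacle is the replacement of the $|W'(\bar u)||\bar u - c|$ contribution by $W(\bar u)$, exactly as in Lemma \ref{lemma:meyers-estimate}. This is handled by a dichotomy: either $\int_{B_r} |\nabla \bar u|^2 + W(\bar u) \geq \delta$, in which case the energy itself dominates and the $W'$-term can be absorbed using the global $L^\infty$ bound on $W'$ on $[-1,1]$; or the local energy is below the threshold $\delta$, in which case the clearing-out property (Lemma \ref{lemma:clearing-out}, which applies also to $\bar u$ as noted in its statement) forces $\bar u$ to be close to one of the wells $\pm 1$, and the polynomial comparison \eqref{eqn:w-poly} gives $|W'(\bar u)| \lesssim W(\bar u)^{(2\kappa - 1)/(2\kappa)}$, so that $|W'(\bar u)||\bar u - c| \lesssim W(\bar u) + |\bar u - c|^{2\kappa}$. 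Since $|\bar u - c| \lesssim 1$, this contribution can be absorbed into the $W(\bar u)$ term (or, via Poincaré, into the gradient term) to close the argument.
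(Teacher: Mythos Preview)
Your overall strategy --- Caccioppoli, reverse H\"older, Gehring, and a clearing-out dichotomy to replace $W'(\bar u)$ by $W(\bar u)$ --- is exactly the one the paper uses (it simply says: repeat the proof of Lemma~\ref{lemma:meyers-estimate} with $(u,q)$ replaced by $(\bar u,u)$). The difference is in the \emph{order} of the steps, and your ordering has a genuine gap.

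You try to convert $|W'(\bar u)||\bar u - c|$ into $W(\bar u)$ \emph{at every scale $r$}, so that the reverse H\"older inequality already has $\fint_{B_r} W(\bar u)$ on the right before Gehring is applied. But the dichotomy you describe is not scale-invariant. In the large-energy branch $\int_{B_r}(|\nabla \bar u|^2 + W(\bar u)) \geq \delta$, absorbing the $W'$-term via the $L^\infty$ bound gives $\int_{B_r}|W'(\bar u)||\bar u - c| \lesssim |B_r| \sim r^d$, and comparing this to the energy introduces a factor $r^d/\delta$ in front of $\fint_{B_r}|\nabla \bar u|^2$ on the right-hand side. For Gehring's lemma this coefficient must be uniformly small over all $r \leq r_c$, which fails for $r$ near $r_c$ once $\delta$ is small. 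In the small-energy branch you cite Lemma~\ref{lemma:clearing-out}, but that lemma is stated only for cubes of side length $r_c$, not for arbitrary sub-balls $B_r$.

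The paper's proof of Lemma~\ref{lemma:meyers-estimate} avoids this by keeping the potential term as $r^2\int W'(\bar u)^2$ in the Caccioppoli inequality~\eqref{eq:interior-caccioppoli}--\eqref{eq:boundary-caccioppoli} and carrying $r(\fint W'(\bar u)^2)^{1/2}$ through the reverse H\"older step~\eqref{eq:interior-reverse-hoelder}--\eqref{eq:boundary-reverse-hoelder} as an inhomogeneous term; this has the right scaling and Gehring's lemma (as in \cite[Theorem~3.1]{GM86}) tolerates it, producing $(\int_Q|W'(\bar u)|^p)^{1/p}$ on the right at the fixed scale $r_c$. Only \emph{then}, at that single scale, is the clearing-out dichotomy invoked (via~\eqref{eqn:meyer02}) to replace $(\int_Q|W'(\bar u)|^p)^{1/p}$ by $(\int_Q|\nabla\bar u|^2 + W(\bar u))^{1/2}$. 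Reordering your argument this way closes the gap.
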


Let us remark that the above estimate is the same statement as the Meyer's estimate in Lemma \ref{lemma:meyers-estimate} for the heterogeneous medium. In fact, the same proof applies. We comment on the details in Section \ref{section:pde-estimates}.

Finally, we will need interior Schauder estimates for minimizers $\bar{u}$ of the homogenized energy $\overline{\mathscr{F}}$.  Once again, the interest here is our bounds are controlled by the energy itself.

\begin{lemma}[Interior Schauder estimates]\label{lemma:homogeneous-interior-schauder-estimate}
	Let $ Q $ be a cube of side length $ r_c $ and suppose $ \bar u \in H^1(Q) $ minimizes $ \overline{\mathscr{F}}( \cdot ; Q) $ on $ u + H^1_0(Q) $, where the boundary datum $ u $ satisfies \eqref{eqn:hom-int-cubes-assumption}.  Then
	$$
	\sup_{\varrho Q} | \nabla^2 \bar u | + | \nabla \bar u | \lesssim_{\varrho} \left( \int_{Q} | \nabla \bar u |^2 + \int_{Q} W(\bar u) \right)^{\frac{1}{2}}
	$$
	for all $ 0 < \varrho < 1 $.
\end{lemma}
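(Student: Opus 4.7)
The plan is a two-step bootstrap exploiting that $\bar a$ has constant entries, so that the differentiated Euler--Lagrange equation is genuinely linear. Writing $E \coloneqq \int_Q | \nabla \bar u |^2 + W( \bar u )$ for the quantity appearing on the right of the lemma, we note that $\bar u \in H^1(Q)$ weakly solves
\[
-\nabla \cdot (\bar a \nabla \bar u) + \bar \theta W'(\bar u) = 0 \quad \text{in } Q,
\]
and, since the boundary datum $u$ takes values in $[-1,1]$ by \eqref{eqn:hom-int-cubes-assumption}, Proposition \ref{P: uniqueness comparison} gives $-1 \leq \bar u \leq 1$ in $Q$. Because $W \in C^2$ in a neighborhood of $[-1,1]$, both $\|W'(\bar u)\|_{L^\infty(Q)}$ and $\|W''(\bar u)\|_{L^\infty(Q)}$ are bounded by constants depending only on $W$. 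Rewriting the equation as $-\bar a_{ij} \partial_{ij} \bar u = -\bar\theta W'(\bar u) \in L^\infty(Q)$, the interior Calder\'on--Zygmund $L^p$ estimate for the constant-coefficient operator $-\bar a_{ij}\partial_{ij}$ yields $\bar u \in W^{2,p}_{\text{loc}}(Q)$ for every finite $p$, which legitimizes differentiating the equation weakly.

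Next, we differentiate: for each $k = 1, \dots, d$, $w_k \coloneqq \partial_k \bar u \in W^{1,2}_{\text{loc}}(Q)$ weakly solves
\[
-\nabla \cdot (\bar a \nabla w_k) + \bar \theta W''(\bar u)\, w_k = 0 \quad \text{in } Q,
\]
a linear, uniformly elliptic equation with bounded zeroth-order coefficient. Moser's local boundedness theorem therefore gives, for any $\varrho < \varrho_1 < 1$,
\[
\|w_k\|_{L^\infty(\varrho_1 Q)} \lesssim_{\varrho_1, r_c} \|w_k\|_{L^2(Q)} \leq \|\nabla \bar u\|_{L^2(Q)} \lesssim E^{1/2},
\]
where the last inequality uses $\bar a \geq \lambda\, \text{Id}$ and $\bar\theta \geq \theta_*$. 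Summing over $k$ delivers the required bound on $\nabla \bar u$.

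For the Hessian we iterate the same Calder\'on--Zygmund estimate, this time applied to the equation for $w_k$: choosing any $p > d$,
\[
\|w_k\|_{W^{2,p}(\varrho Q)} \lesssim \|w_k\|_{L^p(\varrho_1 Q)} + \|\bar\theta W''(\bar u) w_k\|_{L^p(\varrho_1 Q)} \lesssim \|w_k\|_{L^\infty(\varrho_1 Q)} \lesssim E^{1/2},
\]
and the Sobolev embedding $W^{2,p} \hookrightarrow C^{1, 1 - d/p}$ then yields $\|\nabla w_k\|_{L^\infty(\varrho Q)} \lesssim \|w_k\|_{W^{2,p}(\varrho Q)} \lesssim E^{1/2}$. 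Summing over $k$ completes the proof. We anticipate no substantive obstacle in the argument; the one structural remark worth making is that the gain of a factor of $E^{1/2}$ over the naive $L^\infty$ bound on $\nabla \bar u$ arises exclusively from Moser's linear estimate controlling $\|w_k\|_{L^\infty}$ by $\|w_k\|_{L^2}$, and such a linear bound is only available because the constancy of $\bar a$ permits differentiating the semilinear equation into a genuinely linear one for $\nabla \bar u$.
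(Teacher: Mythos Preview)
Your argument is correct and takes a genuinely different route from the paper. The paper works directly with $\bar u$: it combines De Giorgi--Nash--Moser and Schauder estimates to reduce matters to bounding $\sup_{\varrho'' Q} |W'(\bar u)|$ by $E^{1/2}$, and then carries out a Moser-type iteration on $|\bar u - 1|^{2\kappa-1}$ that relies on the clearing-out lemma and the nondegeneracy assumption \eqref{eqn:w-non-degeneracy} on $W$ at its wells. You instead differentiate the equation---legitimate precisely because $\bar a$ is constant---to obtain a \emph{linear} equation for $w_k=\partial_k\bar u$ with merely bounded zeroth-order coefficient $\bar\theta W''(\bar u)$, whereupon Moser's local boundedness gives $\|\nabla\bar u\|_{L^\infty}\lesssim\|\nabla\bar u\|_{L^2}$ in one stroke, and a further Calder\'on--Zygmund step handles $\nabla^2\bar u$. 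Your approach is shorter, avoids the clearing-out lemma and the small/large energy dichotomy, and does not use the structural exponent $\kappa$ at all---only $W\in C^2$ near $[-1,1]$. The paper's route has the side benefit of establishing the bound \eqref{eqn:moser-conclusion} on $\sup|W'(\bar u)|$, which is quoted elsewhere; note, however, that this bound also follows immediately from your estimate via the equation itself, since $\bar\theta\,|W'(\bar u)|=|\bar a_{ij}\partial_{ij}\bar u|\lesssim|\nabla^2\bar u|\lesssim E^{1/2}$ on $\varrho Q$.
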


As remarked earlier, it is crucial to have an estimate with energetic quantities on the right hand side. The r.h.s.~of the classical Schauder estimate for the Euler-Lagrange equation depends on the norm $ \Vert W'(u) \Vert_{ C^{0,\alpha} } $. Via the De Giorgi-Nash-Moser estimate this can be replaced by\footnote{Here $ L^{\frac{d}{2}+} $ is supposed to denote $ L^{ \frac{d}{2} + \varepsilon } $ for some arbitrary small $ \varepsilon > 0 $.} $ \| W'(u) \|_{ L^{ \frac{d}{2} + } }  $. For the classical quartic well this expression scales like a $ L^{ \frac{d}{2} + } $-norm while $ ( \int_{Q} W(u) )^{\frac{1}{2}} $ scales like a $ L^2 $-norm near the constant states $ \pm 1 $. This mismatch is resolved below by using a variant of Moser iteration.

\subsection{Relative Error Estimates on Subcubes} In this section, we sketch the main ingredient for the proof of Theorem \ref{T: elliptic term cube decomposition}, a slightly quantified homogenization result on the subcubes of the cube decomposition \eqref{eqn:partition-of-small-cubes}.

In the rest of this section we fix three constants: the length scale $ r_c $ coming from Proposition \ref{prop:convexity} and the energetic and H\"{o}lder bounds $ C_{ \text{Energy} } $ and $C_{ \text{H\"{o}lder} }$ from Lemma \ref{prop:assumptions-homogenization-interior-qubes}. Since these constants are fixed once and for all, in what follows, we suppress the dependence of further constants upon them.

Our goal for the remainder of this section is to prove the following qualitative homogenization result that will be used to derive Theorem \ref{T: elliptic term cube decomposition}.

\begin{prop}\label{prop:homogenization-interior-qubes}
	Let $ Q $ be a cube of side length $ r_c $ and suppose $ u \in H^1(Q) $ is a minimizer of $ \mathscr{\F}_{\gamma} $ subject to its own boundary conditions, which satisfies \eqref{eqn:hom-int-cubes-assumption} and $ u \in W^{1,p}(Q) $ for some $ p > 2 $. Denote by $ \bar u \in H^1(Q) $ a minimizer of $  \overline{\F}( \cdot , Q ) $ on $ u + H^1_0(Q) $. Then for every $ \alpha > 0 $ there exists a $ \delta > 0 $ such that if
	$$
	| \bar { a } (Q)- \bar a | 
	+ \left( \fint_{Q} | ( \phi_{Q}^{\gamma}, \sigma_{Q}^{\gamma} ) - \fint_{Q} ( \phi_{Q}^{\gamma}, \sigma_{Q}^{\gamma} ) |^2  \right)^{ \frac{1}{2} }+ \| \theta^{\gamma} - \bar \theta \|_{ H^{-1}(Q) } < \delta,
	$$ 
then
	$$
	\begin{aligned}
	| \overline{\mathscr{F}}(\bar u; Q) - \F_{\gamma}(u; Q) | 
	\leq \alpha \left( \F_{\gamma}(u; Q) + \left( \int_{Q} | \nabla u |^{p} \right)^{\frac{2}{p}} \right).
	\end{aligned}
	$$
\end{prop}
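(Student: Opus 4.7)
The strategy is to use a two-scale expansion combined with uniform convexity to produce matching upper and lower bounds for $\F_{\gamma}(u;Q)$ in terms of $\overline{\F}(\bar u;Q)$, with the error absorbed into $\alpha \cdot (\F_{\gamma}(u;Q) + (\int_Q|\nabla u|^p)^{2/p})$. Fix a cutoff $\eta \in C_c^\infty(Q)$ with $\eta \equiv 1$ on $(1-\rho) Q$ and $|\nabla \eta| \lesssim \rho^{-1}$, where $\rho \in (0,1)$ is a small parameter to be chosen in terms of $\alpha$. Define the modified two-scale expansion
\[
\tilde u := \bar u + \eta(\phi_i^{\gamma} - c_i)\partial_i \bar u, \qquad c_i := \fint_Q \phi_i^{\gamma}.
\]
Since $\eta$ vanishes on $\partial Q$ and $u|_{\partial Q} = \bar u|_{\partial Q}$, we have $\tilde u - u \in H_0^1(Q)$, so $\tilde u$ is a valid competitor for $u$.

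\emph{Upper bound.} By Lemma \ref{L: minus one one}, both $u$ and $\bar u$ take values in $[-1,1]$, so $u$ also minimizes $\F_{\gamma}^{\rm reg}(\cdot; Q)$ on $u + H_0^1(Q)$ and $\F_{\gamma}^{\rm reg}(u;Q) = \F_{\gamma}(u;Q)$. Proposition \ref{prop:convexity} together with minimality of $u$ yields
\[
\F_{\gamma}(u;Q) + \tfrac{\lambda}{4} \int_Q |\nabla u - \nabla \tilde u|^2 \leq \F_{\gamma}^{\rm reg}(\tilde u; Q).
\]
In the bulk $\{\eta = 1\}$ we have $\nabla \tilde u = \partial_i \bar u(e_i + \nabla \phi_i^{\gamma}) + (\phi_i^{\gamma} - c_i)\nabla \partial_i \bar u$, and combining with the Helmholtz decomposition \eqref{eqn:a-helmholtz-decomposition} and integration by parts (exploiting the antisymmetry of $\sigma_i^{\gamma}$ to shift derivatives onto the smooth $\bar u$) gives
\[
\tfrac{1}{2}\int_Q a^{\gamma} \nabla \tilde u \cdot \nabla \tilde u = \tfrac{1}{2}\int_Q \bar a \nabla \bar u \cdot \nabla \bar u + \mathcal{E}_{\rm grad},
\]
where $|\mathcal{E}_{\rm grad}|$ is bounded by the $L^2$-norm of $(\phi^{\gamma}, \sigma^{\gamma}) - \fint_Q(\phi^{\gamma}, \sigma^{\gamma})$ (at most $\delta$ by hypothesis) times $\|\nabla^2 \bar u\|_{L^\infty((1-\rho)Q)}$ (controlled via the interior Schauder estimate of Lemma \ref{lemma:homogeneous-interior-schauder-estimate} by a $\rho$-dependent constant times the energy $(\int_Q |\nabla \bar u|^2 + W(\bar u))^{1/2}$), plus boundary-layer contributions on $Q \setminus (1-\rho) Q$ handled by H\"{o}lder's inequality together with Meyer's estimate (Lemma \ref{lemma:meyers-estimate}), giving a term of order $\rho^{1-2/p}(\int_Q |\nabla u|^p)^{2/p}$. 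For the potential, one splits
\[
\int_Q \theta^{\gamma} W_{\rm reg}(\tilde u) - \int_Q \bar\theta W(\bar u) = \int_Q \theta^{\gamma}(W_{\rm reg}(\tilde u) - W(\bar u)) + \int_Q (\theta^{\gamma} - \bar\theta)W(\bar u);
\]
the first summand is controlled by the Lipschitz bound on $W_{\rm reg}$ together with $\|\tilde u - \bar u\|_{L^2(Q)} \lesssim \delta$, while the second is estimated by $H^{-1}$-duality against a cutoff of $W(\bar u) \in H^1(Q)$, whose $H^1_0$-norm is bounded by $\|\nabla \bar u\|_{L^2} + \rho^{-1}\|W(\bar u)\|_{L^2}$. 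Choosing $\rho$ small in terms of $\alpha$ and then $\delta$ small in terms of $\rho$ and $\alpha$ absorbs all errors, proving the upper bound and, through the convexity inequality, the $H^1$-closeness $\|\nabla u - \nabla \tilde u\|_{L^2(Q)}^2 \lesssim \alpha \cdot (\cdots)$.

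\emph{Lower bound and main obstacle.} Since $u$ is itself a valid competitor in the $\overline{\F}$-minimization defining $\bar u$, we have $\overline{\F}(\bar u;Q) \leq \overline{\F}(u;Q)$, and it remains to estimate
\[
\overline{\F}(u;Q) - \F_{\gamma}(u;Q) = \tfrac{1}{2}\int_Q(\bar a - a^{\gamma})\nabla u \cdot \nabla u + \int_Q(\bar\theta - \theta^{\gamma})W(u).
\]
Because $u$ is only in $W^{1,p}$ we avoid direct integration by parts on $u$ by splitting $\nabla u = \nabla \tilde u + \nabla(u - \tilde u)$, so that the dominant term $\tfrac{1}{2}\int_Q(\bar a - a^{\gamma})\nabla \tilde u \cdot \nabla \tilde u$ is handled by the same Helmholtz-IBP computation as in the upper bound (with derivatives now landing on $\bar u$), while the cross and quadratic remainders are controlled by Cauchy-Schwarz against $\|\nabla u - \nabla \tilde u\|_{L^2}$, exploiting the $H^1$-closeness established above. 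The potential discrepancy is treated analogously by $H^{-1}$-duality, noting that $\nabla W(u) = W'(u)\nabla u \in L^2(Q)$ by the $W^{1,p}$ hypothesis and the boundedness of $W'$ on $[-1,1]$. The main technical obstacle throughout is the limited $W^{1,p}$-regularity of $u$: every integration by parts involving a corrector must transfer derivatives onto either the Schauder-regular $\bar u$ or onto the $H^1$-small difference $u - \tilde u$. A secondary delicacy is the $H^{-1}$-duality for the potential term, whose $H^1_0(Q)$ test-function requirement forces cutoff-based decompositions of $W(\bar u)$ and $W(u)$ whose boundary-layer errors must be absorbed via the $W^{1,p}$ hypothesis on $u$.
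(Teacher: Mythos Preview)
Your lower-bound argument has a genuine gap. After invoking $\overline{\F}(\bar u;Q) \leq \overline{\F}(u;Q)$, you need $\overline{\F}(u;Q) - \F_\gamma(u;Q) \leq \alpha(\cdots)$, and you propose to split $\nabla u = \nabla\tilde u + \nabla(u - \tilde u)$ and handle $\int_Q(\bar a - a^\gamma)\nabla\tilde u\cdot\nabla\tilde u$ by ``the same Helmholtz-IBP computation.'' This term is not small. Using $a^\gamma(e_i + \nabla\phi_i^\gamma) = \bar a e_i + \nabla\cdot\sigma_i^\gamma$ one finds, in the bulk, $(\bar a - a^\gamma)\nabla\tilde u \approx (\bar a\nabla\phi_i^\gamma - \nabla\cdot\sigma_i^\gamma)\partial_i\bar u$, so that $\int_Q(\bar a - a^\gamma)\nabla\tilde u\cdot\nabla\tilde u$ contains $\int_Q \bar a\nabla\phi_i^\gamma\cdot\nabla\phi_j^\gamma\,\partial_i\bar u\,\partial_j\bar u$, which is of order $\F_\gamma(u;Q)$ and cannot be integrated by parts onto a small corrector without landing a derivative on $\nabla\phi^\gamma$. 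The two-scale expansion is designed so that $\int a^\gamma|\nabla\tilde u|^2 \approx \int\bar a|\nabla\bar u|^2$, \emph{not} so that $\int\bar a|\nabla\tilde u|^2 \approx \int a^\gamma|\nabla\tilde u|^2$; already in the one-dimensional linear case ($\theta\equiv 1$, $W\equiv 0$) one checks $\overline{\F}(u;Q) - \F_\gamma(u;Q) \gtrsim \F_\gamma(u;Q)$ whenever $a$ is nonconstant. Relatedly, your derivation of the $H^1$-closeness $\|\nabla(u-\tilde u)\|_{L^2}^2 \lesssim \alpha(\cdots)$ is circular: from $\F_\gamma(u) + \tfrac{\lambda}{4}\|\nabla(u-\tilde u)\|^2 \leq \F_\gamma^{\rm reg}(\tilde u) \approx \overline{\F}(\bar u)$ you only get $\|\nabla(u-\tilde u)\|^2 \lesssim \overline{\F}(\bar u) - \F_\gamma(u) + \text{small}$, and the first term on the right is precisely the lower bound you have not yet proved.

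The paper avoids both issues by never passing through $\overline{\F}(u;Q)$. Instead it uses the \emph{two-sided} convexity bound $|\F_\gamma^{\rm reg}(\tilde u;Q) - \F_\gamma^{\rm reg}(u;Q)| \lesssim \int_Q|\nabla(\tilde u - u)|^2$ (identity \eqref{eqn:ee-diff-convexity}, valid because $u$ is a critical point), and then establishes the $H^1$-closeness \emph{independently} via a PDE energy estimate (Lemma~\ref{lemma:h1-estimate-cut-off-two-scale-expansion}): one writes the equation for $u - \tilde u$ using the intertwining identity \eqref{eq:cut-off-two-scale-intertwining-property}, tests with $u - \tilde u$, and controls the nonlinear term $\int(W'(u)-W'(\bar u))(u-\tilde u)$ by the $L^2$-closeness $\|u - \bar u\|_{L^2}^2 \leq \alpha\F_\gamma(u)$ of Lemma~\ref{lemma:l2-homogenization-estimate} (itself proved by a compactness and clearing-out argument). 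Combined with your upper-bound computation $|\F_\gamma^{\rm reg}(\tilde u;Q) - \overline{\F}(\bar u;Q)|$ small --- which is essentially the paper's Step~3 --- this yields both directions at once.
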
	

Before we sketch the proof of Proposition \eqref{prop:homogenization-interior-qubes}, let us recall from Lemma \ref{lemma:meyers-estimate} that there is no loss in assuming $ u \in W^{1,p}(Q) $ as long as $ p $ is sufficiently close to two.

The proof of Proposition \ref{prop:homogenization-interior-qubes} is based on PDE estimates for the cut-off two-scale expansion that are summarized in the next lemma.

\begin{lemma}\label{lemma:h1-estimate-cut-off-two-scale-expansion}
	Let $ u $ and $ \bar u $ be as in Proposition \ref{prop:homogenization-interior-qubes}. Furthermore let $ \eta \in C^{\infty}(\R^d) $ be a smooth cut-off function with support $ \supp{\eta} $ compactly contained in $ Q $, $ 0 \le \eta \le 1 $ and fix some exponent $ p > 2 $ sufficiently close to two. For every $ \alpha > 0 $ there exists a $ \delta > 0 $ such that if $ \fint_{Q} ( \phi_{Q}^{\gamma}, \sigma_{Q}^{\gamma} ) = 0 $ and
	$$
	| \bar { a } (Q) - \bar a | + \left( \fint_{Q} | ( \phi_{Q}^{\gamma}, \sigma_{Q}^{\gamma} ) |^2 \right)^{ \frac{1}{2} } + \| \theta^{\gamma} - \bar \theta \|_{ H^{-1}(Q) } \le \delta,
	$$
	then
	$$
	\begin{aligned}
		&\int_{Q} | \nabla ( u - (1 + \eta \phi_{Q,e_i}^{\gamma} \partial_i) \bar u ) |^2  \\
		&\qquad \leq \alpha \F_{\gamma}(u; Q) + \F_{\gamma}(u; Q) | \bar {a} (Q) - \bar a |^{2} \phantom{\fint} \\
		&\qquad\quad + C_{\eta} \F_{\gamma}(u; Q) \left(  \fint_{Q} | ( \phi_{Q}^{\gamma}, \sigma_{Q}^{\gamma} ) |^2 + \| \theta^{\gamma} - \bar \theta \|_{ H^{-1}(Q) }^2  \right) \\
		&\qquad\quad + C \left( \F_{\gamma}(u; Q) + \left( \int_{Q} | \nabla u |^{p} \right)^{\frac{2}{p}} \right) \left( \int_{Q} (1 - \eta)^{ \frac{2p}{p-2} } \right)^{\frac{p-2}{p}}
	\end{aligned}
	$$
	where $ C $, resp.~$ C_{\eta} $, denotes a constant, which is independent of, resp.~depends on, $ \eta $.\end{lemma}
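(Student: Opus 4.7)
By Lemma \ref{L: minus one one} and the definition of $W_{\rm reg}$, the minimizer $u$ satisfies $\F_\gamma^{\rm reg}(u;Q) = \F_\gamma(u;Q)$ and is the unique minimizer of the strictly convex functional $\F_\gamma^{\rm reg}(\cdot;Q)$ on $u + H^1_0(Q)$; the analogous statement holds for $\bar u$ relative to $\overline{\F}^{\rm reg}$. Since $\eta$ is compactly supported in $Q$, both $v - u$ and the ``dual'' two-scale expansion $\tilde u - \bar u$ lie in $H^1_0(Q)$, where $\tilde u := u - \eta\phi^\gamma_i\partial_i\bar u$. Crucially, $\tilde u - \bar u = -(v - u)$. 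Applying Proposition \ref{prop:convexity} at \emph{both} minimizers and adding the resulting inequalities yields
\begin{equation*}
\frac{\lambda}{2}\int_Q |\nabla(v-u)|^2 \;\leq\; \underbrace{\big[\F_\gamma^{\rm reg}(v) - \overline{\F}^{\rm reg}(\bar u)\big]}_{=:\,A} \;+\; \underbrace{\big[\overline{\F}^{\rm reg}(\tilde u) - \F_\gamma^{\rm reg}(u)\big]}_{=:\,B}.
\end{equation*}
Both brackets are homogenization residuals, and the remainder of the proof is devoted to estimating them by two-scale expansion arguments.

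\textbf{Helmholtz expansion.} For $A$, expand $\nabla v = \nabla\bar u + (\nabla\eta)\phi^\gamma_i\partial_i\bar u + \eta(\nabla\phi^\gamma_i)\partial_i\bar u + \eta\phi^\gamma_i\nabla\partial_i\bar u$ and apply the rescaled identity $a^\gamma e_i = \bar a e_i - a^\gamma\nabla\phi^\gamma_i + \nabla\cdot\sigma^\gamma_i$ to the $\bar a\nabla\bar u$ term, so that the leading quadratic form $\int\bar a\nabla\bar u\cdot\nabla\bar u$ cancels exactly. For $B$, proceed symmetrically with $\tilde u$ in place of $v$, using the rearranged Helmholtz identity $\bar a e_i = a^\gamma e_i + a^\gamma\nabla\phi^\gamma_i - \nabla\cdot\sigma^\gamma_i$ so that $\int a^\gamma\nabla u\cdot\nabla u$ cancels. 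In both expansions, the residual terms fall into three groups.

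\textbf{The three error families.} (i) \emph{Boundary-layer terms} carry a factor $(1-\eta)$ and are handled by Hölder's inequality with exponents $p/(p-2)$ and $p/2$, producing $(\int_Q(1-\eta)^{2p/(p-2)})^{(p-2)/p}$ times $\|\nabla\bar u\|_{L^p(Q)}^2$; the latter is bounded by $\F_\gamma(u;Q) + \|\nabla u\|_{L^p(Q)}^2$ via Lemma \ref{lemma:homogeneous-calderon-zygmund-estiamte}. (ii) \emph{Corrector-fluctuation terms} pair $\phi^\gamma$ or $\sigma^\gamma$ with smooth data (involving $\nabla\eta$, $\nabla\bar u$, or $\nabla^2\bar u$). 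The antisymmetry of $\sigma^\gamma_i$ is used to transfer $\nabla\cdot\sigma^\gamma_i$ off the divergence via integration by parts, the $\eta$-cutoff eliminating boundary contributions, so only $\sigma^\gamma_i$ itself appears. Since these integrands vanish against constants (the factors involving $\nabla\eta$ or $\nabla^2\bar u$ integrate to zero after integration by parts), one may freely subtract the means of $(\phi^\gamma,\sigma^\gamma)$, and Cauchy--Schwarz together with the Schauder bound of Lemma \ref{lemma:homogeneous-interior-schauder-estimate} on $\nabla\bar u,\nabla^2\bar u$ yields the factor $C_\eta\F_\gamma(u;Q)\cdot\fint_Q|(\phi^\gamma,\sigma^\gamma)-\fint(\phi^\gamma,\sigma^\gamma)|^2$. (iii) \emph{Potential terms} split as $\int(\theta^\gamma-\bar\theta)W_{\rm reg}(\bar u) + \int\theta^\gamma[W_{\rm reg}(v)-W_{\rm reg}(\bar u)]$; the first is an $H^{-1}$–$H^1$ duality pairing (after a standard localization of $W_{\rm reg}(\bar u)$ into $H^1_0(Q)$) which, using Lemma \ref{lemma:homogeneous-interior-schauder-estimate} to control $\|W_{\rm reg}(\bar u)\|_{H^1}$, gives $\F_\gamma(u;Q)\|\theta^\gamma-\bar\theta\|_{H^{-1}(Q)}$; the second is estimated via the Lipschitz bound on $W_{\rm reg}'$ and the pointwise identity $|v-\bar u| = |\eta\phi^\gamma_i\partial_i\bar u|$, eventually absorbed into the corrector-fluctuation estimate (or, for the cross term, into the left-hand side via Poincaré on the fixed-size cube $Q$).

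\textbf{Conclusion and main obstacle.} Collecting all estimates and choosing $\delta$ small enough (independently of $\eta$) to absorb the Young and Poincaré losses into the left-hand side produces the claimed inequality, with $\alpha$ tracking the absorption thresholds, $C_\eta$ collecting the $L^\infty$ norms of $\nabla\eta$ and $\nabla^2\eta$, and $C$ depending only on the ambient parameters $\lambda,\Lambda,\theta_*,\theta^*,W,r_c,p$. The principal obstacle is bracket $B$: a direct computation would require integration by parts against $\nabla u$, whose second derivatives are not controlled beyond the Meyer exponent. The symmetric two-sided convexity trick circumvents this by introducing the dual expansion $\tilde u$, so that all ``bad'' derivatives land on $\bar u$ (where Lemma \ref{lemma:homogeneous-interior-schauder-estimate} applies), at the cost of only a Young-absorbable lower-order term.
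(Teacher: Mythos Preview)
Your two-sided convexity setup is clean and the rearrangement into the brackets $A=\F_\gamma^{\rm reg}(v)-\overline{\F}^{\rm reg}(\bar u)$ and $B=\overline{\F}^{\rm reg}(\tilde u)-\F_\gamma^{\rm reg}(u)$ is correct. Bracket $A$ can indeed be handled essentially as in the paper's Step~3 of the proof of Proposition~\ref{prop:homogenization-interior-qubes}: the flux identity \eqref{eq:cut-off-two-scale-flux} gives $a^\gamma\nabla v\approx\bar a\nabla\bar u$ up to corrector-fluctuation and $(1-\eta)$ errors, and then the energy difference falls out.

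The gap is in bracket $B$. Your ``dual'' object $\tilde u=u-\eta\phi_i^\gamma\partial_i\bar u$ is \emph{not} a two-scale expansion in the reverse direction, and there is no Helmholtz-type identity giving $\bar a\nabla\tilde u\approx a^\gamma\nabla u$. Expanding the gradient part of $B$ directly,
\[
\tfrac12\int_Q\bar a\nabla\tilde u\cdot\nabla\tilde u-\tfrac12\int_Q a^\gamma\nabla u\cdot\nabla u
=\tfrac12\int_Q(\bar a-a^\gamma)\nabla u\cdot\nabla u+\text{(lower order)},
\]
and the displayed leading term is $O(\F_\gamma(u;Q))$ with \emph{no} small prefactor. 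Attempting to integrate it by parts via the Helmholtz decomposition $(\bar a-a^\gamma)e_i=a^\gamma\nabla\phi_i^\gamma-\nabla\cdot\sigma_i^\gamma$ forces $\nabla^2 u$ to appear---precisely the obstacle you acknowledge but do not actually circumvent. Nor does this term cancel against anything in $A$: the analogous leading contribution there is $\int(a^\gamma-\bar a)\nabla\bar u\cdot\nabla\bar u$ (with $\bar u$, not $u$), and the difference $\int(a^\gamma-\bar a)(\nabla\bar u+\nabla u)\cdot(\nabla\bar u-\nabla u)$ involves $\nabla(u-\bar u)$, which is not small in $L^2$ (that is essentially the quantity you are trying to bound).

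The paper proceeds differently: it derives the PDE \eqref{eq:cut-off-two-scale-difference-equation} for $u-v$ and applies an energy estimate, so that only $a^\gamma$ (never $\bar a$) appears in the principal part and no term like $\int(\bar a-a^\gamma)\nabla u\cdot\nabla u$ is ever generated. The nonlinear term $\theta^\gamma(W'(u)-W'(\bar u))$ then produces $\int_Q|u-\bar u|^2$, which is controlled by Lemma~\ref{lemma:l2-homogenization-estimate}; this lemma is the source of the $\alpha\,\F_\gamma(u;Q)$ contribution in the final estimate and is absent from your argument.
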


A proof is given further down in this section. The estimate from Lemma \ref{lemma:h1-estimate-cut-off-two-scale-expansion} is upgraded to a statement about energies by a convexity argument.  Here is where it is convenient to work with the regularized functionals, since the two-scale expansion of $ \bar u $ that appeared in Lemma \ref{lemma:h1-estimate-cut-off-two-scale-expansion} does not necessarily have values in between $ - 1 $ and $ 1 $. Indeed, notice that Proposition \ref{prop:convexity} implies that
\begin{equation}\label{eqn:ee-diff-convexity}
| \F^{\text{reg}}_{\gamma}(u; Q) - \F^{\text{reg}}_{\gamma}(u^*; Q) |
\le \langle D\F^{\text{reg}}_{\gamma}(u; Q) - D\F^{\text{reg}}_{\gamma}(u^* ; Q), u - u^* \rangle
\end{equation}
whenever $ u $ and $ u^* $ share the same boundary conditions and $ u $ is a critical point. Before we make this precise, let us continue to explain the proof strategy.

Lemma \ref{lemma:h1-estimate-cut-off-two-scale-expansion} is reminiscent of \cite[Proposition 2.1, (18)]{JosienOtto}. As therein its proof is based on PDE estimates for the Euler-Lagrange equation that we collect in Lemmas \ref{lemma:homogeneous-calderon-zygmund-estiamte} and \ref{lemma:homogeneous-interior-schauder-estimate}. To employ them it is worth noting that the right hand sides are of the same order as the one in Proposition \ref{prop:homogenization-interior-qubes}.

Homogenization will enter the proof of Lemma \ref{lemma:h1-estimate-cut-off-two-scale-expansion} in a rather soft way via the following lemma, which is inspired by \cite[Proposition 2.1, (17)]{JosienOtto}.

\begin{lemma}\label{lemma:l2-homogenization-estimate}	
	Let $ u $ and $ \bar u $ be as in Proposition \ref{prop:homogenization-interior-qubes}. For every $ \alpha > 0 $, there exists a $ \delta > 0 $ such that if
	\begin{align}\label{eqn:l2-hom-cond}
	| \bar { a } (Q) - \bar a |
	+ \left( \fint_{Q} | (\phi_{Q}^{\gamma}, \sigma_{Q}^{\gamma}) - \fint_{Q} ( \phi_{Q}^{\gamma}, \sigma_{Q}^{\gamma} )  |^2 \right)^{ \frac{1}{2} }
	+ \| \theta^{\gamma} - \theta \|_{ H^{-1}(Q) } \le \delta,
	\end{align}
	then
	\begin{equation}\label{eqn:l2-hom}
	\int_{Q} | u - \bar u |^2 \leq \alpha \mathscr{F}_{ \gamma } (u; Q).
	\end{equation}
\end{lemma}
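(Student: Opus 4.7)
The plan is an Aubin--Nitsche (duality) argument, adapted to the heterogeneous setting in the spirit of the quantitative stochastic homogenization literature (cf.~\cite{JosienOtto}). Since $u - \bar u \in H^1_0(Q)$, I introduce the dual problem
\[
-\nabla \cdot (\bar a \nabla v) = u - \bar u \quad \text{in } Q, \qquad v = 0 \quad \text{on } \partial Q.
\]
The convexity of $Q$ together with the constant-coefficient operator yields $H^2$-regularity, so $\|v\|_{H^2(Q)} \lesssim \|u - \bar u\|_{L^2(Q)}$. Testing the equation against $u - \bar u$ gives the baseline identity $\|u - \bar u\|_{L^2(Q)}^2 = \int_Q \bar a \nabla v \cdot \nabla(u - \bar u)$, and the task becomes to bound this right-hand side by a small multiple of $\mathscr{F}_\gamma(u;Q)^{1/2} \|u - \bar u\|_{L^2(Q)}$.

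Next, I introduce the two-scale test function $\tilde v := v + \eta\, (\phi_i^\gamma - \fint_Q \phi_i^\gamma)\, \partial_i v \in H^1_0(Q)$, where $\eta \in C_c^\infty(Q)$ is a smooth cut-off equal to one on a slightly smaller cube. Subtracting the averages of $\phi^\gamma$ (and implicitly of $\sigma^\gamma$ below) is what lets the $\mathrm{Sub}$ quantity enter the estimates. Applying the Helmholtz identity \eqref{eqn:a-helmholtz-decomposition} to $\bar a \nabla v$ and integrating by parts---using the antisymmetry of $\sigma_i^\gamma$ in its last two indices to cancel the otherwise uncontrollable contraction against $\nabla^2(u - \bar u)$---produces an identity of the form
\[
\int_Q \bar a \nabla v \cdot \nabla(u - \bar u)
= \int_Q a^\gamma \nabla \tilde v \cdot \nabla u - \int_Q \bar a \nabla v \cdot \nabla \bar u + \mathcal{E}_{\mathrm{corr}},
\]
where $\mathcal{E}_{\mathrm{corr}}$ collects pointwise products of $(\phi^\gamma - \fint_Q \phi^\gamma,\, \sigma^\gamma - \fint_Q \sigma^\gamma)$ against $\nabla^2 v$, $\nabla(u - \bar u)$, and $\nabla \eta$. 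Testing the Euler--Lagrange equation for $u$ with $\tilde v$ and that for $\bar u$ with $v$ converts the first two integrals into
\[
-\int_Q \theta^\gamma W'(u)\, \tilde v + \int_Q \bar\theta W'(\bar u)\, v,
\]
which I decompose as $\int_Q (\bar\theta - \theta^\gamma) W'(u)\, v + \int_Q \bar\theta (W'(\bar u) - W'(u))\, v - \int_Q \theta^\gamma W'(u)(\tilde v - v)$.

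Finally, the four resulting contributions are estimated one at a time. The first is bounded using the scale-invariant $H^{-1}$ norm of $\theta^\gamma - \bar\theta$ against $\|W'(u) v\|_{H^1_0(Q)}$; the latter is controlled, via the $L^\infty$-bound $|u| \leq 1$ (hence $W'(u), W''(u)$ bounded) together with the $H^2$-control on $v$ and a Sobolev embedding on the product $v \nabla u$, by $\mathscr{F}_\gamma(u;Q)^{1/2} \|u - \bar u\|_{L^2(Q)}$ (up to lower-order terms). The Lipschitz term is $O(\|u - \bar u\|_{L^2(Q)}^2)$ and will be absorbed into the left-hand side once $\delta$ is small. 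The $\tilde v - v$ term is bounded by $\|\phi^\gamma - \fint_Q \phi^\gamma\|_{L^2(Q)} \|u - \bar u\|_{L^2(Q)}$ using the $H^2$-control of $v$. The Helmholtz remainder $\mathcal{E}_{\mathrm{corr}}$ is handled by H\"older's inequality, pairing the $L^2$-smallness of the correctors with the higher integrability of $\nabla u$ and $\nabla \bar u$ provided by Meyer's estimates (Lemmas \ref{lemma:meyers-estimate} and \ref{lemma:homogeneous-calderon-zygmund-estiamte}) and $L^p$-bounds on $\nabla^2 v$ coming from Calder\'on--Zygmund theory on the convex Lipschitz cube. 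Dividing by $\|u - \bar u\|_{L^2(Q)}$ and choosing $\delta$ small enough yields the claim. The main obstacle will be the cut-off strip $\{\eta < 1\}$, where $|\nabla \eta|$ is large: to absorb the boundary-layer errors one must exploit the $L^2$ smallness of the correctors on the full cube $Q$ (not merely on a smaller subcube) and carefully balance the width of the strip against the size of $\mathrm{Sub}_0(r_c/\gamma)$ so that the contribution to $\mathcal{E}_{\mathrm{corr}}$ still tends to zero.
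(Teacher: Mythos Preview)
Your Aubin--Nitsche duality approach is genuinely different from the paper's, which argues by compactness and contradiction: one assumes a sequence of counterexamples, extracts weak limits, and invokes uniqueness for the constant-coefficient semilinear problem on the cube of side $r_c$; the small-energy case is handled separately by the clearing-out lemma (Lemma~\ref{lemma:clearing-out}) followed by a linearization around the wells, after normalizing by $\mathscr{F}_\gamma(u;Q)^{1/2}$. Your direct route is standard in quantitative linear homogenization, but here it has a real gap: the point of the lemma is the \emph{relative} bound $\int_Q|u-\bar u|^2\le\alpha\,\mathscr{F}_\gamma(u;Q)$, which must survive when $\mathscr{F}_\gamma(u;Q)$ is arbitrarily small, and your handling of $\mathcal{E}_{\mathrm{corr}}$ does not deliver this scaling. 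In the three-factor H\"older split of the typical contribution $\int_Q(\sigma^\gamma-\fint\sigma^\gamma)\,\nabla^2 v\,\nabla(u-\bar u)$, the corrector sits in $L^2$ and $\nabla(u-\bar u)$ only in $L^{2+\varepsilon}$ via Meyer, which forces $\nabla^2 v$ into $L^p$ with $p\sim\varepsilon^{-1}$. Calder\'on--Zygmund then returns $\|\nabla^2 v\|_{L^p}\lesssim\|u-\bar u\|_{L^p}$, and for such large $p$ you can only use $\|u-\bar u\|_{L^p}\lesssim 1$ (from $|u|,|\bar u|\le 1$), not $\lesssim\|u-\bar u\|_{L^2}$. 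The alternative, interior Schauder on $v$, gives $\|\nabla^2 v\|_{L^\infty}\lesssim\|u-\bar u\|_{C^{0,\alpha}}$, bounded by the fixed constant of \eqref{eqn:hom-int-cubes-assumption} rather than by $\|u-\bar u\|_{L^2}$. Either way you land at $\|u-\bar u\|_{L^2}^2\lesssim\delta\,\mathscr{F}_\gamma^{1/2}$ instead of $\delta\,\mathscr{F}_\gamma^{1/2}\,\|u-\bar u\|_{L^2}$, and this fails to imply $\le\alpha\,\mathscr{F}_\gamma$ in the small-energy regime. The paper's proof sidesteps precisely this difficulty via its case distinction and the normalization by $\mathscr{F}_\gamma^{1/2}$ before passing to the limit.

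A secondary issue: the Lipschitz contribution $\int_Q\bar\theta\,(W'(\bar u)-W'(u))\,v$ is bounded by $C\|u-\bar u\|_{L^2}^2$ with a constant $C$ depending on $r_c$, $\theta^*$, $\|W''\|_{L^\infty([-1,1])}$ and the Poincar\'e constant, but \emph{not} on $\delta$. It therefore cannot be absorbed ``once $\delta$ is small''; it can be absorbed because $r_c$ was chosen in Proposition~\ref{prop:convexity} precisely so that this constant is strictly below $1$.
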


The above lemma may be seen as a weak form of Lemma \ref{lemma:h1-estimate-cut-off-two-scale-expansion}; rather then comparing $ u $ and (the two-scale expansion of) $ \bar u $ in the strong $ H^1 $-norm, we begin with the weaker $ L^2 $-norm. Its proof, as presented in Section \ref{sec:proofs-deterministic-homogenization}, is based on a compactness argument. At this point we rely on the local energy bound \eqref{eqn:hom-int-cubes-assumption}.

We will continue by first proving Theorem \ref{T: elliptic term cube decomposition} based on the homogenization result in Proposition \ref{prop:homogenization-interior-qubes}, see Section \ref{section: T: elliptic term cube decomposition}. Next, we prove the homogenization statements above, see Section \ref{sec:proofs-deterministic-homogenization}. Finally, we collect the proofs of our PDE estimates in Section \ref{section:pde-estimates}. 

\subsection{Proof of Theorem \ref{T: elliptic term cube decomposition}}\label{section: T: elliptic term cube decomposition} Taking for granted Proposition \ref{prop:homogenization-interior-qubes} and Lemma \ref{lemma:meyers-estimate} for now, here is the proof of the relative energy error estimate, Theorem \ref{T: elliptic term cube decomposition}.

	\begin{proof}[Proof of Theorem \ref{T: elliptic term cube decomposition}] The theorem follows directly from Proposition \ref{prop:homogenization-interior-qubes} applied to the subcubes in \eqref{eqn:partition-of-small-cubes}, after accounting for the rescaling.  Toward that end, recall the definitions of the rescaled medium $(a_{Q}^{\gamma},\theta_{Q}^{\gamma})$, see \eqref{E: rescaled fields},  and correctors $(\phi_{Q}^{\gamma},\sigma_{Q}^{\gamma})$, see \eqref{E: rescaled correctors}. Observe that, given a cube $Q = Q_{r_{c}}(z)$ of radius $r_{c}$ centered at some point $z$, we can write
		\begin{align*}
			&\left( \fint_{Q_{r_{c}}(z)} |(\phi_{Q_{r_{c}}(z)}^{\gamma},\sigma_{Q_{r_{c}}(z)}^{\gamma}) - \fint_{Q_{r_{c}}(z)} (\phi_{Q_{r_{c}}(z)}^{\gamma},\sigma_{Q_{r_{c}}(z)}^{\gamma})|^{2} \right)^{\frac{1}{2}} \\
			&\quad = \gamma \left( \fint_{Q_{r_{c} \gamma^{-1}}(\gamma^{-1}z)} |(\phi,\sigma)_{Q_{r_{c} \gamma^{-1}}(\gamma^{-1}z)} - \fint_{Q_{r_{c} \gamma^{-1}}(\gamma^{-1} z)} (\phi,\sigma)_{Q_{r_{c} \gamma^{-1}}(\gamma^{-1}z)}|^{2} \right)^{\frac{1}{2}} \\
			&\quad \leq  r_c { \rm Sub } (Q_{r_{c}\gamma^{-1}}(r_{c}z))
		\end{align*} 
and, similarly, by \eqref{E: H minus one norm},
		\begin{equation*}
			\|\theta^{\gamma} - \bar{\theta}\|_{H^{-1}(Q_{r_{c}}(z))} = \gamma \|\theta - \bar{\theta}\|_{H^{-1}(Q_{r_{c} \gamma^{-1}}(r_{c} \gamma^{-1}z))} \leq r_c { \rm Osc } (Q_{r_{c}\gamma^{-1}}(r_{c}z)).
		\end{equation*}
	
	Let $u \in H^{1}(Q_{R})$ be a minimizer of \eqref{eqn: T: elliptic term cube decomposition variation problem}.  Given a subcube $Q = Q_{r_{c}}(r_{c} z)$ for some $z \in \mathbb{Z}^{d} \cap [-K,K]$, let $\bar{u}_{Q}$ be the minimizer of $\overline{\mathscr{F}}(\cdot; Q)$ in $u + H^{1}_{0}(Q)$.  We define $\tilde{u} \in H^{1}(Q_{R})$ by gluing together these functions, that is,
		\begin{align*}
			\tilde{u} = \sum_{z \in \mathbb{Z}^{d} \cap [-K,K]} \bar{u}_{Q_{r_{c}}(r_{c} z)} \boldsymbol{1}_{Q_{r_{c}}(r_{c} z)}.
		\end{align*}
	
	Now, by Proposition \ref{prop:homogenization-interior-qubes}, there is a modulus of continuity $\omega : [0,\infty) \to [0,\infty)$ such that, for any $z \in \mathbb{Z}^{d} \cap [-K,K]^{d}$,
		\begin{align*}
			&| \overline{\mathscr{F}}(\tilde{u}; Q_{r_{c}}(r_{c} z)) - \F_{\gamma}(u; Q_{r_{c}}(r_{c} z)) | \\
			&\quad \leq \omega \! \left( { \rm Sub } (Q_{r_{c}\gamma^{-1}}(r_{c}z)) +{ \rm Osc } (Q_{r_{c}\gamma^{-1}}(r_{c}z)) +  | \bar {a} ( Q_{r_{c} \gamma^{-1} }(r_{c} z) ) - \bar a | \right) \\
			&\quad\qquad \times \left( \F_{\gamma}(u; Q_{r_{c}}(r_{c} z)) + \left(\int_{Q_{r_{c}}(r_{c} z)} |\nabla u|^{p} \right)^{\frac{2}{p}} \right).
		\end{align*}
	Let $\mathcal{E}_{K}(\gamma)$ be the maximum error obtained in this approximation:
		\begin{equation*}
			\mathcal{E}_{K}(\gamma) = \max_{ z \in \mathbb{Z}^{d} \cap [-K,K]^{d} } \omega \left( { \rm Sub } (Q_{r_{c}\gamma^{-1}}(r_{c}z)) + { \rm Osc } (Q_{r_{c}\gamma^{-1}}(r_{c}z)) +  | \bar {a} ( Q_{r_{c} \gamma^{-1}}(r_{c} z) ) - \bar a | \right) .
		\end{equation*}
	After summing over cubes, the previous estimate becomes
		\begin{align*}
			& | \overline{\mathscr{F}}(\tilde{u}; Q_{R}) - \F_{\gamma}(u; Q_{R}) | \\
			&\quad \leq \mathcal{E}_{K}(\gamma)  \left( \F_{\gamma}(u; Q_{R}) + \sum_{ z \in \mathbb{Z}^{d} \cap [-K,K]^{d} } \left(\int_{Q_{r_{c}}(r_{c} z)} |\nabla u|^{ p} \right)^{\frac{2}{p}} \right).
		\end{align*}
	Invoking Lemma \ref{lemma:meyers-estimate}, the additional gradient term can be bounded via 
		\begin{align*}
			 \sum_{ z \in \mathbb{Z}^{d} \cap [-K,K]^{d} } & \left(\int_{Q_{r_{c}}(r_{c} z)} |\nabla u|^{p} \right)^{\frac{2}{p}} \\
			& \lesssim \sum_{ z \in \mathbb{Z}^{d} \cap [-K,K]^{d} } \F_{\gamma}(u; Q_{2r_{c}}(r_{c} z) \cap Q_{R}) + \sum_{ \substack{ z \in \mathbb{Z}^{d} \\ | z |_{ \infty } = K } }  \left( \int_{ Q_{2r_{c}}(r_{c}z) \cap Q_R } |\nabla q|^{p} \right)^{\frac{2}{p}}  \\
			& \lesssim \F_{\gamma}(u;Q_{S}) + R^{ d - 1 } \sup | q'  |^{ 2 } .
		\end{align*}
	In view of the fact that $\theta_{*} \leq \theta$ and $\lambda \text{Id} \leq a$, we have a lower bound
		\begin{align*}
			\F_{\gamma}(u;Q_{R}) &\geq \min \left\{ \frac{\lambda}{2} \int_{Q_{R}} |\nabla v|^{2} + \theta_{*} \int_{Q_{R}} W(v) \, \mid \, v(x) = q(x) \, \, \text{for each} \, \, x \in \partial Q_{R} \right\} \\
					&\gtrsim R^{d - 1},
		\end{align*}
	where in the last line we used $\Gamma$-convergence of the constant-coefficient Allen-Cahn functional to obtain the lower bound (cf.\ \cite[Theorem 3.7]{ansini_braides_chiado-piat}).  Concatenating the previous estimates, we observe that the additional gradient error terms can be absorbed into $\F_{\gamma}(u,Q_{R})$, leading to
		\begin{align*}
			& | \overline{\mathscr{F}}(\tilde{u}; Q_{R}) - \F_{\gamma}(u; Q_{R}) | \lesssim \mathcal{E}_{K}(\gamma)  \F_{\gamma}(u; Q_{R})
		\end{align*}
	Up to multiplying $\omega$ by a constant, this is precisely the desired conclusion.\end{proof}

\subsection{Proof of the Homogenization Statements.}\label{sec:proofs-deterministic-homogenization} 

We are now going to prove Proposition \ref{prop:homogenization-interior-qubes}. We start with the weak approximation statement in Lemma \ref{lemma:l2-homogenization-estimate} and show the upgrade to the $ H^1 $-statement later on.  While the argument is inspired by Proposition 2.1 in \cite{JosienOtto} for the linear problem, the argument here is slightly more elaborate since the PDE is nonlinear.

\begin{proof}[Proof of Lemma \ref{lemma:l2-homogenization-estimate}.]  The following proof is completely deterministic in the sense that we use only the assumptions of Section \ref{S: deterministic assumptions}, most importantly, the decomposition \eqref{eqn:a-helmholtz-decomposition}, and the growth condition \eqref{eqn:l2-hom-cond} as an input to obtain the error estimate \eqref{eqn:l2-hom}. In particular, as throughout this section, no probabilistic arguments are involved here.

\textit{Step 1 (Contradictive assumption).} We prove the error estimate \eqref{eqn:l2-hom} by contradiction. To this end, let us notice that by translation and dilation invariance of the statement, we may restrict our contradictive proof to a fixed cube $ Q $. Therefore, it is enough to suppose that $\{(a_{k},\theta_{k})\}_{k \in \mathbb{N}}$ are sequences of coefficients satisfying the assumptions of Section \ref{S: deterministic assumptions}.  Let $\{(\bar{a}_{k}(Q),\bar{a}_{k}, \bar{\theta}_{k})\}_{k \in \mathbb{N}}$ and $\{(\phi_{Q,k},\sigma_{Q,k})\}_{k \in \mathbb{N}}$ be the associated constants and correctors.  As in the assumptions of the present lemma, suppose that $\{(u_k, \bar u_k)\}_{k \in \mathbb{N}}$ are functions in $H^{1}(Q)$ such that $ u_k $ satisfies \eqref{eqn:hom-int-cubes-assumption} and minimizes $ \mathscr{F}_k(v ; Q) \coloneqq \frac{1}{2} \int_{Q} a_k \nabla v \cdot \nabla v + \int_{Q} \theta_{k} W(v) $ on $ u_k + H^1_0(Q) $ (i.e., subject to its own boundary conditions), and $ \bar u_k $ minimizes $ \overline{\mathscr{F}}_{k} (v ; Q) \coloneqq \frac{1}{2} \int_{Q} \bar a_k \nabla v \cdot \nabla v + \int_{Q} \bar{ \theta }_{k} W(v) $ on $ u_k + H^1_0(Q) $. In particular, $ (u_k, \bar u_k ) $ solves
\begin{align*}
- \nabla \cdot a_k \nabla u_k + \theta_k W'(u_k) & = 0 = - \nabla \cdot \bar a_k \nabla \bar u_k + \bar\theta_{k} W'( \bar u_k) & \quad { \rm in } ~ Q \\
 u_k &= \bar u_k & \quad { \rm on } ~ \partial Q,
\end{align*}
To see that the error estimate \eqref{eqn:l2-hom} must hold, it suffices to establish that if
\begin{align}\label{eqn:l2-hom-contra-assumption-1}
| \bar a_{ k } (Q) - \bar a_k | + \left( \fint_{Q} | (\phi_{Q,k}, \sigma_{Q,k}) - \fint_{Q} (\phi_{Q,k}, \sigma_{Q,k}) |^2\right)^{\frac{1}{2}}
+ \| \theta_k - \bar \theta \|_{ H^{-1}(Q) } \rightarrow 0
\end{align}
as $ k \rightarrow \infty $, then
\begin{align*}
	\lim_{k \to \infty} \F_{k}(u_{k} ; Q)^{-1} \int_{Q} | u_k - \bar u_k |^2 = 0.
\end{align*}
Toward that end, we argue by contradiction: Specifically, we fix an $\alpha^{*} > 0$ and assume that (after passing to another subsequence)
	\begin{align} \label{eqn:l2-hom-contra-assumption-2}
		\int_{Q} | u_k - \bar u_k |^2 \geq \alpha^{*} \F_{k}(u_{k} ; Q)  \quad \text{for each} \quad k \in \mathbb{N}.
	\end{align}
Below we show that this leads to a contradiction.  Before we do this, let us collect some properties of $ (u_k, \bar u_k) $ that follow from the above assumptions.

First, by \eqref{eqn:hom-int-cubes-assumption} and Proposition \ref{P: uniqueness comparison}, we have
$$
\sup_{ k \in \N } \int_{Q} ( u_k^2 + \bar u_k^2 + | \nabla u_k |^2 + | \nabla \bar u_k |^2 ) < \infty.
$$
Thus, up to passing to a subsequence, we can assume that there are functions $u, \bar{u} \in H^{1}(Q)$ such that $\bar{u} - u \in H^{1}_{0}(Q)$ and
$$
\nabla u_k \rightharpoonup \nabla u,
\quad \nabla \bar u_k \rightharpoonup \nabla \bar u,
\quad u_k \rightarrow u,
\quad \bar u_k \rightarrow \bar u
\quad \text{in} ~ L^2(Q)
$$
as $ k \rightarrow \infty $. Due to the uniform boundedness assumption on the constants $ ( \bar{a}_k, \bar{\theta}_{k} ) $ from Section \ref{S: deterministic assumptions}, we may select a further subsequence to ensure that
$$
\bar a_k \rightarrow \bar a 
\quad { \rm and } \quad
\bar \theta_k \rightarrow \bar \theta
$$
as $ k \rightarrow \infty $ for some constant matrix $ \bar a $ and positive number $ \bar \theta $. The last two assertions are enough to show that both $ u $ and $ \bar u $ solve
\begin{equation}\label{eqn:l2-hom-pf06}
- \nabla \cdot \bar a \nabla \bar v +  \bar\theta W'(\bar{v}) = 0 \quad \text{in} ~ Q
\end{equation}
with boundary conditions $ \bar v = u = \bar u $ on $ \partial Q $; see the next paragraph for the details.

Here comes the argument for \eqref{eqn:l2-hom-pf06}. For $ \bar u $, the statement almost immediately follows from the above convergence statements. As in \cite{JosienOtto}, one can show that
$$
\lim_{ k \rightarrow \infty } 
\int_{Q} | (\phi_{Q,k}, \sigma_{Q,k}) - \fint_{Q} (\phi_{Q,k}, \sigma_{Q,k}) |^2 = 0
~~ \Rightarrow ~~
a_k \nabla u_k - \bar {a}_{k} (Q) \nabla u_k \rightharpoonup 0
\quad \text{in} ~ L^2(Q).
$$
as $ k \rightarrow \infty $. Since we arranged everything such that $ \bar a_k \rightarrow \bar a $, we know by assumption \eqref{eqn:l2-hom-contra-assumption-1} that also $ \bar a_{k} (Q) \rightarrow \bar a $. This implies
\begin{equation}\label{eqn:l2-hom-pf06a}
a_k \nabla u_k = \bar a_k (Q) \nabla u_k + ( a_k - \bar {a}_{k} (Q) ) \nabla u_k  \rightharpoonup \bar a \nabla u
\quad \text{in} ~ L^2(Q).
\end{equation}
Furthermore, we may write the well terms as
$$
\theta_k W'(u_k) - \bar \theta W' ( u )
= ( \theta_k - \bar \theta ) W'( u )
+ \theta_k ( W'(u_k) - W'( u ) ).
$$
Since $ W'(u) \in H^1(Q) $ and $ \theta_* \leq \theta_k \leq \theta^* $, this implies
\begin{equation}\label{eqn:l2-hom-pf06b}
\theta_k W'(u_k) - \bar \theta W' ( u )
\rightharpoonup 0
\quad { \rm in } ~ L^{2}(Q).
\end{equation}
Together \eqref{eqn:l2-hom-pf06a} and \eqref{eqn:l2-hom-pf06b} imply that $u$ satisfies \eqref{eqn:l2-hom-pf06} distributionally.

In the remainder of the proof, we let $\delta^{*} > 0$ be a small positive constant to be determined below and consider separately two cases: (i) $\mathscr{F}_{k}(u_{k}; Q) \geq \delta^{*}$ for each $k$ and (ii) $\mathscr{F}_{k}(u_{k}; Q) \leq \delta^{*}$ for each $k$.  (Up to passing to yet another subsequence, these two cases are exhaustive.)  We refer to these two cases as the \emph{large} and \emph{small energy regimes}, respectively.

\textit{Step 2 (Large energy regime).} Assume that $\mathscr{F}_{k}(u_{k};Q) \geq \delta^{*}$ for each $k$.  Since $ -1 \leq u, \bar u \leq 1 $ on $ Q $, which is a cube of side length $r_{c}$, Proposition \ref{P: uniqueness comparison} implies that equation \eqref{eqn:l2-hom-pf06} has at most one solution. Therefore,
$$
0 = \int_{Q} | u - \bar u |^2
= \lim_{k \rightarrow \infty} \int_{Q} | u_k - \bar u_k |^2
\geq \alpha^* \inf_{ k \in \N } \mathscr{F}_{ k } ( u_k ; Q )
\geq \alpha^* \delta^* > 0,
$$
which yields the desired contradiction in the large energy regime.

\textit{Step 3 (Small energy regime).} It only remains to consider the case when we have $\mathscr{F}_{k}(u_{k};Q) \leq \delta^{*}$ for each $k$.  Here we use the clearing-out lemma to deduce that $u_{k}$ and $\bar{u}_{k}$ are uniformly close to $1$ or $-1$, and then the regularity assumptions on $W$ allow us to pass to a linearized equation.

For technical reasons, we need to separately consider the cases when $\kappa = 1$ ($W$ is approximately quadratic near its minima) and $\kappa > 1$ ($W$ is superquadratic near its minima).

\textit{Step 3.1 (Linearization for locally quadratic wells).} Assume that the parameter $\kappa$ of Section \ref{S: assumptions} is equal to one.  Since $\mathscr{F}_{k}(u_{k}; Q) \leq \delta^{*}$ for each $k$, we apply the clearing-out lemma (Lemma \ref{lemma:clearing-out}) to deduce that $ u_k $ is close to either of the minima of $ W $,~w.l.o.g.
\begin{equation}\label{eqn:l2-hom-pf05}
\sup_{ k \in \N }  \| u_k - 1 \|_{ L^{ \infty }(Q) }  \leq \varepsilon^*,
\end{equation}
where $ \varepsilon^* \rightarrow 0 $ as $ \delta^* \rightarrow 0 $. In view of Proposition \ref{P: uniqueness comparison}, the energy of the functions $(\bar{u}_{k})_{k \in \mathbb{N}}$ is also uniformly small, hence, up to decreasing $\delta^{*}$ by a constant factor, we also have $\|\bar{u}_{k} - 1\|_{L^{\infty}(Q)} \leq \varepsilon^{*}$ for each $k$. (Recall that $ u_k $ and $ \bar u_k $ share the same boundary conditions. Hence $ \bar u_k $ has to be close to the same minimum of $ W $ as $ u_k $, see \eqref{eqn:l2-hom-pf05}.) We claim that this leads to a contradiction.

To see this, we begin by defining the functions $\{R_{k}\}_{k \in \mathbb{N}}$ by
$$
R_k \coloneqq \frac{ W'(u_k) - W'(1) - W''(1) ( u_k - 1 ) }{  u_k - 1 }
=  \frac{ W'(u_k) - W''(1) ( u_k - 1 ) }{  u_k - 1 }
$$
and $ \{\bar R_k\}_{k \in \mathbb{N}}$ analogously, so that
\begin{equation}\label{eqn:l2-hom-pf03}
\| R_k \|_{ L^{\infty }(Q) } + \| \bar R_k \|_{ L^{\infty }(Q) }  \leq \omega( \varepsilon^* ),
\end{equation}
where $ \omega : [0,\infty) \to [0,\infty)$ is a modulus of continuity determined by $W$. Now let $ w_k = \F_{ k }(u_k ; Q)^{-\frac{1}{2}} ( 1 - u_k ) $ and $ \bar w_k = \F_{ k }(u_k ; Q)^{-\frac{1}{2}} ( 1 - \bar u_k ) $, with the same normalization on both terms. Note that
$$
- \nabla \cdot a_k \nabla w_k + \theta_k W''(1) w_k
= f_k ,
\quad
- \nabla \cdot \bar a_k \nabla \bar w_k + \bar \theta_k W''(1) \bar w_k
= \bar f_k
\quad \text{in} ~ Q
$$
where $ f_k  = - \theta_k R_k w_k $, $ \bar f_k =  - \bar \theta_k \bar R_k \bar w_k $. Moreover, the lower bound \eqref{eqn:l2-hom-contra-assumption-2} becomes
\begin{equation}\label{eqn:l2-hom-pf02b}
\int_{Q} | w_k - \bar w_k |^2 \ge  \alpha^* > 0.
\end{equation}

Since in this step of the proof we restrict ourselves to $\kappa = 1$ in \eqref{eqn:w-non-degeneracy}, it is straight forward to see by using \eqref{eqn:w-poly} and \eqref{eqn:l2-hom-pf05} that, for sufficiently small $ \varepsilon^{*}$, $ W $ looks like a quadratic so that from assumption \eqref{eqn:hom-int-cubes-assumption} on $ u_k $ and the minimiality of $ \bar u_k $, cf. Proposition \ref{P: uniqueness comparison}, we obtain
\begin{equation}\label{eqn:l2-hom-pf04}
\sup_{k \in \N} \int_{Q} ( | \nabla w_k |^2 + | \nabla \bar w_k |^2 + w_k^2 + \bar w_k^2 ) \lesssim 1 < \infty.
\end{equation}
Hence we may as well assume that there are functions $w, \bar{w} \in H^{1}(Q)$ such that
$$
\nabla w_k \rightharpoonup \nabla w,
\quad \nabla \bar w_k \rightharpoonup \nabla \bar w,
\quad w_k \rightarrow w,
\quad \bar w_k \rightarrow \bar w
\quad \text{in} ~ L^2(Q)
$$
as $ k \rightarrow \infty $. Furthermore, due to \eqref{eqn:l2-hom-pf03}, we can similarly assume there are $f,\bar{f} \in L^{2}(Q)$ such that
$$
f_k \rightharpoonup f,
\quad \bar f_k \rightharpoonup \bar f
\quad \text{in} ~ L^2(Q).
$$
Combining these convergence statements with the arguments from Step 1, we obtain the limiting equation
$$
- \nabla \cdot \bar a \nabla (w - \bar w) + \bar \theta W''(1) (w - \bar w)
= \bar f - f
\quad \text{in} ~ Q.
$$
Furthermore, by \eqref{eqn:l2-hom-pf03}, \eqref{eqn:l2-hom-pf04}, and weak convergence,
$$
\int_{Q} ( | f  |^2 + | \bar f |^2) \leq \liminf_{ k \rightarrow \infty } \int_{Q} ( | f_k  |^2 + | \bar f_k |^2)  \lesssim \omega ( \varepsilon^* )^2.
$$
Since the constant $\kappa$ in Section \ref{S: assumptions} is equal to one by assumption, we know that $W''(1) > 0$.  Thus, if we test the equation above with $w - \bar{w}$ and invoke \eqref{eqn:l2-hom-pf05}, we find
$$
\lambda \int_{Q} |\nabla w - \nabla \bar{w}|^{2} + \frac{W''(1)}{2} \int_{Q} | w - \bar w |^2
\lesssim  \omega ( \varepsilon^* )^2,
$$
a contradiction to \eqref{eqn:l2-hom-pf02b} as soon as $ \delta^* $ (hence also $ \varepsilon^* $) is sufficiently small.

\textit{Step 3.2 (Linearization for locally superquadratic wells.)} We now sketch the necessary modifications for the last argument from \eqref{eqn:l2-hom-pf03} onwards, for the case $ \kappa > 1 $ in \eqref{eqn:w-non-degeneracy}. As in the previous step, we define $ ( R_k ) $ so that the following identity holds
$$
R_k(u_k - 1) = W'(u_k) - W''(1) (u_k - 1) = W'(u_k).
$$
By assumption \eqref{eqn:w-non-degeneracy},
$$
| R_k | \lesssim (u_k - 1)^{ 2\kappa - 2 }.
$$
Defining $ w_k = \F_{ k }(u_k ; Q)^{-\frac{1}{2}} ( 1 - u_k ) $ and $ \bar w_k = \F_{ k }(u_k ; Q)^{-\frac{1}{2}} ( 1 - \bar u_k ) $ as above, they become almost $ a_k $-harmonic, resp.~ $ \bar a_k $-harmonic, in the sense that
$$
- \nabla \cdot a_k \nabla w_k = - \theta_k R_k w_k,
\quad - \nabla \cdot \bar a_k \nabla \bar w_k = - \bar\theta_k \bar R_k \bar w_k
\quad \text{in} ~ Q.
$$
Let us now write $ f_k = - \theta_k R_k w_k $, resp.~$ \bar f_k = - \bar\theta_k \bar R_k \bar w_k $. In view of \eqref{eqn:w-poly}, as soon as $ \varepsilon^* $ is small enough,
\begin{align} 
	\int_{Q} |u_{k} - 1|^{2\kappa - 2} w_{k}^{2} \lesssim \mathscr{F}_{ k } (u_{k} ; Q)^{-1} \int_{Q} W(u_{k}) \leq 1, \nonumber \\
	\int_{Q} |\nabla w_{k}|^{2} \lesssim \mathscr{F}_{ k } (u_{k} ; Q)^{-1} \int_{Q} |\nabla u_{k}|^{2} \leq 1. \label{E: slightly annoying gradient bound}
\end{align}
Using the first of the above equations, we can estimate
$$
\int_{Q} | f_k |^2 
\lesssim \int_{Q} | u_k - 1 |^{2 ( 2\kappa - 2 ) } w_k^2
\lesssim \| u_k - 1 \|_{ L^{\infty}(Q) } ^{2\kappa - 2}.
$$
The same computation on $ \bar w_k $ together with \eqref{eqn:l2-hom-pf05} yields
\begin{equation}\label{eqn:l2-hom-pf07}
\sup_{k \in \N} \int_{Q} ( | f_k |^2 + | \bar f_k |^2 )
\lesssim \| u_k - 1 \|_{ L^{\infty}(Q) } ^{2\kappa - 2} + \| \bar{u}_{k} - 1\|_{L^{\infty}(Q)}^{2 \kappa - 2}
\lesssim 1.
\end{equation}
Therefore, by compactness, we may assume that
$$
f_k \rightharpoonup f,
\quad \bar f_k \rightharpoonup \bar f
\quad \text{in} ~ L^2(Q)
$$
along a subsequence as $ k \rightarrow \infty $.

Replacing $w_{k}$ and $\bar{w}_{k}$ by $ v_k \coloneqq w_{k} - \fint_{Q} w_{k}$ and $ \bar v_k \coloneqq \bar{w}_{k} - \fint_{Q} w_{k}$ (the same average on both terms), the relation \eqref{eqn:l2-hom-pf02b} becomes
\begin{align}\label{eqn:l2-hom-pf02c}
\int_{Q} | v_k - \bar v_k |^2 \ge  \alpha^* > 0,
\end{align}
and the equations stay the same,~i.e.,
$$
- \nabla \cdot a_k \nabla v_k = f_k,
\quad - \nabla \cdot \bar a_k \nabla \bar v_k = \bar f_k
\quad \text{in} ~ Q.
$$
Note that $ v_k - \bar v_k \in H^1_0(Q) $ since we subtracted the same constant.

First, since by definition we have $\int_{Q} v_{k} = 0$, the estimate \eqref{E: slightly annoying gradient bound} and Poincaré together yield
$$
\int_{Q} v_{k}^{2}  \lesssim \int_{Q} |\nabla v_{k}|^{2} \lesssim 1.
$$
Next, after subtracting the equations for $w_{k}$ and $\bar{w}_{k}$, we have
$$
- \nabla \bar a_k \nabla ( v_k - \bar v_k ) = f_k - \bar f_k - \nabla \cdot ( \bar a_k - a_k ) \nabla v_k
\quad \text{in} ~ Q
$$
so that, by the energy estimate on this equation (recall $v_{k} - \bar{v}_{k} \in H^{1}_{0}(Q)$),
$$
\sup_{ k \in \N } \int_{Q} ( | \nabla v_k |^2 + | \nabla \bar v_k |^2 + v_k^2 + \bar v_k^2 ) \lesssim 1.
$$
From all this, we may conclude that
$$
\nabla v_k \rightharpoonup \nabla v,
\quad \nabla \bar v_k \rightharpoonup \nabla \bar v,
\quad v_k \rightarrow v,
\quad \bar v_k \rightarrow \bar v
\quad \text{in} ~ L^2(Q)
$$
along a subsequence as $ k \rightarrow \infty $. All this is enough to pass to the limiting equations, i.e.,
$$
- \nabla \cdot \bar a \nabla v = f,
\quad - \nabla \cdot \bar a \nabla \bar v = \bar f
\quad \text{in} ~ Q,
$$
the same conclusion as we reached in the previous steps of the proof. In particular, we can conclude that
$$
- \nabla \cdot \bar a ( \nabla v - \nabla \bar v ) = f - \bar f
\quad \text{in} ~ Q.
$$
The energy estimate (and Poincaré inequality) for this equation, together with \eqref{eqn:l2-hom-pf07}, yields
$$
\int_{Q} | v - \bar v |^2
\lesssim \int_{Q} ( | f |^2 + | \bar f |^2 )
\lesssim \liminf_{k \rightarrow \infty}  \int_{Q} ( | f_k |^2 + | \bar f_k |^2 )
\lesssim \sup_{k \in \N} \| u_k - 1 \|_{ L^{\infty}(Q) } ^{2\kappa - 2},
$$
which, in view of \eqref{eqn:l2-hom-pf05}, contradicts \eqref{eqn:l2-hom-pf02c}.
\end{proof}

We now give the proof of Lemma \ref{lemma:h1-estimate-cut-off-two-scale-expansion}. The strategy is quite standard in the literature, see, for example, Proposition 2.1 in \cite{JosienOtto}. Therein a local estimate is proved for the gradient of the two-scale expansion on some small ball relative to the $ H^1 $ norm on a larger ball. It turns out that for us it is more natural to introduce the cut-off directly in the two-scale expansion rather than in the estimate; this leaves the boundary datum unchanged.

\begin{proof}[Proof of Lemma \ref{lemma:h1-estimate-cut-off-two-scale-expansion}.]
We start with some useful identities that culminate in what is known as the intertwining property of the (cut-off) two-scale expansion,~cf.~\eqref{eq:cut-off-two-scale-intertwining-property}. As for the standard two-scale expansion we compute
$$
\nabla (1 + \eta \phi_{Q,e_{i}}^{\gamma} \partial_i) \bar u
= (e_i + \nabla \phi_{Q,e_{i}}^{\gamma}) \eta \partial_i \bar u
+ \phi_{Q,e_{i}}^{\gamma} \nabla ( \eta \partial_i \bar u )
+ (1 - \eta) \nabla \bar u
$$
and
$$
\begin{aligned}
& a^{\gamma} \nabla (1 + \eta \phi_{Q,e_{i}}^{\gamma} \partial_i) \bar u \\
&\quad = a^{\gamma} (e_i + \nabla \phi_{Q,e_{i}}^{\gamma}) \eta \partial_i \bar u
+ \phi_{Q,e_{i}}^{\gamma} a^{\gamma} \nabla ( \eta \partial_i \bar u )
+ (1 - \eta) a^{\gamma} \nabla \bar u.
\end{aligned}
$$
Invoking the Helmholtz-type decomposition \eqref{E: helmholtz}, we rewrite this in the form
\begin{equation}\label{eq:cut-off-two-scale-flux}
\begin{aligned}
& a^{\gamma} \nabla (1 + \eta \phi_{Q,e_{i}}^{\gamma} \partial_i) \bar u \\
&\quad = \bar { a } (Q) \nabla \bar u
+ ( \nabla \cdot \sigma_{Q,e_{i}}^{\gamma} ) \eta \partial_i \bar u 
+ \phi_{Q,e_{i}}^{\gamma} a^{\gamma} \nabla ( \eta \partial_i \bar u )
+ (1 - \eta) ( a^{\gamma} - \bar { a } (Q ) ) \nabla \bar u \\
&\quad = \bar { a } (Q) \nabla \bar u
+ ( \phi_{Q,e_{i}}^{\gamma} a^{\gamma}  - \sigma_{Q,e_{i}}^{\gamma} ) \nabla ( \eta \partial_i \bar u )
+ (1 - \eta) ( a^{\gamma} - \bar { a } (Q) ) \nabla \bar u
+ \nabla \cdot \eta (\partial_i \bar u) \sigma_{Q,e_{i}}^{\gamma}.
\end{aligned}
\end{equation}
Note that the last term in the above computation is divergence free, so that
\begin{equation}\label{eq:cut-off-two-scale-intertwining-property}
\begin{aligned}
& \nabla \cdot a^{\gamma} \nabla (1 + \eta \phi_{Q,e_{i}}^{\gamma} \partial_i) \bar u 
= \nabla \cdot \bar a \nabla \bar u \\
& \quad + \nabla \cdot ( \phi_{Q,e_{i}}^{\gamma} a^{\gamma}  - \sigma_{Q,e_{i}}^{\gamma} ) \nabla ( \eta \partial_i \bar u )
+ \nabla \cdot (1 - \eta) ( a^{\gamma} - \bar {a} (Q) ) \nabla \bar u
+ \nabla \cdot ( \bar { a } (Q) - \bar a ) \nabla \bar u.
\end{aligned}
\end{equation}
Hence $ u - (1 + \eta \phi_{Q,e_{i}}^{\gamma} \partial_i) \bar u $ solves the equation
\begin{equation}\label{eq:cut-off-two-scale-difference-equation}
\begin{aligned}
& - \nabla \cdot a^{\gamma} \nabla ( u - (1 + \eta \phi_{Q,e_{i}}^{\gamma} \partial_i) \bar u ) + \theta^{\gamma} W'(u) - \bar \theta W'(\bar u) \\
& \quad =
\nabla \cdot ( \phi_{Q,e_{i}}^{\gamma} a^{\gamma} - \sigma_{Q,e_{i}}^{\gamma} ) \nabla ( \eta \partial_i \bar u )
+ \nabla \cdot (1 - \eta) ( a^{\gamma} - \bar {a} (Q) ) \nabla \bar u \\
&\quad \qquad
+ \nabla \cdot ( \bar {a} (Q) - \bar a ) \nabla \bar u .
\end{aligned}
\end{equation}
Having the identity $ \theta^{\gamma} W'(u) - \bar \theta W'(\bar u) = ( \theta^{ \gamma} - \bar \theta ) W'( \bar u ) + \theta^{ \gamma} ( W'(u) - W'( \bar u ) ) $ in mind, one can see that the energy estimate for \eqref{eq:cut-off-two-scale-difference-equation} implies
$$
\begin{aligned}
& \int_{Q} | \nabla ( u - (1 + \eta \phi_{Q,e_{i}}^{\gamma} \partial_i) \bar u ) |^2 \\
& \qquad \lesssim \int_{Q} ( W'(u) - W'(\bar u) )^2 + \int_{Q} ( \theta^{\gamma} - \bar \theta ) W'( \bar u )  ( u - (1 + \eta \phi_{Q,e_{i}}^{\gamma} \partial_i) \bar u ) \\
& \quad \qquad + \int_{Q} | ( \phi_{Q,e_{i}}^{\gamma} a^{\gamma}  - \sigma_{Q,e_{i}}^{\gamma} ) \nabla ( \eta \partial_i \bar u ) |^2
+ \int_{Q} | (1 - \eta) ( a^{\gamma} - \bar {a} (Q) ) \nabla \bar u |^2 \\
&\quad \qquad
+ | \bar {a} (Q) - \bar a |^{ 2 } \int_{Q} | \nabla \bar u |^2 
\end{aligned}
$$
so that, by our assumptions in Section \ref{S: assumptions}, we can estimate
\begin{equation}\label{eqn:proof-h1-estimate-cut-off-two-scale-expansion-1}
\begin{aligned}
& \int_{Q} | \nabla ( u - (1 + \eta \phi_{Q,e_{i}}^{\gamma} \partial_i) \bar u ) |^2 \\
& \qquad \lesssim \|W''\|^{2}_{L^{\infty}([-1,1])} \int_{Q} ( u - \bar u )^2 + \int_{Q} ( \theta^{\gamma} - \bar \theta ) W'( \bar u )  ( u - (1 + \eta \phi_{Q,e_{i}}^{\gamma} \partial_i) \bar u )  \\
& \quad \qquad + \sup_{Q} | \nabla ( \eta \partial_i \bar u ) |^2 \int_{Q} | ( \phi_{Q}^{\gamma}, \sigma_{Q}^{\gamma} ) |^2
+ \left( \int_{Q} (1 - \eta)^{2q} \right)^{\frac{1}{q}} \left( \int_{Q} | \nabla \bar u |^{2p} \right)^{\frac{1}{p}} \\
& \quad \qquad + | \bar a_{Q} - \bar a |^{ 2 } \int_{Q} | \nabla \bar u |^2 ,
\end{aligned}
\end{equation}
where $ \frac{1}{p} + \frac{1}{q} = 1 $.

Let us now handle the second term on the r.h.s.. To this end, we split the integral using the cut-off $ \eta $ to obtain
\begin{align*}
&\int_{Q} ( \theta^{\gamma} - \bar \theta ) W'( \bar u )  ( u - (1 + \eta \phi_{Q,e_{i}}^{\gamma} \partial_i) \bar u ) \\
&\quad \lesssim \| \theta^{\gamma} - \bar \theta \|_{ H^{-1}(Q) } \left(  \int_{Q} | \nabla ( \eta  W'( \bar u )  ( u - (1 + \eta \phi_{Q,e_{i}}^{\gamma} \partial_i) \bar u ) ) |^2  \right)^{ \frac{1}{2} } \\
&\qquad +  \int_{Q} ( 1 - \eta ) ( \theta^{\gamma} - \bar \theta ) W'( \bar u )  ( u - (1 + \eta \phi_{Q,e_{i}}^{\gamma} \partial_i) \bar u ) \\
&\quad \lesssim  \|  \theta^{\gamma} - \bar \theta \|_{ H^{-1}(Q) } \left( \int_{Q} | \nabla ( \eta W'( \bar u )  ( u - (1 + \eta \phi_{Q,e_{i}}^{\gamma} \partial_i) \bar u ) ) |^2 \right)^{ \frac{1}{2} } \\
&\qquad + \left( \int_{Q} ( ( 1 - \eta ) W'( \bar u ) )^2 \right)^{ \frac{1}{2} } \left( \int_{Q}  ( u - (1 + \eta \phi_{Q,e_{i}}^{\gamma} \partial_i) \bar u )^2 \right)^{ \frac{1}{2} } .
\end{align*}
As before, we may use the Poincaré inequality and Lemma \ref{lemma:homogeneous-interior-schauder-estimate}, also in form of \eqref{eqn:moser-conclusion}, together with the energetic minimality of $ \bar u $ in form of the estimate in Proposition \ref{P: uniqueness comparison}, to estimate
\begin{align*}
& \left( \int_{Q} | \nabla ( \eta W'( \bar u )  ( u - (1 + \eta \phi_{Q,e_{i}}^{\gamma} \partial_i) \bar u ) ) |^2 \right)^{ \frac{1}{2} } \\
& \quad \lesssim \left(  \sup_{ Q } | \eta W'( \bar u ) | + | \nabla ( \eta W'(\bar u) ) ) | \right) \left( \int_{Q} | \nabla ( u - (1 + \eta \phi_{Q,e_{i}}^{\gamma} \partial_i) \bar u ) |^2 \right)^{ \frac{1}{2} } \\
& \quad \leq C_{ \eta } \mathscr{F}_{ \gamma } ( u ; Q )^{ \frac{1}{2} } \left( \int_{Q} | \nabla ( u - (1 + \eta \phi_{Q,e_{i}}^{\gamma} \partial_i) \bar u ) |^2 \right)^{ \frac{1}{2} }.
\end{align*}
Furthermore, by Sobolev inequality, used as in \eqref{eqn:meyer02} below, we can estimate
\begin{align*}
\left( \int_{Q} ( ( 1 - \eta ) W'( \bar u ) )^2 \right)^{ \frac{1}{2} }
&\leq \left( \int_{Q} ( 1 - \eta )^{2q} \right)^{ \frac{1}{2q} } \left( \int_{Q} ( W'( \bar u ) )^{2p}  \right)^{ \frac{1}{2p} } \\
&\lesssim \left( \int_{Q} ( 1 - \eta )^{2q} \right)^{ \frac{1}{2q} }  \left( \int_{Q} | \nabla \bar u |^2 + \int_{Q} W(\bar u) \right)^{ \frac{1}{2} }
\end{align*}
for $ p $ close to one. Inserted in the last three equations into each other implies
\begin{equation}\label{eqn:proof-h1-estimate-cut-off-two-scale-expansion-2}
\begin{aligned}
& \int_{Q} ( \theta^{\gamma} - \bar \theta ) W'( \bar u )  ( u - (1 + \eta \phi_{Q,e_{i}}^{\gamma} \partial_i) \bar u ) \\
& \qquad \lesssim \Big(  C_{ \eta } \| \theta^{\gamma} - \bar \theta \|_{ H^{ -1 } ( Q ) }
+  \left( \int ( 1 - \eta )^{2q}  \right)^{ \frac{1}{2q} } \Big) \mathscr{F}_{ \gamma }(u ; Q)^{ \frac{1}{2} }  \\
& \qquad\quad \times \left( \int_{Q} | \nabla ( u - (1 + \eta \phi_{Q,e_{i}}^{\gamma} \partial_i) \bar u ) |^2 \right)^{ \frac{1}{2} }.
\end{aligned}
\end{equation}
Using Lemmas \ref{lemma:homogeneous-calderon-zygmund-estiamte}, \ref{lemma:homogeneous-interior-schauder-estimate}, and \ref{lemma:l2-homogenization-estimate} together with the energetic minimality of $ \bar u $ in form of the estimate in Proposition \ref{P: uniqueness comparison}, on the equations \eqref{eqn:proof-h1-estimate-cut-off-two-scale-expansion-1} and \eqref{eqn:proof-h1-estimate-cut-off-two-scale-expansion-2}, we obtain
$$
\begin{aligned}
&\int_{Q} | \nabla ( u - (1 + \eta \phi_{Q,e_{i}}^{\gamma} \partial_i) \bar u ) |^2 \\
&\qquad \leq o ( 1 ) \F_{\gamma}(u;Q) 
+ C_{\eta} \F_{\gamma}(u;Q) \left( \int_{Q} | ( \phi_{Q}^{\gamma}, \sigma_{Q}^{\gamma} ) |^2 +  \| \theta^{\gamma} - \bar \theta \|_{ H^{-1}(Q) }^2 \right) \\
&\qquad\quad + C \left( \int_{Q} (1 - \eta)^{2q} \right)^{\frac{1}{q}}  \left( \F_{\gamma}(u;Q)
+ \left( \int_{Q} | \nabla u |^{2p} \right)^{\frac{1}{p}} \right) 
+ | \bar {a} (Q) - \bar a |^{2} \F_{\gamma}(u;Q) ,
\end{aligned}
$$
where $ o(1) \rightarrow 0 $ as $ \int_{Q} | ( \phi_{Q}^{\gamma}, \sigma_{Q}^{\gamma} ) |^2 + \| \theta^{\gamma} - \bar{\theta} \|_{H^{-1}(Q)}^{2} + | \bar{a}(Q) - \bar{a}|^{2} \rightarrow 0 $. That yields the claim.
\end{proof}

We can connect the lemmas proven above to show Proposition \ref{prop:homogenization-interior-qubes}.

\begin{proof}[Proof of Proposition \ref{prop:homogenization-interior-qubes}.]
Since the assumptions in Section \ref{S: deterministic assumptions}, in particular the {Helm\-holtz} decomposition \eqref{eqn:a-helmholtz-decomposition}, are unchanged if we subtract a constant from $(\phi_{Q},\sigma_{Q})$, we assume without loss of generality that $ \fint_{Q} ( \phi_{Q}^{\gamma}, \sigma_{Q}^{\gamma}) = 0 $. The proof itself will be given in several steps. Throughout them $ \eta \in C^{\infty}(\R^d) $ denotes a smooth cut-off with $ \supp{\eta} $ compactly contained in $ Q $ and $ 0 \le \eta \le 1 $ that we will choose in the end. Constants depending on $ \eta $ are tracked by a subscript.

Before we start the actual proof, the reader may wish to revisit the definitions of $ W_{ \rm reg } $ and $ \mathscr{F}^{ \rm reg } $ in Section \ref{sec: regularized_potential_reg}. It will be useful to keep in mind that these coincide with $ W $, $ \mathscr{F} $ on functions with values between $ - 1 $ and $ 1 $, but differ when the two-scale correction is added. 

\textit{Step 1 (Convexity).} We start the argument by showing that for every $ \alpha > 0 $, there exists a $ \delta_1 = \delta_1 ( \alpha, \eta ) > 0 $ such that
\begin{align*}
| \bar { a } (Q) - \bar a | + \left( \int_{Q} | ( \phi_{Q}^{\gamma}, \sigma_{Q}^{\gamma}) |^2 \right)^{ \frac{1}{2} }+ \| \theta^{\gamma} - \bar \theta \|_{ H^{-1}(Q) } < \delta_1
\end{align*}
implies
\begin{equation}\label{eq:cut-off-two-scale-energy-difference}
\begin{aligned}
& | \F_{\gamma}^{ \rm reg } ((1 + \eta \phi_{Q,e_i}^{\gamma} \partial_i) \bar u ; Q) - \F_{\gamma}^{ \rm reg } (u ; Q) | \phantom{\fint} \\
&\quad \leq \alpha \F_{\gamma} (u ; Q)
 + C_{\eta} \F_{\gamma} (u ; Q) \left( \int_{Q} | ( \phi_{Q}^{\gamma}, \sigma_{Q}^{\gamma} ) |^2 + \| \theta^{\gamma} - \bar \theta \|_{ H^{-1} (Q) } ^ { 2 }  \right) \\
&\quad\quad + C \left( \F_{\gamma} (u ; Q) + \left( \int_{Q} | \nabla u |^{2p} \right)^{\frac{1}{p}} \right) \left( \int_{Q} (1 - \eta)^{2q} \right)^{\frac{1}{q}}
+ \F_{\gamma}(u; Q) | \bar a_Q - \bar a |^{ 2 } .
\end{aligned}
\end{equation}
That is, the convexity,~cf.~Proposition \ref{prop:convexity}, enables us to lift the $ H^1 $-bound on the (cut-off) two-scale expansion from Lemma \ref{lemma:h1-estimate-cut-off-two-scale-expansion} to the energies.

We want to appeal to Proposition \ref{prop:convexity}. For convenience, we introduce $\delta u = (1 + \eta \phi_{Q,e_i}^{ \gamma } \partial_{i}) \bar{u} - u$. Using the explicit form for $ D \mathscr{F}_{\gamma}^{ \rm reg }  $ that we derived \eqref{eq:derivative-energy}, the r.h.s.~in \eqref{eqn:ee-diff-convexity} becomes
$$
\begin{aligned}
& \langle D\F_{\gamma}^{ \rm reg } ((1 + \eta \phi_{Q,e_i}^{\gamma} \partial_i) \bar u ; Q) - D\F_{\gamma}^{ \rm reg } (u ; Q), (1 + \eta \phi_{Q,e_i}^{\gamma} \partial_i) \bar u - u \rangle \phantom{\fint} \\
& \qquad = \int_{Q} a^{\gamma} \nabla ( (1 + \eta \phi_{Q,e_i}^{\gamma} \partial_i) \bar u ) \cdot \nabla \delta u
+ \int_{Q} \theta^{\gamma} W_{ \rm reg }'( (1 + \eta \phi_{Q,e_i}^{\gamma} \partial_i) \bar u ) \delta u \\
&\qquad\quad -\int_{Q} a^{\gamma} \nabla u \cdot \nabla \delta u
- \int_{Q}  \theta^{\gamma} W_{ \rm reg } '( u ) \delta u ,
\end{aligned}
$$
which by Young's and Poincaré inequality is estimated  by (recall also the global Lipschitz bound on $ W_{\rm reg}' $ in \eqref{E: regularized_potential_reg})
$$
\begin{aligned}
& \langle D\F_{\gamma}^{ \rm reg } ((1 + \eta \phi_{Q,e_i}^{\gamma} \partial_i) \bar u ; Q) - D\F_{\gamma}^{ \rm reg } (u ; Q), (1 + \eta \phi_{Q,e_i}^{\gamma} \partial_i) \bar u - u \rangle \phantom{\fint}  \\
& \qquad \lesssim \int_{Q} | \nabla ( (1 + \eta \phi_{Q,e_i}^{\gamma} \partial_i) \bar u - u ) |^2 
+ \int_{Q} | \nabla \delta u |^2 \\
& \qquad \quad + \int_{Q} | W_{ \rm reg }' ( (1 + \eta \phi_{Q,e_i}^{\gamma} \partial_i) \bar u )  - W_{ \rm reg }'(u) |^2
+ \int | \delta u |^2 \\
& \qquad \lesssim \int_{Q} | \nabla \delta u |^2. 
\end{aligned}
$$
This, together with Lemma \ref{lemma:h1-estimate-cut-off-two-scale-expansion} and estimate \eqref{eqn:ee-diff-convexity}, implies the claimed \eqref{eq:cut-off-two-scale-energy-difference}.

\textit{Step 2 (Boundary layer estimate).} We now argue that the contributions along the boundary layer are negligible. More precisely, we show
\begin{equation}\label{eq:boundary-layer-energy}
\overline{\mathscr{F}}^{\rm reg} (\bar u ; Q \setminus \varrho Q)
\lesssim | Q \setminus \varrho Q |^{ 1 - \frac{1}{p} } \left( \left( \int_{Q} | \nabla \bar u |^{2p} \right)^{\frac{1}{p}} + \overline{\mathscr{F}} (\bar u ; Q) \right) .
\end{equation}
The same statement holds true with $ \bar u $ and $ \overline{\mathscr{F}} $ replaced by $ u $ and $ \mathscr{F}_{\gamma} $.

First, observe that
\begin{equation}\label{eqn:en-hom-01}
\int_{Q \setminus \varrho Q} | \nabla \bar u |^2
\leq | Q \setminus \varrho Q |^{ 1 - \frac{1}{p} } \left( \int_{Q \setminus \varrho Q} | \nabla \bar u |^{2p} \right)^{\frac{1}{p}}
\end{equation}
for $ p > 1 $. To show a similar estimate for the well term, we distinguish the cases $ \overline{\mathscr{F}} (\bar u ; Q) \ge \delta $ and $  \overline{\mathscr{F}} (\bar u ; Q) < \delta $, where $ \delta > 0 $ is chosen such that the clearing-out property, i.e.~the conclusion of Lemma \ref{lemma:clearing-out}, holds.

If $  \overline{\mathscr{F}} (\bar u ; Q) < \delta $, we may assume w.l.o.g.~that $ | \bar u - 1 | \ll 1 $, so that by \eqref{eqn:w-poly} we have
$$
W_{ \rm reg } (\bar u) \sim | \bar u - 1|^{ 2 \kappa } \quad \text{in} ~ Q.
$$
Hence
$$
\int_{Q \setminus \varrho Q} W_{ \rm reg } (\bar u) 
\lesssim | Q \setminus \varrho Q |^{ 1 - \frac{1}{p} } \left( \int_{Q} | ( \bar u - 1 )^{\kappa} |^{2 p}  \right)^{\frac{1}{p}}
$$
for any $ p > 1 $. If $ p $ is sufficiently close to one, we can appeal to the Sobolev inequality to obtain
$$
\begin{aligned}
\int_{Q \setminus \varrho Q} W_{ \rm reg } (\bar u) 
&\lesssim| Q \setminus \varrho Q |^{ 1 - \frac{1}{p} } \left( \int_{Q} | \nabla ( \bar u - 1 )^{\kappa} |^2 + \int_{Q} | ( \bar u - 1 )^{\kappa} |^2 \right).
\end{aligned}
$$
Since $ | \bar u - 1 | \ll 1 $ this implies
$$
\begin{aligned}
\int_{Q \setminus \varrho Q} W_{ \rm reg } (\bar u) 
&\lesssim | Q \setminus \varrho Q |^{ 1 - \frac{1}{p} } \left( \int_{Q} | \nabla \bar u |^2 + \int_{Q} W_{ \rm reg } (\bar u) \right).
\end{aligned}
$$
Note that this estimate also holds true in the regime  $ \overline{\mathscr{F}} (\bar u ; Q) \ge \delta $ since $ W_{ \rm reg} (\bar u) = W (\bar u) $ is bounded. With the gradient estimate \eqref{eqn:en-hom-01} this combines to \eqref{eq:boundary-layer-energy}. Again, to reiterate, the same argument would also work with $ \bar u $ replaced by $ u $ and $ \overline{\mathscr{F}} $ by $ \mathscr{F}_{\gamma} $.

\textit{Step 3 (Homogenization on the level of energies).} We now argue, that the cut-off two-scale expansion is also a good approximation on the level of energies. To make this precise, we assume from this point on that the cut-off $ \eta $ is chosen such that $ \supp{\eta} \subset \varrho Q $ (for $ 0 < \varrho < 1 $ as in Step 2) and $ 0 \le \eta \le 1 $. For convenience, let us also assume that $ \supp{\eta} $ is convex. We claim that there exists a $ \delta_2  > 0 $ such that if
\begin{align*}
\left( \int_{Q} | ( \phi_{Q}^{\gamma}, \sigma_{Q}^{\gamma} ) |^2 \right)^{\frac{1}{2}} + \| \theta^{\gamma} - \bar{\theta} \|_{H^{-1}(Q)} + | \bar{a}(Q) - \bar{a} | < \delta_2,
\end{align*}
then
\begin{equation}\label{eq:cut-off-two-scale-expansion-energy}
\begin{aligned}
& | \F_{\gamma}^{ \rm reg } ((1 + \eta \phi_{Q,e_i}^{\gamma} \partial_i) \bar u ;  \varrho Q) - \overline{\mathscr{F}}^{ \rm reg } (\bar u ; \varrho Q) |  \\
&\qquad \le C_{\eta} \overline{\mathscr{F}} (\bar u ; Q) \left( \int_{Q} | (\phi_{Q}^{\gamma}, \sigma_{Q}^{\gamma}) |^2 + \| \theta^{\gamma} - \bar \theta \|_{ H^{-1}(Q) }^2  + | \bar a_{Q} - \bar a |^2 \right)^{\frac{1}{2}} \\
& \qquad\quad + \left( \int_{\varrho Q} | 1 - \eta|^{q} \right)^{\frac{1}{q}} \left(  C_{\varrho} \overline{\mathscr{F}} (\bar u ; Q) + \left( \int_{Q} | \nabla \bar u |^{2p} \right)^{\frac{1}{p}} \right),
\end{aligned}
\end{equation}
where $ \frac{1}{q} + \frac{1}{p} = 1 $ with $ q $ sufficiently large. We will show the above estimate for the gradient and well part separately.

\textit{Step 3.1 (Gradient term).} Let us begin with the gradient term that splits into
\begin{equation}\label{eq:two-scale-expansion-gradient-energy}
\begin{aligned}
& \int_{\varrho Q} a^{\gamma} \nabla ( (1 + \eta \phi_{Q,e_i}^{\gamma} \partial_i ) \bar u ) \cdot \nabla ( (1 + \eta \phi_{Q,e_j}^{\gamma} \partial_j ) \bar u ) \\
&\quad\quad= \int_{\varrho Q} a^{\gamma} \nabla ( (1 + \eta \phi_{Q,e_i}^{\gamma} \partial_i ) \bar u ) \cdot \nabla \bar u
+ \int_{\varrho Q} a \nabla ( (1 + \eta \phi_{Q,e_i}^{\gamma} \partial_i ) \bar u ) \cdot \nabla ( \eta \phi_{Q,e_j}^{\gamma} \partial_j \bar u )
\end{aligned}
\end{equation}
We also argue for the two terms on the r.h.s.~of \eqref{eq:two-scale-expansion-gradient-energy} separately.

On the first term on the r.h.s.~in \eqref{eq:two-scale-expansion-gradient-energy}, we decompose the flux $a^{\gamma} \nabla(1 + \eta \phi_{Q,e_i} \partial_{i}) \bar{u}$ using identity \eqref{eq:cut-off-two-scale-flux} to obtain
$$
\begin{aligned}
&\int_{\varrho Q} a^{\gamma} \nabla ( (1 + \eta \phi_{Q,e_i} \partial_i ) \bar u ) \cdot \nabla \bar u \\
&\quad\quad = \int_{\varrho Q} \bar a \nabla \bar u \cdot \nabla \bar u
+ \int_{\varrho Q} \phi_{Q,e_i}^{\gamma} a^{\gamma} \nabla ( \eta \partial_i \bar u ) \cdot \nabla \bar u 
+ \int_{\varrho Q} (\nabla \cdot \sigma^{\gamma}_{Q,e_i}) \cdot (\eta \partial_{i} \bar{u} \nabla \bar{u}) \\
&\quad \quad \quad + \int_{\varrho Q} (1 - \eta) ( a^{\gamma} - \bar {a} (Q) ) \nabla \bar u \cdot \nabla \bar u
+ \int_{ \varrho Q } ( \bar {a} (Q) - \bar a ) \nabla \bar u \cdot \nabla \bar u .
\end{aligned}
$$
After integrating-by-parts to take the derivative off of $\sigma_{Q,i}$ and invoking Lemma \ref{lemma:homogeneous-interior-schauder-estimate} to control the derivatives of $\bar{u}$ that appear, this becomes
\begin{equation}\label{eqn:en-hom-02}
\begin{aligned}
& \left| \int_{\varrho Q} a^{\gamma} \nabla ( (1 + \eta \phi_{Q,e_i} \partial_i ) \bar u ) \cdot \nabla \bar u - \int_{\varrho Q} \bar a \nabla \bar u \cdot \nabla \bar u \right| \\
&\quad\quad \lesssim C_{\eta} \left( \int_{Q} | (\phi_{Q}^{\gamma}, \sigma_{Q}^{\gamma} ) |^2 \right)^{\frac{1}{2}} \overline{\mathscr{F}} (\bar u ; Q)
+ \left( \int_{\varrho Q} | 1 - \eta|^{q} \right)^{\frac{1}{q}} \left( \int_{Q} | \nabla \bar u |^{2p} \right)^{\frac{1}{p}} \\
&\quad\qquad + | \bar { a } (Q) - \bar a | \overline{\mathscr{F}} (\bar u ; Q) . \phantom{\int}
\end{aligned}
\end{equation}
where $ \frac{1}{p} + \frac{1}{q} = 1 $. Note that all these terms appear in the right-hand side of \eqref{eq:cut-off-two-scale-expansion-energy}.

On the second term in \eqref{eq:two-scale-expansion-gradient-energy}, we also appeal to \eqref{eq:cut-off-two-scale-intertwining-property} to obtain
$$
\begin{aligned}
\int_{\varrho Q} a \nabla ( (1 + \eta \phi_{Q,e_i} \partial_i ) \bar u ) \cdot \nabla ( \eta \phi^{\gamma}_{Q,e_j} \partial_j \bar u )
&= - \int_{\varrho Q} \bar\theta \, W_{ \rm reg }'(\bar u) \eta \phi^{\gamma}_{Q,e_i} \partial_i \bar u\\
&+ \int_{\varrho Q} ( \phi_{Q,e_i}^{\gamma} a^{\gamma} - \sigma_{Q,e_i}^{\gamma} ) \nabla ( \eta \partial_i \bar u ) \cdot \nabla ( \eta \phi^{\gamma}_{Q,e_j} \partial_j \bar u ) \\
& + \int_{\varrho Q} (1 - \eta) ( a^{\gamma} - \bar {a} (Q) ) \nabla \bar u \cdot \nabla (\eta \phi^{\gamma}_{Q,e_j} \partial_j \bar u) \\
& + \int_{\varrho Q} ( \bar { a } (Q) - \bar a ) \nabla \bar u \cdot \nabla (\eta \phi^{\gamma}_{Q,e_j} \partial_j \bar u) . 
\end{aligned}
$$
Note that by Lemma \ref{lemma:homogeneous-interior-schauder-estimate} and Caccioppoli's estimate (applicable since $ \supp{\eta} \subset \varrho Q $), applied to the gradient of $ \phi_{Q,e_i}^{\gamma} + ( x - x_0 ) \cdot e_i $, where $ x_0 $ denotes the center of $ Q $,
$$
\begin{aligned}
\int_{\varrho Q} | \nabla (\eta \phi^{\gamma}_{Q,e_j} \partial_j \bar u) |^2
&\leq C_{\eta} \overline{\mathscr{F}} (\bar u ; Q) \left( \int_{Q} | \eta \nabla \phi_{Q}^{\gamma} |^2  + \int_{Q} | \phi_{Q}^{\gamma} |^2 
\right) \\
&\leq C_{\eta} \overline{\mathscr{F}} (\bar u ; Q) \left( 1 + \int_{Q} | \phi_{Q}^{\gamma} |^2 \right)
\end{aligned}
$$
and
$$
\int_{\varrho Q} ( \eta \phi^{\gamma}_{Q,e_j} \partial_j \bar u)^2
\le C_{\eta} \overline{\mathscr{F}} (\bar u ; Q) \int_{Q} | \phi_{Q}^{\gamma} |^2.
$$
Combining this with another application of Lemma \ref{lemma:homogeneous-interior-schauder-estimate}, the above identity is estimated by
$$
\begin{aligned}
&\left| \int_{\varrho Q} a \nabla ( (1 + \eta \phi^{\gamma}_{Q,e_i} \partial_i ) \bar u ) \cdot \nabla ( \eta \phi^{\gamma}_{Q,e_j} \partial_j \bar u ) \right| \\
&\qquad \le C_{\eta} \overline{\mathscr{F}} (\bar u ; Q)^{\frac{1}{2}} \left( \int_{Q} W_{ \rm reg }'(\bar u)^2 \right)^{\frac{1}{2}} \left( \int_{Q} | (\phi_{Q}^{\gamma}, \sigma_{Q}^{\gamma}) |^2 \right)^{\frac{1}{2}} \\
&\qquad\quad + C_{\eta} \overline{\mathscr{F}} (\bar u ; Q) \left( \int_{Q} | ( \phi_{Q,e_i}^{\gamma}, \sigma_{Q,e_i}^{\gamma} )|^2 + | \bar a_{Q} - \bar a |^2 \right)^{\frac{1}{2}} \left( 1 + \int_{Q} | \phi_{Q}^{\gamma} |^2 \right)^{ \frac{1}{2} }  \\
&\qquad\quad + \left| \int_{\varrho Q} (1 - \eta) ( a^{\gamma} - \bar a ) \nabla \bar u \cdot \nabla (\eta \phi^{\gamma}_{Q,e_j} \partial_j \bar u) \right|.
\end{aligned}
$$
On the last term, we need to be more careful. Note that
$$
\nabla (\eta \phi^{\gamma}_{Q,e_j} \partial_j \bar u)
= \phi^{\gamma}_{Q,e_j} \nabla ( \eta \partial_j \bar u ) + \eta ( \partial_j \bar u ) \nabla \phi^{\gamma}_{Q,e_j}
$$
so that Lemma \ref{lemma:homogeneous-interior-schauder-estimate} shows
$$
\begin{aligned}
& \left| \int_{\varrho Q} (1 - \eta) ( a^{\gamma} - \bar a ) \nabla \bar u \cdot \nabla (\eta \phi_{Q,e_j} \partial_j \bar u) \right| \\
&\quad\quad \leq C_{\eta} \left( \int_{Q} | \phi_{Q}^{\gamma} |^2 \right)^{\frac{1}{2}} \overline{\mathscr{F}} (\bar u ; Q) \\
&\quad\quad\quad + C_{\varrho} \left( \int_{\varrho Q} | \nabla \phi_{Q}^{\gamma} |^2 \right)^{\frac{1}{2}} \left( \int_{\varrho Q} | 1 - \eta |^2 \right)^{\frac{1}{2}} \overline{\mathscr{F}} (\bar u ; Q).
\end{aligned}
$$
We want to remark that the constant in the last line depends only on $\varrho$, not on $ \eta $, since we may apply Lemma \ref{lemma:homogeneous-interior-schauder-estimate} on $ \varrho Q \subset Q $ after neglecting the cut-off $ \eta $. By another application of Caccioppoli's estimate, the last three equations combine to
$$
\begin{aligned}
&\left| \int_{\varrho Q} a \nabla ( (1 + \eta \phi_{Q,e_i} \partial_i ) \bar u ) \cdot \nabla ( \eta \phi_{Q,e_j} \partial_j \bar u ) \right| \\
&\qquad \le C_{\eta} \overline{\mathscr{F}} (\bar u ; Q)^{\frac{1}{2}} \left( \int_{Q} W_{ \rm reg }'(\bar u)^2 \right)^{\frac{1}{2}} \left( \int_{Q} | (\phi_{Q}^{\gamma}, \sigma_{Q}^{\gamma}) |^2 \right)^{\frac{1}{2}} \\
&\qquad\quad + C_{\eta} \overline{\mathscr{F}} (\bar u ; Q) \left( \int_{Q} | ( \phi_{Q}^{\gamma}, \sigma_{Q}^{\gamma} )|^2 + | \bar {a} (Q) - \bar a |^2 \right)^{\frac{1}{2}} \left( 1 + \int_{Q} | \phi_{Q}^{\gamma} |^2 \right)^{ \frac{1}{2} }  \\
&\quad\quad\quad + C_{\varrho} \left( 1 + \left( \int_{Q} | \phi_{Q}^{\gamma} |^2 \right)^{\frac{1}{2}} \right) \left( \int_{\varrho Q} | 1 - \eta |^2 \right)^{\frac{1}{2}} \overline{\mathscr{F}} (\bar u ; Q).
\end{aligned}
$$
The well term $ W_{ \rm reg }' $ is handled via a Sobolev inequality like in \eqref{eqn:meyer02} so that overall we obtain the estimate
$$
\begin{aligned}
&\left| \int_{\varrho Q} a \nabla ( (1 + \eta \phi_{Q,e_i} \partial_i ) \bar u ) \cdot \nabla ( \eta \phi_{Q,e_j} \partial_j \bar u ) \right| \\
&\qquad \le C_{\eta} \overline{\mathscr{F}} (\bar u ; Q) \left( \int_{Q} | (\phi_{Q}^{\gamma}, \sigma_{Q}^{\gamma}) |^2 + | \bar a_{Q} - \bar a |^2 \right)^{\frac{1}{2}} \left( 1 + \int_{Q} | (\phi_{Q}^{\gamma}, \sigma_{Q}^{\gamma}) |^2 \right)^{ \frac{1}{2} } \\
&\qquad\quad + C_{\varrho} \left( 1 + \int_{Q} | ( \phi_{Q}^{\gamma}, \sigma_{Q}^{\gamma} ) |^2 \right)^{\frac{1}{2}} \left( \int_{\varrho Q} | 1 - \eta |^2 \right)^{\frac{1}{2}} \overline{\mathscr{F}} (\bar u ; Q).
\end{aligned}
$$
Assuming $ \delta_2 < 1 $, these terms are exactly the errors appearing in \eqref{eq:cut-off-two-scale-expansion-energy}. By combining this with \eqref{eqn:en-hom-02}, we finish the estimate for \eqref{eq:two-scale-expansion-gradient-energy}. This contains all contributions of the gradient part to \eqref{eq:cut-off-two-scale-expansion-energy}.

\textit{Step 3.2 (Potential term).}  On the well part of \eqref{eq:cut-off-two-scale-expansion-energy}, we appeal to another convexity argument. We start with the observation
$$
|W_{ \rm reg } ( \bar u + \eta \phi_{Q,e_i}^{\gamma} \partial_i \bar u) - W_{ \rm reg } ( \bar u ) -  W_{ \rm reg } '(\bar u) \eta \phi_{Q,e_i}^{\gamma} \partial_i \bar u |
\leq \frac{1}{2} \sup | W_{ \rm reg }'' | ( \eta \phi_{Q,e_i}^{\gamma} \partial_i \bar u )^2.
$$
It is crucial to work with a second order Taylor estimate here, to obtain an estimate with the right scaling on the r.h.s.. Indeed, by Lemma \autoref{lemma:homogeneous-interior-schauder-estimate},
\begin{equation}\label{eqn:pf-hom-well-potb}
\begin{aligned}
& \left| \int_{ \varrho Q } \theta^{\gamma} W_{ \rm reg } ( \bar u + \eta \phi_{Q,e_i}^{\gamma} \partial_i \bar u) - \int_{ \varrho Q } \theta^{\gamma} W_{ \rm reg } ( \bar u ) \right| \\
& \qquad \leq C_{\eta} \left( \int_{ Q } | W_{ \rm reg } '(\bar u) |^2 \right)^{\frac{1}{2}} \left( \int_{Q} | \phi_{Q}^{\gamma} |^2 \right)^{\frac{1}{2}} \overline{\mathscr{F}} (\bar u ; Q)^{\frac{1}{2}} \\
&\qquad\quad  + C_{\eta} \left( \int_{ Q } | \phi_{Q}^{\gamma} |^2 \right)^{\frac{1}{2}} \overline{\mathscr{F}} (\bar u ; Q)
\end{aligned}
\end{equation}
As above, the $ W' $ term may be handled via a Sobolev inequality, cf.~\eqref{eqn:meyer02}.

In addition to \eqref{eqn:pf-hom-well-potb}, we need to control $ ( \theta^{\gamma} - \bar \theta ) W_{\rm reg} ( \bar u ) $, which we split up in a boundary and interior contribution. Since $ \supp \eta \subset \varrho Q \subset Q $, we have
\begin{equation}\label{eqn:pf-hom-well-pot}
\begin{aligned}
& \left| \int_{ \varrho Q } ( \theta^{\gamma} - \bar \theta ) \eta W_{ \rm reg } ( \bar u ) \right| \\
&\qquad \leq C_{ \eta } \| \theta^{\gamma} - \bar \theta \|_{ H^{-1}(Q) } \left( \int_{ \supp \eta } | W_{ \rm reg } (\bar u) |^2 + \int_{ \varrho Q } | \nabla W_{ \rm reg } (\bar u) |^2   \right)^{ \frac{1}{2} }.
\end{aligned}
\end{equation}
Since we assume that $ \supp{ \eta } $ is convex, we can apply the Poincaré inequality on $ \supp{\eta} $ to obtain
$$
\begin{aligned}
\left( \int_{ \supp \eta } | W_{ \rm reg } ( \bar u ) |^2 \right)^{ \frac{1}{2} }
&\leq \left( \int_{ \supp \eta } | W_{ \rm reg } ( \bar u ) - \fint_{ \supp \eta } W_{ \rm reg } ( \bar u ) |^2 \right)^{ \frac{1}{2} } + C_{ \eta }  \fint_{ \supp \eta } W_{ \rm reg } ( \bar u ) \\
&\leq C_{ \eta } \left( \left( \int_{ \supp \eta } | \nabla W_{ \rm reg } (\bar u) |^2 \right)^{ \frac{1}{2} } + \int_{ Q } W_{ \rm reg } ( \bar u ) \right).
\end{aligned}
$$
Since $ -1 \leq \bar u \leq 1 $, we may use the previous estimate to post-process \eqref{eqn:pf-hom-well-pot} to
\begin{equation} \label{eqn:pf-hom-well-potd}
\begin{aligned}
& \left| \int_{ \varrho Q } ( \theta^{\gamma} - \bar \theta ) \eta W_{ \rm reg } ( \bar u ) \right| \\
& \qquad \leq C_{ \eta } \| \theta^{\gamma} - \bar \theta \|_{ H^{-1}(Q) } \left( \overline{ \mathscr{F} } ( u ; Q ) + \left( \int_{ \supp \eta } | \nabla W (\bar u) |^2 \right)^{ \frac{1}{2} } \right).
\end{aligned}
\end{equation}
We can estimate
$$
\left( \int_{ \supp \eta  } | \nabla W(\bar u) |^2 \right)^{ \frac{1}{2} }
= \left( \int_{ \supp \eta  }  | W'( \bar u ) \nabla \bar u |^{ 2 } \right)^{ \frac{1}{2} }
\lesssim ( \sup_{ \supp \eta  } | W'( \bar u ) | ) \left( \int_{ Q } | \nabla \bar u |^2 \right)^{ \frac{1}{2} }.
$$
By virtue of \eqref{eqn:moser-conclusion}, in the proof of Lemma \ref{lemma:homogeneous-interior-schauder-estimate}, we may conclude
\begin{equation}\label{eqn:pf-hom-well-potc}
\left| \int_{ \varrho Q } ( \theta^{\gamma} - \bar \theta ) \eta W_{ \rm reg } ( \bar u ) \right|
\leq C_{ \eta } \| \theta^{\gamma} - \bar \theta \|_{ H^{-1}(Q) } \overline{ \mathscr{F} } ( \bar u ; Q ),
\end{equation}
whenever $ \overline{ \mathscr{F} } ( \bar u ; Q ) \ll 1 $. In the opposite regime $ \overline{ \mathscr{F} } ( \bar u ; Q ) \gtrsim 1 $, we don't need to use \eqref{eqn:moser-conclusion} to conclude the above inequality since $ W'_{\rm reg} ( \bar u ) $ is uniformly bounded.

Lastly, we may argue by Sobolev's inequality (as done in Step 2) that
\begin{align}\label{eqn:pf-hom-well-potc-2}
\left| \int_{ \varrho Q } ( \theta^{\gamma} - \bar \theta ) ( 1 - \eta ) W_{ \rm reg }( \bar u ) \right|
&\lesssim \left( \int_{ \varrho Q } ( 1 - \eta )^{ q } \right)^{ \frac{1}{q} } \overline{ \mathscr{F} } ( \bar u ; Q )
\end{align}
for some large $ q \gg 1 $. Together, \eqref{eqn:pf-hom-well-potb}, \eqref{eqn:pf-hom-well-potc}, and \eqref{eqn:pf-hom-well-potc-2} deal with the well part of \eqref{eq:cut-off-two-scale-expansion-energy}.

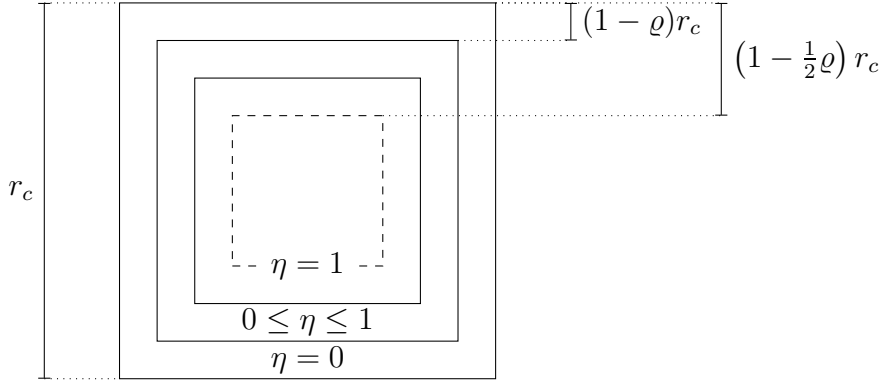
\begin{figure}

\centering

\begin{tikzpicture}
	\draw (0,0) rectangle (5,5);
	\draw [dotted] (-1, 5) -- (0, 5);
	\draw [dotted] (-1, 0) -- (0, 0);
	\draw [|-|] (-1,0) -- node[left] { $ r_c $ } (-1,5);

	\draw (0.5, 0.5) rectangle (4.5, 4.5);
	\draw [dotted] (4.5, 4.5) -- (6, 4.5);
	\draw [dotted] (5, 5) -- (6, 5);
	\draw [|-|] (6, 4.5) -- node[right] { $ ( 1 - \varrho ) r_c $ } (6, 5);

	\draw [ dashed, opacity = 0.2 ] (1.5, 1.5) rectangle (3.5, 3.5);
	\draw [dotted] (3.5, 3.5) -- (8, 3.5);
	\draw [dotted] (5, 5) -- (8, 5);
	\draw [|-|] (8, 3.5) -- node[right] { $ \left( 1 - \frac{ 1 }{ 2 } \varrho \right) r_c $ } (8, 5);

	\draw (1, 1) rectangle (4, 4);
	\node [ fill = white ] at (2.5, 1.5) {$ \eta = 1 $};
	\node at (2.5, 0.75) {$ 0 \le \eta \le 1 $};
	\node at (2.5, 0.25) {$ \eta = 0 $};
\end{tikzpicture}

\caption{Cut-off strategy used in proof of Proposition \autoref{prop:homogenization-interior-qubes}: The boundary layer of width $ 1 - \varrho $ (where $ \eta = 0 $) is energetically negligible while on the set $ \{ \eta = 1 \} $ the two-scale expansion is a good approximation in energy. The transition layer $ \{ 0 < \eta < 1 \} $, which is squeezed into $ \varrho Q \setminus \frac{\varrho}{2} Q $, appears as a technical artifact of the proof and is also shown to be negligible as well.}

\end{figure}

\textit{Step 5 (Conclusion).} We now prove the proposition. Given $ \alpha > 0 $, we show that there exists some $ \delta > 0 $ and a constant $ C_{\alpha} > 0 $ such that if
\begin{align*}
 | \bar {a} (Q) - \bar a | + \left( \int_{Q} | (\phi_{Q}^{\gamma}, \sigma_{Q}^{\gamma}) |^2 \right)^{ \frac{1}{2} } + \| \theta^{\gamma} - \bar \theta \|_{ H^{-1}(Q) } < \delta,
\end{align*}
then
$$
\begin{aligned}
& | \overline{\mathscr{F}}^{ \rm reg } (\bar u ; Q) - \F_{\gamma}^{ \rm reg } (u ; Q) | \\
& \quad \leq \alpha \left( \F_{\gamma} (u ; Q) + \left( \int_{Q} | \nabla u |^{2p} \right)^{\frac{1}{p}} \right) \\
& \quad \qquad + C_{\alpha} \F_{\gamma} (u ; Q) \left( \int_{Q} | (\phi_{Q}^{\gamma}, \sigma_{Q}^{\gamma}) |^2 + \| \theta^{\gamma} - \bar \theta \|_{ H^{-1}(Q) }^2 + | \bar a_{Q} - \bar a |^2 \right)^{\frac{1}{2}}. 
\end{aligned}
$$
Since $ - 1 \leq u, \bar u \leq 1 $, we replace $ ( \overline{\F}^{\rm reg}, \F_{\gamma}^{ \rm reg } ) $ by $ ( \overline{\F}, \F_{\gamma} ) $ on the l.h.s., so that the above claim implies the proposition.

Here comes the argument. First, we choose $ \varrho > 0 $ close to one such that the boundary layer is negligible. That is, we appeal to the estimate \eqref{eq:boundary-layer-energy} in combination with Meyer's estimate, cf.~ Lemma \ref{lemma:homogeneous-calderon-zygmund-estiamte}, to get
\begin{equation}\label{eqn:en-hom-04}
\begin{aligned}
&\overline{\mathscr{F}}^{ \rm reg } (\bar u ; Q \setminus \varrho Q)
+ \F_{\gamma}^{ \rm reg } (\bar u ; Q \setminus \varrho Q) \\
&\qquad \lesssim | Q \setminus \varrho Q |^{ 1 - \frac{1}{p} } \left( \F_{\gamma} (u ; Q) + \left( \int_{Q} | \nabla u |^{2p} \right)^{\frac{1}{p}} \right) \\
&\qquad \leq \frac{\alpha}{4} \left( \F_{\gamma} (u ; Q) + \left( \int_{Q} | \nabla u |^{2p} \right)^{\frac{1}{p}} \right).
\end{aligned}
\end{equation}
provided $ 0 < \varrho < 1 $ is sufficiently close to one.
Simultaneously, by choosing $ \varrho $ even closer to one if necessary, we may achieve that for every cut-off $ 0 \leq \eta \leq 1 $ with $ \eta |_{ \frac{\varrho}{2} Q  } = 1 $, we have
\begin{equation}\label{eqn:en-hom-03}
C \left( \int_{\varrho Q} (1 - \eta)^{2q}  \right)^{\frac{1}{q} } \leq C \left| Q \setminus \frac{\varrho}{2} Q \right|^{ \frac{1}{q} }\leq \frac{ \alpha }{ 2 } ,
\end{equation}
where $ C $ and $ q $ are chosen as in \eqref{eq:cut-off-two-scale-energy-difference}. Estimate \eqref{eqn:en-hom-03} will be important later on. For now, we combine \eqref{eqn:en-hom-04} with the fact that the cut-off two-scale expansion approximates the energy inside $ \varrho Q $.  This is made precise by \eqref{eq:cut-off-two-scale-expansion-energy}. Therefore, the last estimates and \eqref{eqn:en-hom-04} and \eqref{eq:cut-off-two-scale-expansion-energy} combine to
$$
\begin{aligned}
& | \overline{\mathscr{F}}^{ \rm reg } (\bar u ; Q) - \F_{\gamma}^{ \rm reg } ((1 + \eta \phi_{Q,e_i}^{\gamma} \partial_i) \bar u ; Q) | \\
&\qquad \leq \frac{\alpha}{4} \left( \F_{\gamma} (u ; Q) + \left( \int_{Q} | \nabla u |^{2p} \right)^{\frac{1}{p}} \right)\\
&\qquad\quad + C_{\eta} \overline{\mathscr{F}} (\bar u ; Q)  \left( \int_{Q} | (\phi_{Q}^{\gamma}, \sigma_{Q}^{\gamma}) |^2 + \| \theta^{\gamma} - \bar \theta \|_{ H^{-1}(Q) }^2 + | \bar {a} (Q) - \bar a |^2  \right)^{\frac{1}{2}}  \\
&\qquad\quad + \left( \int_{\varrho Q} | 1 - \eta |^q \right)^{\frac{1}{q}} \left( C_{\varrho}  \overline{\mathscr{F}} (\bar u; Q) + \left( \int_{Q} | \nabla \bar u |^{2p} \right)^{\frac{1}{p}} \right).
\end{aligned}
$$
By choosing the transition layer of $ \eta = \eta(\varrho) $ sufficiently thin in $ \varrho Q \setminus \frac{\varrho}{2} Q $, and appealing to Meyer's estimate, cf.~Lemma \ref{lemma:homogeneous-calderon-zygmund-estiamte}, and minimality of $ \bar u $ in form of the estimate provided in Proposition \ref{P: uniqueness comparison}, we obtain
\begin{equation}\label{eq:prop-cut-off-two-scale-conclusion-i}
\begin{aligned}
& | \overline{\mathscr{F}}^{ \rm reg } (\bar u; Q) - \F_{\gamma}^{ \rm reg } ((1 + \eta \phi_{Q,e_i}^{\gamma} \partial_i) \bar u; Q) | \\
&\qquad \leq \frac{\alpha}{2} \left( \F_{\gamma} (u; Q) + \left( \int_{Q} | \nabla u |^{2p} \right)^{\frac{1}{p}} \right) \\
&\qquad\quad + C_{\eta} \mathscr{F}_{\gamma} (u; Q)  \left( \int_{Q} | (\phi_{Q}^{\gamma}, \sigma_{Q}^{\gamma}) |^2 + \| \theta^{\gamma} - \bar \theta \|_{ H^{-1}(Q) }^2 + | \bar {a} (Q) - \bar a |^2  \right)^{\frac{1}{2}}.
\end{aligned}
\end{equation}
It is now left to approximate the energy of $ u $ by the cut-off two-scale expansion. More precisely, if $ \delta $ is made sufficiently small, by our choice of $ \eta $ and \eqref{eqn:en-hom-03}, we may use \eqref{eq:cut-off-two-scale-energy-difference} to get
\begin{equation}\label{eq:prop-cut-off-two-scale-conclusion-ii}
\begin{aligned}
& | \F_{\gamma}^{ \rm reg } ((1 + \eta \phi_{Q,e_i}^{\gamma} \partial_i) \bar u ; Q) - \F_{\gamma}^{ \rm reg } (u ; Q) | \\
&\qquad \leq \frac{\alpha}{2} \left( \F_{\gamma} (u ; Q) + \left( \int_{Q} | \nabla u |^{2p} \right)^{\frac{1}{p}} \right) \\
&\qquad\quad + C_{\eta} \mathscr{F}_{\gamma} (u ; Q)  \left( \int_{Q} | (\phi_{Q}^{\gamma}, \sigma_{Q}^{\gamma}) |^2 + \| \theta^{\gamma} - \bar \theta \|_{ H^{-1}(Q) }^2  + | \bar {a} (Q) - \bar a |^2 \right)^{\frac{1}{2}}.
\end{aligned}
\end{equation}
The estimates \eqref{eq:prop-cut-off-two-scale-conclusion-i} and \eqref{eq:prop-cut-off-two-scale-conclusion-ii} imply the desired conclusion.
\end{proof}

\subsection{Proof of PDE Estimates}\label{section:pde-estimates}

In this section we prove the main PDE estimates. We start with the proof of Lemma \ref{lemma:dg-n-m}, which is a consequence of the DeGeorgi-Nash-Moser theorem,~cf.~Theorem 8.24 in \cite{GilbargTrudinger}.

\begin{proof}[Proof of Lemma \ref{lemma:dg-n-m}]
We partition the cube $ Q_R (x)$ in sub cubes of size $ r_c $ as in \eqref{eqn:partition-of-small-cubes}. There is no loss of generality in assuming that $ Q $ is one of these cubes. We distinguish two cases: (i) $ \partial Q $ is contained in the interior of $ Q_R(x) $ or (ii) $ \partial Q $ intersects the boundary of $ Q_R(x) $.

In case $ \partial Q $ is contained in the interior of $ Q_R(x) $, we may directly appeal to Theorem 8.24 in \cite{GilbargTrudinger}, that yields
\begin{equation}\label{eqn:dg-n-m-02}
[ u ]_{ C^{0, \alpha}(Q) } \lesssim_{q} \left( \int_{ 2 Q }  | u |^2 \right)^{ \frac{1}{2} } + \left( \int_{ 2Q } | W'(u) |^q \right)^{ \frac{1}{q} }
\quad { \rm for } \quad q > \frac{d}{2}
\end{equation}
and some $ \alpha $ depending only on $ r_c $, the space dimension $ d $, the ellipticity constants $ \lambda, \Lambda $, and $ \theta^* $. The r.h.s.~of the this equation is uniformly bounded since $ -1 \le u \le 1 $ by Lemma \ref{L: minus one one}.

In case $ \partial Q $ intersects the boundary of $ Q_R(x) $, we use the boundary version of the DeGeorgi-Nash-Moser theorem, cf.~Theorem 8.29 in \cite{GilbargTrudinger} that comes with the estimate
\begin{equation}\label{eqn:dg-n-m}
[ u ]_{ C^{0, \alpha}(Q) } \lesssim_{q} \sup_{ 2 Q \cap Q_R(x) }  | u | + \left( \int_{ 2Q } | W'(u) |^q \right)^{ \frac{1}{q} } + \| q \|_{ C^{0,\alpha}( \R ) } 
\quad { \rm for } \quad q > \frac{d}{2},
\end{equation}
where $ \alpha $ has the same dependencies as above.
\end{proof}

Lemma  \ref{lemma:dg-n-m}, see also the condition \eqref{eqn:hom-int-cubes-assumption}, enters all proofs only implicitly through the clearing-out lemma, which we prove next.

\begin{proof}[Proof of Lemma \ref{lemma:clearing-out}]
Our argument is based on the fact that $ u $ satisfies \eqref{eqn:hom-int-cubes-assumption}. Upon changing the constants, the same estimate holds true for any minimizer $ \bar u $ of $ \overline{\mathscr{F}}(\cdot, Q) $ on $ u + H^1_0(Q) $. Indeed, we may appeal to Theorem 8.29 in \cite{GilbargTrudinger} to deduce an estimate similar to \eqref{eqn:dg-n-m} for $ \bar u $. Hence the following argument applies to both $ \bar u $ and $ u $. For simplicity let us restrict to the latter.

Let us fix $ \varepsilon > 0 $ and set $ A_{ \varepsilon } = \{ | u - 1 | \ge \varepsilon ~ \text{and} ~ | u + 1 | \ge \varepsilon \} \subset Q $. Due to the uniform Hölder continuity of the functions under consideration, there exists a radius $ r = r(\varepsilon) > 0 $ (that depends on the function only through the upper bound on the Hölder norm) such that
$$
B_r + ( Q \setminus A_{ \frac{ \varepsilon }{ 2 } } ) \subset \{ | u - 1 | < \varepsilon ~ \text{or} ~ | u + 1 | < \varepsilon \} \subset Q.
$$
From the continuity of $ W $, and the assumption that $ W $ attains its minimum only at $ -1 $ and $ +1 $, we know that
$$
| A_{ \frac{\varepsilon}{2} } | 
\lesssim_{\varepsilon} \int_{Q} W(u) < \delta .
$$
Hence if we choose $ \delta \le \delta_0 (\varepsilon) $, $  A_{ \frac{\varepsilon}{2} } $ contains no ball of radius $ r $. Phrased differently, that means that every ball of radius $ r $ intersects $ Q \setminus A_{ \frac{\varepsilon}{2} } $, so that
$$
Q
\subset B_r + \left( Q \setminus A_{ \frac{ \varepsilon }{ 2 } } \right)
\subset \{ | u - 1 | < \varepsilon ~ \text{or} ~ | u + 1 | < \varepsilon \},
$$
which by virtue of the continuity of $ u $ implies the claim.
\end{proof}

Next, we are going to prove Lemma \ref{lemma:meyers-estimate}, which corresponds to Meyer's estimate in our setting. We follow the classical strategy of establishing a reverse Hölder inequality, which enables us to appeal to the Gehring-type lemma of \cite[Theorem 3.1]{GM86} to conclude.

\begin{proof}[Proof of Lemma \ref{lemma:meyers-estimate}.]
For notational simplicity, we will denote by $ Q $ a cube of some size $ r > 0 $. In the conclusion (Step 3), we specialize to $ r = r_c $.

\textit{Step 1 (Caccioppoli's estimate).} We will need two slightly different versions of Caccioppoli's estimate: First, let us consider some cube $ Q $ of width $ r > 0 $ with $ 2Q \subset Q_R $. Then
\begin{equation}\label{eq:interior-caccioppoli}
\int_{Q} | \nabla u |^2
\lesssim \frac{1}{r^2} \int_{2Q} ( u - c )^2 + r^2 \int_{2Q} W'(u)^2
\end{equation}
for every constant $ c > 0 $. Additionally, we need a boundary version of \eqref{eq:interior-caccioppoli}. To this end, we consider a cube $ Q $ of width $ r > 0 $ such that $ 2Q $ is not fully contained in $ Q_R $. Then
\begin{equation}\label{eq:boundary-caccioppoli}
\int_{Q} \boldsymbol{1}_{Q_R} | \nabla u |^2
\lesssim \frac{1}{r^2} \int_{2 Q} \boldsymbol{1}_{Q_R} ( u - q )^2 + \int_{2 Q} \boldsymbol{1}_{Q_R} | \nabla q |^2 + r^2 \int_{2 Q} \boldsymbol{1}_{Q_R} W'(u)^2.
\end{equation}
Let us recall that $ q $ denotes the boundary value of $ u $, i.e.~$ u = q $ on $ \partial Q_R $.

The proofs of \eqref{eq:interior-caccioppoli} and \eqref{eq:boundary-caccioppoli} are rather standard. For the readers convenience, we will give an argument for \eqref{eq:boundary-caccioppoli}: Let $ \eta \ge 0 $ be a smooth cutoff that is to be chosen later. Observe that $ (u - q) \eta^2 $ vanishes on $ \partial Q_R $. Therefore we can test the Euler-Lagrange equation for $ u $ and obtain
\begin{equation}\label{eqn:cacc01}
\int_{ Q_R } a^{\gamma} \nabla u \cdot \nabla (u - q) \eta^2
= - \int_{ Q_R } \theta^{\gamma} W'(u) (u - q) \eta^2.
\end{equation}
The right hand side is estimated by
\begin{equation}\label{eqn:cacc02}
- \int_{ Q_R } \theta^{\gamma} W'(u) (u - q) \eta^2
\leq \frac{\theta^*}{2 r^2} \int_{ Q_R } (u - q)^2 \eta^2 + \frac{ \theta^* r^2}{2} \int_{Q_R} W'(u)^2 \eta^2,
\end{equation}
while on the left hand side we compute by Cauchy-Schwarz combined with Young's inequality
$$
\begin{aligned}
&\int_{ Q_R } a^{\gamma} \nabla u \cdot \nabla (u - q) \eta^2 \\
&\qquad = \int_{ Q_R } ( a^{\gamma} \nabla u \cdot \nabla u ) \eta^2
- \int_{ Q_R } ( a^{\gamma} \nabla u \cdot \nabla q ) \eta^2
+ 2 \int_{ Q_R } ( a^{\gamma}  \nabla u \cdot \nabla \eta ) (u - q) \eta \\
&\qquad \geq \frac{1}{2} \int_{ Q_R } ( a^{\gamma} \nabla u \cdot \nabla u ) \eta^2
- \frac{1}{2} \int_{ Q_R } ( a^{\gamma} \nabla q \cdot \nabla q ) \eta^2 \\
&\quad\qquad - 2 \int_{ Q_R } ( a^{\gamma} \nabla u \cdot \nabla u)^{\frac{1}{2}} (a^{\gamma} \nabla \eta \cdot \nabla \eta )^{\frac{1}{2}}  | u - q | \eta \\
&\qquad \ge \frac{1}{4} \int_{ Q_R } ( a^{\gamma} \nabla u \cdot \nabla u ) \eta^2
- \frac{1}{2}\int_{ Q_R } ( a^{\gamma} \nabla q \cdot \nabla q ) \eta^2
- 4 \int_{ Q_R } ( a^{\gamma} \nabla \eta \cdot \nabla \eta ) ( u - q )^2.
\end{aligned}
$$
Together with \eqref{eqn:cacc01} and \eqref{eqn:cacc02}, the last equation implies
$$
\begin{aligned}
\int_{ Q_R }  | \nabla u |^2 \eta^2
&\lesssim \int_{ Q_R }  \left( \frac{1}{ r^2}  \eta^2 + | \nabla \eta |^2 \right) (u - q)^2
+ \int_{ Q_R }  | \nabla q |^2  \eta^2 \\
&\quad\quad + r^2 \int_{ Q_R }  W'(u)^2 \eta^2.
\end{aligned}
$$
Choosing $ \eta $ with $ \eta = 1 $ on $ Q $, $ \eta = 0 $ outside $ 2 Q $ and $ | \nabla \eta | \lesssim \frac{1}{r} $ yields \eqref{eq:boundary-caccioppoli}. 

\textit{Step 2 (Reverse Hölder inequality).} We now use \eqref{eq:interior-caccioppoli} and \eqref{eq:boundary-caccioppoli} to show two kinds of reverse Hölder inequalities: On the one hand, given a cube $ Q $ of width $ r > 0 $ such that $ 2Q \subset Q_R $, we claim that
\begin{equation}\label{eq:interior-reverse-hoelder}
\left( \fint_{ Q } | \nabla u |^2 \right)^{\frac{1}{2}}
\lesssim \left( \fint_{ 2Q } |\nabla u|^p \right)^{\frac{1}{p}}
+ r \left( \fint_{ 2Q } W'(u)^2 \right)^{\frac{1}{2}} , 
\quad { \rm where } \quad p = \frac{2d}{d + 2} .
\end{equation}
On the other hand, if $ 2Q $ is not fully contained in $ Q_R $, we instead claim that
\begin{equation}
\begin{aligned}\label{eq:boundary-reverse-hoelder}
\left( \fint_{Q} \boldsymbol{1}_{ Q_R } | \nabla u |^2 \right)^{\frac{1}{2}}
&\lesssim \left( \fint_{2 Q} \boldsymbol{1}_{ Q_R } | \nabla u |^p \right)^{\frac{1}{p}}
+ \left( \fint_{2 Q} \boldsymbol{1}_{ Q_R } | \nabla q |^2 \right)^{\frac{1}{2}} \\
&\quad\quad + r \left( \fint_{2 Q} \boldsymbol{1}_{ Q_R } W'(u)^2 \right)^{\frac{1}{2}} ,
\quad { \rm where } \quad p = \frac{2d}{d + 2} . 
\end{aligned}
\end{equation}
As in the last step, we only give an argument for the second claim. The first inequality follows similarly.

Let us recall the following version of the scale-invariant Poincar\'{e}-Sobolev inequality: if $p = \frac{2d}{d + 2}$ and $f \in W^{1,p}(2Q)$ is such that $|\{f = 0\}| \geq \alpha |2Q|$ for some $\alpha \in (0,1)$, then
$$
\left( \int_{2 Q} f^2 \right)^{\frac{1}{2}}
\lesssim_{\alpha} \left( \int_{2 Q} |\nabla f|^p \right)^{\frac{1}{p}}.
$$
(See, e.g., \cite[pp.~153]{giaquinta_book}.)  Applying this estimate to the function $f = (u - q) \mathbf{1}_{Q_{R}}$, which satisfies $|\{f = 0\}| \geq |(2Q) \setminus Q_{R}| \geq 2^{-1} |Q|$, we find
$$\left( \int_{2Q} \boldsymbol{1}_{Q_{R}} (u - q)^{2} \right)^{\frac{1}{2}} \lesssim \left( \int_{2Q} \boldsymbol{1}_{Q_{R}} |\nabla u - \nabla q|^{p}  \right)^{\frac{1}{p}}.
$$

It is now left to observe that by our choice of $ p $,
$$
r^{-1} r^{ - \frac{d}{2} } = r^{- \frac{2 + d}{2}} = r^{- \frac{d}{p} }
\quad \text{and} \quad p < 2
$$
so that the above inequality and \eqref{eq:boundary-caccioppoli} combine to yield \eqref{eq:boundary-reverse-hoelder}.

\textit{Step 3 (Conclusion).} To conclude, we now use the version of Gehring's lemma from \cite[Theorem 3.1]{GM86}. Again, we only present the argument for the boundary estimate.

First note that \eqref{eq:boundary-reverse-hoelder}, as stated above, holds for every cube. Indeed, \eqref{eq:interior-reverse-hoelder} implies \eqref{eq:boundary-reverse-hoelder} for interior cubes. Therefore, Gehring's lemma yields the estimate
\begin{equation}\label{eqn:meyer01}
\begin{aligned}
\left( \fint_{Q} \boldsymbol{1}_{ Q_R } | \nabla u |^p \right)^{\frac{1}{p}}
&\lesssim \left( \fint_{2Q} \boldsymbol{1}_{ Q_R } |\nabla u |^2 \right)^{\frac{1}{2}}
+ \left( \fint_{2Q} \boldsymbol{1}_{ Q_R } |\nabla q|^p \right)^{\frac{1}{p}} \\
&\quad\quad + \left( \fint_{2Q} \boldsymbol{1}_{ Q_R } |W'(u)|^p \right)^{\frac{1}{p}}
\end{aligned}
\end{equation}
for some $ p = p(d) > 2 $ for any cube $ Q $ of size $ r_c $. We apply this to a cube that satisfies the assumptions of Lemma \ref{lemma:meyers-estimate}. That is, $ Q $ has size $ r_c $ and is either $ 2 Q \subset Q_R $ or $ Q \subset Q_R $ with $ \partial Q \cap \partial Q_R \neq \emptyset $. Since we only work with cubes of size $ r_c $ from now on, we can drop the averages in \eqref{eqn:meyer01}.

To conclude the lemma, we need to post-process the last term on the r.h.s.~of \eqref{eqn:meyer01}. To this end, we distinguish the two cases $ \F_{\gamma}(u,2Q \cap Q_R ) \ge \delta $ and $ \F_{\gamma}(u,2Q \cap Q_R ) < \delta $ where $ \delta > 0 $ is chosen according to the clearing-out lemma, cf.~Lemma \ref{lemma:clearing-out}.

If $ \F_{\gamma}(u,2Q \cap Q_R ) < \delta $, we may assume w.l.o.g.~that $ | u - 1 | \ll 1 $ in $ 2 Q \cap Q_R $. In particular, by virtue of the non-degeneracy assumption \eqref{eqn:w-non-degeneracy} in form of \eqref{eqn:w-poly}, we know that
$$
W(u) \sim | u - 1 |^{2 \kappa }
\quad \text{and} \quad
W'(u) \sim | u - 1 |^{2 \kappa - 1}
\quad { \rm in } ~ 2 Q \cap Q_R 
$$
if $ \delta $ is chosen small enough. Furthermore, we may assume that $ p > 2 $ is close to two. Hence, by Sobolev embedding applied to $ ( u - 1 )^{2 \kappa - 1} $, we get
$$
\left( \int_{2Q \cap Q_R} | W'(u) |^p \right)^{\frac{1}{p}}
\lesssim_{ p } \left( \int_{2Q \cap Q_R} | (u-1)^{2 \kappa -1} |^2 + | \nabla (u-1)^{2 \kappa - 1} |^2 \right)^{\frac{1}{2}}.
$$
But since $ | u - 1 | \ll 1 $ and $ 2(2 \kappa - 1) \ge 2 \kappa $, we can conclude
\begin{equation}\label{eqn:meyer02}
\left( \int_{2Q \cap Q_R } | W'(u) |^p \right)^{\frac{1}{p}}
\lesssim \left( \int_{2Q \cap Q_R } | \nabla u |^2 + W(u) \right)^{\frac{1}{2}}
\lesssim \mathscr{F}_{ \gamma }( u, 2Q \cap Q_R ) ^{\frac{1}{2}}.
\end{equation}
Note that if $ \F_{\gamma}(u,2Q \cap Q_R ) \ge \delta $, we may appeal to the boundedness of $ W'(u) $ to see that the above estimate is still true. In combination with \eqref{eqn:meyer01}, \eqref{eqn:meyer02} yields our claim. 
\end{proof}

\begin{proof}[Proof of Lemma \ref{lemma:homogeneous-calderon-zygmund-estiamte}.]
As announced earlier, the same proof as for Lemma \ref{lemma:meyers-estimate} applies: the Caccioppli inequalites \eqref{eq:interior-caccioppoli} and \eqref{eq:boundary-caccioppoli} also hold for $ (u,q) $ replaced by $ (\bar u, u) $, so that the conclusions from Step 2 and 3 within the proof of Lemma \ref{lemma:meyers-estimate} also apply.
\end{proof}

Finally, we establish local Schauder estimates for the constant-coefficient equation. Note that the assumption on the boundary conditions is redundant but simplifies the proof. We use it to appeal to the clearing-out property, see Lemma \ref{lemma:clearing-out}.

\begin{proof}[Proof of Lemma \ref{lemma:homogeneous-interior-schauder-estimate}.]
Fix $ 0 < \varrho < \varrho' < \varrho'' < 1 $. The De Giorgi-Nash-Moser estimate, cf.~Theorem 8.24 in \cite{GilbargTrudinger}, on $ \bar u - \fint_{Q} \bar u $ yields
$$
\begin{aligned}
\sup_{\varrho' Q} | \bar u - \fint_{Q} \bar u | + [\bar u]_{C^{0,\alpha}(\varrho' Q)}
&\lesssim_{\varrho', \varrho''} \left( \int_{ \varrho'' Q} | \bar u - \fint_{Q} \bar u |^2 \right)^{\frac{1}{2}} + \sup_{ \varrho'' Q } | W'(\bar u) | \\
&\lesssim \left( \int_{Q} | \nabla \bar u |^2 \right)^{\frac{1}{2}} + \sup_{ \varrho'' Q } | W'(\bar u) |
\end{aligned}
$$
for some $ 0 < \alpha < 1 $. Furthermore, the classical Schauder estimate, cf.~Corollary 6.3 in \cite{GilbargTrudinger}, applied to $ \bar u - \fint_{Q} \bar  u $ reads
$$
\sup_{ \varrho Q } ( | \nabla \bar u | + | \nabla^2 \bar u | )
\lesssim_{\varrho, \varrho'} \sup_{\varrho' Q} ( | \bar u - \fint_{Q} \bar u | + | W'(\bar u) | ) + [ W'(\bar u) ]_{C^{0,\alpha}(\varrho' Q) }.
$$
Since $ W $ is $ C^2 $ in a neighborhood of $ [-1,1] $, the estimates combine to
\begin{equation}\label{eqn:schauder01}
\sup_{ \varrho Q } ( | \nabla \bar  u | + | \nabla^2 \bar  u | )
\lesssim_{\varrho, \varrho', \varrho''} \left( \int_{Q} | \nabla \bar u |^2 \right)^{\frac{1}{2}} + \sup_{\varrho'' Q} | W'(\bar u) |.
\end{equation}
To conclude Lemma \ref{lemma:homogeneous-interior-schauder-estimate} we will perform a bootstrap argument on $ W'(u) $ to obtain the estimate
\begin{equation}\label{eqn:moser01}
\sup_{\varrho'' Q} | W'(\bar u) | \lesssim_{\varrho''} \left( \int_{Q} | \nabla \bar u |^2 + W(\bar u) \right)^{\frac{1}{2}}.
\end{equation}
Evidently, \eqref{eqn:schauder01} and \eqref{eqn:moser01} imply the claim.

\medskip

We now set up the iteration that leads to \eqref{eqn:moser01}: Let us first assume that $ \F_{\gamma}(\bar u,Q) \ll 1 $. Hence we can further assume w.l.o.g.~that
\begin{equation}\label{eqn:moser04}
W'(\bar u) \sim | \bar u - 1 |^{ 2 \kappa - 1 }
\quad { \rm in } ~ Q,
\end{equation}
see \eqref{eqn:w-poly}. We select a finite number of radii $ \varrho'' = r_{N+1} < r_{N} < \hdots < r_1 < r_0 = 1 $ where $ N $ denotes the integer satisfying $ N \leq \frac{d}{2} < N + 1 $. Recall the interior Calder\'{o}n-Zygmund estimate, cf.~Theorem 3.7 in \cite{T87},
$$
\left( \int_{r_k Q} | \nabla \bar u |^p \right)^{\frac{1}{p}}
\lesssim_{p, k} \left( \int_{r_{k-1} Q} | \nabla \bar u |^2 + \bar u^2 \right)^{\frac{1}{2}} + \left( \int_{r_{k-1} Q} | W'(\bar u) |^p \right)^{\frac{1}{p}}
$$
for $ 2 < p < \infty $. Since our equation is in divergence form, we may pass to $ u - \fint_{r_{k-1} Q} u $ and obtain the estimates
\begin{equation}\label{eqn:moser02}
\left( \int_{r_{k} Q} | \nabla \bar u |^{ {p^*}^{(k-1)} } \right)^{\frac{1}{ {p^*}^{(k-1)} }}
\lesssim_{p, k} \left( \int_{Q} | \nabla \bar u |^2 \right)^{\frac{1}{2}} + \left( \int_{r_{k-1} Q} | W'(\bar u) |^{ {p^*}^{(k-1)} } \right)^{\frac{1}{ {p^*}^{(k-1)} }}.
\end{equation}
Here the sequence of exponents ${ p^* }^{ (k) }$ are determined recursively so that the exponent $ { p^* }^{ (k) } $ equals the critical Sobolev exponent for $ { p^* }^{ (k - 1) } $ when $ k > 0 $ and the initial value ${ p^* }^{ (0) }  = p \in \R \setminus \mathbb{Q} $ is chosen such that $ \frac{2d}{d+2} < p < 2 $ (the former meaning that $ { p^* }^{ (1) } > 2 $) and $ p (N + 1) > d $ (which is possible since $ d < 2 (N+1) $). Note that $ { p^* }^{(k)} = \frac{ pd }{ d - pk } $.

We observe that since $ - 1 \le \bar u \le 1 $, the Sobolev inequality implies
\begin{equation}\label{eqn:moser03}
\begin{aligned}
& \left( \int_{r_k Q} ( | \bar u - 1 |^{2 \kappa - 1} )^{ {p^*}^{(k)} } \right)^{\frac{1}{ {p^*}^{(k)} }} \\
& \qquad \lesssim_{k} \left( \int_{r_{ k } Q} | \nabla \bar u |^{ \max \{  {p^*}^{(k-1)}, 2 \} } + \int_{r_{ k } Q}  ( | \bar u - 1 |^{2 \kappa - 1} )^{ \max \{  {p^*}^{(k-1)}, 2 \} } \right)^{\frac{1}{ \max \{  {p^*}^{(k-1)}, 2 \} }}.
\end{aligned}
\end{equation}
Since $ p^{*(k)} > 2 $ for each $k \geq 1$, the equations \eqref{eqn:moser02} and \eqref{eqn:moser03} combine with $ | W'(u) | \sim | u - 1 |^{ 2 \kappa - 1 } $, see \eqref{eqn:moser04},  to
\begin{align*}
&\left( \int_{r_k Q} ( | \bar u - 1 |^{ 2 \kappa - 1 } )^{ {p^*}^{(k)} } \right)^{\frac{1}{ {p^*}^{(k)} }} \\
&\qquad \lesssim_{k} \left( \int_{Q} | \nabla \bar u |^2 \right)^{\frac{1}{2}} + \left( \int_{r_{k-1} Q}  ( | \bar u - 1 |^{ 2 \kappa - 1 } )^{ \max \{  {p^*}^{(k-1)}, 2 \}  } \right)^{\frac{1}{ \max \{  {p^*}^{(k-1)}, 2 \}  }}.
\end{align*}
We iterate this equation finitely many times (hence the $ k $ dependent constant does not blow up) and use the condition $ p(N+1) > d $ (on the first inequality) and \eqref{eqn:moser02} (on the second inequality) to obtain
$$
\begin{aligned}
\sup_{r_{N+1} Q}  | \bar u - 1 |^{ 2 \kappa - 1 }
&\lesssim_{ p, N }  \left( \int_{r_{N+1} Q} | \nabla \bar u |^{ {p^*}^{(N)} } + \int_{r_{N+1} Q}  ( | \bar u - 1 |^{ 2 \kappa - 1 } )^{ {p^*}^{(N)} } \right)^{\frac{1}{ {p^*}^{(N)} }} \\
&\lesssim_N \left( \int_{Q} | \nabla \bar u |^{ 2 } \right)^{\frac{1}{2}} + \left( \int_{r_N Q}  ( | \bar u - 1 |^{ 2 \kappa - 1 } )^{ {p^*}^{(N)} } \right)^{\frac{1}{ {p^*}^{(N)} }} \\
&\lesssim \left( \int_{Q} | \nabla \bar u |^{ 2 } \right)^{\frac{1}{2}} + \left( \int_{Q}  ( | \bar u - 1 |^{ 2 \kappa - 1 }  )^{ 2 } \right)^{\frac{1}{2}}.
\end{aligned}
$$
Since $ 2 ( 2 \kappa - 1 ) \ge 2 \kappa  $, and $ W(u) \sim | u - 1 |^{ 2 \kappa  } $ by virtue of \eqref{eqn:moser04}, we conclude
\begin{equation}\label{eqn:moser-conclusion}
\begin{aligned}
\sup_{\varrho Q} | W'(\bar u) |
&\lesssim \left( \int_{Q} | \nabla \bar u |^{ 2 } \right)^{\frac{1}{2}} + \left( \int_{Q} | \bar u - 1 |^{ 2 \kappa } \right)^{\frac{1}{2}} \\
&\lesssim \left( \int_{Q} | \nabla \bar u |^{ 2 } + \int_{Q} W(\bar u) \right)^{\frac{1}{2}}.
\end{aligned}
\end{equation}
As usual, we observe that this estimate is also true if $ \F_{\gamma}(u,Q) \ge C $ for some uniform constant $ C > 0 $ since the left hand side is always bounded.
\end{proof}

\section{Averaging} \label{S: homogenization 2}

This section completes the proof of Theorem \ref{T: planar homogenization theorem lower bound}.  In this random setting, independent of the cube $Q$, we define 
	\begin{equation}
		(\phi_{Q},\sigma_{Q},\bar{a}(Q)) = (\phi,\sigma,\bar{a}),
	\end{equation} 
where $(\phi,\sigma)$ are the infinite-volume correctors and $\bar{a}$ is the homogenized matrix, as in the introduction. 

The results of the previous section showed that the energy of a minimizer $u$ in the plane-like cell problem \eqref{E: planar limit we want} can be approximated by the homogenized energy of a suitable competitor provided we are able to control the sublinear growth of the correctors $(\phi,\sigma)$ associated with the operator $-\nabla \cdot (a \nabla )$ and the oscillations of $\theta$.  In view of our definition of $(\phi_{Q},\sigma_{Q})$ and $\bar{a}(Q)$ in this context, this control is phrased in terms of the stationary fields ${ \rm Sub }(\cdot)$ and ${ \rm Osc }(\cdot)$ defined by 
	\begin{align}
		{ \rm Sub }_{x}(r) &= \sup_{ \varrho \ge r } \frac{1}{\varrho} \left( \fint_{Q_{\varrho}(x)} | (\phi(y),\sigma(y)) - \fint_{Q_{ \varrho } ( x ) } (\phi, \sigma) |^{2} \, dy \right)^{\frac{1}{2}}, \label{E: sub defined again} \\
		{ \rm Osc }_{x}(r) &= \sup_{ \varrho \ge r }  \varrho^{-1} \|\theta - \bar{\theta}\|_{H^{-1}(Q_{\varrho}(x))}. \label{E: osc defined again}
	\end{align}  
In this section, we prove that the needed control over these quantities is indeed furnished by the assumptions \eqref{E: sublinear part} and \eqref{E: averaging part}.  
	
In addition to completing the proof of the lower bound in Theorem \ref{T: homogenization theorem}, we briefly recall the optimal decay rate of ${ \rm Sub }(\cdot)$ and ${ \rm Osc }(\cdot)$ for the random checkerboard.  This last part is expository and intended for readers who may not be experts in quantitative stochastic homogenization.

\subsection{Preliminaries from Homogenization Theory} \label{S: homogenization preliminaries} For the reader's convenience, we recall the relevant results from homogenization theory that will be needed in what follows.  In particular, as we review next, the assumptions of Section \ref{S: assumptions} imply that, on an event of probability one, all of the structural assumptions of Section \ref{S: deterministic assumptions} hold.  Further, we recall the fact that the stationary fields ${ \rm Sub }(R)$ and ${ \rm Osc}(R)$ vanish as $R \to \infty$.  

\subsubsection{Helmholtz-type Decomposition.} First, the assumptions of Section \ref{S: assumptions} imply that there is a matrix $\bar{a}$, a random vector field $\phi$, and a random $3$-tensor $\sigma$ such that, with probability one, for any $\xi \in \mathbb{R}^{d}$, the function $\phi_{\xi} = \xi_{i} \phi_{i}$ and skew-symmetric matrix field $\sigma_{\xi} = \xi_{i} \sigma_{i}$ relate to $a(y) \xi$ via the formula
	\begin{equation} \label{E: helmholtz again}
		a(y) \xi = \bar{a} \xi - a(y) \nabla \phi_{\xi}(y) + (\nabla \cdot \sigma_{\xi})(y).
	\end{equation} 
Furthermore, the gradient fields $\nabla \phi$ and $\nabla \sigma$ are stationary with mean zero.  For a more precise statement and proof, we refer the reader to the short proof-sketch in \cite[Section 1.2]{JosienOtto} or, alternatively, the proof of \cite[Lemma 1]{gloria_neukamm_otto}.


\subsubsection{Bounds on $\bar{a}$ and $\bar{\theta}$.} \label{S: bound on homogenized matrix} Concerning the constant $\bar{a}$, recall that, for any $\xi \in \mathbb{R}^{d}$,
	\begin{align*}
		\xi \cdot \bar{a} \xi &= \mathbb{E}[ a(y) (\xi + \nabla \phi_{\xi}(y)) \cdot (\xi + \nabla \phi_{\xi}(y)) \\
			&= \min_{\nabla \psi} \mathbb{E}[ a(y) (\xi + \nabla \psi(y)) \cdot (\xi + \nabla \psi(y)) ],
	\end{align*}
where the minimum is over stationary gradient fields; see, for instance, \cite[Section 7.2]{jikov_kozlov_oleinik}.  Since $\lambda \text{Id} \leq a \leq \Lambda \text{Id}$ pointwise by the assumptions of Section \ref{S: assumptions}, and the minimizer is $\nabla \psi \equiv 0$ if $a$ is replaced by a constant, we deduce that $\lambda \text{Id} \leq \bar{a} \leq \Lambda \text{Id}$.

Recall from the introduction that we define $\bar{\theta} = \mathbb{E}[\theta(y)]$.  Since the pointwise bounds $\theta_{*} \leq \theta \leq \theta^{*}$ are imposed in Section \ref{S: assumptions}, it follows that $\theta_{*} \leq \bar{\theta} \leq \theta^{*}$. 

\subsubsection{Sublinearity and Oscillations} Next, let us recall that, since the action $(\tau_{x})_{x \in \mathbb{R}^{d}}$ is stationary and ergodic, we know that, for any $\nu > 0$,
	\begin{gather}
		\lim_{r \to \infty} \mathbb{P} \left\{ { \rm Sub }_{0}(r) > \nu \right\} = 0, \label{E: sublinearity averaging section} \\
		\lim_{r \to \infty} \mathbb{P} \left\{ { \rm Osc }_{0}(r) > \nu \right\} = 0. \label{E: averaging theta averaging section}
	\end{gather}
The first is classical and follows from the fact that $\nabla \phi$ is stationary and mean-zero, see, for instance, \cite[Section 3.4]{gloria_neukamm_otto} for the proof.  The second follows directly from the fact that, by the ergodic theorem, $\theta(\gamma^{-1} \cdot) \rightharpoonup \bar{\theta}$ weakly in $L^{2}_{\text{loc}}(\mathbb{R}^{d})$ as $\gamma \to 0$.  For the fact that weak convergence follows from the ergodic theorem, see \cite[Section 7.1]{jikov_kozlov_oleinik}.  Alternatively, both this and the (strong) $H^{-1}$-convergence are stated and proved in detail in \cite[Corollary 1.9]{armstrong_kuusi_book}.

\subsection{Dilatational and Rotational Invariance}\label{section:dil-rot-invariance} 

In Section \ref{S: homogenization 1}, we proved that the energy in the planar cell problems \eqref{E: planar limit we want} converges provided the phase transition occurs across the hyperplane with normal vector $e_{1}$.  In fact, the argument applies if $e_{1}$ is replaced by any unit vector $e \in S^{d-1}$.  To see that, it is necessary to argue that the assumptions \eqref{E: sublinear part} and \eqref{E: averaging part} remain true after rotation, as asserted in Proposition \ref{P: rotated fields}.

It will first be useful to verify that the assumptions are invariant under dilations rather than rotations.  That is the aim of the next proposition.

\begin{prop} \label{P: dilational invariance} If \eqref{E: sublinear part} and \eqref{E: averaging part} hold, then, for any $\alpha, \nu > 0$, 
		\begin{align}
			\lim_{\epsilon \to 0} \epsilon^{-d} \mathbb{P} \left\{ {\rm Sub }_{0} \left( \frac{\alpha \epsilon}{\delta(\epsilon)} \right) > \nu \right\} &= 0, \label{E: sublinear part dilated}\\
			\lim_{\epsilon \to 0} \epsilon^{-d} \mathbb{P} \left\{ {\rm Osc}_{0} \left( \frac{\alpha \epsilon}{\delta(\epsilon)} \right) > \nu \right\} &= 0. \label{E: averaging part dilated}
		\end{align}
	\end{prop}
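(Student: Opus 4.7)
The plan is to reduce the proposition to the hypotheses \eqref{E: sublinear part} and \eqref{E: averaging part} via an elementary deterministic comparison of the form
\[
{\rm Sub}_0(\alpha r) \leq C(\alpha, d)\,{\rm Sub}_0(r), \qquad {\rm Osc}_0(\alpha r) \leq C(\alpha, d)\,{\rm Osc}_0(r),
\]
valid for all $r > 0$, with a constant $C(\alpha, d) > 0$ depending only on $\alpha$ and the dimension. Once such a comparison is in hand, the inclusion of events
\[
\{{\rm Sub}_0(\alpha r) > \nu\} \;\subseteq\; \{{\rm Sub}_0(r) > \nu/C(\alpha, d)\}
\]
(and similarly for ${\rm Osc}$) applied with $r = \epsilon/\delta(\epsilon)$ immediately yields \eqref{E: sublinear part dilated} and \eqref{E: averaging part dilated} upon invoking \eqref{E: sublinear part} and \eqref{E: averaging part} with the modified threshold $\nu' = \nu/C(\alpha,d)$.

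The case $\alpha \geq 1$ is trivial, since both ${\rm Sub}_0(\cdot)$ and ${\rm Osc}_0(\cdot)$ are nonincreasing by definition (they are suprema over sets of scales which shrink as the argument grows), so $C(\alpha,d) = 1$ suffices.  For $\alpha \in (0,1)$, I will prove the comparison scale-by-scale.  Given $R \geq \alpha r$, set $R' := R/\alpha \geq r$, so that $Q_R \subseteq Q_{R'}$.  For ${\rm Sub}$, the variance bound follows from the fact that the mean over $Q_R$ minimizes the $L^2$ distance to constants, hence
\[
\fint_{Q_R} |(\phi,\sigma) - \langle (\phi,\sigma) \rangle_{Q_R}|^2 \;\leq\; \fint_{Q_R} |(\phi,\sigma) - \langle (\phi,\sigma)\rangle_{Q_{R'}}|^2 \;\leq\; \frac{|Q_{R'}|}{|Q_R|}\,\fint_{Q_{R'}} |(\phi,\sigma) - \langle (\phi,\sigma)\rangle_{Q_{R'}}|^2.
\]
Since $|Q_{R'}|/|Q_R| = \alpha^{-d}$ and $R = \alpha R'$, taking square roots and dividing by $R$ gives $R^{-1} V(R)^{1/2} \leq \alpha^{1 - d/2} (R')^{-1} V(R')^{1/2}$, where $V$ denotes the variance.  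Taking the supremum over $R \geq \alpha r$ (equivalently $R' \geq r$) yields ${\rm Sub}_0(\alpha r) \leq \max(1, \alpha^{1-d/2})\,{\rm Sub}_0(r)$.

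For ${\rm Osc}$, the analogous comparison will come from extending test functions by zero: any $v \in H^1_0(Q_R)$ with $\fint_{Q_R}|\nabla v|^2 \leq 1$ extends to $\tilde v \in H^1_0(Q_{R'})$, and the rescaling $w := (R'/R)^{d/2}\,\tilde v$ satisfies $\fint_{Q_{R'}}|\nabla w|^2 \leq 1$, while
\[
\fint_{Q_R} (\theta - \bar\theta)\, v \;=\; (R'/R)^{d/2}\,\fint_{Q_{R'}} (\theta - \bar\theta)\, w \;\leq\; (R'/R)^{d/2}\,\|\theta - \bar\theta\|_{H^{-1}(Q_{R'})}.
\]
Taking the supremum over admissible $v$, dividing by $R$, and reorganizing using $R = \alpha R'$ produces $R^{-1}\|\theta - \bar\theta\|_{H^{-1}(Q_R)} \leq \alpha^{1-d/2} (R')^{-1} \|\theta-\bar\theta\|_{H^{-1}(Q_{R'})}$, and then taking the supremum over $R \geq \alpha r$ gives the desired comparison for ${\rm Osc}$.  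There is no serious obstacle here; the argument is purely deterministic and reduces the claim to rescaling identities plus a Poincaré/extension-by-zero bookkeeping.
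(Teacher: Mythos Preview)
Your approach is essentially identical to the paper's: both handle $\alpha \geq 1$ by monotonicity and, for $\alpha < 1$, compare the scale-$R$ quantity to the scale-$R/\alpha$ quantity via the variance-minimizing property of the mean (for ${\rm Sub}$) and extension by zero (for ${\rm Osc}$), then absorb the resulting constant into the threshold $\nu$. The only issue is an arithmetic slip in the exponent: since $R = \alpha R'$ gives $R^{-1} = \alpha^{-1}(R')^{-1}$, your comparison should read $R^{-1}V(R)^{1/2} \leq \alpha^{-1-d/2}(R')^{-1}V(R')^{1/2}$ (and likewise for ${\rm Osc}$), so the constant is $\alpha^{-(d+2)/2}$ rather than $\max(1,\alpha^{1-d/2})$; with this correction the argument goes through exactly as in the paper.
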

	
		\begin{proof} If $\alpha \geq 1$, then this is trivial since, for any function $F$, $\max_{r \geq S} F(r)$ decreases as a function of $S$.  Thus, fix $\alpha < 1$ from here on.
		
		Given any $r > 0$, observe that 
			\begin{align*}
				\frac{1}{ (\alpha r)^{2} } \fint_{Q_{\alpha r}} |(\phi,\sigma ) - \fint_{ Q_{ \alpha r } } (\phi,\sigma) |^{2}
				&\leq \frac{1}{ (\alpha r)^{2} } \fint_{Q_{\alpha r}} |(\phi,\sigma ) - \fint_{ Q_r } (\phi,\sigma) |^{2} \\
				&\leq \frac{1}{\alpha^{2 + d} r^{2}} \fint_{Q_{r}} | (\phi, \sigma) - \fint_{ Q_r } (\phi,\sigma) |^{2},
			\end{align*}
		which after taking the supremum $ r \geq R $ shows $ { \rm Sub }_0 ( \alpha R ) \leq \alpha^{ - \frac{d + 2}{2} } { \rm Sub }_0 ( R ) $.	
		Thus, setting $ R = \delta^{-1} \epsilon$, we find
			\begin{equation*}
				\mathbb{P} \left\{ {\rm Sub}_{0} \left( \frac{\alpha \epsilon}{\delta} \right) > \nu \right\} \leq \mathbb{P} \left\{ {\rm Sub}_{0} \left( \frac{\epsilon}{\delta} \right) > \alpha^{\frac{d + 2}{2}} \nu \right\},
			\end{equation*}
		which proves \eqref{E: sublinear part dilated} follows from \eqref{E: sublinear part}.
		
		A completely analogous argument applies to $ { \rm Osc }_0 $.  Indeed, observe that
			\begin{equation*}
				\|f\|_{H^{-1}(Q_{\alpha r})} \leq \alpha^{-\frac{d}{2}} \|f\|_{H^{-1}(Q_{r})} \quad \text{for each} \quad f \in L^2_{ \rm loc } ( \R^d).
			\end{equation*}
		To see this, let us fix $v \in H^{1}_{0}(Q_{\alpha r})$, so that the function $ \tilde{v} = \alpha^{ - \frac{d}{2} } v \boldsymbol{1}_{Q_{\alpha r}} $ is in $H^{1}_{0}(Q_{r})$ and satisfies
		\begin{align*}
		\fint_{Q_{r}} | \nabla \tilde{v} |^{2} = \fint_{Q_{\alpha r}} | \nabla v |^{2}
		\quad { \rm and } \quad
		\fint_{ Q_{ \alpha r } } f v = \alpha^{ - \frac{d}{2} } \fint_{ Q_r } f \tilde v.
		\end{align*}
		Thus,
		\begin{align*}
		\| f \|_{ H^{-1} ( Q_{ \alpha r } ) } \leq \alpha^{ - \frac{d}{2} } \| f \|_{ H^{-1}( Q_r ) },
		\quad { \rm and ~ therefore } \quad
		{ \rm Osc }_{ 0 } ( \alpha R )  \leq \alpha^{ - \frac{d}{2} } { \rm Osc }_{ 0 } ( R ).
		\end{align*}
		%
		As in the case of ${\rm Sub}_{0}(\cdot)$, we conclude that \eqref{E: averaging part dilated} follows from \eqref{E: averaging part}.
		\end{proof}

Finally, we prove rotational invariance.  To make the discussion precise, it will be convenient to introduce some notation.  Given any orthogonal transformation $\mathcal{O}$ and $r > 0$, define $Q^{\mathcal{O}}_{r}$ by 
	\begin{equation} \label{E: rotated cube}
		Q^{\mathcal{O}}_{r} = \mathcal{O}(Q_{r}).
	\end{equation}
We denote by ${\rm Sub}^{\mathcal{O}}(\cdot)$ and ${\rm Osc}^{\mathcal{O}}(\cdot)$ the functions defined analogously to ${\rm Sub}(\cdot)$ and ${\rm Osc}(\cdot)$, see \eqref{E: sub defined again} and \eqref{E: osc defined again}, but with rotated cubes in place of the standard ones.

	\begin{prop} \label{P: rotational invariance} If the assumptions \eqref{E: sublinear part} and \eqref{E: averaging part} hold, then, for any orthogonal transformation $\mathcal{O}$ of $\mathbb{R}^{d}$ and any $\alpha > 0$,
		\begin{align}
			\lim_{\epsilon \to 0} \epsilon^{-d} \mathbb{P} \left\{ { \rm Sub }^{\mathcal{O}}_{0} \left( \frac{\alpha \epsilon}{\delta(\epsilon)} \right) > \nu \right\} &= 0, \label{E: sublinear part rotated}\\
			\lim_{\epsilon \to 0} \epsilon^{-d} \mathbb{P} \left\{ {\rm Osc}^{\mathcal{O}}_{0} \left( \frac{\alpha \epsilon}{\delta(\epsilon)} \right) > \nu \right\} &= 0. \label{E: averaging part rotated}
		\end{align}
	\end{prop}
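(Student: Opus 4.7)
The plan is to reduce Proposition \ref{P: rotational invariance} to the dilational invariance already established in Proposition \ref{P: dilational invariance}. The geometric input is the trivial inclusion $Q^{\mathcal{O}}_{R} \subseteq Q_{\sqrt{d}\, R}$, valid for every orthogonal transformation $\mathcal{O}$ and every $R > 0$, since the rotated cube (centered at the origin) is contained in the ball of radius $\tfrac{\sqrt{d}}{2} R$, which in turn lies in the axis-aligned cube $Q_{\sqrt{d}\, R}$. Once the deterministic comparisons
$$ {\rm Sub}^{\mathcal{O}}_{0}(r) \leq d^{(d+2)/4} \, {\rm Sub}_{0}(\sqrt{d}\, r), \qquad {\rm Osc}^{\mathcal{O}}_{0}(r) \leq d^{(d+2)/4} \, {\rm Osc}_{0}(\sqrt{d}\, r) $$
are established, the conclusion follows mechanically: the event on the left of \eqref{E: sublinear part rotated} is contained in the corresponding event for ${\rm Sub}_{0}$ at scale $\sqrt{d}\,\alpha \epsilon / \delta(\epsilon)$ and threshold $d^{-(d+2)/4} \nu$, and an application of Proposition \ref{P: dilational invariance} with parameters replaced accordingly yields the claim. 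The same reasoning handles \eqref{E: averaging part rotated}.

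For the ${\rm Sub}$ inequality, I would use the cube inclusion together with the identity $|Q_{\sqrt{d}\, R}| = d^{d/2} |Q^{\mathcal{O}}_{R}|$ and the variance-minimizing property of the mean to write
$$ \fint_{Q^{\mathcal{O}}_{R}} \Big| (\phi,\sigma) - \fint_{Q^{\mathcal{O}}_{R}} (\phi,\sigma) \Big|^{2} \leq \fint_{Q^{\mathcal{O}}_{R}} \Big| (\phi,\sigma) - \fint_{Q_{\sqrt{d}\, R}} (\phi,\sigma) \Big|^{2} \leq d^{d/2} \fint_{Q_{\sqrt{d}\, R}} \Big| (\phi,\sigma) - \fint_{Q_{\sqrt{d}\, R}} (\phi,\sigma) \Big|^{2}. $$
Dividing by $R^{2}$, taking the square root, and passing to the supremum over $R \geq r$ then produces the desired bound, the factor $\sqrt{d}$ arising naturally when rewriting $R^{-1}$ as $\sqrt{d} (\sqrt{d}\, R)^{-1}$.

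The ${\rm Osc}$ inequality is only slightly more delicate because of the normalization adopted in \eqref{E: H minus one norm}. Given a test function $v \in H^{1}_{0}(Q^{\mathcal{O}}_{R})$ with $\fint_{Q^{\mathcal{O}}_{R}} |\nabla v|^{2} \leq 1$, I would extend $v$ by zero to a function $\tilde v \in H^{1}_{0}(Q_{\sqrt{d}\, R})$. The volume rescaling then gives $\fint_{Q_{\sqrt{d}\, R}} |\nabla \tilde v|^{2} \leq d^{-d/2}$, so $w \coloneqq d^{d/4} \tilde v$ is admissible for the $H^{-1}$ norm on $Q_{\sqrt{d}\, R}$, and a direct computation using $|Q_{\sqrt{d}\, R}|/|Q^{\mathcal{O}}_{R}| = d^{d/2}$ yields
$$ \fint_{Q^{\mathcal{O}}_{R}} (\theta - \bar\theta)\, v \;=\; d^{d/4} \fint_{Q_{\sqrt{d}\, R}} (\theta - \bar\theta)\, w \;\leq\; d^{d/4} \, \|\theta - \bar\theta\|_{H^{-1}(Q_{\sqrt{d}\, R})}. $$
Taking the supremum over admissible $v$, dividing by $R$, and passing to the supremum over $R \geq r$ produces the claimed comparison for ${\rm Osc}^{\mathcal{O}}$.

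I do not expect any serious obstacle here: the entire argument is soft, relying only on an elementary cube inclusion and a careful unpacking of the normalizations used to define ${\rm Sub}$ and ${\rm Osc}$. The one point that requires care is the bookkeeping of the dimensional factor $d^{d/4}$ produced by the rescaling of the $H^{1}_{0}$ competitor in the $H^{-1}$ norm; once this is pinned down, rotational invariance follows with no further probabilistic input beyond Proposition \ref{P: dilational invariance}.
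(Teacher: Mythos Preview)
Your proposal is correct and follows essentially the same route as the paper: both use the inclusion $Q^{\mathcal{O}}_{R}\subseteq Q_{CR}$ (you make the explicit choice $C=\sqrt{d}$), bound the rotated quantities by the axis-aligned ones via the variance-minimizing property of the mean for ${\rm Sub}$ and extension by zero for the $H^{-1}$ norm, and then invoke Proposition~\ref{P: dilational invariance} to conclude. Your write-up is in fact more careful with the constants than the paper's sketch.
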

	
		\begin{proof} Fix an orthogonal transformation $\mathcal{O}$ and let $r > 0$.  Since $Q^{\mathcal{O}}_{r}$ contains the origin in its interior, there is a $C > 1$, which is independent of $r$, such that $Q_{C^{-1} r} \subseteq Q^{\mathcal{O}}_{r} \subseteq Q_{Cr}$.  Thus, arguing as in the previous proposition, we deduce $ { \rm Sub }_0^{ \mathcal{O} } ( R ) \leq C^{ \frac{d}{2} + 1} { \rm Sub }_0 ( R ) $, so that the previous proposition and \eqref{E: sublinear part} imply \eqref{E: sublinear part rotated}. 
		
		Similarly, arguments similar to those in the previous proof show that if $f \in L^{2}(Q_{Cr})$, then 
			\begin{equation*}
				\|f\|_{H^{-1}(Q^{\mathcal{O}}_{r})} \leq C^{ \frac{d}{2} }  \|f\|_{H^{-1}(Q_{Cr})}
				\quad { \rm and ~ thus } \quad
				{ \rm Osc }_0^{ \mathcal{O} } (r) \leq C^{ \frac{d}{2} }  { \rm Osc }_0 ( C r ) .
			\end{equation*}
		As before, \eqref{E: averaging part} implies \eqref{E: averaging part rotated}.  \end{proof}
		
Finally, here is the proof of Proposition \ref{P: rotated fields} for completeness:

	\begin{proof}[Proof of Proposition \ref{P: rotated fields}]  Fix an orthogonal transformation $\mathcal{O} \in O(d)$.  It is immediate to check that the rotated coefficients $a^{\mathcal{O}}$ and $\theta^{\mathcal{O}}$ satisfy the assumptions of Section \ref{S: assumptions} with the action $(\tau_{x})_{x \in \mathbb{R}^{d}}$ replaced by the rotated action $(\tau^{\mathcal{O}}_{x})_{x \in \mathbb{R}^{d}}$.
	
	By the previous proposition (Proposition \ref{P: rotational invariance}), the rotated medium $(a^{\mathcal{O}},\theta^{\mathcal{O}})$  satisfies the assumptions \eqref{E: sublinear part} and \eqref{E: averaging part} if and only if $(a,\theta)$ does. 
	
	Finally, by manipulating the variational formula in Section \ref{S: bound on homogenized matrix}, one readily deduces that $\bar{a}^{\mathcal{O}} = \mathcal{O}^{-1} \bar{a} \mathcal{O}$.  Further, we immediately deduce that $\bar{\theta}^{\mathcal{O}} = \mathbb{E}[\theta^{\mathcal{O}}(0)] = \mathbb{E}[\theta(0)] = \bar{\theta}$. \end{proof}

\subsection{Proof of Theorem \ref{T: planar homogenization theorem lower bound}} \label{S: homogenization proof} In this section, we complete the proof of Theorem \ref{T: planar homogenization theorem lower bound}.  To get an idea why assumptions \eqref{E: sublinear part} and \eqref{E: averaging part} are the correct ones, recall from \eqref{eqn:length-scales-trafo} that, writing $R = \epsilon^{-1}$ and $\gamma(R) = \delta(\epsilon) \epsilon^{-1}$ for the macroscopic and microscopic length scales after mesoscopic rescaling, we can invoke dilatational invariance (Proposition \ref{P: dilational invariance}) to find
	\begin{equation} \label{E: reformulation of main homogenization assumptions}
		\lim_{R \to \infty} R^{d} \, \mathbb{P}\{{ \rm Sub }_{0}(r_{c} \gamma(R)^{-1}) > \nu\} = 0,
		\quad \lim_{R \to \infty} R^{d} \, \mathbb{P}\{{ \rm Osc }_{0}(r_{c} \gamma(R)^{-1}) > \nu\} = 0.
	\end{equation}
By ergodicity, if $E_{R},F_{R} \subseteq Q_{R}$ are the subsets defined by 
	\begin{equation*}
		E_{R} = \{x \in Q_{R} \, \mid \, { \rm Sub }_{x}(r_{c} \gamma(R)^{-1}) > \nu\}, \quad
		F_{R} = \{x \in Q_{R} \, \mid \, { \rm Osc }_{x}(r_{c} \gamma(R)^{-1}) > \nu\},
	\end{equation*}
then
	\begin{equation*}
		|E_{R}| \approx R^{d} \, \mathbb{P}\{{ \rm Sub }_{0}(\gamma(R)^{-1}) > \nu\} \quad \text{and} \quad
		|F_{R}| \approx R^{d} \, \mathbb{P}\{{ \rm Osc}_{0}(\gamma(R)^{-1}) > \nu\},
	\end{equation*}
and, thus, \eqref{E: reformulation of main homogenization assumptions} suggests that $E_{R}$ and $F_{R}$ are negligible.  For all intents and purposes, this puts us in a situation in which we have uniform control of the homogenization error in the entire box $Q_{R}$ (cf.\ \eqref{E: what we want to prove averaging}).

The proof given next closely follows the previous heuristic discussion.

	\begin{proof}[Proof of Theorem \ref{T: planar homogenization theorem lower bound}] Suppose that the scale $\epsilon \mapsto \delta(\epsilon)$ satisfies \eqref{E: sublinear part} and \eqref{E: averaging part}.  We want to show that, for any $\varrho > 0$ and any $x \in \mathbb{R}^{d}$,
		\begin{equation*}
			\liminf_{\epsilon \to 0} \min \left\{ \mathscr{F}_{\epsilon,\delta(\epsilon)}(u; Q_{\varrho}(x)) \, \mid \, u - q(\epsilon^{-1}(\cdot - x) \cdot e_{1}) \in H^{1}_{0}(Q_{\varrho}(x)) \right\} \geq \bar\sigma(e_1)
		\end{equation*}
	To be precise, we prove this holds in probability.
	
	Given $ x \in \mathbb{R}^{d}$, $S > 0$, and a functional $\mathscr{F}$, define $ m( \mathscr{F}, Q_S(x) , q) $ by 
		\begin{align} \label{E: definition of little m}
			 m( \F , Q_S(x) , q) &= \min \left\{ \F (u; Q_{S}(x)) \, \mid \, u - q((\cdot - x) \cdot e_{1}) \in H^{1}_{0}(Q_{S}(x)) \right\}.
		\end{align}
	Changing variables by setting $ R = \epsilon^{-1}$ and $\gamma(R) = \delta(\epsilon) \epsilon^{-1} $ (cf. \eqref{eqn:length-scales-trafo}), what we seek to prove becomes
		\begin{equation*}
			\liminf_{R \to \infty} R^{1 - d} \, m(\F_{ \gamma(R) }, Q_{ \varrho R }(x) , q)  \geq \bar \sigma(e_1) \quad \text{in probability.}
		\end{equation*}
	Note that, by stationarity (since we are only interested in convergence in probability), it suffices to assume $x = 0$.
	
	In order to appeal to Theorem \ref{T: elliptic term cube decomposition}, assume that $R \geq r_{c} \varrho^{-1}$ and define $K(R) \in \mathbb{N}$ in such a way that
		\begin{equation} \label{E: choice of K}
			(2(K(R) - 1) + 1) r_{c} \leq \varrho R \leq (2 K(R) + 1) r_{c}.
		\end{equation}
	At this stage, it will be convenient to define radii $S_{+}(R)$ and $S_{-}(R)$ by $S_{-}(R) = (2 (K(R) - 1) + 1)r_{c}$ and $S_{+}(R) = (2 K(R) + 1) r_{c}$.  By the definition of $K(R)$, we have $Q_{S_{-}(R)} \subseteq Q_{\varrho R} \subseteq Q_{S_{+}(R)}$. 
	
	By a monotonicity argument similar to \cite[Proposition 5]{morfe}, for any $ \F \in \{ \F_{\gamma(R)}, \overline{\F} \}$, we have that
		\begin{align} \label{E: basic monotonicity}
			\begin{aligned}
			m(\F, Q_{ S_{+}(R)} , q) - E(R) &\leq m(\F, Q_{ \varrho R } , q) \\ &\leq m(\F, Q_{ S_{-}(R) } , q) + E(R),
			\end{aligned}
		\end{align}
	where $E : (0,\infty) \to (0,\infty)$ is a deterministic function such that $R^{1-d} E(R) \to 0$ as $R \to \infty$. 
	
	Next, for any $K \in \mathbb{N}$, let $u_{K} : Q_{(2K + 1) r_{c}} \to [-1,1]$ be such that
		\begin{equation*}
			m(\F_{\gamma(R)} , Q_{ (2K+1) r_{c} } , q)  = \mathscr{F}_{\gamma(R)}(u_{K}; Q_{(2K+1) r_{c}} ),
		\end{equation*}
		and $ u_{K}(y) = q(y \cdot e_{1}) $ for $ y \in \partial Q_{(2K+1) r_{c}} $. Since $u_{K}$ is a minimizer, observe that if $v$ is the function $v(y) = q(y \cdot e_{1})$, then
		\begin{align*}
			\mathscr{F}_{\gamma(R)}(u_{K}; Q_{(2K+1) r_{c}}) &\leq \int_{Q_{(2K+1) r_{c}}} \left( \frac{1}{2} a(\gamma^{-1} y) Dv(y) \cdot Dv(y) + \theta(\gamma^{-1}y) W(v(y)) \right) \, dy
		\end{align*}
	and, thus, by slicing and \eqref{E: finite width}, $\mathscr{F}_{\gamma(R)}(u_{K}; Q_{(2K+1) r_{c}}) \leq C ((2K+1) r_{c})^{d-1}$, where the constant $C$ depends on $q$, $\Lambda$, $\theta^{*}$, and $W$, but not on $K$. 
	
	In what follows, let $\omega : [0,\infty) \to [0,\infty)$ be the modulus of continuity from Theorem \ref{T: elliptic term cube decomposition} and let $\{X_{\gamma}(z)\}_{z \in \mathbb{Z}^{d}}$ be the random variables 
		\begin{align*}
			X_{ \gamma }( z ) =  \omega \left( { \rm Sub }_{ r_c z } \left( \frac{ r_c }{ \gamma } \right) + { \rm Osc }_{ r_c z } \left( \frac{ r_c }{\gamma } \right) \right).
		\end{align*}
	Fix $\nu > 0$.  Using $u_{K}$ as the minimizer in Theorem \ref{T: elliptic term cube decomposition}, for any $K \in \mathbb{N}$ and $\gamma > 0$, we obtain that
		\begin{align*}
			& \mathbb{P} \left\{ m( \F_{\gamma}, Q_{ (2K+1) r_{c} }, q ) < m( \overline\F , Q_{ (2K+1) r_{c} }, q ) - \nu ((2K + 1) r_{c})^{d-1} \right\} \\
				&\qquad \qquad \leq \mathbb{P} \left\{ C\max_{ z \in \mathbb{Z}^{d} \cap [-K,K]^d } X_{\gamma}(z) > \nu \right\} \\
				&\qquad \qquad \leq \sum_{ z \in \mathbb{Z}^{d} \cap [-K,K]^d } \mathbb{P}\{X_{\gamma}(z) > C^{-1} \nu\} 
				= (2K + 1)^{d} \, \mathbb{P}\{X_{\gamma}(0) > C^{-1} \nu\}.
		\end{align*}
	Setting $\gamma = \gamma(R)$ and $K = K(R)$ with $K(R)$ determined by \eqref{E: choice of K}, we invoke the definitions of $X_{\gamma}$, $ S_+ (R) $, $R$, and $\gamma(R)$ to obtain, for $R \geq \varrho^{-1} r_{c}$, the error estimate
		\begin{align*}
			&\mathbb{P} \left\{ m( \F_{\gamma(R)}, Q_{ S_{+}(R)}, q ) < m( \overline\F , Q_{ S_{+}(R) }, q ) - \nu S_{+}(R)^{d-1} \right\} \\
			&\quad \leq (r_{c}^{-1} S_{+}(R))^{d} \, \mathbb{P}\{X_{\gamma(R)}(0) > C^{-1} \nu\} \\
				&\quad \leq 3^{d} r_{c}^{-d} \varrho^{d} \cdot R^{d} \, \mathbb{P}\{X_{\gamma(R)}(0) > C^{-1} \nu\} \\
				&\quad = 3^{d} r_{c}^{-d} \varrho^{d} \cdot \epsilon^{-d} \, \mathbb{P} \left\{ { \rm Sub }_{ 0 } \left( \frac{ \epsilon }{ \delta(\epsilon) } r_c  \right) + { \rm Osc }_{ 0 } \left( \frac{ \epsilon }{ \delta(\epsilon) } r_c \right) > \omega^{-1} \left( C^{-1} \nu \right) \right\},
		\end{align*}
	where $\omega^{-1}$ is any fixed left-inverse of $\omega$.
	Invoking our assumptions \eqref{E: sublinear part} and \eqref{E: averaging part} on $\delta$ in the form of \eqref{E: reformulation of main homogenization assumptions}, we combine the previous string of inequalities with \eqref{E: basic monotonicity} to find
		\begin{align*}
			\lim_{R \to \infty} \mathbb{P}\{ R^{1-d} m( \F_{\gamma(R)}, Q_{ \varrho R }, q ) < R^{1-d} m( \overline\F, Q_{ \varrho R }, q ) - \nu \} = 0.
		\end{align*}
	At the same time, since $\overline{\mathscr{F}}$ has constant coefficients, it is well-known (see \cite[Theorem 3.7]{ansini_braides_chiado-piat}) that 
		\begin{equation*}
			\lim_{R \to \infty} R^{1-d} m( \overline\F, Q_{ \varrho R }, q ) = \bar{\sigma}( e_{1} ).
		\end{equation*}
	Putting it all together, we conclude that, for any $\nu > 0$,
		\begin{align*}
			\lim_{R \to \infty} \mathbb{P} \left\{ R^{1-d} m( \F_{\gamma(R)}, Q_{ \varrho R }, q ) < \bar{\sigma} (e_{1} ) - \nu \right\} = 0.
		\end{align*}
	\end{proof}
	
\subsection{Proof of Corollary \ref{C: planar homogenization corollary}} \label{S: almost sure version} As advertised in the introduction, if we work with sequences $(\epsilon_{j})_{j \in \mathbb{N}}$ and $(\delta_{j})_{j \in \mathbb{N}}$ and ask for slightly better decay of $\delta$ relative to $\epsilon$, we can upgrade from convergence in probability to almost-sure convergence. The proof is a more-or-less standard modification of the proof of Theorem \ref{T: planar homogenization theorem lower bound} involving the Borel-Cantelli Lemma, hence, in the proof that follows, only the relevant changes are described.

	\begin{proof}[Proof of Corollary \ref{C: planar homogenization corollary}] The argument proceeds as in the proof of Theorem \ref{T: planar homogenization theorem lower bound}. Let $R_{j} = \epsilon_{j}^{-1}$ and $\gamma_{j} = \delta_{j} \epsilon_{j}^{-1}$. As before, it is convenient to define $K_{j} \in \mathbb{N}$ such that 
		\begin{equation*}
			(2(K_{j}-1) + 1) r_{c} \leq \varrho R_{j} \leq (2K_{j} + 1) r_{c}.
		\end{equation*}
		
	Let $\nu > 0$. The key observation is that, as in the proof of Theorem \ref{T: planar homogenization theorem lower bound}, we can write
		\begin{align*}
			& \mathbb{P} \left\{ m( \F_{ \gamma_j }, Q_{ (2K_{j} + 1)r_{c} }( R_{j}x ), q ) < m( \overline{ \F }, Q_{ (2K_{j} + 1) r_{c} }( R_{j}x ), q )  - \nu ((2K_{j} + 1)r_{c})^{d-1} \right\} \\
			&\qquad \leq (2K_{j} + 1)^{d} \mathbb{P}\{X_{\gamma_{j}}(0) > C^{-1} \nu\} \\
			&\qquad \leq 3^{d} r_{c}^{-d} \varrho^{d} \cdot \epsilon_{j}^{-d}  \mathbb{P} \left\{ { \rm Sub }_{ 0 } \left( \frac{ \epsilon_j }{ \delta_j  }  r_c \right) + { \rm Osc }_{ 0 } \left( \frac{ \epsilon_j }{ \delta_j} r_c  \right) > \omega^{-1} \left( \frac{\nu}{C} \right) \right\}
		\end{align*}
	and, thus, by \eqref{E: sublinear summable},
		\begin{equation*}
			\sum_{j = 1}^{\infty} \mathbb{P} \left\{ \begin{aligned} & m( \F_{ \gamma_j }, Q_{ (2K_{j} + 1)r_{c} }( R_{j}x ), q ) \\ & \qquad < m( \overline{ \F }, Q_{ (2K_{j} + 1) r_{c} }( R_{j}x ), q ) - \nu ((2K_{j} + 1)r_{c})^{d-1} \end{aligned} \right\} < \infty.
		\end{equation*}
	Therefore, by the Borel-Cantelli Lemma, with probability one,
		\begin{equation*}
			\liminf_{j \to \infty} ((2K_{j} + 1) r_{c})^{1-d} m( \F_{ \gamma_j }, Q_{ K_{j} r_{c} }( R_{j}x ), q ) \geq \bar{\sigma}(e_{1}) - \nu.
		\end{equation*}
	Since $\nu > 0$ was arbitrary, this concludes the proof.
\end{proof}

%
	
\subsection{Proof of Proposition \ref{prop:existence-of-scales}} \label{S: existence of scales}The previous results show that homogenization holds conditional on a decay assumption on the microscale $\delta$. We now prove that, as long as the medium $(a,\theta)$ is stationary and ergodic, there is always a choice of scales satisfying this assumption.

	\begin{proof}[Proof of Proposition \ref{prop:existence-of-scales}] Since \eqref{E: sublinearity averaging section} and \eqref{E: averaging theta averaging section} both hold, for any $j \in \mathbb{N}$, there is a scale $R_{j} \geq j$ such that
		\begin{gather*}
			\mathbb{P}\left\{ {\rm Sub}_{0}(R_{j}) > 2^{-j} \right\} \leq 2^{-j}, \\
			\mathbb{P} \left\{ {\rm Osc}_{0}(R_{j}) > 2^{-j} \right\} \leq 2^{-j}.
		\end{gather*}
	Define $\delta_{*,{\rm Sub}}$ via the following rule:
		\begin{equation*}
			\delta_{*,{\rm Sub}}(\epsilon) = \epsilon R_{j}^{-1} \quad \text{for each} \quad 2^{-(j + 1)/2d} \leq \epsilon < 2^{-j/2d}.
		\end{equation*}
	
	Suppose that $\epsilon \mapsto \delta(\epsilon)$ is any choice of scale such that $\delta(\epsilon) \leq \delta_{*,{\rm Sub}}(\epsilon)$ for all $\epsilon$ close enough to zero. Given any $\nu > 0$, if $2^{-(j+1)/2d} \leq \epsilon < 2^{-j/2d} < \nu^{\frac{1}{2d}}$, then $\epsilon \delta(\epsilon)^{-1} \geq R_{j}$ and, therefore,
		\begin{align*}
			\epsilon^{-d} &\mathbb{P} \left\{ {\rm Sub}_{0} \left( \frac{\epsilon}{\delta(\epsilon)} \right) > \nu \right\} \leq 2^{(j + 1)/2} \mathbb{P} \left\{ {\rm Sub}_{0} \left( \frac{\epsilon}{\delta(\epsilon)} \right) > 2^{-j} \right\}  \leq \sqrt{2} \cdot 2^{-j/2}.
		\end{align*}
	This proves that 
		\begin{equation*}
			\lim_{\epsilon \to 0} \epsilon^{-d} \mathbb{P} \left\{ {\rm Sub}_{0} \left( \frac{\epsilon}{\delta(\epsilon)} \right) > \nu \right\} = 0.
		\end{equation*}
	
	Via analogous arguments, we define a scale $\delta_{*,{\rm Osc}}$ in such a way that if $\delta(\epsilon) \leq \delta_{*,{\rm Osc}}(\epsilon)$ for all $\epsilon$ in a neighborhood of zero, then
		\begin{align*}
			\lim_{\epsilon \to 0} \epsilon^{-d} \mathbb{P} \left\{ {\rm Osc}_{0} \left( \frac{\epsilon}{\delta(\epsilon)} \right) > \nu \right\} = 0.
		\end{align*}
	We conclude by setting $\delta_{*}(\epsilon) = \min\{\delta_{*,{\rm Sub}}(\epsilon), \delta_{*,{\rm Osc}}(\epsilon)\}$. \end{proof}

\subsection{Quantitative Sublinearity of Correctors in the Random Checkerboard}\label{S: quantitative checkerboard -1} In this section, we recall the optimal decay rate of the limit \eqref{E: sublinearity averaging section} in the case when $a$ is a random checkerboard as in the discussion of Section \ref{S: intro rare events}.

Specifically, it follows from the main results of \cite{gloria_neukamm_otto} that, for any $\nu > 0$, there is a constant $c(\nu) > 0$ such that 
	\begin{align} \label{E: large deviations lower bound sublinearity}
		-  r^{-d}  \log\mathbb{P} \{ { \rm Sub }_{0}(r) > \nu \} \geq c(\nu) \quad \text{for any} \, \, r \geq 1 . 
	\end{align}
As sketched in \cite[Remark 5]{gloria_neukamm_otto}, provided $a$ is in fact nonconstant, it is possible to prove that the matching upper bound also holds, at least for small enough $\nu$.  That is, for any $\nu > 0$ small enough,
	\begin{align} \label{E: large deviations upper bound sublinearity}
		- r^{-d}  \log \mathbb{P} \{ { \rm Sub }_{0}(r) > \nu \} \leq c(\nu)^{-1} \quad \text{for any} \, \, r \geq 1 .
	\end{align}
A proof of this fact for a specific choice of the variables $\{A_{z}\}_{z \in \mathbb{Z}^{d}}$ can be found in the monograph \cite[Section 3.6]{armstrong_kuusi_mourrat_book}.  Below, for the reader's convenience, we give a short proof that it holds for \emph{any} (nonconstant) uniformly elliptic random checkerboard.

\begin{remark} The work \cite{gloria_neukamm_otto} considers stationary uniformly elliptic matrix fields, obtaining quantitative results under the assumption that certain functional inequalities hold.  This applies, in particular, to the random checkerboard.  To be precise, if $a$ is the random checkerboard considered here, then, by \cite[Proposition 2.3]{duerinckx_gloria_constructive} and its proof, there is a $\gamma_{*} = \gamma_{*}(d) > 0$ such that the rescaled field $a^{\gamma_{*}} = a(\gamma_{*}^{-1}\cdot)$ satisfies the standard logarithmic Sobolev inequality with respect to the essential oscillation $\partial^{\text{osc}}$, as defined in \cite[Definition 1]{gloria_neukamm_otto}.  Therefore, \cite[Theorem 2]{gloria_neukamm_otto} and its proof apply to the rescaled field.  Since rescaling the field has the effect of replacing $R \mapsto { \rm Sub}_{0}(R)$ by $R \mapsto { \rm Sub }_{0}(\gamma^{-1}_{*} R)$, it does not effect the qualitative statement \eqref{E: large deviations lower bound sublinearity}. \end{remark}

Concerning the large deviations lower bound \eqref{E: large deviations lower bound sublinearity}, this follows from the proof of \cite[Theorem 2]{gloria_neukamm_otto}.  In particular, the proof in \cite{gloria_neukamm_otto} begins by establishing that there is a deterministic exponent $\mu > 0$ and a random variable $r_{**} > 0$ such that 
	\begin{gather*}
		\mathbb{E} [ \exp ( C^{-1} r_{**}^{d} ) ] < \infty \quad \text{for some} \, \, C = C(d,\lambda,\Lambda) > 0  \\
		\text{and} \quad \frac{ 1 }{ R } \left( \fint_{ B_{R} } \left| (\phi,\sigma) - \fint_{ B_{R} } (\phi,\sigma) \right|^{2} \right)^{\frac{1}{2}} \leq C \left( \frac{r_{**}}{R} \right)^{\mu} \quad \text{for each} \, \, R \geq r_{**} ,
	\end{gather*}
see \cite[Proof of Theorem 2 \& Proposition 2]{gloria_neukamm_otto}, which yields a constant of the form $c(\nu) \sim \min\{ 1, \nu^{ \frac{ d }{ \mu } } \}$ in \eqref{E: large deviations lower bound sublinearity} after applying Markov's inequality. 


\begin{remark} The bound \eqref{E: large deviations lower bound sublinearity} can also be obtained using the approach exposed in the recent monograph \cite[Chapters 4 \& 6]{armstrong_kuusi_book}.  Like \cite{gloria_neukamm_otto}, the argument uses the notion of a random scale analogous to the minimal radius $r_{*}(\nu)$ (see \eqref{E: minimal radius} above), above which homogenization effects become apparent.  (To the best of our knowledge, this idea first appears in \cite{armstrong_smart}, which introduced some of the techniques used in \cite{armstrong_kuusi_book}.)   \end{remark}

We next give a short proof of the large deviations upper bound \eqref{E: large deviations upper bound sublinearity}.

	\begin{proof}[Proof of the Upper Bound \eqref{E: large deviations upper bound sublinearity}]  In the proof, it is convenient to take $a$ of the form $a(x) = \sum_{z \in \mathbb{Z}^{d}} A_{z} \mathbf{1}_{\tilde{Q}(z)}(x)$, where $\tilde{Q}(z) = \prod_{j = 1}^{d} [z_{j},z_{j}+1)$ for $z = (z_{1},\dots,z_{d})$, which is no loss of generality since it is a translation of the field given in \eqref{E: one d random checkerboard}.  Since $a$ is nonconstant, we can choose two distinct points $A_{\star} \neq A_{\bullet}$ in the support of the law of $a(0)$.  Let $\|\cdot\|$ be some norm on the space of $d \times d$ matrices.  For a given $\alpha : \{1,\dots,d\} \to \{-1,1\}$, let $H_{\alpha} \subseteq \mathbb{R}^{d}$ be the orthant consisting of points $x$ with $x_{j} = |x_{j}| \alpha_{j}$ for each $j$.  Define a deterministic matrix field $\underline{a} : \mathbb{R}^{d} \to \text{Sym}(d)$ by
		\begin{align*}
			\underline{a}(x) = \left\{ \begin{array}{r l}
								A_{\star}, & \text{if} \, \, x \in H_{\alpha} \, \, \text{for some} \, \, \alpha \, \, \text{with} \, \, |\{j \, \mid \, \alpha_{j} = 1\}| \, \, \text{even}, \\
								A_{\bullet}, & \text{if} \, \, x \in H_{\alpha} \, \, \text{for some} \, \, \alpha \, \, \text{with} \, \, |\{j \, \mid \, \alpha_{j} = 1\}| \, \, \text{odd}.	
							\end{array} \right.
		\end{align*}
	Since for any $\epsilon > 0$, the probabilities $\mathbb{P} \{ \| A_{z} - A_{\star} \| < \epsilon \}$ and $\mathbb{P} \{ \| A_{z} - A_{\bullet}\| < \epsilon\}$ are positive and independent of $z \in \mathbb{Z}^{d}$, and for any $z \in \mathbb{Z}^{d}$ we have $\tilde{Q}(z) \subseteq H_{\alpha}$ for some $\alpha$, there is a constant $c(\epsilon) > 0$ such that if $E_{\epsilon}(\gamma)$ is the event 
		\begin{align*}
			E_{\epsilon}(\gamma) =  \{ \| a^{\gamma}(x) - \underline{a}(x) \| < \epsilon \, \, \text{for a.e.} \, \, x \in Q_{1} \} ,
		\end{align*}
	then 
		\begin{align} \label{E: large deviations upper bound example}
			-\lim_{ \gamma \to 0 } \gamma^{d} \log \mathbb{P}(E_{\epsilon}(\gamma)) = c(\epsilon).
		\end{align}
	At the same time, we know that, for any $\xi \in \mathbb{R}^{d}$, with probability one,
		\begin{align*}
			- \nabla \cdot a^{\gamma}(x) ( \xi + \nabla \phi_{\xi}^{\gamma} ) = 0 \quad \text{in} \, \, Q_{1} .
		\end{align*}
	Thus, if we define the space of matrix fields $\mathcal{A}(\epsilon)$ and the constant $M(\epsilon,\xi)$ by 
		\begin{gather*}
			\mathcal{A}(\epsilon) = \{ b : Q_{1} \to \text{Sym}(d) \, \mid \, \| b(x) - \underline{a}(x) \| < \epsilon \, \, \text{for a.e.} \, \, x \in Q_{1} \}, \\
			\nu(\epsilon,\xi) = \inf \left\{ \big( \int_{Q_{1}} | v |^{2} \, dx \big)^{\frac{1}{2}} \, \mid \, (v,b) \in H^{1}(Q_{1}) \times \mathcal{A}(\epsilon), \, \, - \nabla \cdot b ( \xi + \nabla v ) = 0 \, \, \text{in} \, \, Q_{1} \right\},
		\end{gather*}
	then, by scaling,
		\begin{align*}
			\mathbb{P} \{ { \rm Sub }_{0}(\gamma^{-1}) \geq \nu(\epsilon,\xi) \} \geq \mathbb{P} \left\{ \left( \int_{Q_{1}} | \phi_{\xi}^{\gamma} - \fint_{Q_{1}} \phi_{\xi}^{\gamma} |^{2} \right)^{\frac{1}{2}} \geq \nu(\epsilon,\xi) \right\} \geq \mathbb{P} ( E_{\epsilon}(\gamma) ) . 
		\end{align*}
	In view of \eqref{E: large deviations upper bound example}, it only remains to observe that there is a $\xi_{*} \in \mathbb{R}^{d} \setminus \{0\}$ such that 
		\begin{align} \label{E: liminf deviations argument}
			\liminf_{ \epsilon \to 0 } \nu(\epsilon,\xi_{*}) > 0,
		\end{align}
	so that the set $\{ \epsilon > 0 \, \mid \, \nu(\epsilon,\xi_{*}) > 0\}$ is nonempty and the bound obtained above is thus nontrivial.  This last claim we prove by contrapositive.
	
	Indeed, if for some $\xi \in \mathbb{R}^{d}$ we have that  $\nu(\epsilon,\xi) \to 0$ as $\epsilon \to 0$, then this implies that the equation $- \nabla \cdot \underline{a}(x) \xi = 0$ holds in the distributional sense in $Q_{1}$.  By definition of $\underline{a}$, this implies that the measure
		\begin{align*}
			n_{M} \cdot ( A_{\star} - A_{\bullet} ) \xi \, \mathcal{H}^{d-1} \lfloor_{M} = 0 \quad \text{in} \, \, Q_{1},
		\end{align*}
	where $M = \{ (x_{1},\dots,x_{d}) \in \mathbb{R}^{d} \, \mid \, x_{j} = 0 \, \, \text{for some} \, \, j \}$, $n_{M}$ is the normal vector to $M$, and $\mathcal{H}^{d-1} \lfloor_{M}$ is the $(d - 1)$-dimensional Hausdorff measure on $M$.  Since each of the standard coordinate vectors is attained by $n_{M}$ in a set of positive $\mathcal{H}^{d-1}$ measure, this last identity implies that
		\begin{align*}
			( A_{\star} - A_{\bullet} ) \xi = 0.
		\end{align*}
	Thus, $\nu(\epsilon,\xi)$ can only vanish in the limit $\epsilon \to 0$ if $A_{\star} \xi = A_{\bullet} \xi$.  Since $A_{\star} \neq A_{\bullet}$, it is possible to find a $\xi_{*} \neq 0$ such that $A_{\star} \xi_{*} \neq A_{\bullet} \xi_{*}$, and \eqref{E: liminf deviations argument} follows.
	  \end{proof}

%
	
In view of \eqref{E: large deviations lower bound sublinearity} and \eqref{E: large deviations upper bound sublinearity}, for the random checkerboard, the assumption \eqref{E: sublinear part} in our main theorem amounts to requiring that $\epsilon^{-d} \exp ( - C (\frac{\epsilon}{\delta(\epsilon)} )^{d} )$ vanishes as $\epsilon \to 0$ for any $C> 0$, or, equivalently,
	\begin{equation*}
		\lim_{ \epsilon \to 0 } \frac{ \delta ( \epsilon ) | \log ( \epsilon ) |^{ 1/d } }{ \epsilon } = 0.
	\end{equation*}
In the next subsection, we will see that this same sufficient condition applies to our other assumption \eqref{E: averaging part} when the field $\theta$ is a random checkerboard.  The main result of the companion paper \cite{part2} shows this is also a necessary condition when $d = 1$. We expect that the higher dimensional random checkerboard is fundamentally different; this will be treated in future work.

\begin{remark} More generally, the results of \cite{gloria_neukamm_otto} and \cite{armstrong_kuusi_book} provide estimates on the decay of $\mathbb{P} \{ {\rm Sub}_{0}(R) > \nu \}$ provided the law of $a$ satisfies certain mixing conditions, which are specified in both cases using functional derivatives analogous to the one from Malliavin calculus.  In that context, the estimates from \cite{gloria_neukamm_otto,armstrong_kuusi_book} can be used to specify sufficient conditions for the assumption \eqref{E: sublinear part} to hold. \end{remark}

\subsection{Quantitative Averaging of the Random Checkerboard} \label{S: quantitative checkerboard} Let us next consider the case when $\theta$ is the random checkerboard as in \eqref{E: one d random checkerboard}.  We claim that, analogous to the previous discussion, for any $\nu > 0$ small enough, there is a constant $C_{\nu} \geq 1$ such that
	\begin{equation} \label{E: exponential asymptotic of osc}
		C_{\nu}^{-1} \leq - r^{-d} \log \mathbb{P}\{{ \rm Osc }_{0}(r) > \nu\} \leq C_{\nu} \quad \text{for any} \quad r \geq 1.
	\end{equation}
Since we are not aware of a reference for this relatively elementary large deviations estimate, we provide a proof below.  

In fact, we expect that the i.i.d.\ assumption used here is stronger than what is needed to obtain the lower bound in \eqref{E: exponential asymptotic of osc}. We comment on this in the proof.

In the proof below, we assume for notational convenience that $\theta_{*}$ is the essential infimum of $\theta(x)$ for any $x \in \mathbb{R}^{d}$.

\begin{proof}[Proof of \eqref{E: exponential asymptotic of osc}]  By the definition of ${ \rm Osc}_{0} (\cdot)$ and a union bound, it suffices to establish that 
	\begin{align*}
		0 <  &-\limsup_{r \to \infty} r^{-d} \log \left( \mathbb{P}\{ r^{-1} \|\theta - \bar{\theta}\|_{H^{-1}(Q_{r})} > \nu\} \right), \\
		&-\liminf_{r \to \infty} r^{-d} \log \left( \mathbb{P}\{ r^{-1} \|\theta - \bar{\theta}\|_{H^{-1}(Q_{r})} > \nu\} \right) < \infty.
	\end{align*}
Toward this end, it is somewhat convenient to rescale, writing $\theta^{\gamma}(x) = \theta(\gamma^{-1}x)$, in which case the statement becomes
	\begin{align} 
		0 < &-\limsup_{\gamma \to 0} \gamma^{d} \log \left( \mathbb{P}\{ \|\theta^{\gamma} - \bar{\theta}\|_{H^{-1}(Q_{1})} > \nu \} \right), \label{E: basic reformulation lower bound} \\
		&-\liminf_{\gamma \to 0} \gamma^{d} \log \left( \mathbb{P}\{ \|\theta^{\gamma} - \bar{\theta}\|_{H^{-1}(Q_{1})} > \nu \} \right) < \infty. \label{E: basic reformulation upper bound}
	\end{align}

To see that \eqref{E: basic reformulation lower bound} holds, let $(e_{n})_{n \in \mathbb{N}^{d}}$ be the trigonometric orthonormal system of eigenfunctions of the Laplacian with Dirichlet boundary conditions, so that $-\Delta e_{n} = \pi^{2} | n |^{2} e_{n}$ and $\int_{Q_{1}} e_{n}(x) e_{m}(x) \, dx = \delta_{nm}$, and recall that
	\begin{align} \label{E: H minus one formula}
		\|\theta^{\gamma} - \bar{\theta}\|_{H^{-1}(Q_{1})}^{2} = \sum_{n \in \mathbb{N}^{d}} \frac{1}{\pi^{2} |n|^{2}} \left| \int_{Q_{1}} (\theta^{\gamma}( x) - \bar{\theta}) e_{n}(x) \, dx \right|^{2}.
	\end{align}
	
On the one hand, for any $N \in \mathbb{N}$,
	\begin{align} \label{E: very easy bound}
		\sum_{n \in \mathbb{N}^{d} \, : \, |n| \geq N} \frac{1}{ 4 \pi^{2} |n|^{2}} \left| \int_{Q_{1}} (\theta^{\gamma}(x) - \bar{\theta}) e_{n}(x) \, dx \right|^{2} &\lesssim N^{-2} \int_{ Q_{1} } | \theta^{\gamma}( x ) - \bar{\theta} |^{2} \, dx \\
        &\lesssim N^{-2} ( \theta^{*} - \theta_{*} )^{2} . \nonumber
	\end{align}
On the other hand, since $\theta$ is the random checkerboard, the standard large deviations estimate for sums of bounded, independent random variables implies that, for any $n \in \mathbb{N}$ and any $\nu > 0$, there is a constant $C_{\nu,n} > 0$ such that 
	\begin{equation*}
		\mathbb{P}\left\{ \left| \int_{Q_{1}} (\theta^{\gamma}(x) - \bar{\theta}) e_{n}(x) \, dx \right| > \nu \right\} \leq \exp \left( - C_{\nu,n} \gamma^{-d} \right).
	\end{equation*}
see, for instance, \cite[Chapter 2]{dembo_zeitouni} or \cite[Section 2.4, Example 4]{varadhan}.
This is the only place in the proof of \eqref{E: basic reformulation lower bound} where we use the fact that $\theta$ is a random checkerboard.  We expect this would still hold provided that the law of $ \theta $ satisfies a logarithmic Sobolev inequality in a suitable sense; see \cite[Proposition 1.7]{duerinckx_gloria_concentration} for related results in this direction.

Taking $N \gtrsim (\theta^{*} - \theta_{*}) \nu^{-\frac{1}{2}}$ and combining the last estimate with \eqref{E: very easy bound}, we conclude
	\begin{equation*}
		-\limsup_{\gamma \to 0} \gamma^{d} \log \left( \mathbb{P} \left\{ \|\theta(\gamma^{-1} \cdot) - \bar{\theta}\|_{H^{-1}(Q_{1})} > \nu \right\} \right) \geq \sum_{ n \in \mathbb{N}^{d} \, : \, | n | < N } C_{\nu/2,n} > 0 ,
	\end{equation*}
	which proves \eqref{E: basic reformulation lower bound}.

Finally, regarding \eqref{E: basic reformulation upper bound}, it suffices to notice that since $\theta_{*}$ is the essential infimum of $\theta(x)$ for any $x \in \mathbb{R}^{d}$, the definition of $\theta$ implies that, for any $\epsilon \in (0, \bar{\theta} - \theta_{*})$,
	\begin{align*}
		\lim_{ \gamma \to 0 } \gamma^{d} \log \mathbb{P} \{ \theta^{\gamma}(x) \leq \theta_{*} + \epsilon \, \, \text{for each} \, \, x \in Q_{1} \} = \log \mathbb{P} \{ \theta(0) \leq \theta_{*} + \epsilon \} > -\infty.
	\end{align*}
Let $\mu = \int_{Q_{1}} e_{(1,1,\dots,1)}(x) \, dx$ be the total mass of the principal eigenfunction $e_{(1,1,\dots,1)}$.  Since the principle eigenfunction is positive, for any $\epsilon \in (0,\bar{\theta} - \theta_{*})$,
	\begin{align*}
		\mathbb{P} \{ \theta^{\gamma} \leq \theta_{*} + \epsilon \, \, \text{ in} \, \, Q_{1} \} \leq \mathbb{P} \left\{ \left| \int_{Q_{1}} ( \theta^{\gamma}( x) - \bar{\theta} ) e_{(1,1,\dots,1)}(x) \, dx \right| \geq \mu ( \bar{\theta} - \theta_{*} - \epsilon )  \right\} .
	\end{align*}
In view of \eqref{E: H minus one formula}, this implies \eqref{E: basic reformulation upper bound} for any $\nu > 0$ small enough.   \end{proof}

\section{Periodic and (Uniformly) Almost Periodic Media} \label{S: periodic almost periodic section}

This section discusses the relevant adaptations of the arguments of Section \ref{S: homogenization 2} to the case when the coefficients $a$ and $\theta$ are both either periodic or in the class $B^{\infty}(\mathbb{R}^{d})$.  Recall that functions in $B^{\infty}(\mathbb{R}^{d})$ are sometimes called \emph{uniformly almost periodic.}  

The main aim of this section is to prove Corollary \ref{cor:periodic-homogenization}, which shows that if $a$ and $\theta$ are periodic or uniformly almost periodic, then homogenization determines the limiting behavior of the energy $\mathscr{F}_{\epsilon,\delta}$, no matter the choice of the scale $\epsilon \mapsto \delta(\epsilon)$ (as long as $ \epsilon^{-1} \delta(\epsilon) \rightarrow 0 $). In order to do this, we need to show that the quantities ${\rm Osc}(Q_{R}(x))$, ${\rm Sub}(Q_{R}(x))$, and $| \bar{a}(Q_{R}(x)) - \bar{ a} |$  appearing in Theorem \ref{T: elliptic term cube decomposition} converge uniformly to zero as $R \to \infty$ in this setting, which is the main focus of Sections \ref{S: periodic} and \ref{S: uniformly almost periodic}. 

It should be emphasized that there are nonuniformly almost periodic media for which the homogenized functional $\bar{\mathscr{E}}$ is not necessarily the $\Gamma$-limit of the energy $\mathscr{F}_{\epsilon,\delta}$ for certain choices of the scale $\epsilon \mapsto \delta(\epsilon)$. This is proved in the companion paper \cite{part2}.

\subsection{Periodic Media} \label{S: periodic} Let us recall the relevant facts from periodic homogenization, which imply, in particular, that the assumptions of Section \ref{S: deterministic assumptions} all hold.  As in the random case, we work with correctors $(\phi_{Q},\sigma_{Q}) = (\phi,\sigma)$ and matrices $\bar{a}(Q) = \bar{a}$ that do not depend on the cube $Q$.

First, if $a$ is $\mathbb{Z}^{d}$-periodic, then, for any $\xi \in S^{d-1}$, recall that it is possible to fix a $\phi_{\xi} \in H^{1}(\mathbb{T}^{d})$ such that 
	\begin{equation*}
		- \nabla \cdot (a(x) (\xi + \nabla \phi_{\xi})) = 0 \quad \text{in} \, \, \mathbb{T}^{d}, \quad \fint_{\mathbb{T}^{d}} \phi_{\xi} = 0.
	\end{equation*}
Indeed, the existence and uniqueness of such $\phi_{\xi}$ is discussed in detail in the classic books \cite{bensoussan_lions_papanicolaou} and \cite{jikov_kozlov_oleinik}, and $\xi \mapsto \phi_{\xi}$ is linear by linearity of the equation.  Observe that, for any $ x \in \mathbb{Z}^d $, in the limit as $ R \rightarrow \infty $, by the Poincaré inequality,
	\begin{equation*}
		\begin{aligned}
		\frac{1}{R^{2}} \fint_{Q_{R}(x) } \left|\phi_{\xi} - \fint_{Q_R(x)} \phi_{\xi} \right|^{2}
		& \leq  \frac{1}{R^{2}} \fint_{ Q_{R}(x) } \left| \phi_{\xi} \right|^{2} \\
		& \sim \frac{1}{R^2} \fint_{ \mathbb{T}^d } | \phi_{\xi} |^{2} \lesssim \frac{1}{R^2} \fint_{\mathbb{T}^{d}} | \nabla \phi_{\xi} |^{2},
		\end{aligned}
	\end{equation*}
where the implicit constants above do not depend on $x$.
Writing $\phi = (\phi_{e_{1}},\dots,\phi_{e_{d}})$ for the vector-valued corrector, this proves
	\begin{align*}
		\lim_{R \to \infty} \sup_{x \in \mathbb{R}^{d}} \frac{1}{R} \left( \fint_{Q_R(x)} \left| \phi - \fint_{Q_R(x)} \phi \right|^{2} \right)^{\frac{1}{2}} = 0.
	\end{align*}

The flux corrector $\sigma_{\xi}$ can be defined as follows.  Since the flux $q_{\xi} = a (\xi + \nabla \phi_{\xi})$ is an element of $L^{2}(\mathbb{T}^{d})$, for any $i,j \in \{1,2,\dots,d\}$, there is a unique function $\sigma_{\xi i j} \in H^{1}(\mathbb{T}^{d})$ satisfying the following two equations:
	\begin{align*}
		- \Delta \sigma_{\xi i j} = \partial_{j} q_{\xi i} - \partial_{i} q_{\xi j} \quad \text{in} \, \, \mathbb{T}^{d}, \quad \fint_{\mathbb{T}^{d}} \sigma_{\xi i j} = 0.
	\end{align*}
By the uniqueness of $ \sigma_{ \xi i j } $, we learn that $ \sigma_{ \xi i j } = - \sigma_{ \xi j i } $. Hence the matrix $ \sigma_{ \xi } $ is skew-symmetric.  By the same argument as for $\phi_{\xi}$, we have that $R^{-1} ( \fint_{Q_R(x)} |\sigma_{\xi} - \fint_{Q_R(x)} \sigma_{\xi}|^{2} )^{\frac{1}{2}} \to 0$ uniformly with respect to $x \in \mathbb{R}^{d}$. 

Arguing as in \cite[Section 2.1]{JosienOtto}, one can show that the Helmholtz-type decomposition \eqref{eqn:a-helmholtz-decomposition} holds with the matrix $\bar{a}$ determined by the formula
	\begin{align*}
		\bar{a} \xi = \int_{\mathbb{T}^{d}} a(y) (\xi + \nabla \phi_{\xi}(y)) \, dy.
	\end{align*}
Furthermore, a variational argument along the lines of the one in Section \ref{S: bound on homogenized matrix} shows that $\lambda \text{Id} \leq \bar{a} \leq \Lambda \text{Id}$ (see also \cite{bensoussan_lions_papanicolaou}).

The previous two paragraphs prove that if $a$ is $\mathbb{Z}^{d}$-periodic, then it certainly satisfies the assumptions of Section \ref{S: deterministic assumptions} and
	\begin{align*}
		\lim_{R \to \infty} \sup_{x \in \mathbb{R}^{d}} \big( { \rm Sub } (Q_{R}(x)) + | \bar{a}(Q_{R}(x)) - \bar{a} | \big) = 0 .
	\end{align*}

Similarly, if $\theta$ is $\mathbb{Z}^{d}$-periodic, we define $\bar{\theta} = \int_{\mathbb{T}^{d}} \theta(y) \, dy$, which clearly satisfies $\theta_{*} \leq \bar{\theta} \leq \theta^{*}$. Observe that if we let $\psi \in H^{1}(\mathbb{T}^{d})$ denote the solution of the PDE
	\begin{align*}
		-\Delta \psi = \theta - \bar{\theta} \quad \text{in} \, \, \mathbb{T}^{d}, \quad \fint_{\mathbb{T}^{d}} \psi = 0,
	\end{align*}
then it is straightforward to check that, for any $x \in \mathbb{R}^{d}$ and any $R > 0$,
	\begin{equation}\label{eqn:osc-thing-periodic}
		\frac{1}{R^{2}} \|\theta - \bar{\theta}\|_{H^{-1}( Q_R(x)  )}^{2} \lesssim \frac{1}{R^{2}} \fint_{ Q_R(x) } |\nabla \psi|^{2} \lesssim \frac{1}{R^{2}} \fint_{\mathbb{T}^{d}} |\nabla \psi|^{2},
	\end{equation}
where the implied constants do not depend on $x$. 
Therefore,
	\begin{equation*}
		\lim_{R \to \infty} \sup_{x \in \mathbb{R}^{d}} { \rm Osc } ( Q_{R}(x) ) = \lim_{R \to \infty} \sup_{x \in \mathbb{R}^{d}} \frac{1}{R} \|\theta - \bar{\theta}\|_{H^{-1}({ Q_R(x) })} = 0.
	\end{equation*}

\subsection{Uniformly Almost Periodic Media} \label{S: uniformly almost periodic} In this section, we consider the case when $a$ or $\theta$ are uniformly almost periodic, or, more precisely, elements of $B^{\infty}(\mathbb{R}^{d})$.  We prove that it is possible to construct fields $(\phi_{Q},\sigma_{Q})$ and matrices $\bar{a}(Q)$ satisfying the assumptions of Section \ref{S: deterministic assumptions} in such a way that
 	\begin{align*}
		\lim_{R \to \infty} \sup_{x \in \mathbb{R}^{d}} \Big( { \rm Osc } ( Q_{R}(x) ) + { \rm Sub } ( Q_{R}(x) ) + | \bar{a}(Q_{R}(x)) - \bar{a} | \Big) = 0.
	\end{align*}

We begin by recalling the definition of the Bohr compactification, which provides a natural ergodic-theoretic setting for studying almost periodic functions.  For more information and references, see \cite[Section 3]{berg}, \cite{pankov}, or \cite[Chapter 7, Section 5]{katznelson}.

Let $\Omega_{\text{Bohr}}$ be the group of all complex (possibly discontinuous) multiplicative characters of $\mathbb{R}^{d}$:
	\begin{equation*}
		\Omega_{\text{Bohr}} = \left\{ \omega : \mathbb{R}^{d} \to \mathbb{C} \, \mid \, \omega(x + y) = \omega(x) \omega(y), \, \, |\omega(x)| = 1 \, \, \text{for each} \, \, x, y \in \mathbb{R}^{d} \right\}.
	\end{equation*} 
$\Omega_{\text{Bohr}}$ is an Abelian group under pointwise multiplication: $(\omega' \omega)(x) = \omega'(x) \omega(x)$.
As is standard, we equip $\Omega_{\text{Bohr}}$ with the product topology inherited from $\mathbb{C}^{\mathbb{R}^{d}}$. Hence by Tychonoff's theorem, $\Omega_{\text{Bohr}}$ is a compact topological Abelian group, referred to as the Bohr compactification of $\mathbb{R}^{d}$.

Note that $\mathbb{R}^{d}$ itself includes into $\Omega_{\text{Bohr}}$.  Indeed, define the group action $(\tau_{x})_{x \in \mathbb{R}^{d}}$ such that, for any $\omega \in \Omega_{\text{Bohr}}$ and any $x \in \mathbb{R}^{d}$, the character $\tau_{x} \omega$ is given by
	\begin{equation*}
		(\tau_{x} \omega)(y) = e^{i 2 \pi x \cdot y} \omega(y).
	\end{equation*}
Clearly, if $\mathbf{1} \in \Omega_{\text{Bohr}}$ denotes the constant function $\mathbf{1} \equiv 1$, then $x \mapsto \tau_{x} \mathbf{1}$ is an embedding (group monomorphism) of $\mathbb{R}^{d}$ into $\Omega_{\text{Bohr}}$.  It is possible to show that the set $\{\tau_{x} \mathbf{1}\}_{x \in \mathbb{R}^{d}}$ is dense in $\Omega_{\text{Bohr}}$, hence $\Omega_{\text{Bohr}}$ really is a topological compactification of $\mathbb{R}^{d}$. 

  Further, and most important for us, $ f \in B^{\infty}(\R^d) $\footnote{A similar equivalence holds relating $B^{p}(\mathbb{R}^{d})$ to $L^{p}(\Omega_{\text{Bohr}})$ in case $p < \infty$, see \cite{pankov}.} if and only if there exists an $F \in C(\Omega_{ \rm Bohr })$ such that 
	\begin{equation*}
		f(x) = F(\tau_{x}\mathbf{1}) \quad \text{for each} \quad x \in \mathbb{R}^{d}.
	\end{equation*}
Letting $\mathbb{P}_{\text{Bohr}}$ denote the normalized Haar measure on $\Omega_{\text{Bohr}}$, we have the identity
	\begin{equation*}
		\mathbb{E}_{\text{Bohr}}[F] = \lim_{R \to \infty} \fint_{B_{R}(x)} f,
	\end{equation*}
where the convergence is uniform with respect to the center point $x \in \mathbb{R}^{d}$.  From this, it follows that the action $(\tau_{x})_{x \in \mathbb{R}^{d}}$ is ergodic on $\Omega_{\text{Bohr}}$ equipped with its Borel $\sigma$-algebra and $\mathbb{P}_{\text{Bohr}}$.

We next establish that if $\theta \in B^{\infty}(\mathbb{R}^{d})$, then the quantity ${ \rm Osc }(\cdot)$ defined in \eqref{eqn:defn-osc} vanishes as $R \to \infty$ uniformly with respect to the center point $x$.

	\begin{prop} If $\theta \in B^{\infty}(\mathbb{R}^{d})$, then 
		\begin{equation*}
			\lim_{R \to \infty} \sup_{x \in \mathbb{R}^{d}} { \rm Osc }_{x}(R) = \lim_{R \to \infty} \sup_{x \in \mathbb{R}^{d}} { \rm Osc}(Q_{R}(x)) = 0.
		\end{equation*}
	\end{prop}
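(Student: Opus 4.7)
The plan is to establish the uniform limit by approximating $\theta$ by trigonometric polynomials. Since ${\rm Osc}_x(R) = \sup_{R' \geq R} (R')^{-1}\|\theta - \bar\theta\|_{H^{-1}(Q_{R'}(x))}$ and the suprema over $x \in \mathbb{R}^d$ and over $R' \geq R$ commute, it suffices to prove
\[
\lim_{R \to \infty} \sup_{x \in \mathbb{R}^{d}} R^{-1}\|\theta - \bar{\theta}\|_{H^{-1}(Q_R(x))} = 0.
\]
Fix $\epsilon > 0$. By the definition of $B^{\infty}(\mathbb{R}^{d})$ as the $L^{\infty}$-closure of the generalized trigonometric polynomials, there is a finite sum $P_0 = \sum_{j=1}^{N} c_{j} e^{i \alpha_{j} \cdot y}$ with $\|\theta - \bar{\theta} - P_0\|_{L^{\infty}} < \epsilon/2$. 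Since $\theta - \bar\theta$ has mean zero, the zero-frequency coefficient of $P_0$ is bounded by $\epsilon/2$ in modulus, so after subtracting it off I may assume that all frequencies $\alpha_1, \dots, \alpha_N$ are nonzero and that the resulting polynomial $P$ still satisfies $\|r\|_{L^\infty} < \epsilon$, where $r := \theta - \bar{\theta} - P$.

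Next, I would estimate the contributions from $r$ and $P$ separately. For the residual, Cauchy-Schwarz and the scale-invariant Poincar\'e inequality on $Q_R(x)$ give $R^{-1}\|r\|_{H^{-1}(Q_R(x))} \lesssim \|r\|_{L^{\infty}(\mathbb{R}^d)} < \epsilon$, uniformly in $x$ and $R$. For the polynomial, the key observation is that each pure frequency admits the bounded antiderivative $\psi_{j}(y) := -|\alpha_{j}|^{-2} e^{i\alpha_{j} \cdot y}$, which solves $-\Delta \psi_j = e^{i\alpha_j \cdot y}$ on all of $\mathbb{R}^d$ and has $|\nabla \psi_j| \equiv |\alpha_j|^{-1}$. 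Because any test function $v \in H^1_0(Q_R(x))$ vanishes on $\partial Q_R(x)$, integration by parts yields
\[
\left| \fint_{Q_R(x)} e^{i\alpha_j \cdot y} v(y)\, dy \right| \leq |\alpha_j|^{-1} \left(\fint_{Q_R(x)} |\nabla v|^2\right)^{1/2},
\]
so $\|e^{i\alpha_j \cdot y}\|_{H^{-1}(Q_R(x))} \leq |\alpha_j|^{-1}$ uniformly in $x$ and $R$. Summing over $j$, $R^{-1}\|P\|_{H^{-1}(Q_R(x))} \leq R^{-1}\sum_j |c_j||\alpha_j|^{-1}$, which tends to zero uniformly in $x$ as $R \to \infty$. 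Combining the two estimates yields $\limsup_{R \to \infty} \sup_x R^{-1}\|\theta - \bar{\theta}\|_{H^{-1}(Q_R(x))} \lesssim \epsilon$, and since $\epsilon > 0$ was arbitrary, the claim follows.

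No single step poses a serious obstacle. The only conceptual point worth emphasizing is the correct test-function strategy for an individual Fourier mode: rather than bounding $\|e^{i\alpha_j \cdot y}\|_{H^{-1}(Q_R(x))}$ via an $L^p$ norm (which would scale unfavorably in $R$), one exploits the fact that $e^{i\alpha_j \cdot y}$ already admits a bounded antiderivative at the fixed microscopic scale $|\alpha_j|^{-1}$, independent of the macroscopic size $R$. This is precisely the mechanism by which uniform almost periodicity promotes qualitative convergence of averages into the desired $o(R)$ decay of $\|\theta - \bar\theta\|_{H^{-1}(Q_R(x))}$.
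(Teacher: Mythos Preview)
Your proof is correct and follows essentially the same strategy as the paper: approximate $\theta - \bar\theta$ in $L^{\infty}$ by a trigonometric polynomial with nonzero frequencies, bound the residual via Poincar\'e, and handle each pure mode by an integration-by-parts argument. The only cosmetic difference is that the paper treats each mode $e_{\lambda}$ by observing it is periodic with respect to some lattice and invoking the periodic case (where one solves $-\Delta\psi = \theta - \bar\theta$ on the torus and bounds $\|\theta - \bar\theta\|_{H^{-1}}$ by $\|\nabla\psi\|_{L^{2}}$), whereas you write down the inverse Laplacian $-|\alpha_j|^{-2}e^{i\alpha_j\cdot y}$ explicitly; these are the same computation.
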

	
The proof of the proposition is a routine extension of the periodic case already covered in the previous subsection, which we include for completeness.  Here it is convenient to recall that the characters $\{e_{\lambda}\}_{\lambda \in \mathbb{R}^{d}}$ defined by $e_{\lambda}(x) = \exp \left( i 2 \pi \lambda \cdot x \right)$ densely span the space $B^{\infty}(\mathbb{R}^{d})$; see \cite[Chapter 3]{corduneanu} or \cite[Section 7.4]{jikov_kozlov_oleinik}.

\begin{proof}  Given a $\lambda \in \mathbb{R}^{d} \setminus \{0\}$, let $E_{\lambda} : \Omega_{\text{Bohr}} \to \mathbb{C}$ be defined by $E_{\lambda}(\omega) = \omega_{\lambda}$.  Observe that if we define $e_{\lambda}(x) = E_{\lambda}(\tau_{x} \mathbf{1})$, then $e_{\lambda}(x) = e^{i 2 \pi \lambda \cdot x}$ as in the previous paragraph.  Notice that $e_{\lambda}$ is periodic with respect to some lattice, which can be obtained by rotating and dilating $\mathbb{Z}^{d}$.  Thus, as in the previous section, since $e_{\lambda}$ has mean zero,
	\begin{align*}
		\lim_{R \to \infty} \sup_{x \in \mathbb{R}^{d}} \frac{1}{R} \|e_{\lambda}\|_{H^{-1}(Q_{R}(x))} = 0.
	\end{align*}

	Next, since $\theta  \in B^{\infty}(\mathbb{R}^{d})$, we have $\theta(x) = \Theta(\tau_{x} \omega)$ for some $\Theta \in C(\Omega_{\text{Bohr}})$ and $\bar{\theta} = \mathbb{E}_{\text{Bohr}}[\Theta]$ by definition.  
	
	Fix $\epsilon > 0$.  Since $\Theta - \bar{\theta}$ has mean zero, there are points $\lambda_1, \hdots, \lambda_N \in \R^{d} \setminus \{0\}$ and numbers $ \alpha_1, \hdots, \alpha_N \in \mathbb{C} $ such that
	\begin{align*}
	\sup_{ \omega \in \Omega_{\text{Bohr}}} \Big| \Theta(\omega) - \bar \theta - \sum_{ k = 1 }^N \alpha_k E_{\lambda_k}(\omega) \Big| < \epsilon,
	\end{align*}
	see \cite[Theorem 4.5]{corduneanu}.  In particular, $\left| \theta(x) - \bar{\theta} - \sum_{k = 1}^{N} \alpha_{k} e_{\lambda_{k}}(x) \right| < \epsilon$ for every $x \in \mathbb{R}^{d}$. Taken together with our choice of normalization for the $ H^{-1} $ norm, see \eqref{E: H minus one norm}, and the Poincaré inequality (which implies $R^{-1} \| \mathbf{1} \|_{H^{-1}(Q_{R})} \lesssim 1$), this implies 
	\begin{align*}
	\frac{1}{R} \| \theta - \bar \theta \|_{ H^{-1} ( Q_R(x) ) } \lesssim \epsilon + \sum_{ k = 1 }^N | \alpha_k | \frac{1}{R} \|  e_{\lambda_k} \|_{ H^{-1} ( Q_R(x) ) }.
	\end{align*}
We conclude upon sending first $R \to \infty$ and then $\epsilon \to 0$.
\end{proof}

It remains to analyze the case when $a \in B^{\infty}(\mathbb{R}^{d}; \text{Sym}(d))$, i.e., each entry of the matrix is in $B^{\infty}(\mathbb{R}^{d})$.  Here is where we use the generality of the assumptions in Section \ref{S: deterministic assumptions}.  Specifically, we work with finite-volume correctors rather than the infinite-volume ones we used in the random and periodic cases.  Technically, the reason we treat the almost periodic case differently is simple: it is not known whether or not the quantity ${ \rm Sub }_{x}(R)$ measuring the sublinearity of the infinite-volume corrector (i.e., as defined in \eqref{eqn:defn-sub}) vanishes uniformly with respect to the center point $x$.  As discussed further in Remark \ref{R: sub remark} below, this is a question of its own interest, related to open questions in the large-scale regularity theory for elliptic equations with almost periodic coefficients.

More specifically, in this setting, for any $\xi \in \mathbb{R}^{d}$ and any bounded, Lipschitz open set $U$, we let $\phi_{U,\xi}$ be the solution of the Dirichlet problem
    \begin{align*}
        - \nabla \cdot a(x) ( \xi + \nabla \phi_{U,\xi} ) = 0 \quad \text{in} \, \, U, \quad \phi_{U,\xi} = 0 \quad \text{on} \, \, \partial U.
    \end{align*}
By uniqueness, the map $\xi \mapsto \phi_{U,\xi}$ is linear, hence as before we define the vector-valued corrector $\phi_{U} = (\phi_{U,e_{1}},\dots,\phi_{U,e_{d}})$.
Below we will construct, for any cube $Q$, a field $\sigma_{Q} = ( \sigma_{Q,e_{1}},\dots,\sigma_{Q,e_{d}})$ and a matrix $\bar{a}(Q)$ in such a way that 
	\begin{align*}
		a(x) ( \xi + \nabla \phi_{Q,\xi} ) &= \bar{a}(Q) \xi + \nabla \cdot \sigma_{Q,\xi}
	\end{align*}
and
	\begin{align*}
		\lim_{R \to \infty} \sup_{x \in \mathbb{R}^{d}} \Big( { \rm Sub }(Q_{R}(x)) + | \bar{a}(Q_{R}(x)) - \bar{a} | \Big) = 0.
	\end{align*}
Here $\bar{a}$ is the homogenized matrix associated with the operator $\nabla \cdot a(x) \nabla$, which we will access using the variational approach.

First, we construct $\sigma_{Q,\xi}$ using the following lemma.  The difficulty lies in obtaining a suitable estimate for $\sigma_{Q,\xi}$, which is essentially folklore but is complicated by the fact that we work in a bounded domain.  Since we are not aware of a reference where this is done in detail, a proof is given at the end of this section (Section \ref{S: proof of lemma}).  In the statement, we denote by $\mathcal{C}$ the space of skew-symmetric matrix fields $\sigma' \in L^{2}(Q; \text{Skew}(d) )$ such that $\nabla \cdot \sigma' = 0$ weakly in $Q$.   

	\begin{lemma} \label{L: differential forms} Given a cube $Q \subseteq \mathbb{R}^{d}$ and a divergence-free vector field $b \in L^{2}(Q ; \mathbb{R}^{d} )$, there is a unique skew-symmetric matrix field $\sigma \in L^{2} (Q ; \text{Skew}(d) )$ such that 
		\begin{align} \label{E: sigma eqn differential form proof}
			\nabla \cdot \sigma = b \quad \text{in} \, \, Q \quad \text{and} \quad \int_{Q} \text{tr} ( \sigma(x)^{T} \sigma'(x) ) \, dx = 0 \quad \text{for each} \, \, \sigma' \in \mathcal{C}.
		\end{align}
	Further, there is a constant $C(d) > 0$ such that 
		\begin{align*}
			\fint_{Q} \text{tr} ( \sigma(x)^{T} \sigma(x) ) \, dx \leq C(d) \| b \|_{H^{1}(Q)'}^{2},
		\end{align*}
	where $\|\cdot\|_{H^{1}(Q)'}$ is the dual norm
		\begin{align*}
			\| b \|_{H^{1}(Q)'} = \sup \left\{ \fint_{ Q } b \cdot V \, \mid \, \int_{Q} |\nabla V|^{2} \, dx + |Q|^{-2/d} \int_{Q} |V|^{2} \, dx \leq 1  \right\}.
		\end{align*}
	\end{lemma}
	
We apply the lemma to obtain, for each cube $Q$ and each $\xi \in \mathbb{R}^{d}$, a unique skew-symmetric matrix field $\sigma_{Q,\xi}$ such that $\nabla \cdot \sigma_{Q,\xi} = a(x) (\xi + \nabla \phi_{Q,\xi}) - \bar{a}(Q) \xi$ in $Q$ and $\sigma_{Q,\xi}$ is orthogonal to divergence-free, skew-symmetric fields.  By uniqueness, the map $\xi \mapsto \sigma_{Q,\xi}$ is linear.

Finally, we establish uniform control over the homogenization error using variational techniques.  Given a bounded open set $U \subseteq \mathbb{R}^{d}$ and a vector $\xi \in \mathbb{R}^{d}$, let $\bar{a}(U)$ be the symmetric matrix such that
	\begin{align}
		\frac{1}{2} \bar{a}(U) \xi \cdot \xi &= \min \left\{ \frac{1}{2} \fint_{U} a(x) \nabla u \cdot \nabla u \, dx \, \mid \, u - \xi \cdot x \in H^{1}_{0}(U) \right\}. \label{E: dirichlet minimization}
	\end{align}
For the proof that the right-hand sides of the displays above define quadratic forms in the variable $\xi$, and thus $\bar{a}(U)$ and $\bar{a}_{*}(U)$ are well-defined, see \cite[Section 1.2]{armstrong_kuusi_book} or \cite{armstrong_kuusi_mourrat_book}.

Using the topology of $\Omega_{\text{Bohr}}$ in conjunction with results from the theory of $\Gamma$-convergence, we will prove that the quantities $ | \bar{a}(Q_{R}(x)) - \bar{a}|$ and $ { \rm Sub } ( Q_{R}(x) ) $ vanish as $R \to \infty$ uniformly with respect to the center point $x$.

	\begin{prop} \label{P: uniform convergence variational quantities} If $a \in B^{\infty}(\mathbb{R}^{d} ; \text{Sym}(d))$ and there are constants $0 < \lambda \leq \Lambda$ such that $\lambda \text{Id} \leq a \leq \Lambda \text{Id}$ pointwise, then
        		\begin{gather}
			\lim_{R \to \infty} \sup_{x \in \mathbb{R}^{d}} | \bar{a}(Q_{R}(x)) - \bar{a}|  = 0, \quad \text{and} \label{E: uniform convergence of a quantities} \\
            		\lim_{R \to \infty} \sup_{x \in \mathbb{R}^{d}} { \rm Sub } ( Q_{R}(x) ) = 0. \label{E: finite volume sub}
        		\end{gather}   
	\end{prop}

	\begin{proof} We first prove uniform convergence of $\bar{a}(Q)$, then we recall some facts from $\Gamma$-convergence and homogenization, and, finally, we prove the uniform convergence of ${ \rm Sub } (Q)$.  All the properties of $\Gamma$-convergence used in this proof are standard and can be found in the textbook \cite[Chapter 22]{DalMasoIntroGC}.
	
	\textit{Step 1: Uniform Convergence of Variational Quantities.} A change of perspective is useful.  Since $a \in B^{\infty}(\mathbb{R}^{d} ; \text{Sym}(d) )$, there is a continuous function $A : \Omega_{\text{Bohr}} \to \text{Sym}(d)$ such that $a(x) = A(\tau_{x}\mathbf{1})$ for each $x \in \mathbb{R}^{d}$.  It is convenient to define a symmetric matrix-valued set function $U \mapsto \bar{A}(U;\omega)$ , proceeding by analogy with $\bar{a}(U)$, via the formula
			\begin{align*}
				\frac{1}{2} \bar{A}(U;\omega) \xi \cdot \xi &= \min \left\{ \frac{1}{2} \fint_{U} A(\tau_{x} \omega) \nabla u \cdot \nabla u \, dx \, \mid \, u - \xi \cdot x \in H^{1}_{0}(U) \right\}.
			\end{align*} 
	As in \cite[Section 1.2]{armstrong_kuusi_mourrat_book}, $\lambda \text{Id} \leq \bar{A}(U;\omega) \leq \Lambda \text{Id}$ (as symmetric matrices) for any $U$ and $\omega$.
	Notice that $\bar{a}(U) = \bar{A}(U; \mathbf{1})$ for any $U$.  In this step of the proof, we will show that
		\begin{align*}
			\lim_{R \to \infty} \sup_{x \in \mathbb{R}^{d}} | \bar{A}(Q_{R}(x);\mathbf{1}) - \bar{a} | = 0.
		\end{align*}
	Since $A(x + U ; \omega ) = A(U ; \tau_{x} \omega)$ for any $U$ and $x$, it suffices to prove that 
		\begin{align*}
			\lim_{R \to \infty} \sup_{\omega \in \Omega_{\text{Bohr}}} | \bar{A}(Q_{R}; \omega) - \bar{a} | = 0.
		\end{align*}
		
	As in \cite{dal-maso_modica}, $\bar{A}(\cdot;\omega)$ is subadditive in the sense that if $U_{1},\dots,U_{N}$ are disjoint bounded open sets and $U = U_{1} \cup \dots \cup U_{N}$, then, for any $\omega \in \Omega_{\text{Bohr}}$,
			\begin{align*}
				A(U;\omega) \leq \sum_{i = 1}^{N} A(U_{i};\omega), \quad A_{*}(U;\omega)^{-1} \leq \sum_{i = 1}^{N} A_{*}(U_{i};\omega)^{-1},
			\end{align*}
		where again the inequalities hold at the level of quadratic forms.
		Therefore, the subadditive ergodic theorem implies that there is a (constant) symmetric matrix $\bar{A}$ such that
			\begin{align*}
				\bar{A}(Q_{R}; \omega) \to \bar{A} \quad \text{as} \, \, R \to \infty \quad \text{for} \, \,  \mathbb{P}_{\text{Bohr}}\text{-almost every} \, \, \omega \in \Omega.
			\end{align*}
		The classical result of \cite{dal-maso_modica} implies that $\bar{A} = \bar{a}$, the homogenized matrix.  It only remains to prove that the above convergence actually holds uniformly with respect to $\omega \in \Omega_{\text{Bohr}}$.

		Here we invoke the fact that $\Omega_{\text{Bohr}}$ is a compact Abelian topological group, and, therefore, the continuity of $A$ implies
			\begin{align*}
				\lim_{\omega' \to \mathbf{1}} \sup \left\{ |A( \omega' \omega) - A(\omega)| \, \mid \, \omega \in \Omega_{\text{Bohr}} \right\} = 0,
			\end{align*}
		see, e.g.,~\cite[Proposition 2.6]{FollandHarmonic}.  At the same time, by using the uniform estimate $\bar{A}(U; \omega) \leq \Lambda \text{Id}$, we see that, for any $\omega',\omega \in \Omega_{\text{Bohr}}$ and any bounded open set $U$,
			\begin{align*}
				| \bar{A} ( U ; \omega' \omega ) - \bar{A} ( U ; \omega ) | &\leq \Lambda \lambda^{-1} \sup \left\{ |A(\tau_{x} \omega' \omega) - A(\tau_{x} \omega)| \, \mid \, x \in \mathbb{R}^{d}, \, \, \omega \in \Omega_{\text{Bohr}} \right\} \\
					&= \Lambda \lambda^{-1} \sup \left\{ | A ( \omega' \omega'' ) - A( \omega'' ) | \, \mid \, \omega'' \in \Omega_{\text{Bohr}} \right\}.
			\end{align*}
		Thus,
			\begin{align*}
				\lim_{\omega' \to \mathbf{1}} \sup_{R > 0} \sup_{\omega \in \Omega_{\text{Bohr}}} | A(Q_{R}; \omega' \omega) - A(Q_{R}; \omega) | = 0,
			\end{align*}
		Since $\Omega_{\text{Bohr}}$ is compact, this implies $\bar{A}(Q_{R};\omega) \to \bar{a}$ uniformly with respect to $\omega$ as $R \to \infty$. 
		
	\textit{Step 2: $\Gamma$-convergence and Homogenization.} For any $\epsilon > 0$, let $\mathcal{E}_{\epsilon}$ be the energy functional defined by
		\begin{align*}
			\mathcal{E}_{\epsilon} ( u ; U ) = \frac{1}{2} \int_{ U } a ( \epsilon^{-1} x ) \nabla u \cdot \nabla u \, dx.
		\end{align*}
	Since $a \in B^{\infty}(\mathbb{R}^{d} ; \text{Sym}(d) )$, the theory of homogenization implies there is a symmetric matrix $\bar{a}$ such that if $\bar{\mathcal{E}}$ is the energy given by 
		\begin{align*}
			\bar{\mathcal{E}}(u;U) = \int_{U} \bar{a} \nabla u \cdot \nabla u \, dx,
		\end{align*}
	then $\mathcal{E}_{\epsilon}(\cdot;U)$ $\Gamma$-converges in $L^{2}(U)$ to $\bar{\mathcal{E}}(\cdot;U)$ as $\epsilon \to 0$ for any bounded open set $U$.
	
	The convergence above can be upgraded: For any sequence of scales $(\epsilon_{n})_{n \in \mathbb{N}}$ and points $(x_{n})_{n \in \mathbb{N}}$, if $\mathcal{E}_{n}$ is the shifted functional (abusing notation)
		\begin{align*}
			\mathcal{E}_{n} (u; U) = \mathcal{E}_{\epsilon_{n}} ( u( \cdot - x_{n} ) ; x_{n} + U),
		\end{align*}
	then compactenss and integral representation results from the theory of $\Gamma$-convergence (see \cite[Theorem 22.3]{DalMasoIntroGC}) imply that there is a subsequence $(\mathcal{E}_{n_{j}})_{j \in \mathbb{N}}$ and an energy functional $\tilde{\mathcal{E}}$ such that $\mathcal{E}_{n_{j}}(\cdot;U)$ $\Gamma$-converges to $\tilde{\mathcal{E}}(\cdot;U)$ as $j \to \infty$ for any bounded open set $U$ and $\tilde{\mathcal{E}}(u;U) = \frac{1}{2} \int_{U} \tilde{a}(x) \nabla u \cdot \nabla u \, dx$ for some bounded, positive definite matrix field $\tilde{a}$.    Since $\Gamma$-convergence implies convergence of solutions of the Dirichlet problem in this setting (see \cite[Theorem 22.9]{DalMasoIntroGC}), this implies that, for any $x \in \mathbb{R}^{d}$, $r > 0$, and $\xi \in \mathbb{R}^{d}$, if we let $y_{n} = \epsilon_{n} (x + x_{n})$, then
		\begin{align*}
			&\min \left\{ \tilde{\mathcal{E}}(u; Q_{r}(x)) \, \mid \, u(y) = \xi \cdot y \, \, \text{for each} \, \, y \in \partial Q_{r}(x) \right\} \\
			&\qquad = \lim_{j \to \infty} \min \left\{ \mathcal{E}_{n_{j}}(u; Q_{r}(x)) \, \mid \, u(y) = \xi \cdot y \, \, \text{for each} \, \, y \in \partial Q_{r}(x) \right\} \\
				&\qquad = \lim_{j \to \infty} \frac{1}{2} \bar{A}( Q_{\epsilon_{n_{j}}^{-1}r}(y_{n_{j}}) ; \mathbf{1} ) \xi \cdot \xi \\
				&\qquad = \lim_{j \to \infty} \frac{1}{2} \bar{A} ( Q_{\epsilon_{n_{j}}^{-1}r}(0) ; \tau_{y_{n}} \mathbf{1} ) \xi \cdot \xi = \frac{1}{2} \bar{a} \xi \cdot \xi.
		\end{align*}
	In view of the fact that this is true independent of the choice of the triple $(x,r,\xi)$, it follows that $\tilde{a}(y) = \bar{a}$ for almost every $y \in \mathbb{R}^{d}$.  This proves $\tilde{\mathcal{E}} = \bar{\mathcal{E}}$.
	
\textit{Step 3: Uniform Convergence of ${ \rm Sub } ( Q_{R}(x) )$.} Fix a sequence $(R_{n})_{n \in \mathbb{N}}$ of scales and a sequence of points $(x_{n})_{n \in \mathbb{N}}$ such that
	\begin{align*}
		\limsup_{R \to \infty} \sup_{x \in \mathbb{R}^{d}} { \rm Sub } ( Q_{R}(x) ) = \lim_{n \to \infty} { \rm Sub } ( Q_{R_{n}}(x_{n}) ).
	\end{align*}
We will show that the right-hand side is zero using what was proved in Step 2.

Let $\epsilon_{n} = R_{n}^{-1}$ for each $n$.  Associated to the sequences $(\epsilon_{n})_{n \in \mathbb{N}}$ and $(x_{n})_{n \in \mathbb{N}}$, define the sequence $( \mathcal{E}_{n} )_{n \in \mathbb{N}}$ as in Step 2.  As was shown in that step, $\mathcal{E}_{n}(\cdot;U)$ $\Gamma$-converges to $\bar{\mathcal{E}}(\cdot;U)$ as $n \to \infty$ for any fixed $U$.  

Given $\xi \in \mathbb{R}^{d}$, let $u_{n}$ be the solution of the PDE
	\begin{align*}
		- \nabla \cdot a(\epsilon_{n}^{-1} y + x_{n}) \nabla u_{n} = 0  \quad \text{in} \, \, Q_{1}, \quad u_{n}(y) = \xi \cdot y \quad \text{for each} \, \, y \in \partial Q_{1}.
	\end{align*}
By construction,
	\begin{align*}
		\frac{1}{R_{n}^{2}} \fint_{Q_{R_{n}}(x_{n})} | \phi_{Q_{R_{n}}(x_{n}),\xi} - \fint_{Q_{R_{n}}(x_{n})} \phi_{Q_{R_{n}}(x_{n}),\xi} |^{2} = \fint_{ Q_{1} } | u_{n} - \xi \cdot y - \fint_{ Q_{1} } u_{n} |^{2}.
	\end{align*}
At the same time, as a consequence of $\Gamma$-convergence, the sequence $(u_{n})_{n \in \mathbb{N}}$ converges in $L^{2}( Q_{1} )$ to the solution $\bar{u}$ of 
	\begin{align*}
		- \nabla \cdot \bar{a} \nabla \bar{u} = 0 \quad \text{in} \, \, Q_{1}, \quad \bar{u}(y) = \xi \cdot y \quad \text{for each} \, \, y \in \partial Q_{1}.
	\end{align*}
Since $\bar{a}$ is constant, the solution is precisely $\bar{u}(y) = \xi \cdot y$ for all $y$, hence
	\begin{align*}
		&\lim_{n \to \infty} \frac{1}{R_{n}^{2}} \fint_{Q_{R_{n}}(x_{n})} | \phi_{Q_{R_{n}}(x_{n}),\xi} - \fint_{Q_{R_{n}}(x_{n})} \phi_{Q_{R_{n}}(x_{n}),\xi} |^{2} \\
		&\qquad= \fint_{ Q_{1} } | \bar{u} - \xi \cdot y - \fint_{ Q_{1} } \bar{u} |^{2} = 0.
	\end{align*}
	
Similarly, at the level of $\sigma_{Q_{R_{n}}(x_{n})}$, it is well-known that homogenization implies the weak convergence of fluxes:
	\begin{align*}
		a ( \epsilon_{n}^{-1} y + x_{n} ) \nabla u_{n} \rightharpoonup \bar{a} \xi \quad \text{in} \, \, L^{2}(Q_{1}; \mathbb{R}^{d} ) \quad \text{as} \, \, n \to \infty.
	\end{align*}
Since weak convergence in $L^{2}(Q_{1}; \mathbb{R}^{d})$ implies strong convergence with respect to the norm $\|\cdot\|_{H^{1}(Q_{1})'}$ defined in the statement of Lemma \ref{L: differential forms}, this combines with the estimate in said lemma and \eqref{E: uniform convergence of a quantities} to yield
	\begin{align*}
		\lim_{n \to \infty} \frac{1}{R_{n}^{2}} \fint_{ Q_{R_{n}}(x) } | \sigma_{Q_{R_{n}}(x_{n}),\xi} |^{2} \, dy = 0.
	\end{align*}
Since $\xi$ was arbitrary, by linearity, we obtain ${ \rm Sub } ( Q_{R_{n}}(x_{n}) ) \to 0$ as desired.
\end{proof}

\begin{remark} \label{R: sub remark} In the previous proposition, we proved that ${\rm Sub }(Q)$, which measures the sublinearity of the finite-volume corrector $\phi_{Q}$, converges uniformly to zero as $|Q| \to \infty$.  It is not clear that the same could be said if finite-volume correctors were replaced by the infinite-volume corrector.  This is the reason for using correctors $(\phi_{Q},\sigma_{Q})$ and matrices $\bar{a}(Q)$ that depend on $Q$ in this section.

If it were known that, in the uniformly almost periodic setting, the quantity ${ \rm Sub }_{x}(R) $ associated with the infinite volume corrector (see \eqref{eqn:defn-sub} above) vanished uniformly with respect to $x$ as $R \to \infty$, then this fact could be combined with the approach of \cite{gloria_neukamm_otto} to prove (uniform-in-space) large-scale Lipschitz estimates for solutions of the corresponding elliptic equation.  To the best of the authors' knowledge, whether or not such estimates hold for arbitrary uniformly almost periodic coefficients remains an open question.  (This is highlighted as an open problem in \cite{armstrong_shen}, where Lipschitz estimates are proved under slightly stronger assumptions on $a$.)  Conversely, if true, such estimates could be used in conjunction with the techniques of \cite{armstrong_kuusi_book} to prove the uniform convergence of ${ \rm Sub }_{x} (R)$.    \end{remark}

\subsection{Proof of Corollary \ref{cor:periodic-homogenization}} \label{S: proof of periodic corollary} By combining the observations of the previous two subsections with the results of Section \ref{S: homogenization 1}, we readily obtain the proof of Corollary \ref{cor:periodic-homogenization}, that is, the $\Gamma$-convergence of $\mathscr{F}_{\epsilon,\delta}$ to $\bar{\mathscr{E}}$ for any scale $\epsilon \mapsto \delta(\epsilon)$ satisfying $\delta(\epsilon) \ll \epsilon$.

	\begin{proof}[Proof of Corollary \ref{cor:periodic-homogenization}] Assume that both $a$ and $\theta$ are either $\mathbb{Z}^{d}$-periodic or functions in $B^{\infty}(\mathbb{R}^{d})$, and let $\epsilon \mapsto \delta(\epsilon)$ be any scale such that $\epsilon^{-1} \delta(\epsilon) \to 0$ as $\epsilon \to 0$.  As in the stochastic setting, we need to show that, for any $x \in \mathbb{R}^{d}$, any $\varrho > 0$, and any $e \in S^{d-1}$,
		\begin{align*}
			\lim_{\epsilon \to 0} \min \left\{ \mathscr{F}_{\epsilon,\delta(\epsilon)}(u; Q_{\varrho}(x)) \, \mid \, u(y) = q(\epsilon^{-1}(y - x) \cdot e) \, \, \text{for} \, \, y \in \partial Q_{\varrho}(x) \right\} = \bar{\sigma}(e).
		\end{align*}
	As before, we restrict attention to the case $e = e_{1}$. Otherwise, one can check that composition with a rotation preserves $B^{\infty}(\mathbb{R}^{d})$, while such a rotation would transform a $\mathbb{Z}^{d}$-periodic function into a function periodic with respect to a rotated lattice. (It is a coincidence that $ e_1 $ is a lattice direction. Recall from Section \ref{S: periodic} that the correctors are defined for arbitrary directions $\xi$.) In either case, the relevant facts from Sections \ref{S: periodic} and \ref{S: uniformly almost periodic} are preserved.
	
	The proof of the upper bound (Theorem \ref{T: planar homogenization theorem upper bound}) carries over to the present setting, as is discussed briefly in Remark \ref{R: upper bound periodic almost periodic} in Appendix \ref{A: upper bound} below.  In particular, this means that 
		\begin{align*}
			\limsup_{\epsilon \to 0} \min \left\{ \mathscr{F}_{\epsilon,\delta(\epsilon)}(u; Q_{\varrho}(x)) \, \mid \, u(y) = q(\epsilon^{-1}(y - x) \cdot e) \, \, \text{for} \, \, y \in \partial Q_{\varrho}(x) \right\} \leq \bar{\sigma}(e).
		\end{align*}
		
	It only remains to check the lower bound.  Toward this end, as in Section \ref{S: homogenization 1}, we rescale space by a factor $R \coloneqq \epsilon^{-1}$ and introduce the parameter $\gamma(R) \coloneqq \epsilon^{-1} \delta(\epsilon)$, so that now the goal is to prove
		\begin{align*}
			&\liminf_{R \to \infty} \min \left\{ \mathscr{F}_{\gamma(R)}(u; Q_{\varrho R}(Rx) ) \, \mid \, u(y) = q((y - Rx) \cdot e_{1}) \, \, \text{for} \, \, y \in \partial Q_{\varrho R}(Rx) \right\} \\
			& \qquad \qquad \qquad \qquad \qquad \qquad \qquad \qquad \qquad \qquad \qquad \qquad \qquad \qquad \qquad \qquad \geq \bar{\sigma}(e_{1}).
		\end{align*}
		
	As in the proof of Theorem \ref{T: planar homogenization theorem lower bound}, there is no loss of generality assuming that $\varrho R = (2K+1) r_{c}$ for some $K \in \mathbb{N}$.  Fix a minimizer $u$ of the variational problem above.  According to Theorem \ref{T: elliptic term cube decomposition}, there is an $R$-independent modulus of continuity $\omega$ and a function $\tilde{u} \in H^{1}(Q_{\varrho R}(Rx))$ such that
		\begin{align*}
			&\frac{ \overline{\mathscr{F}}(\tilde{u}; Q_{\varrho R}(Rx)) }{ \mathscr{F}_{\gamma(R)}(u; Q_{\varrho R}(Rx) ) } -1 \\
			&\qquad \qquad \leq \sup_{ x \in \R^d } \omega \Big(  {\rm Sub} ( Q_{ r_{c} \gamma(R)^{-1} } ( x ) ) + {\rm Osc} ( Q_{ r_{c} \gamma(R)^{-1} } ( x ) ) + | \bar{a}(Q_{r_{c} \gamma(R)^{-1} } ( x ) - \bar{a} | \Big) .
		\end{align*}
	In view of the discussion of Sections \ref{S: periodic} and \ref{S: uniformly almost periodic} above, our construction of the fields $(\phi_{Q},\sigma_{Q})$ and the matrices $\bar{a}(Q)$ for cubes $Q$ yields that
		\begin{align*}
			\lim_{R \to \infty} \sup_{ x \in \R^d } \omega \Big(  {\rm Sub} ( Q_{ r_{c} \gamma(R)^{-1} } ( x ) ) + {\rm Osc} ( Q_{ r_{c} \gamma(R)^{-1} } ( x ) ) + | \bar{a}(Q_{r_{c} \gamma(R)^{-1} } ( x ) - \bar{a} | \Big) = 0,
		\end{align*}
	hence, upon invoking $\Gamma$-convergence results for the spatially homogeneous functional $\overline{\mathscr{F}}$ (see \cite[Theorem 3.7]{ansini_braides_chiado-piat}), we find
		\begin{align*}
			&\liminf_{R \to \infty} \min \left\{ \mathscr{F}_{\gamma(R)}(u; Q_{\varrho R}(Rx) ) \, \mid \, u(y) = q((y - Rx) \cdot e_{1}) \, \, \text{for} \, \, y \in \partial Q_{\varrho R}(Rx) \right\} \\
				&\qquad \geq \lim_{R \to \infty} \min \left\{ \overline{\mathscr{F}}(u; Q_{\varrho R}(Rx) ) \, \mid \, u(y) = q((y - Rx) \cdot e_{1}) \, \, \text{for} \, \, y \in \partial Q_{\varrho R}(Rx) \right\} \\
				&\qquad = \bar{\sigma}(e_{1}).
		\end{align*}
	\end{proof}
	
\subsection{Proof of Lemma \ref{L: differential forms}} \label{S: proof of lemma} Finally, we address the construction of $\sigma_{Q}$ from Section \ref{S: uniformly almost periodic}.  We treat the equation $\nabla \cdot \sigma = b$ as an equation for differential forms and then quantify the dependence of $\sigma$ on $b$ following \cite{dario}.
	
	\begin{proof}[Proof of Lemma \ref{L: differential forms}] By scaling, it suffices to prove the result when $|Q| = 1$, which we assume henceforth for simplicity.
	
	After suitably reformulating the problem, it becomes a question about differential forms.  To reformulate,  we pass from vector fields and skew-symmetric matrix fields (interpreted as $1$- and $2$-forms, respectively) to $(d - 1)$- and $(d - 2)$-forms using the Hodge star operator $\star$.  The definition of the Hodge star operator in full generality can be found in \cite{dario}, but does not concern us here.  For our purposes, it suffices to know that if $b$ is a vector field in $Q$, and if $B = \star b$ is the corresponding $(d - 1)$-form, then
			\begin{align*}
				B = \sum_{j = 1}^{d} (-1)^{j + 1} b_{j} dx_{1} \wedge \cdots \wedge dx_{j - 1} \wedge dx_{j + 1} \wedge \cdots \wedge dx_{d},
			\end{align*}
		and, thus,
			\begin{align*}
				dB = 0 \quad \text{if and only if} \quad \nabla \cdot b = 0.
			\end{align*}
		Similarly, if $\sigma$ is a skew-symmetric matrix field, and if $\Sigma = \star \sigma$ is the corresponding $(d - 2)$-form, then
			\begin{align*}
				\Sigma = \sum_{i < j} (-1)^{i + j + 1} \sigma_{ij} dx_{1} \wedge \cdots \wedge dx_{i - 1} \wedge dx_{i + 1} \wedge \cdots \wedge dx_{j - 1} \wedge dx_{j + 1} \wedge \cdots \wedge dx_{d},
			\end{align*}
		from which a straightforward computation shows that 
			\begin{align*}
				d \Sigma = - B \quad \text{if and only if} \quad \nabla \cdot \sigma = b.
			\end{align*}
		Finally, we will use the fact that, provided one treats the exterior power $\wedge^{k} \mathbb{R}^{d}$ for a given $k$ as an inner product space with orthonormal basis $\{ dx_{i_{1}} \wedge \cdots \wedge dx_{i_{k}} \, \mid \, 1 \leq i_{1} < \cdots < i_{k} \leq d\}$, then the Hodge star operator is an isometry between $\wedge^{k} \mathbb{R}^{d}$ and $\wedge^{d- k} \mathbb{R}^{d}$.  In particular, a skew-symmetric matrix field $\sigma$ is perpendicular to $\mathcal{C}$ as in the second half of \eqref{E: sigma eqn differential form proof} if and only if the corresponding $(d - 2)$-form $\Sigma = \star \sigma$ is perpendicular to the space of weakly closed, square-integrable $(d - 2)$-forms, that is,
			\begin{gather*}
				 \langle \Sigma, \Sigma' \rangle_{L^{2} ( Q )} = 0 \quad \text{for each} \, \, \Sigma' \in \mathcal{C}_{\star}, \\
                 \text{where} \quad \mathcal{C}_{\star} = \{ \Sigma' \in L^{2} ( Q ; \wedge^{d - 2} \mathbb{R}^{d} ) \, \mid \, d \Sigma' = 0 \}.
			\end{gather*}

		In view of these correspondences, we seek to prove that for a given $(d-1)$-form $B \in L^{2} ( Q ; \wedge^{d - 1} \mathbb{R}^{d} )$ satisfying $dB = 0$, there is a unique $\Sigma \in L^{2} ( Q ; \wedge^{d - 2} \mathbb{R}^{d} )$ such that 
			\begin{align} \label{E: differential forms equation}
				d\Sigma = B \quad \text{in} \, \, Q \quad \text{and} \quad \langle \Sigma, \Sigma' \rangle_{L^{2}(Q)} = 0 \quad \text{for each} \, \, \Sigma' \in \mathcal{C}_{\star}
			\end{align}
		and for which the following estimate holds
			\begin{align} \label{E: differential forms estimate}
				\| \Sigma \|_{L^{2}(Q)} \leq C(d) \| B \|_{H^{-1}(Q)},
			\end{align}
        where $\|\cdot\|_{H^{1}(Q)'}$ is the norm 
            \begin{align*}
                \|V\|_{H^{1}(Q)'} &= \sup \left\{ \langle V, V' \rangle_{L^{2}(Q)} \, \mid \, V' \in H^{1}(Q;\Lambda^{d-1}\mathbb{R}^{d}) \quad \text{such that} \right. \\  
                   &\qquad\left. \| \nabla V' \|^{2}_{L^{2}(Q)} + \| V' \|^{2}_{L^{2}(Q)} \leq 1 \right\}.
            \end{align*}
		
		Concerning existence, since $Q$ is convex, it is well-known that if $B$ is smooth and satisfies $dB = 0$, then there is a smooth $\Sigma$ such that $d \Sigma = B$ in $Q$.  If $B$ is merely square integrable with $dB = 0$, then \cite[Proposition 2.3]{dario} implies there is a $\Sigma \in H^{1}( Q ; \wedge^{d-2} \mathbb{R}^{d} )$ such that $d \Sigma = B$.  Further, we can invoke a version of the Poincar\'{e} inequality, namely,  \cite[Proposition 3.6]{dario} to find that 
			\begin{align*}
				\inf \left\{ \| \Sigma - \Sigma' \|_{L^{2}(Q)} \, \mid \, \Sigma' \in \mathcal{C}_{\star} \right\} \leq C(d) \| B \|_{ H^{-1} ( Q ) }.
			\end{align*}
	Notice that if $\Sigma' \in \mathcal{C}_{\star}$, then $d ( \Sigma - \Sigma' ) = B$.  Therefore, the orthogonal projection $\Sigma^{\perp}$ of $\Sigma$ perpendicular to $\mathcal{C}_{\star}$ satisfies $d \Sigma^{\perp} = B$ and $\|\Sigma^{\perp}\|_{L^{2}(Q)} \leq C(d) \|B\|_{ H^{-1}(Q) }$.  In particular, replacing $\Sigma$ by $\Sigma^{\perp}$, we obtain a solution of \eqref{E: differential forms equation} and \eqref{E: differential forms estimate} holds.  
		
		Finally, if $\Sigma_{1}$ and $\Sigma_{2}$ are two solutions of \eqref{E: differential forms equation} with the same right-hand side $B$, then $d ( \Sigma_{2} - \Sigma_{1} ) = 0$ so $\langle \Sigma_{1} - \Sigma_{2}, \Sigma_{1} - \Sigma_{2} \rangle_{L^{2}(Q)} = 0$.  Thus, the solution is unique.
		\end{proof}

\appendix

\section{Homogenization Upper Bound} \label{A: upper bound}

In this appendix, we prove the homogenization upper bound, Theorem \ref{T: planar homogenization theorem upper bound}, which shows that the homogenized energy is always an upper bound in the regime where $\delta(\epsilon) \ll \epsilon$.  It is important to note that in this appendix, while we continue to assume that $(a,\theta)$ satisfy the assumptions of Section \ref{S: assumptions}, we only assume that $W : \mathbb{R} \to [0,\infty)$ is continuous and 
    \begin{align*}
        \{ u \in \mathbb{R} \, \mid \, W(u) = 0 \} = \{-1,1\}.
    \end{align*}

The proof will be presented in two steps.  First, we consider the special case when $x_{0} = 0$, in which case we can readily apply an ergodic theorem to obtain the desired result.  Next, when $x_{0} \neq 0$, we once again appeal to the ergodic theorem to argue that the behavior near $x_{0}$ looks sufficiently similar to that at the origin.

In the first step, we will use the following variant of the standard ergodic theorem.  As in \cite{morfe}, we are motivated to consider averages over a $(d - 1)$-dimensional group due to the fact that the energy scales like $R^{d-1}$.  

	\begin{prop} \label{P: ergodic theorem continuous egorov} Assume that $(\Omega,\mathcal{F},\mathbb{P})$ is a probability space supporting a measurable group action $\{\tau_{x}\}_{x \in \mathbb{R}^{d}}$ and satisfying the assumptions of Section \ref{S: assumptions}.  Let $(X_{\gamma})_{\gamma > 0}$ be a process (one-parameter family of $\mathcal{F}$-measurable random variables) such that the function $\gamma \mapsto X_{\gamma}$ is continuous on an event of probability one and for which there is a constant $C_{X} > 0$ such that, with probability one,
	\begin{equation*}
		|X_{\gamma}| \leq C_{X} \quad \text{for each} \, \, \gamma > 0.
	\end{equation*}  
If there is a constant $\bar{X} \in \mathbb{R}$ such that 
	\begin{equation*}
		\lim_{\gamma \to 0} X_{\gamma} = \bar{X} \quad \text{almost surely,}
	\end{equation*}
then, for any $L \in \mathbb{N}$,
	\begin{equation*}
		\lim_{(R^{-1},\gamma) \to (0,0)} \left( \frac{L}{R} \right)^{d-1}\sum_{k \in (\{0\} \times L\mathbb{Z}^{d-1}) \cap Q_{R}} X_{\gamma} \circ \tau_{k} = \bar{X} \quad \text{almost surely.}
	\end{equation*}
\end{prop}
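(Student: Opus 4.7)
My plan is to reduce this joint ergodic statement to a single application of the multidimensional Birkhoff ergodic theorem by absorbing the $\gamma$-dependence into a monotone majorant, thereby bypassing any need for a quantitative rate of ergodic averaging. To this end I would introduce
\[
Y_\gamma(\omega) \coloneqq \sup_{\gamma' \in (0,\gamma] \cap \mathbb{Q}} |X_{\gamma'}(\omega) - \bar X|,
\]
which is measurable as a countable supremum and, on the probability-one event where $\gamma \mapsto X_\gamma$ is continuous, agrees with the supremum over all of $(0,\gamma]$. The family $\{Y_\gamma\}_{\gamma>0}$ is non-decreasing in $\gamma$, uniformly bounded by $C_X + |\bar X|$, and satisfies $Y_\gamma \downarrow 0$ almost surely as $\gamma \downarrow 0$ by the hypothesis $X_\gamma \to \bar X$.

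Next I would apply the multidimensional Birkhoff ergodic theorem (over growing cubes) to the $(d-1)$-dimensional sublattice action generated by $\tau_{Le_2},\dots,\tau_{Le_d}$: for each fixed $\gamma$ in a countable dense subset $D \subset (0,\infty)$,
\[
\lim_{R \to \infty} \left(\tfrac{L}{R}\right)^{d-1} \sum_{k \in (\{0\} \times L\mathbb{Z}^{d-1}) \cap Q_R} Y_\gamma(\tau_k \omega) = \mathbb{E}[Y_\gamma \mid \mathcal{I}_L](\omega) \quad \text{a.s.,}
\]
where $\mathcal{I}_L$ denotes the invariant $\sigma$-algebra of the sublattice action. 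Intersecting the countably many full-measure events produced by this identity (one per $\gamma \in D$) with the continuity event and with the event on which conditional monotone convergence gives $\mathbb{E}[Y_\gamma \mid \mathcal{I}_L] \downarrow 0$ along $D$, yields a single full-measure event $\Omega_0$ on which all of the preceding statements hold simultaneously.

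On $\Omega_0$, given $\eta > 0$, I would first pick $\gamma_* \in D$ so that $\mathbb{E}[Y_{\gamma_*} \mid \mathcal{I}_L](\omega) < \eta/3$, then pick $R_0$ large enough that for $R \geq R_0$ the ergodic average of $Y_{\gamma_*}$ along $\omega$ is within $\eta/3$ of that conditional expectation and, simultaneously, $(L/R)^{d-1}$ times the number of summands is within $\eta/(3(C_X+|\bar X|))$ of $1$. For any $\gamma \in (0, \gamma_*]$ the pointwise bound $|X_\gamma - \bar X| \leq Y_{\gamma_*}$ (valid at every $\tau_k \omega$ thanks to the continuity event) then gives
\[
\left|\left(\tfrac{L}{R}\right)^{d-1} \sum_k X_\gamma(\tau_k\omega) - \bar X\right| \leq \eta,
\]
which is the desired joint convergence. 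The main technical point is producing a genuine joint limit rather than an iterated one: the Egorov-style device of working with the single monotone, bounded family $Y_\gamma$ replaces a uniform Egorov extraction (which would only survive averages against $\omega$, not against the orbit $\{\tau_k \omega\}$) and makes countably many invocations of Birkhoff suffice, which is what sidesteps the usual difficulty of upgrading convergence in probability to almost-sure convergence in the joint limit.
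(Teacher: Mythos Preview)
Your argument is correct and shares its opening move with the paper's: both introduce the monotone majorant $Y_\gamma=\sup_{\gamma'\le\gamma}|X_{\gamma'}-\bar X|$ (the paper calls it $M^*_\gamma$). After that the two proofs diverge. The paper invokes Egorov's theorem to produce, for each $\delta>0$, a set $E_\delta$ with $\mathbb{P}(E_\delta)\ge 1-\delta$ on which $M^*_\gamma\le\omega_\delta(\gamma)$ uniformly; it then splits the lattice sum according to $\{\tau_k\omega\in E_\delta\}$ versus its complement, applies Birkhoff only to the indicator $\mathbf 1_{\Omega\setminus E_\delta}$, and finishes by sending $\delta\to 0$ (passing from an in-probability bound on $\mathbb{E}[\mathbf 1_{\Omega\setminus E_\delta}\mid\mathscr I_L]$ to almost-sure vanishing of the $\delta$-independent $\limsup$). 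Your route avoids Egorov and the good-set/bad-set decomposition entirely: you apply Birkhoff directly to $Y_{\gamma}$ for countably many $\gamma$ and then use conditional monotone convergence $\mathbb{E}[Y_{\gamma}\mid\mathcal I_L]\downarrow 0$ to select $\gamma_*$. This is slightly more streamlined, since the almost-sure conclusion falls out immediately rather than via the $\delta\to 0$ endgame. One small point worth making explicit: for the pointwise bound $|X_\gamma-\bar X|\le Y_{\gamma_*}$ to hold at \emph{every} shifted point $\tau_k\omega$, your full-measure event $\Omega_0$ should include $\bigcap_{k\in\{0\}\times L\mathbb Z^{d-1}}\tau_k^{-1}(\text{continuity event})$, not just the continuity event itself; this is still a countable intersection of full-measure sets, so nothing changes. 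The paper's footnote makes the same remark.
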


In the above proposition, we  impose a continuity assumption in the variable $\gamma$ only because it is real-valued.  This allows us to sidestep measurability issues.  Such an assumption would not be necessary if we worked with sequences $(\delta_{j})_{j \in \mathbb{N}}$ rather than functions $\epsilon \mapsto \delta(\epsilon)$.

The proof of Proposition \ref{P: ergodic theorem continuous egorov} is deferred to the end of this appendix.  For now, we use the proposition to prove Theorem \ref{T: planar homogenization theorem upper bound} in the special case when $x_{0} = 0$.

	\begin{proof}[Proof of Theorem \ref{T: planar homogenization theorem upper bound} (case: $x_{0} = 0$)] Fix an arbitrary (deterministic) scaling function $\epsilon \mapsto \delta(\epsilon)$ such that $\epsilon^{-1} \delta(\epsilon) \to 0$ as $\epsilon \to 0$.  As in Section \ref{S: homogenization proof} and \eqref{eqn:defn-min}, we will use the following shorthand notation:
        \begin{equation*}
            m( \mathscr{F} , U , q ) = \inf \left\{ \mathscr{F}(u;U) \, \mid \, u \in H^{1}(U), \, \, u(y) = q(y\cdot e_{1}) \, \, \text{for each} \, \, y \in \partial U \right\}.
        \end{equation*}
        Recall that we aim to prove that if $q$ satisfies the assumptions \eqref{E: finite width} and \eqref{E: finite width 2} of Section \ref{section: homogenization introduction}, then
		\begin{equation*}
			\limsup_{ (R^{-1},\gamma) \to (0,0) } R^{1-d} m(\mathscr{F}_{\gamma}, Q_{\varrho R}, q) \leq \bar{\sigma}(e_{1}) \varrho^{d-1} \quad \text{almost surely,}
		\end{equation*}
	where $\bar{\sigma}(e_{1})$ is the positive number with $\bar{\sigma}(e_{1})^{2} = \sigma_{W}^{2} \bar{\theta} e_{1} \cdot \bar{a} e_{1}$ and $\sigma_{W}$ is given by \eqref{E: homogeneous surface tension}.
	
	Up to replacing the medium $(a,\theta)$ by the rescaled one $(a(\varrho \cdot), \theta(\varrho \cdot))$, we can and will assume without loss of generality that $\varrho = 1$.  
	
	In the proof that follows, we will work with the cylinder $\mathcal{C}(R,h) \subseteq \mathbb{R}^{d}$ defined for $R, h > 0$ by 
	\begin{align*}
		\mathcal{C}(R,h) = \left\{ y \in \mathbb{R}^{d} \, \mid \, |y \cdot e_{1}| < \frac{h}{2}, \, \, |y \cdot e_{2}|, \dots, |y \cdot e_{d}| < \frac{R}{2} \right\}.
	\end{align*}

We first observe that, as in \cite[Proposition 5]{morfe}, there is a modulus of continuity $\omega : [0,\infty) \to [0,\infty)$ depending only on $q$, $\Lambda$, and $\theta^{*}$ such that 
	\begin{align} \label{E: almost monotone}
		&m(\mathscr{F}_{\gamma}, Q_{R}, q) \leq m(\mathscr{F}_{\gamma}, \mathcal{C}(R,h), q) + R^{1-d} \omega(h^{-1}).
	\end{align}
In what follows, we take advantage of the fact that $m(\mathscr{F}_{\gamma},\mathcal{C}(R,h),q)$ has a natural subadditive structure in the transversal variables $(y \cdot e_{2},\dots,y \cdot e_{d})$. 

Specifically, we fix a discretization parameter $L \in \mathbb{N}$ and discretize the cylinder $\mathcal{C}(R,h)$ using the subcylinders $\{k + \mathcal{C}(L,h) \, \mid \, k \in (\{0\} \times L\mathbb{Z}^{d-1}) \cap Q_{R}\}$.  It is convenient to define the discretization $\mathcal{C}_{L}(R,h)$ by 
	\begin{align*}
		\mathcal{C}_{L}(R,h) = \bigcup_{k \in (\{0\} \times L \mathbb{Z}^{d-1}) \cap Q_{R}} (k + \mathcal{C}(L,h)).
	\end{align*}
By subadditivity (see \cite[Proposition 2]{morfe}), we can write
	\begin{align*}
		m(\mathscr{F}_{\gamma},\mathcal{C}(R,h), q) &\leq m(\mathscr{F}_{\gamma}, \mathcal{C}(R,h) \setminus \mathcal{C}_{L}(R,h), q) \\
			&\qquad + \sum_{k \in (\{0\} \times L\mathbb{Z}^{d-1}) \cap Q_{R}} m(\mathscr{F}_{\gamma}, k + \mathcal{C}(L,h),q).
	\end{align*}
Observe that 
	\begin{align*}
		m(\mathscr{F}_{\gamma}, \mathcal{C}(R,h) \setminus \mathcal{C}_{L}(R,h), q) &\leq \mathscr{F}_{\gamma}(q(y \cdot e_{1}); \mathcal{C}(R,h) \setminus \mathcal{C}_{L}(R,h)) \\
			&\leq \int_{\mathcal{C}(R,h) \setminus \mathcal{C}_{L}(R,h)} \left( \frac{\Lambda}{2} q'(y \cdot e_{1})^{2} + \theta^{*} W(q(y \cdot e_{1})) \right) \, dy.
	\end{align*}
From this, we deduce that, with probability one, for any $L \in \mathbb{N}$,
	\begin{align*}
		\lim_{R \to \infty} \sup_{ \gamma } \frac{m(\mathscr{F}_{\gamma}, \mathcal{C}(R,h) \setminus \mathcal{C}_{L}(R,h),q)}{R^{d-1}} = 0.
	\end{align*}
It only remains to analyze the terms in the sum.  This is where we apply Proposition \ref{P: ergodic theorem continuous egorov}.

Define the process $(M_{\gamma})_{\gamma > 0}$ by setting
	\begin{equation*}
		M_{\gamma}= m(\mathscr{F}_{\gamma}, \mathcal{C}(L,h), q)
	\end{equation*}
Recall that the pointwise bounds $\lambda \text{Id} \leq a \leq \Lambda \text{Id}$ and $\theta_{*} \leq \theta \leq \theta^{*}$ hold Lebesgue almost everywhere in $\mathbb{R}^{d}$ with probability one.  Thus, it is straightforward to check that, by the direct method of the calculus of variations, the function $\gamma \mapsto M_{\gamma}$ is continuous in $(0,\infty)$ almost surely.

At the same time, recall that, with probability one, the gradient functional
	\begin{equation*}
		\int_{\mathcal{C}(L,h)}  a(\gamma^{-1} x) \nabla u \cdot \nabla u \, dx
	\end{equation*}
$\Gamma$-converges with respect to the weak topology of $H^{1}(\mathcal{C}(L,h))$) as $\gamma \to 0$ to the homogenized functional 
	\begin{equation*}
		\int_{\mathcal{C}(L,h)} \bar{a} \nabla u \cdot \nabla u \, dx .
	\end{equation*}
By \cite[Theorem 21.1]{DalMasoIntroGC}, this statement remains true if $H^{1}(\mathcal{C}(L,h))$ is replaced by $q + H^{1}_{0}(\mathcal{C}(L,h))$, i.e., the set of functions $u$ with $u(y) = q(y\cdot e_{1})$ on $\partial \mathcal{C}(L,h)$.  Further, the potential well functional
	\begin{equation*}
		\int_{\mathcal{C}(L,h)} \theta(\gamma^{-1} x) \tilde{W}(u) \, dx, \quad \text{with} \, \, \tilde{W}(u) = \left\{ \begin{array}{r l}
        W(u), & \text{if} \, \, |u| \leq 1, \\
            |u|^{2} - 1, & \text{if} \, \, |u| > 1 ,
        \end{array} \right.
	\end{equation*}
is uniformly continuous on bounded subsets of $H^{1}(\mathcal{C}(L,h))$ (with a deterministic modulus of continuity, by the boundedness of $\theta$) and, thus, by the ergodic theorem, with probability one, it converges uniformly on bounded subsets of $H^{1}(\mathcal{C}(L,h))$ as $\gamma \to 0$ to the averaged functional
	\begin{equation*}
		\bar{\theta} \int_{\mathcal{C}(L,h)} \tilde{W}(u) \, dx.
	\end{equation*}
Since by Lemma \ref{L: minus one one} and its proof, the value of $M_{\gamma}$ is unchanged if we replace $W$ by $\tilde{W}$ in the definition of $\mathscr{F}_{\gamma}$, we can invoke the standard perturbation result in the theory of $\Gamma$-convergence (see \cite[Proposition 6.20]{DalMasoIntroGC}) to find
	\begin{equation*}
		\lim_{\gamma \to 0} M_{\gamma} = \bar{M} \quad \text{almost surely,}
	\end{equation*}
where 
	\begin{align*}
		\bar{M} &= \min \left\{ \overline{\mathscr{F}}(u, \mathcal{C}(L,h)) \, \mid \, u(y) = q(y \cdot e_{1}) \, \, \text{if} \, \, y \in \partial \mathcal{C}(1,h) \right\} = m( \overline{\mathscr{F}}, \mathcal{C}(L,h), q).
	\end{align*}
Thus, by Proposition \ref{P: ergodic theorem continuous egorov}, with probability one,
	\begin{align*}
		\lim_{(R^{-1},\gamma) \to (0,0)} \frac{1}{R^{d-1}}\sum_{k \in (\{0\} \times L\mathbb{Z}^{d-1}) \cap Q_{R}} m(\mathscr{F}_{\gamma}, \mathcal{C}(L,h), q) = L^{1-d} m(\overline{\mathscr{F}},\mathcal{C}(L,h),q) .
	\end{align*}
	
Combining the last limit with \eqref{E: almost monotone}, we conclude that, with probability one, 
	\begin{align*}
		\limsup_{ ( R^{-1}, \gamma ) \to ( 0, 0 ) } R^{1-d} m(\mathscr{F}_{\gamma}, Q_{R},q) \leq L^{1-d} m(\overline{\mathscr{F}}, \mathcal{C}(L,h), q) + \omega(h^{-1}).
	\end{align*}
Setting $h = L$ and sending $L \to \infty$, well-known results on the spatially homogeneous Allen-Cahn functional (see \cite[Theorem 3.7]{ansini_braides_chiado-piat}) imply that
	\begin{align*}
		\limsup_{ (R^{-1},\gamma) \to (0,0) } R^{1-d} m(\mathscr{F}_{\gamma},Q_{R},q) \leq \lim_{L \to \infty} L^{1-d} m(\overline{\mathscr{F}},\mathcal{C}(L,L),q) = \bar{\sigma}(e_{1}) \quad \text{almost surely.}
	\end{align*}
\end{proof}

Next, arguing as in \cite{morfe}, we extend to the case where the center $x_{0}$ is nonzero.  

In this step of the proof, we use the following continuous variant of Egorov's Theorem: If $(X_{y})_{y \in Y}$ is a random process indexed by some open set $Y \subseteq \mathbb{R}^{2}$ such that $y \mapsto X_{y}$ is continuous and $X_{y} \to 0$ as $|y| \to 0$ with probability one, then, for any $\delta > 0$, there is an event $E_{\delta}$ and a modulus of continuity $\omega_{\delta} : [0,\infty) \to [0,\infty)$ such that $\mathbb{P}(E_{\delta}) > 1 - \delta$ and
	\begin{align*}
		\sup_{E_{\delta}} |X_{y}| \leq \omega_{\delta}(|y|).
	\end{align*}
This follows from an application of the standard version of Egorov's Theorem to the discrete sequence $(X^{*}_{K})_{K \in \mathbb{N}}$ given by $X^{*}_{K} = \sup_{|y| \leq K^{-1}} |X_{y}|$.  (The continuity assumption is only imposed to ensure the $X^{*}_{K}$ is measurable for each $K$.)

\begin{proof}[Proof of Theorem \ref{T: planar homogenization theorem upper bound} (case: $x_{0} \neq 0$)] As in the $x_{0} = 0$ case, we assume without loss of generality that $\varrho = 1$.  

We begin by invoking the ergodic theorem.  Notice that the two-parameter random process
	\begin{align*}
		(R,\gamma) \mapsto \min \left\{ \mathscr{F}_{\gamma}(u; Q_{R}) \, \mid \, u(y) = q(y \cdot e_{1}) \, \, \text{for each} \, \, y\in  \partial Q_{R} \right\}
	\end{align*}
is continuous, as follows once again from an application of the direct method of the calculus of variations.  Thus, by the $x_{0} = 0$ case of the theorem and the continuous variant of Egorov's Theorem mentioned just before the start of this proof, there is an event $E \in \mathcal{F}$ and a modulus of continuity $\eta : [0,\infty) \to [0,\infty)$ such that $\mathbb{P}(E) \geq \frac{1}{2}$ and
	\begin{align*}
		\sup_{E} \left( R^{1-d} \min \left\{ \mathscr{F}_{\gamma}(u; Q_{R}) \, \mid \, u(y) = q(y \cdot e_{1}) \, \, \text{on} \, \, \partial Q_{R} \right\} - \bar{\sigma}(e_{1}) \right) \leq \eta(\gamma + R^{-1})
	\end{align*}
for any $R, \gamma > 0$. Let $\alpha \in (0,1/2)$ be a small parameter.  By the ergodic theorem, with probability one,
	\begin{align*}
		\lim_{R \to \infty} \frac{|\{x \in Q_{2\alpha R}(Rx_{0}) \, \mid \, \tau_{x} \omega \in E \}|}{R^{d}} = (2\alpha)^{d} \mathbb{P}(E) > 0.
	\end{align*}
Thus, we can fix random points $(X_{R})_{R > 0}$ such that $X_{R} \in Q_{2\alpha R}(Rx_{0})$ for any $R > 0$ and, for large enough $R$, $\tau_{X_{R}} \omega \in E$. 

The remainder of the argument is deterministic.  By the definition of $E$, for any $R, \gamma > 0$, there is a function $u : Q_{(1 - \alpha) R}(X_{R}) \to [-1,1]$ such that 
	\begin{gather*}
		( 1 - \alpha )^{ 1 - d } R^{1-d} \mathscr{F}_{\gamma}(u; Q_{(1 - \alpha) R}(X_{R})) \leq \bar{\sigma}(e_{1}) + \eta(\gamma + R^{-1}), \\
		u(y)= q((y - X_{R}) \cdot e_{1}) \, \, \text{for each} \, \, y \in \partial Q_{(1 - \alpha) R}(X_{R}).
	\end{gather*}
(We suppress the dependence on $R$, $\gamma$, and $\alpha$ for notational ease.)  This is nearly what we want, except that $u$ has the wrong boundary condition: the boundary condition has been translated slightly (but no more than a distance $\alpha R$, which is macroscopically small).  As we explain next, it is well-known that the error induced by this translation becomes negligible in the limit $\alpha \to 0$.

Arguing as in \cite[Proposition 9]{morfe}, we can define cut-off functions $\psi_{R,\alpha}$ such that if $v : Q_{R}(Rx_{0}) \to [-1,1]$ is the function 
	\begin{equation*}
		v(y) = (1 - \psi_{R,\alpha}(y)) u(y) + \psi_{R,\alpha}(y) q((y - Rx_{0}) \cdot e_{1}),
	\end{equation*} 
then $v(y) = q((y - R x_{0}) \cdot e_{1})$ for each $y \in \partial Q_{R}(Rx_{0})$ and
	\begin{align*}
		\limsup_{\alpha \to 0} \limsup_{(R^{-1},\gamma) \to 0} R^{1-d} \mathscr{F}_{\gamma}(v; Q_{R}(Rx_{0})) &\leq \limsup_{\alpha \to 0} \limsup_{(R^{-1},\gamma) \to 0} R^{1-d} \mathscr{F}_{\gamma}(u; Q_{(1 - \alpha) R}(X_{R})) \\
		&\leq \bar{\sigma}(e_{1}).
	\end{align*}
For more details, the reader is again referred to \cite[Proposition 9]{morfe}.  Of course, $v$ has the correct boundary condition, and its energy is asymptoptically no larger than $\bar{\sigma}(e_{1})$ plus an error that vanishes as $\alpha \to 0$, so we are done. 
\end{proof}

To conclude this appendix, it only remains to prove Proposition \ref{P: ergodic theorem continuous egorov}.

	\begin{proof}[Proof of Proposition \ref{P: ergodic theorem continuous egorov}] First, for any $\gamma > 0$, define $M^{*}_{\gamma}$ by 
		\begin{equation*}
			M^{*}_{\gamma} = \sup \left\{ |X_{\mu} - \bar{X}| \, \mid \, \mu \leq \gamma \right\}.
		\end{equation*}
	This is measurable since $\gamma \mapsto X_{\gamma}$ is continuous on an event of probability one.\footnote{Technically, since we have not specified that $(\Omega,\mathcal{F},\mathbb{P})$ is complete, we should define $M^{*}_{\gamma}$ more precisely by setting $M_{\gamma}^{*} = \sup \left\{ |X_{\mu} - \bar{X}| \, \mid \, \mu \in \mathbb{Q} \cap (0, \gamma] \right\}$, let $E \in \mathcal{F}$ be an event of probability one on which $\gamma \mapsto X_{\gamma}$ is continuous, and then restrict attention in the rest of the proof to the almost-sure event $\cap_{k \in \mathbb{Z}^{d}} \tau_{k}^{-1}(E)$.}  By assumption, $M^{*}_{\gamma} \to 0$ almost surely as $\gamma \to 0$.  Thus, by Egorov's Theorem applied to the sequence $\{M_{n^{-1}}^{*}\}_{n \in \mathbb{N}}$, for any $\delta > 0$, there is an event $E_{\delta} \in \mathcal{F}$ and a modulus of continuity $\omega_{\delta} : [0,\infty) \to [0,\infty)$ such that $\mathbb{P}(E_{\delta}) \geq 1 - \delta$ and 
		\begin{equation*}
			\sup_{E_{\delta}} M^{*}_{\gamma} \leq \omega_{\delta}(\gamma).
		\end{equation*}
	
	Observe that we can write
		\begin{align*}
			\left( L^{-1} R \right)^{1-d} \sum_{k \in (\{0\} \times L\mathbb{Z}^{d-1}) \cap Q_{R}} &|X_{\gamma} \circ \tau_{k} - \bar{X}| \leq \left( L^{-1} R \right)^{1-d} \sum_{k \in (\{0\} \times L \mathbb{Z}^{d-1}) \cap Q_{R}} M_{\gamma}^{*} \circ \tau_{k} \\
				&\quad \leq \omega_{\delta}(\gamma) \left( L^{-1} R \right)^{1-d}  \sum_{k \in (\{0\} \times L \mathbb{Z}^{d-1}) \cap Q_{R}}  \mathbf{1}_{E_{\delta}} \circ \tau_{k} \\
				&\quad \quad + 2C_{X} \left( L^{-1} R \right)^{1-d}  \sum_{k \in (\{0\} \times L\mathbb{Z}^{d-1}) \cap Q_{R}} \mathbf{1}_{\Omega \setminus E_{\delta}} \circ \tau_{k}.
		\end{align*}
	
	At this stage, we apply the ergodic theorem, although we apply it to the (not necessarily ergodic) group action $\{\tau_{k}\}_{k \in \{0\} \times L\mathbb{Z}^{d-1}}$.  Let $\mathscr{I}_{L}$ denote the $\sigma$-algebra of sets invariant under this action:
		\begin{equation*}
			\mathscr{I}_{L} = \{E \in \mathcal{F} \, \mid \, \tau_{k}^{-1}(E) = E \, \, \text{for each} \, \, k \in \{0\} \times L\mathbb{Z}^{d-1}\}.
		\end{equation*}
	By the ergodic theorem (see, e.g., \cite[Section 1.2]{armstrong_kuusi_book} or \cite[Appendix B]{morfe}),
		\begin{align*}
			\lim_{R \to \infty} (L^{-1}R)^{1-d} \sum_{k \in (\{0\} \times L\mathbb{Z}^{d-1}) \cap Q_{R}} \mathbf{1}_{\Omega \setminus E_{\delta}} \circ \tau_{k} = \mathbb{E}[\mathbf{1}_{\Omega \setminus E_{\delta}} \mid \mathscr{I}_{L}] \quad \text{almost surely.}
		\end{align*}
	Since $\omega_{\delta}$ is a modulus of continuity, we deduce that, for any $\delta > 0$, with probability one,
		\begin{align} \label{E: last hurrah}
			\limsup_{(R^{-1},\gamma) \to (0,0)} (L^{-1} R)^{1 - d} \sum_{k \in (\{0\} \times L \mathbb{Z}^{d-1}) \cap Q_{R}} |X_{\gamma} \circ \tau_{k} - \bar{X}| &\leq 2C_{X} \mathbb{E}[\mathbf{1}_{\Omega \setminus E_{\delta}} \mid \mathscr{I}_{L}].
		\end{align}
	At the same time, since $\mathbb{E}[ \mathbb{E}[\mathbf{1}_{\Omega \setminus E_{\delta}} \mid \mathscr{I}_{L}]] = \mathbb{P}(\Omega\setminus E_{\delta}) \leq \delta$, we have
		\begin{align*}
			\lim_{\delta \to 0} \mathbb{E}[\mathbf{1}_{\Omega \setminus E_{\delta}} \mid \mathscr{I}_{L}] = 0 \quad \text{in probability.}
		\end{align*}
	Thus, since the left-hand side of \eqref{E: last hurrah} is independent of $\delta$, we conclude
		\begin{align*}
			\limsup_{(R^{-1},\gamma) \to (0,0)} (L^{-1}R)^{1 - d} \sum_{k \in (\{0\} \times L\mathbb{Z}^{d-1}) \cap Q_{R}} |X_{\gamma} \circ \tau_{k} - \bar{X}| = 0 \quad \text{almost surely.}
		\end{align*}
	\end{proof}
	
Finally, we comment on the case when $a$ and $\theta$ are both either $\mathbb{Z}^{d}$-periodic or $B^{\infty}$ functions.

\begin{remark} \label{R: upper bound periodic almost periodic} The above proof generalizes without fanfare to the case when $a$ and $\theta$ are both either $\mathbb{Z}^{d}$-periodic or $B^{\infty}$ functions, as in Corollary \ref{cor:periodic-homogenization}.  In that case, it is not necessary to treat the $x_{0} = 0$ and $x_{0} \neq 0$ cases separately.  In the $x_{0} = 0$ stage of the proof, one should simply work with the cylinders $x_{0} + \mathcal{C}(R,h)$ and $x_{0} + \mathcal{C}_{L}(R,h)$ directly.  

In lieu of the ergodic theorem, one needs to show that, for any $L \in \mathbb{N}$, there is a uniform limit in this context:
	\begin{equation*}
		\lim_{\gamma \to 0} \sup \left\{ |m(\mathscr{F}_{\gamma}, x + \mathcal{C}(L,h), q) - m(\overline{\mathscr{F}}, \mathcal{C}(L,h), q)| \, \mid \, x \in \mathbb{R}^{d} \right\} = 0.
	\end{equation*}
This can be proved by mimicking the argument in the proof of Proposition \ref{P: uniform convergence variational quantities} above.  The details are left to the reader. \end{remark}



\section{$\Gamma$-Convergence in Probability}\label{sec:gamma-convergence-in-prob}

In this section, we complete the proof of Theorem \ref{T: homogenization theorem} by showing that our analysis of the planar cell problem \eqref{E: conv in prob part of the proof} implies $\Gamma$-convergence.  This will build on the approach in \cite{ansini_braides_chiado-piat,morfe,marziani}, the only novelty being we will consider convergence in probability in addition to almost-sure convergence.  Where possible, we follow \cite{ansini_braides_chiado-piat} and \cite{morfe}.

Before continuing, the reader may want to recall the abbreviation \eqref{eqn:defn-min} that we use to paraphrase the energy in \eqref{E: planar limit we want} from now on.

\subsection{Assumptions in this Appendix} Throughout this appendix, we assume that the medium $(a,\theta)$ satisfies the assumptions of Section \ref{S: assumptions}, but concerning the potential well $W$, we only assume that it is a nonnegative continuous function, that $\{u \in \mathbb{R} \, \mid \, W(u) = 0\} = \{-1,1\}$, and that the (super)quadratic growth assumption \eqref{eqn:w-growth-at-inf} holds.

\subsection{Subsequential Limits of the Cell Problems} To begin the section, we observe that, after possibly passing to a subsequence of scales $(\epsilon_{j})$, it is always possible to get almost-sure convergence in the cell problems studied in Section \ref{S: homogenization 1}, independently of the direction.  This is the main probabilistic ingredient used in the proof, which is tantamount to the well-known fact that convergence in probability is equivalent to subsequential almost-sure convergence. The precise assertion is made in the next proposition.

First, we need a bit of notation. Recall that $\{e_{1},\dots,e_{d}\}$ of $\mathbb{R}^{d}$ denotes the standard orthonormal basis in $\mathbb{R}^{d}$.

Rather than varying the basis as in \cite{morfe}, it is convenient to instead work with orthogonal transformations. Toward that end, let $O(d)$ denote the family of all orthogonal $d \times d$-matrices. Given $\mathcal{O} \in O(d)$, $R > 0$, and $x \in \mathbb{R}^{d}$, we define the (tilted) cube $Q^{\mathcal{O}}_{R}(x)$ via the following procedure: first, define an orthogonal coordinate system on $\mathbb{R}^{d}$ via the rule
	\begin{equation*}
		y = \sum_{i = 1}^{d} y_{i}^{\mathcal{O}} \mathcal{O}(e_{i}), \quad y_{i}^{\mathcal{O}} = y \cdot \mathcal{O}(e_{i}),
	\end{equation*}
then this is the cube determined by 
	\begin{equation*}
		Q^{\mathcal{O}}_{R}(x) = \left\{ y \in \mathbb{R}^{d} \, \mid \, |y_{i}^{\mathcal{O}} - x_{i}^{\mathcal{O}}| < \frac{R}{2} \right\}.
	\end{equation*}
Equivalently, if $Q_{R}(x)$ is defined as in \eqref{E: rotated cube}, then 
	\begin{equation*}
		Q^{\mathcal{O}}_{R}(x) = \mathcal{O} (Q_{R}(\mathcal{O}^{-1}x)).
	\end{equation*}
Furthermore, we will write
	\begin{align*}
		q^{ \mathcal{O } } (x) \coloneqq q ( \mathcal{O}(e_{1})  \cdot  x).
	\end{align*}

Here is the result concerning subsequential limits of the cell problems.

\begin{prop} \label{P: subsequential cell problem} Fix a positively one-homogeneous convex function $\sigma : \mathbb{R}^{d} \to [0,\infty)$. Suppose that the scale $\epsilon \mapsto \delta(\epsilon)$ has been chosen in such a way that, for any $(x,R,\mathcal{O}) \in \mathbb{R}^{d} \times (0,\infty) \times O(d)$ and any $\nu > 0$, 
	\begin{equation}\label{E: conv in prob thing}
		\lim_{\epsilon \to 0} \mathbb{P}\{ | m( \F_{\epsilon, \delta(\epsilon)}, Q^{\mathcal{O}}_{R}(x), q^{ \mathcal{O} } ( \epsilon^{-1}( \cdot - x ) ) ) - \sigma(\mathcal{O}(e_{1})) R^{d-1}| > \nu\} = 0.
	\end{equation}
Then, given any sequence $(\epsilon_{j})_{j \in \mathbb{N}} \subseteq (0,\infty)$ such that $\epsilon_{j} \to 0$ as $j \to \infty$, there is a subsequence $(j_{k})_{k \in \mathbb{N}} \subseteq \mathbb{N}$ such that $j_{k} \to \infty$ as $k \to \infty$ and, with probability one, for any $ (x,R,\mathcal{O}) \in \mathbb{R}^{d} \times (0,\infty) \times O(d)$,
		\begin{align} \label{E: almost sure conv upgrade}
			\lim_{k \to \infty } m( \F_{\epsilon_{j_k}, \delta(\epsilon_{j_k})}, Q^{\mathcal{O}}_{R}(x), q^{ \mathcal{O} } ( \epsilon_{j_k} ^{-1}( \cdot - x ) ) ) = \sigma(\mathcal{O}(e_{1})) R^{d-1}.
		\end{align}
\end{prop}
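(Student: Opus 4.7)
The plan is to combine the standard fact — convergence in probability yields subsequential almost-sure convergence — with a diagonalization argument and then extend the pointwise-in-$(x,R,\mathcal{O})$ convergence to the uncountable parameter space $\mathbb{R}^{d}\times(0,\infty)\times O(d)$ by an approximation argument.

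\textbf{Step 1 (Diagonalization on a countable dense set).} First I would fix a countable dense subset $\mathcal{D}\subseteq\mathbb{R}^{d}\times(0,\infty)\times O(d)$; concretely, take $x\in\mathbb{Q}^{d}$, $R\in\mathbb{Q}\cap(0,\infty)$ and $\mathcal{O}$ in a fixed countable dense subset of $O(d)$. Enumerate $\mathcal{D}=\{(x_{n},R_{n},\mathcal{O}_{n})\}_{n\in\mathbb{N}}$. For each fixed $n$, convergence in probability \eqref{E: conv in prob thing} allows us to extract a subsequence of $(\epsilon_{j})_{j\in\mathbb{N}}$ along which $\mathbb{P}\{|m(\mathscr{F}_{\epsilon,\delta(\epsilon)},Q^{\mathcal{O}_{n}}_{R_{n}}(x_{n}),q^{\mathcal{O}_{n}}(\epsilon^{-1}(\cdot-x_{n})))-\sigma(\mathcal{O}_{n}(e_{1}))R_{n}^{d-1}|>\tfrac{1}{k}\}$ is summable in $j$ for every $k$, so the Borel--Cantelli Lemma upgrades convergence in probability to almost-sure convergence along the subsequence. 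A standard diagonal argument then produces a single subsequence $(\epsilon_{j_{k}})_{k\in\mathbb{N}}$ for which \eqref{E: almost sure conv upgrade} holds simultaneously for every element of $\mathcal{D}$ on an event $\Omega_{0}\in\mathcal{F}$ of full probability.

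\textbf{Step 2 (Extension to arbitrary $(x,R,\mathcal{O})$).} Fixing $\omega\in\Omega_{0}$, the plan is to sandwich the minimum $m(\mathscr{F}_{\epsilon_{j_{k}},\delta(\epsilon_{j_{k}})},Q^{\mathcal{O}}_{R}(x),q^{\mathcal{O}}(\epsilon_{j_{k}}^{-1}(\cdot-x)))$ between the corresponding minima at nearby elements of $\mathcal{D}$. The comparison for translations and dilations is the easiest: an approximate monotonicity/sub-additivity inequality analogous to \eqref{E: basic monotonicity} (modeled after \cite[Proposition 5]{morfe}), obtained by gluing a competitor on a slightly smaller cube to the one-dimensional profile $q^{\mathcal{O}}$ on the annular transition region, yields, for any $(x',R')\in\mathbb{Q}^{d}\times\mathbb{Q}_{+}$ with $Q^{\mathcal{O}}_{R'}(x')\subseteq Q^{\mathcal{O}}_{R}(x)$,
$$m(\mathscr{F}_{\epsilon_{j_{k}},\delta(\epsilon_{j_{k}})},Q^{\mathcal{O}}_{R'}(x'),q^{\mathcal{O}}(\epsilon_{j_{k}}^{-1}(\cdot-x')))-E_{k}(R,R')\leq m(\mathscr{F}_{\epsilon_{j_{k}},\delta(\epsilon_{j_{k}})},Q^{\mathcal{O}}_{R}(x),q^{\mathcal{O}}(\epsilon_{j_{k}}^{-1}(\cdot-x))),$$
with the reverse inequality when the inclusion is reversed, and with error $E_{k}(R,R')$ controlled by $|Q^{\mathcal{O}}_{R}(x)\setminus Q^{\mathcal{O}}_{R'}(x')|$ times a constant depending only on $q$, $\Lambda$, $\theta^{*}$, $W$; this error scales like $R^{d-1}$ times a function of $|R-R'|/R$ vanishing as $R'\to R$.

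\textbf{Step 3 (Handling orientation).} The main obstacle is the rotational parameter $\mathcal{O}$, since rotating the cube also rotates the boundary datum $q^{\mathcal{O}}$ and can a priori change the energy substantially even for small rotations. Here I would use continuity of $\sigma$ on $S^{d-1}$ (automatic from convexity and positive $1$-homogeneity) together with the following comparison: given $\mathcal{O}$ and $\mathcal{O}'\in\mathcal{D}$ with $\|\mathcal{O}-\mathcal{O}'\|\ll 1$, one can inscribe a large cube $Q^{\mathcal{O}'}_{R'}(x')\subset Q^{\mathcal{O}}_{R}(x)$ and circumscribe another $Q^{\mathcal{O}'}_{R''}(x'')\supset Q^{\mathcal{O}}_{R}(x)$ with $R',R''\to R$ as $\mathcal{O}'\to\mathcal{O}$; gluing across the thin annular region introduces an error bounded by the $(d{-}1)$-volume of the symmetric difference of the boundaries, times a bounded energy density of $q^{\mathcal{O}'}$ (on the inner piece) and $q^{\mathcal{O}}$ (on the outer annulus). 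Combined with the already-established convergence on $\mathcal{D}$ (applied to the nested cubes) and the continuity $\sigma(\mathcal{O}'(e_{1}))\to\sigma(\mathcal{O}(e_{1}))$, one recovers \eqref{E: almost sure conv upgrade} at the general point. The delicate accounting of the rotational gluing error — making sure the annular contribution scales strictly sublinearly in $R^{d-1}$ as $\mathcal{O}'\to\mathcal{O}$ — is the technical heart of the argument; since essentially the same construction has already been carried out in \cite{morfe, marziani} for the almost-sure (periodic/ergodic) analogue, I would import their geometric estimates verbatim here.
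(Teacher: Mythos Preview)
Your proposal is correct and follows essentially the same two-step strategy as the paper: extract a subsequence giving almost-sure convergence on a countable dense set of parameters via Borel--Cantelli, then extend to all of $\mathbb{R}^{d}\times(0,\infty)\times O(d)$ by a nested-cube approximation. The only minor difference is that the paper handles the extension (your Steps 2 and 3) in one stroke, treating center, radius, and orientation simultaneously by invoking the fundamental estimate from \cite[Proof of Proposition 10]{morfe} on cubes $Q^{\mathcal{O}_{n}}_{R_{n}}(x_{n})\subset\subset Q^{\mathcal{O}}_{R}(x)\subset\subset Q^{\mathcal{U}_{n}}_{S_{n}}(y_{n})$, whereas you separate translation/dilation from rotation and appeal to the monotonicity of \cite[Proposition 5]{morfe}; the paper's route is slightly cleaner, but both arrive at the same place.
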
 

Since the set $\mathbb{R}^{d} \times (0,\infty) \times O(d)$ appearing above is uncountable, it will be convenient to employ a countable, dense approximation.

\begin{proof}[Proof of Proposition \ref{P: subsequential cell problem}]
		We split the proof in two steps. First, we we establish that \eqref{E: almost sure conv upgrade} holds for all $(x,R,\mathcal{O}) \in \mathscr{E} $ in a countable dense subset $ \mathscr{E} \subset \mathbb{R}^{d} \times (0,\infty) \times O(d) $. Afterwards we use the continuity of $ m $  and $ \sigma $ to conclude almost-sure convergence.
		
		\textit{Step 1 (Almost-sure convergence on a dense subset).} Since $\mathscr{E}$ is countable, let us enumerate it in the form of a sequence: 
			\begin{equation*}
				\mathscr{E} = \{(x_{N},R_{N},\mathcal{O}_{N})\}_{N \in \mathbb{N}}.
			\end{equation*}
		By \eqref{E: conv in prob thing}, for any $k \in \mathbb{N}$, we can fix a $j_{k} \in \mathbb{N}$ such that $j_{k} \geq k$ and 
			\begin{align*}
				\sum_{n = 1}^{k} \mathbb{P} \left\{ \left| m( \F_{\epsilon_{j_k}, \delta(\epsilon_{j_k})}, Q^{\mathcal{O}_n}_{R_n}(x_n), q^{ \mathcal{O}_n } ( \epsilon_{j_k} ^{-1}( \cdot - x_n ) ) ) - \sigma(\mathcal{O}(e_{1})) R_{n}^{d-1} \right| > 2^{-k} \right\} \leq 2^{-k}.
			\end{align*}
		In particular, by the union bound,
			\begin{align*}
				\mathbb{P} \left( \bigcup_{n = 1}^{k} \left\{ \left| m( \F_{\epsilon_{j_k}, \delta(\epsilon_{j_k})}, Q^{\mathcal{O}_n}_{R_n}(x_n), q^{ \mathcal{O}_n } ( \epsilon_{j_k} ^{-1}( \cdot - x_n ) ) ) - \sigma(\mathcal{O}(e_{1})) R_{n}^{d-1} \right| > 2^{-k} \right\} \right) \leq 2^{-k}.
			\end{align*}
		Thus, the Borel-Cantelli Lemma implies there is a random variable $K \in \mathbb{N}$ such that $\mathbb{P}\{K < \infty\} = 1$ and, with probability one, for any $N \in \mathbb{N}$ and any $k \geq \max\{N,K\}$, we have 
			\begin{equation*}
				 \left| m( \F_{\epsilon_{j_k}, \delta(\epsilon_{j_k})}, Q^{\mathcal{O}_N}_{R_N}(x_N), q^{ \mathcal{O}_N } ( \epsilon_{j_k} ^{-1}( \cdot - x_N ) ) ) - \sigma(\mathcal{O}(e_{1})) R_{N}^{d-1} \right| \leq 2^{-k} .
			\end{equation*}
		Of course, since $\mathscr{E} = \{(x_{N},R_{N},\mathcal{O}_{N})\}_{N \in \mathbb{N}}$, this proves \eqref{E: almost sure conv upgrade} for $ (x,R,\mathcal{O}) \in \mathscr{E}  $.
		
		\textit{Step 2 (Upgrade to almost-sure convergence).}  By Step 1, there is an event of probability one on which, for any $(x,R,\mathcal{O}) \in \mathscr{E}$, the pointwise limit \eqref{E: almost sure conv upgrade} holds. It only remains to extend this to arbitrary parameters $(x,R,\mathcal{O}) \in \mathscr{E}$.
		
		Fix $(x,R,\mathcal{O}) \in \mathscr{E}$. Choose a sequence $\{(x_{n},R_{n},\mathcal{O}_{n})\}_{n \in \mathbb{N}} \subseteq \mathscr{E}_{\mathcal{D}}$ such that 
			\begin{itemize}
				\item[(i)] $R_{n} < R$ for each $n$, 
				\item[(ii)] $Q^{\mathcal{O}_{n}}_{R_{n}}(x_{n}) \subset \subset Q^{\mathcal{O}}_{R}(x)$ for each $n$, and
				\item[(iii)] $(x_{n},R_{n},\mathcal{O}_{n}) \to (x,R,\mathcal{O})$ as $n \to \infty$.
			\end{itemize}
		Arguing using the fundamental estimate as in \cite[Proof of Proposition 10]{morfe}, we deduce that 
			\begin{align*}
				\liminf_{n \to \infty} \liminf_{k \to \infty} \Big( & m( \F_{\epsilon_{j_k}, \delta(\epsilon_{j_k})}, Q^{\mathcal{O}_n}_{R_n}(x_n), q^{ \mathcal{O}_n } ( \epsilon_{j_k} ^{-1}( \cdot - x_n ) ) ) \\
				& \quad - m( \F_{\epsilon_{j_k}, \delta(\epsilon_{j_k})}, Q^{\mathcal{O}}_{R}(x), q^{ \mathcal{O} } ( \epsilon_{j_k} ^{-1}( \cdot - x ) ) ) \Big) \geq 0.
			\end{align*}
		Thus, by Step 1, 
			\begin{equation*}
				\limsup_{k \to \infty} m( \F_{\epsilon_{j_k}, \delta(\epsilon_{j_k})}, Q^{\mathcal{O}}_{R}(x), q^{ \mathcal{O} } ( \epsilon_{j_k} ^{-1}( \cdot - x ) ) )  \leq \sigma(\mathcal{O}(e_{1})) R^{d-1}.
			\end{equation*}
			
		To conclude, choose a sequence $\{(y_{n},S_{n},\mathcal{U}_{n})\}_{n \in \mathbb{N}} \subseteq \mathscr{E}_{\mathcal{D}}$ such that 
			\begin{itemize}
				\item[(i)] $S_{n} > R$ for each $n$, 
				\item[(ii)] $Q^{\mathcal{U}_{n}}_{S_{n}}(y_{n}) \supset \supset Q^{\mathcal{O}}_{R}(x)$ for each $n$, and
				\item[(ii)] $(y_{n},S_{n},\mathcal{U}_{n}) \to (x,R,\mathcal{O})$ as $n \to \infty$
			\end{itemize}
		Once again, we argue as in \cite{morfe}. First, fix $n$. For any $k$, choose $u_{k}$ such that 
			\begin{gather*}
				\mathscr{F}_{\epsilon_{j_{k}},\delta(\epsilon_{j_{k}})}(u_{k}; Q^{\mathcal{O}}_{R}(x)) = m( \F_{\epsilon_{j_k}, \delta(\epsilon_{j_k})}, Q^{\mathcal{O}}_{R}(x), q^{ \mathcal{O} } ( \epsilon_{j_k} ^{-1}( \cdot - x ) ) ) , \\
				u_{k} - q((\cdot - x) \cdot \mathcal{O}(e_{1})) \in H^{1}_{0}(Q^{\mathcal{O}}_{R}(x)).
			\end{gather*}
		It is convenient to extend $u_{k}$ so that $u_{k}(y) = q((y - x) \cdot \mathcal{O}(e_{1}))$ for each $y \in \mathbb{R}^{d} \setminus Q^{\mathcal{O}}_{R}(x)$. Applying the fundamental estimate in a manner similar to \cite[Proof of Proposition 10]{morfe}, we obtain a sequence $\{E_{n}\}_{n \in \mathbb{N}}$ such that
			\begin{gather*}
				\liminf_{k \to \infty} m( \F_{\epsilon_{j_k}, \delta(\epsilon_{j_k})}, Q^{\mathcal{U}_n}_{R_n}(x_n), q^{ \mathcal{U}_{n} } ( \epsilon_{j_k} ^{-1}( \cdot - x ) ) ) 
				\leq \liminf_{k \to \infty} \mathscr{F}_{\epsilon_{j_{k}},\delta(\epsilon_{j_{k}})}(u_{k}; Q^{\mathcal{O}}_{R}(x)) + E_{n},
			\end{gather*}
			and $ \lim\limits_{n \to \infty} E_{n} = 0 $.
		In particular, after sending $n \to \infty$, we deduce that, on the event $\hat{\Omega}$, we have
			\begin{align*}
				 \sigma(\mathcal{O}(e_{1})) R^{d - 1} &\leq \liminf_{k \to \infty} \mathscr{F}_{\epsilon_{j_{k}},\delta(\epsilon_{j_{k}})}(u_{k}; Q^{\mathcal{O}}_{R}(x)) \\
				&= \liminf_{k \to \infty} m( \F_{\epsilon_{j_k}, \delta(\epsilon_{j_k})}, Q^{\mathcal{O}}_{R}(x), q^{ \mathcal{O} } ( \epsilon_{j_k} ^{-1}( \cdot - x ) ) ).
			\end{align*}
		\end{proof}
		
\subsection{Subsequential $\Gamma$-Convergence} Next, we show that subsequential convergence of the energy in the planar cell problems implies subsequential $\Gamma$-convergence. This part of the argument is completely deterministic, hence we emphasize that the results are stated for a fixed realization of the medium $(a,\theta)$.
	
	\begin{prop} \label{P: general convergence} Let $\sigma : \mathbb{R}^{d} \to [0,\infty)$ be a positively one-homogeneous convex function and fix a realization of the medium $(a,\theta)$. Suppose that $(\epsilon_{j})_{j \in \mathbb{N}}, (\delta_{j})_{j \in \mathbb{N}} \subseteq (0,\infty)$ are sequences such that $\epsilon_{j} \to 0$ as $j \to \infty$ and, for any $(x,R,\mathcal{O}) \in \mathbb{R}^{d} \times (0,\infty) \times O(d) $
		\begin{equation} \label{E: cell problem convergence}
			\lim_{j \to \infty} m( \F_{\epsilon_{j}, \delta(\epsilon_{j})}, Q^{\mathcal{O}}_{R}(x), q^{ \mathcal{O} } ( \epsilon_{j} ^{-1}( \cdot - x ) ) ) = \sigma(\mathcal{O}(e_{1})) R^{d-1}.
		\end{equation}
	Then, for any bounded Lipschitz open set $U \subseteq \mathbb{R}^{d}$ and any $u \in L^{1}(U)$,
		\begin{equation*}
			\Gamma\text{-}\lim_{j \to \infty} \mathscr{F}_{\epsilon_{j},\delta_{j}}(u;U) = \mathscr{E}_{\sigma}(u; U), 
		\end{equation*}
	where $\mathscr{E}_{\sigma}$ is the functional
		\begin{equation} \label{E: limit functional}
			\mathscr{E}_{\sigma}(u;U) =  \left\{ \begin{array}{r l}
				\int_{U \cap \partial^{*} \{u = 1\}} \sigma(\nu_{\{u = 1\}}(\xi)) \, \mathcal{H}^{d-1}(d \xi), & \text{if} \, \, u \in BV(U; \{-1,1\}), \\
				+ \infty, & \text{otherwise.}
			\end{array} \right.
		\end{equation}
	\end{prop}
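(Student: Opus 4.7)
The plan is to follow the well-established strategy of \cite{ansini_braides_chiado-piat, morfe, marziani}, breaking the proof into compactness, an upper bound, and a lower bound. First I would record the compactness and locality properties of $\mathscr{F}_{\epsilon_j,\delta_j}$. The growth assumption \eqref{eqn:w-growth-at-inf} together with a standard Modica-Mortola truncation implies that any sequence $(u_j)$ with $\sup_j \mathscr{F}_{\epsilon_j,\delta_j}(u_j; U) < \infty$ is relatively compact in $L^1(U)$ and that any $L^1$-cluster point lies in $BV(U; \{-1,1\})$. Combining this with a fundamental estimate (obtained as in \cite[Proposition 9]{morfe} via interpolation across annular cutoffs, using that $W$ is locally Lipschitz in a neighborhood of $[-1,1]$) allows one to verify the De Giorgi-Letta criterion for $U \mapsto \Gamma\text{-}\liminf_j \mathscr{F}_{\epsilon_j,\delta_j}(u;U)$ and $U \mapsto \Gamma\text{-}\limsup_j \mathscr{F}_{\epsilon_j,\delta_j}(u;U)$. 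Hence these extend to Borel measures on $\mathbb{R}^d$, and so it suffices to identify them on cubes $Q^{\mathcal{O}}_R(x)$.

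For the upper bound, given $u \in BV_{\text{loc}}(\mathbb{R}^d; \{-1,1\})$, I approximate $u$ in $L^1$ by polyhedral functions $u^{(n)}$ with $\mathscr{E}_\sigma(u^{(n)}; U) \to \mathscr{E}_\sigma(u; U)$. On each face of the polyhedral interface, with exterior normal $\nu = \mathcal{O}(e_1)$, the hypothesis \eqref{E: cell problem convergence} furnishes an almost-minimizer of the planar cell problem with energy $\sigma(\nu) R^{d-1} + o(1)$, which is then pasted into the construction using the fundamental estimate and an interpolation layer of mesoscopic thickness. Summing over faces and letting $n \to \infty$ gives $\Gamma\text{-}\limsup_j \mathscr{F}_{\epsilon_j,\delta_j}(u;U) \leq \mathscr{E}_\sigma(u;U)$.

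The lower bound proceeds by the blow-up method. Consider a sequence $u_j \to u$ in $L^1$ with $\liminf_j \mathscr{F}_{\epsilon_j,\delta_j}(u_j;U) < \infty$, and let $\mu$ be a weak-$*$ subsequential limit of the nonnegative Radon measures
\[
\mu_j(E) = \int_E \left( \frac{\epsilon_j}{2} a(\delta_j^{-1} x) \nabla u_j \cdot \nabla u_j + \frac{1}{\epsilon_j} \theta(\delta_j^{-1}x) W(u_j) \right) dx.
\]
At $\mathcal{H}^{d-1}$-a.e.\ point $x$ of $\partial^* \{u = 1\}$ with outer normal $\nu = \mathcal{O}(e_1)$, the blow-ups of $u$ around $x$ converge to the jump $\mathbf{1}_{\{y \cdot \nu \geq 0\}} - \mathbf{1}_{\{y \cdot \nu < 0\}}$. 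For a suitable rate $R_j \to \infty$ with $R_j \epsilon_j \to 0$, one replaces $u_j$ on an annular layer near $\partial Q^{\mathcal{O}}_{R_j \epsilon_j}(x)$ by the planar profile $q^{\mathcal{O}}(\epsilon_j^{-1}(\cdot - x))$ at asymptotically negligible energy cost. Applying \eqref{E: cell problem convergence} to this modified function yields the density estimate
\[
\liminf_{j \to \infty} \frac{\mu_j(Q^{\mathcal{O}}_{R_j \epsilon_j}(x))}{(R_j \epsilon_j)^{d-1}} \geq \sigma(\nu).
\]
The Besicovitch differentiation theorem then gives $d\mu/d(\mathcal{H}^{d-1} \lfloor \partial^*\{u = 1\})(x) \geq \sigma(\nu(x))$, so integration yields $\Gamma\text{-}\liminf_j \mathscr{F}_{\epsilon_j,\delta_j}(u_j; U) \geq \mathscr{E}_\sigma(u; U)$.

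The main obstacle will be the slicing/gluing in the blow-up argument of Step 3, namely reconciling an arbitrary recovery sequence $u_j$ with the rigid planar boundary datum built into the cell problem \eqref{E: cell problem convergence}. The standard remedy, following \cite{ansini_braides_chiado-piat} and its stochastic successors, is a Fubini-type argument: select the gluing radius from a one-parameter family so that the boundary energy is controlled on average, then combine with a Lipschitz truncation of $u_j$ near the boundary to match values with $q$. Since this device is by now routine, I would simply refer to the analogous implementation in \cite{morfe, marziani} and only comment on the modifications required by the presence of the stationary factor $\theta(\delta_j^{-1}x)$, which do not affect the structural argument.
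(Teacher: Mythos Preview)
Your approach is correct but takes a genuinely different route from the paper's. The paper does not carry out a direct upper/lower bound argument. Instead, it invokes an abstract compactness-and-representation package (Theorems 3.3 and 3.5 of \cite{ansini_braides_chiado-piat}, stated here as Theorem \ref{T: compactness result}): along a subsequence, $\mathscr{F}_{\epsilon_{j_k},\delta_{j_k}}$ $\Gamma$-converges to some surface functional $\mathscr{E}_\varphi$ with a possibly $x$-dependent integrand $\varphi(x,e)$, and $\varphi$ is characterised by the derivation formula \eqref{E: derivation formula}. The entire proof then reduces to showing $\varphi(x,e)=\sigma(e)$. For $\varphi\le\sigma$, the paper takes the cell-problem minimisers $u_k$ themselves, passes to an $L^1$ limit $u$, and uses only the $\Gamma$-liminf inequality for $\mathscr{E}_\varphi$ to produce a competitor in \eqref{E: derivation formula} with value $\sigma(e)R^{d-1}$. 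For $\varphi\ge\sigma$, it simply refers to \cite[Proof of Theorem 1]{morfe}. Thus the De Giorgi--Letta verification, the polyhedral approximation, and the blow-up density argument are all absorbed into the cited black boxes.

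Your approach rederives these ingredients by hand: De Giorgi--Letta for the set functions, polyhedral recovery sequences for the upper bound, and a Fonseca--M\"uller blow-up for the lower bound. This is perfectly valid and is essentially what the black boxes contain; it trades brevity for self-containment. One remark on your write-up: in the blow-up step you phrase the argument in terms of a diagonal scale $R_j\epsilon_j$ with $R_j\to\infty$, but the hypothesis \eqref{E: cell problem convergence} is stated for \emph{fixed} macroscopic radius $R$. The cleaner way to organise the lower bound is to keep the cube radius $\varrho$ fixed, send $j\to\infty$ first (so that \eqref{E: cell problem convergence} applies directly and the gluing error tends to a quantity $E(\varrho)=o(\varrho^{d-1})$), and only then send $\varrho\to 0$ before applying Besicovitch differentiation. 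With that ordering the obstacle you flag is handled exactly as you describe, via the Fubini-type layer selection from \cite{ansini_braides_chiado-piat, morfe}.
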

	
This proposition, which is implicit already in \cite{morfe}, improves the corresponding result in \cite{ansini_braides_chiado-piat}. In particular, in \cite[Theorem 3.7]{ansini_braides_chiado-piat}, it is necessary to first check a translation-invariance condition, whereas here we bypass this extra step.  

We expect that the proposition could also be proved following the arguments in \cite{marziani}.

The proof of Proposition \ref{P: general convergence} will use the following compactness result from \cite{ansini_braides_chiado-piat}. 

	\begin{theorem}[Theorem 3.3 and 3.5 in \cite{ansini_braides_chiado-piat}] \label{T: compactness result} Fix a realization of the medium $(a,\theta)$. Given any sequence $(\epsilon_{j})_{j \in \mathbb{N}}$ such that $\epsilon_{j} \to 0$ as $j \to \infty$, there is a subsequence $(j_{k})_{k \in \mathbb{N}} \subseteq \mathbb{N}$ and a bounded Borel function $\varphi : \mathbb{R}^{d} \times \mathbb{R}^{d} \to [0,\infty)$, both of which may depend on $(a,\theta)$, such that, for any $u \in L^{1}(U)$ and any bounded Lipschitz open set $U \subseteq \mathbb{R}^{d}$, 
		\begin{equation*}
		\Gamma\text{-}\lim_{k \to \infty} \mathscr{F}_{\epsilon_{j_{k}},\delta(\epsilon_{j_{k}})}(u; U) = \mathscr{E}_{\varphi}(u;U),
		\end{equation*} 
where $\mathscr{E}_{\varphi}$ is the functional given by 
		\begin{equation*}
		\mathscr{E}_{\varphi}(u;U) = \left\{ \begin{array}{r l}
        \int_{U \cap \partial^{*}\{u = 1\}} \varphi(\xi,\nu_{\{u = 1\}}(\xi)) \, \mathcal{H}^{d-1}(d \xi), & \text{if} \, \, u \in BV(U;\{-1,1\}), \\
        \infty, & \text{otherwise.}
        \end{array} \right.
		\end{equation*}
For any $(x,\mathcal{O}) \in \mathbb{R}^{d} \times O(d)$, the integrand $\varphi$ is determined by the formula
		\begin{align} \label{E: derivation formula}
		\varphi(x,\mathcal{O}(e_{1})) = \limsup_{\varrho \to 0} \inf \left\{ \frac{ \mathscr{E}_{\varphi}(u;Q^{\mathcal{O}}_{\varrho}(x)) }{ \varrho^{d-1} }\, \mid \, u = \chi_{\mathcal{O}(e_{1})}(\cdot - x) \, \, \text{in} \, \, Q^{\mathcal{O}}_{\varrho}(x)^c \right\} ,
		\end{align}
		where $ \chi_{ \nu } $ denotes the function equal to $ 1 $ on $ \{ x \cdot \nu \geq 0 \} $ and $ - 1 $ on $ \{ x \cdot \nu < 0 \} $.
	\end{theorem}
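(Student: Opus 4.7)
My plan is to establish Theorem \ref{T: compactness result} following the strategy of Ansini-Braides-Chiadò-Piat \cite{ansini_braides_chiado-piat}, which proceeds in three stages: compactness, integral representation, and identification of the integrand. Throughout I fix a realization of the medium $(a,\theta)$ with bounds as in Section \ref{S: assumptions}, so that all estimates will be deterministic.

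\textbf{Compactness.} First I would verify equi-coercivity of the family $\mathscr{F}_{\epsilon_j, \delta(\epsilon_j)}$ with respect to $L^1_{\text{loc}}$. Using the pointwise bounds $a \geq \lambda \text{Id}$ and $\theta \geq \theta_*$, one has $\mathscr{F}_{\epsilon_j, \delta(\epsilon_j)} \geq \min\{\lambda, \theta_*\} \mathscr{F}^{\text{MM}}_{\epsilon_j}$, where $\mathscr{F}^{\text{MM}}_{\epsilon_j}$ is the constant-coefficient Modica-Mortola functional. Combined with the growth assumption \eqref{eqn:w-growth-at-inf}, the classical argument then forces bounded-energy sequences to converge (along a subsequence) in $L^1_{\text{loc}}$ to a function in $BV_{\text{loc}}(\mathbb{R}^d; \{-1,1\})$. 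Given this coercivity, one appeals to the standard compactness theorem for $\Gamma$-convergence: selecting a countable base $(U_n)$ of bounded Lipschitz open sets, a diagonal subsequence extracts a subsequential $\Gamma$-limit $\mathscr{F}^{\infty}(\cdot; U_n)$ for each $n$, and the localization method extends this to all bounded Lipschitz open sets $U$.

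\textbf{Integral representation.} The next step is to show that $u \mapsto \mathscr{F}^{\infty}(u; \cdot)$ defines a Borel measure on bounded Lipschitz opens, with density comparable to the perimeter. This is where the symmetric two-sided comparison with the constant-coefficient Modica-Mortola functional (via the bounds $a \leq \Lambda \text{Id}$, $\theta \leq \theta^*$) becomes essential, since it gives
$$
\min\{\lambda, \theta_*\}\, c_W \mathcal{H}^{d-1}(U \cap \partial^* \{u = 1\}) \leq \mathscr{F}^{\infty}(u;U) \leq \max\{\Lambda, \theta^*\}\, C_W \mathcal{H}^{d-1}(U \cap \partial^* \{u = 1\})
$$
for appropriate constants depending on $W$. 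To upgrade $\mathscr{F}^{\infty}(u; \cdot)$ to a measure I would apply the De Giorgi-Letta criterion, the non-trivial input being the subadditivity, which follows from the fundamental estimate for $\Gamma$-convergence applied to the equi-coercive family $\mathscr{F}_{\epsilon_j, \delta(\epsilon_j)}$. With measure-representation, locality and boundedness established, the Bouchitté-Fonseca-Mascarenhas integral representation theorem then produces a Borel integrand $\varphi \colon \mathbb{R}^d \times \mathbb{R}^d \to [0,\infty)$ with $\mathscr{F}^{\infty}(u; U) = \mathscr{E}_\varphi(u; U)$ on $BV_{\text{loc}}(\mathbb{R}^d; \{-1,1\})$.

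\textbf{Identification of $\varphi$ and the main obstacle.} The formula \eqref{E: derivation formula} is derived by specialization to half-space competitors. The upper bound in \eqref{E: derivation formula} is immediate from the definition of the $\Gamma$-limit, testing against profiles matching $\chi_{\mathcal{O}(e_1)}(\cdot - x)$ on $\partial Q^{\mathcal{O}}_\varrho(x)$. The matching lower bound is obtained by a blow-up argument at a point of approximate tangency of $\partial^* \{u=1\}$: rescaling small cubes $Q^{\mathcal{O}}_{\varrho}(x)$ and using the lower semicontinuity of $\mathscr{F}^{\infty}$, one shows that the density $\varphi(x, \mathcal{O}(e_1))$ must equal the limit of the rescaled energies of near-planar profiles, which recovers \eqref{E: derivation formula}. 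The main technical obstacle is ensuring that the fundamental estimate holds for this particular family in a form strong enough to support the integral representation, since the mixed oscillations in both $a$ and $\theta$ prevent a direct quotation of the classical argument for the Allen-Cahn functional; here the growth condition \eqref{eqn:w-growth-at-inf} on $W$ plays the crucial role, allowing cut-off constructions that control both gradient and potential terms simultaneously.
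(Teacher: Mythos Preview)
Your proposal is a reasonable sketch of how one would establish this compactness and integral representation result, and it follows the strategy of the cited reference. However, note that the paper does not actually prove this theorem: it is stated as Theorems~3.3 and~3.5 of \cite{ansini_braides_chiado-piat} and used as a black-box input in the proof of Proposition~\ref{P: general convergence}. So there is no ``paper's own proof'' to compare against here --- your outline essentially reconstructs the argument from the original source.
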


Finally, here is the proof of Proposition \ref{P: general convergence}. Note that the proof is simpler than that of \cite[Theorem 1]{morfe}; in particular, Proposition 12 in that reference is much more than is needed here.

	\begin{proof}[Proof of Proposition \ref{P: general convergence}] To establish that $\mathscr{F}_{\epsilon_{j},\delta(\epsilon_{j})} \overset{\Gamma}{\to} \mathscr{E}_{\sigma}$ as $j \to \infty$, it suffices to show that, for any subsequence $(j_{k})_{k \in \mathbb{N}}$, we have that $\mathscr{F}_{\epsilon_{j_{k}},\delta(\epsilon_{j_{k}})} \overset{\Gamma}{\to} \mathscr{E}_{\sigma}$. 
	
	Fix a subsequence $(j_{k})_{k \in \mathbb{N}}$. By Theorem \ref{T: compactness result}, there is a further subsequence along which $\Gamma$-convergence holds. Thus, up to relabelling, we can assume that there is a bounded Borel function $\varphi : \mathbb{R}^{d} \times \mathbb{R}^{d} \to [0,\infty)$ such that, for any $u \in L^{1}_{\text{loc}}(\mathbb{R}^{d})$ and any bounded Lipschitz open set $U \subseteq \mathbb{R}^{d}$, 
		\begin{equation*}
			\Gamma\text{-}\lim_{k \to \infty} \mathscr{F}_{\epsilon_{j_{k}},\delta(\epsilon_{j_{k}})}(u;U) = \mathscr{E}_{\varphi}(u;U)
		\end{equation*}
	We claim that \eqref{E: cell problem convergence} implies that $\varphi(x,e) = \sigma(e)$ for each $(x,e) \in \mathbb{R}^{d} \times S^{d-1}$, which is enough to complete the proof since then $\mathscr{E}_{\varphi} \equiv \mathscr{E}_{\sigma}$. We proceed by showing first that $\varphi(x,e) \leq \sigma(e)$ and then by showing that $\varphi(x,e) \geq \sigma(e)$. 
	
	Fix $x \in \mathbb{R}^{d}$ and $e \in S^{d-1}$. Let $\mathcal{O} \in O(d)$ be such that $e = \mathcal{O}(e_{1})$. By \eqref{E: derivation formula}, to establish that $\varphi(x,e) \leq \sigma(e)$, it suffices to show that, for any $\varrho > 0$
		\begin{equation} \label{E: lower bound part first ack}
			\inf \left\{ \mathscr{E}_{\varphi}(u;Q^{\mathcal{O}}_{\varrho}(x)) \, \mid \, u = \chi_{\mathcal{O}(e_{1})}(\cdot - x) \, \, \text{in} \, \, \mathbb{R}^{d} \setminus Q^{\mathcal{O}}_{\varrho}(x) \right\} \leq \sigma(\mathcal{O}(e_{1})) \varrho^{d-1}.
		\end{equation}
	
	Toward that end, for any $k \in \mathbb{N}$, fix a $u_{k} \in H^{1}_{\text{loc}}(\mathbb{R}^{d}; [-1,1])$ such that
		\begin{gather}
			\mathscr{F}_{\epsilon_{j_{k}},\delta(\epsilon_{j_{k}})}(u_{k}; Q^{\mathcal{O}}_{\varrho}(x)) = m( \F_{\epsilon_{j_k}, \delta(\epsilon_{j_k})}, Q^{\mathcal{O}}_{R}(x), q^{ \mathcal{O} } ( \epsilon_{j_k} ^{-1}( \cdot - x ) ) ), \label{E: minimizer thingie} \\ 
			u_{k} = q(\epsilon_{j_{k}}^{-1}(\cdot - x)) \quad \text{in} \, \, \mathbb{R}^{d} \setminus Q^{\mathcal{O}}_{\varrho}(x). \nonumber
		\end{gather}
	Observe that, for any bounded open set $U \subseteq \mathbb{R}^{d}$, there holds
		\begin{align*}
			\sup \left\{ \mathscr{F}_{\epsilon_{j_{k}},\delta(\epsilon_{j_{k}})}(u_{k}; U) \, \mid \, k \in \mathbb{N} \right\} < \infty.
		\end{align*}
	Thus, up to passing to yet another subsequence, we can assume that there is a $u \in BV_{\text{loc}}(\mathbb{R}^{d}; \{-1,1\}) $ such that 
		\begin{gather*}
			u = \lim_{k \to \infty} u_{k} \quad { \rm in } ~ L^1_{ \text{loc} } ( \R^d ) .
		\end{gather*}
By $\Gamma$-convergence, this implies
		\begin{gather*}
			\mathscr{E}_{\varphi}(u; Q^{\mathcal{O}}_{\varrho}(x)) \leq \liminf_{k \to \infty} \mathscr{F}_{\epsilon_{j_{k}},\delta(\epsilon_{j_{k}})}(u_{k}; Q^{\mathcal{O}}_{\varrho}(x)).
		\end{gather*}
	Due to the boundary conditions imposed on $u_{k}$ outside of $Q^{\mathcal{O}}_{\varrho}(x)$, we have that $u = \chi_{\mathcal{O}(e_{1})}(\cdot - x)$ in $\mathbb{R}^{d} \setminus Q^{\mathcal{O}}_{\varrho}(x)$. Thus, in view of \eqref{E: minimizer thingie} and \eqref{E: cell problem convergence}, the bound \eqref{E: lower bound part first ack} follows. 
	
	It only remains to establish the lower bound $\varphi(x,e) \geq \sigma(e)$. This half of the proof follows by arguing exactly as in \cite[Proof of Theorem 1]{morfe} (which, in particular, does not use Proposition 12 of that reference), hence the details are omitted. \end{proof}



\subsection{The Metric $d_{\Gamma}$}

Now that the main subsequential convergence results are proved, we turn to the task of providing a suitable definition for $\Gamma$-convergence in probability.  In what follows, we fix a $c_{*} > 0$ such that $W(u) \geq c_{*}^{2} |u|^{p}$ if $|u| \geq 2$, which is possible by \eqref{eqn:w-growth-at-inf}, and let $\Phi : \mathbb{R} \to \mathbb{R}$ be the function determined by 
	\begin{equation} \label{E: phi function}
		\Phi(0) = 0, \quad \Phi'(u) = \left\{ \begin{array}{r l}
								\sqrt{ W(u) } , & \text{if} \, \, |u| \leq 2 , \\
								c_{*} |u|^{ \frac{p}{2} } ,  & \text{if} \, \, |u| > 2 . \\
							\end{array} \right.
	\end{equation}
Notice that $\Phi$ is increasing and surjective and $\Phi'(u) \leq \sqrt{W(u)}$ for each $u \in \mathbb{R}$.  We argue that there exists a metric $ d_{\Gamma} $ on the space of functionals
\begin{align}\label{gc03}
\mathcal{M} = \left\{ \mathscr{F} : L^1(U) \rightarrow [0,\infty] ~  { \rm l.s.c. } ~\middle|~ \frac{1}{c} \mathscr{F}(u) \geq \int_{ U } | \nabla \Phi(u) | + \int_U | u |^p - 2^{p} | U | \right\},
\end{align}
so that convergence in $(\mathcal{M},d_{\Gamma})$ is equivalent to $ \Gamma $-convergence (induced by the strong $ L^1(U) $ topology). In \eqref{gc03}, $ c > 0 $ denotes some constant. Recall the following ``BV trick" (cf.\ \cite[Proof of Proposition 3]{modica}), which follows from the lower bound \eqref{eqn:w-growth-at-inf} on $W$ and Young's inequality:
\begin{align*}
	\mathscr{F}_{\epsilon,\delta}(u,U) &\gtrsim_{\lambda,\theta_{*},W} \int_{U} \sqrt{ W(u) } | \nabla u |  \geq \int_{U} |\nabla \Phi(u)| .
\end{align*}
In addition, for any $\epsilon \leq 1$, the assumption \eqref{eqn:w-growth-at-inf} implies $\mathscr{F}_{\epsilon,\delta}(u,U) \geq \int_{U} W(u) \gtrsim_{W} \int_{U} |u|^{p} - 2^{p} |U|$. Combining these two lower bounds, we deduce that
\begin{align}\label{eqn:gc02}
\mathscr{F}_{ \epsilon, \delta } (u, U)
\gtrsim_{ \lambda, \theta_*,W }  \int_{ U } | \nabla \Phi(u)| + \int_{ U } | u |^p - 2^{p} | U |
\end{align}
so that in fact $ \mathbb{P} \{ \mathscr{F}_{ \epsilon, \delta } \in \mathcal{M} \, \, \text{for each} \, \, \epsilon \leq 1 \, \, \text{and} \, \, \delta > 0 \} = 1 $ for an appropriate choice of $c$.

In the next lemma, we note that the functional in the lower bound \eqref{gc03} has compact sublevel sets.  Hence this functional is equicoercive in the sense of \cite[Definition 7.6]{DalMasoIntroGC}. 

\begin{lemma} For any $t \in \mathbb{R}$, the set $F(t) \subseteq L^{1}(U)$ given by 
\begin{align*}
F(t) = \left\{ u \in L^1(U) ~\middle|~ \int_{ U } | \nabla \Phi(u) | + \int_{ U } | u |^p \leq t + 2^{p} | U | \right\}
\end{align*}
is compact in $L^{1}(U)$.
\end{lemma}

	\begin{proof} We first observe that by the definition \eqref{E: phi function}, $\Phi$ is strictly increasing and surjective, hence the inverse $\Phi^{-1} : \mathbb{R} \to \mathbb{R}$ exists and is continuous.  Further, the definition of $\Phi$ implies that $|\Phi(u)| \lesssim |u|^{ \frac{p}{2} + 1}$, that is,
        \begin{align}\label{E: growth of phi guy}
            \limsup_{ |u| \to \infty }  |u|^{ - ( \frac{ p }{ 2 } + 1 ) }  | \Phi(u) | < \infty .
        \end{align}
        
	
We conclude by showing that if $ (u_k)_{ k \in \N } \subseteq F(t)$, then it has a subsequence that converges in $L^{1}(U)$ and pointwise almost everywhere.  From the definition of $F(t)$, we directly see that $ ( u_k )_{ k \in \N } $ is bounded in $ L^p(U) $, so that $ ( | u_k |^{ \frac{p}{2} + 1 } )_{ k \in \N } $ is bounded in $ L^1(U) $ by our assumption that $p \geq 2$.  In view of \eqref{E: growth of phi guy}, this implies that $( \Phi(u_{k}) )_{k \in \mathbb{N}}$ is bounded in $L^{1}(U)$.  Combined with $ \int_{ U } | \nabla \Phi(u_{k})| \lesssim 1 $ and the compactness of the embedding $ BV(U) \subseteq L^1(U) $, we learn from this that $ ( \Phi(u_{k}) )_{ k \in \N } $ is precompact in $ L^1(U) $.  Therefore, it has a subsequence $( \Phi(u_{k_{j}}) )_{j \in \mathbb{N}}$ that converges to some function $w \in L^{1}(U)$.  By the continuity of $\Phi^{-1}$, the boundedness of $(u_{k})_{k \in \mathbb{N}}$ in $L^{p}(U)$, and Vitali's Theorem, this implies that $(u_{k_{j}})_{j \in \mathbb{N}}$ converges to $\Phi^{-1}(w)$ in $L^{1}(U)$.  Finally, by lower semicontinuity, $\Phi^{-1}(w) \in F(t)$. \end{proof}

Due to the above equicoercivity,  we can invoke \cite[Theorem 10.22]{DalMasoIntroGC}: To this end, let us recall the Yosida-Moreau transform of some functional  $ \mathscr{F} \in \mathcal{M} $ is defined by
\begin{align*}
\mathscr{F}^{\alpha, \lambda}(u) \coloneqq \inf \left\{ \mathscr{F}(v) + \lambda \| u - v \|_{ L^1(U) }^{ \alpha } ~\middle|~ v \in L^1(U) \right\}
\end{align*}
for $ u \in L^1(U) $,~cf.~ \cite[Definition 9.8]{DalMasoIntroGC}. Using this, we define the metric
\begin{align}\label{eqn:gc-metric}
d_{ \Gamma }( \mathscr{F}, \mathscr{G} ) \coloneqq \sum_{ i, j \in \N } \frac{1}{ 2^{ i + j } } | \arctan( \mathscr{F}^{ 1, j } (u_i) ) - \arctan( \mathscr{G}^{ 1, j } (u_i) ) |
\end{align}
for a dense subset $ ( u_j )_{ j \in \N } \subset L^1(U) $.

\begin{prop}[Theorem 10.22 in \cite{DalMasoIntroGC}]\label{prop:gc-metric}
Definition \eqref{eqn:gc-metric} yields a metric $ d_{ \Gamma } $ on $ \mathcal{M} $ such that $ ( \mathcal{M}, d_{ \Gamma } ) $ is a compact metric space and for any sequence $ ( \mathscr{F}_k )_{ k \in \N } \subseteq \mathcal{M} $
\begin{align*}
\mathscr{F}_k \stackrel{ \Gamma }{ \rightarrow } \mathscr{F}
\quad \Longleftrightarrow \quad
d_{ \Gamma }( \mathscr{F}_k, \mathscr{F} ) \rightarrow 0.
\end{align*}
as $ k \rightarrow \infty $.
\end{prop}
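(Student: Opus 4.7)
The plan is to verify that the class $\mathcal{M}$ satisfies the hypotheses of Theorem 10.22 in \cite{DalMasoIntroGC} and then to transfer its conclusion to our setting. The crucial structural ingredient, already established in the discussion preceding the statement, is the common coercivity lower bound defining membership in $\mathcal{M}$, which by the compactness of the embedding $BV(U) \hookrightarrow L^{1}(U)$ guarantees that the sublevel sets \eqref{eqn:gc01} are relatively compact in $L^{1}(U)$ and, by lower semicontinuity of the BV-norm, actually compact. Thus $\mathcal{M}$ is equi-coercive in the sense of \cite[Definition 7.6]{DalMasoIntroGC}.

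First, I would recall the relevant facts about the Moreau-Yosida transforms $\mathscr{F}^{1,j}$: for every $j \in \mathbb{N}$, the functional $u \mapsto \mathscr{F}^{1,j}(u)$ is $1$-Lipschitz on $L^{1}(U)$, and the sequence $(\mathscr{F}^{1,j})_{j \in \mathbb{N}}$ increases monotonically to $\mathscr{F}$ as $j \to \infty$. Under equi-coercivity, \cite[Theorem 9.16]{DalMasoIntroGC} (or its analogue in the cited reference) gives the equivalence: $\mathscr{F}_{k} \xrightarrow{\Gamma} \mathscr{F}$ in $L^{1}(U)$ if and only if $\mathscr{F}_{k}^{1,j}(u) \to \mathscr{F}^{1,j}(u)$ as $k \to \infty$ for every $j \in \mathbb{N}$ and every $u \in L^{1}(U)$. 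Because the Moreau-Yosida transforms are continuous in $u$, pointwise convergence on a countable dense set $(u_{i})_{i \in \mathbb{N}} \subset L^{1}(U)$ suffices.

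Second, I would use these two facts to check that $d_{\Gamma}$ is a pseudometric on $\mathcal{M}$ that satisfies the equivalence to $\Gamma$-convergence. Symmetry and the triangle inequality are immediate from the corresponding properties of $|\arctan(\cdot) - \arctan(\cdot)|$, and the boundedness (by $\frac{\pi}{2}$) of each summand together with the weights $2^{-(i+j)}$ ensures summability. Separation (i.e., $d_{\Gamma}(\mathscr{F},\mathscr{G}) = 0 \Rightarrow \mathscr{F} = \mathscr{G}$) follows because the Moreau-Yosida transforms agreeing on a dense set of $u$'s for every $j$ imply $\mathscr{F}^{1,j} = \mathscr{G}^{1,j}$ for all $j$ by continuity, and then $\mathscr{F} = \sup_{j} \mathscr{F}^{1,j} = \sup_{j} \mathscr{G}^{1,j} = \mathscr{G}$. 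The equivalence with $\Gamma$-convergence then follows by combining the characterization in the previous paragraph with the fact that $\arctan$ is a homeomorphism of $[0,\infty]$ onto $[0,\pi/2]$.

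Finally, for compactness of $(\mathcal{M}, d_{\Gamma})$, I would invoke the general $\Gamma$-compactness theorem (\cite[Theorem 8.5 / Corollary 10.17]{DalMasoIntroGC}): any sequence of lower semicontinuous functionals on a separable metric space admits a $\Gamma$-convergent subsequence, and the limit is again lower semicontinuous. It remains to check that the $\Gamma$-limit of any sequence in $\mathcal{M}$ again lies in $\mathcal{M}$, which is immediate because the common lower bound in the definition of $\mathcal{M}$ is itself lower semicontinuous in $L^{1}(U)$ and hence stable under $\Gamma$-$\liminf$. The main subtlety I would expect is the bookkeeping needed to reconcile the precise formulation of the metric (involving $\arctan$ and the diagonal double sum) with Dal Maso's statement, along with verifying that no elements of $\mathcal{M}$ have escaped the dense set assumption; this is routine but requires care in choosing $(u_{i})_{i \in \mathbb{N}}$ and in handling the extended-real-valuedness of $\mathscr{F}$ near constant states outside $[-1,1]$.
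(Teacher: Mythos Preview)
Your proposal is correct and follows the same route as the paper. In fact, the paper does not prove this proposition at all: it is stated as a direct citation of Theorem~10.22 in \cite{DalMasoIntroGC}, with the equi-coercivity hypothesis verified in the paragraph preceding the statement (via the compactness of the sublevel sets \eqref{eqn:gc01}). Your sketch simply unpacks the content of Dal Maso's theorem and its supporting results (the Moreau--Yosida characterization of $\Gamma$-convergence under equi-coercivity, and general $\Gamma$-compactness), which is exactly what is being invoked.
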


We will rely on the fact that $\Gamma$-convergence holds if and only if it holds along any subsequence. Toward that end, of course, it helps to be precise about the definition of $\Gamma$-convergence from the very beginning. Since standard texts such as \cite{braides,DalMasoIntroGC} define $\Gamma$-convergence in terms of sequences, for the sake of completeness, let us give a precise definition of $\Gamma$-convergence of the one-parameter family $(\mathscr{F}_{\epsilon,\delta(\epsilon)})_{\epsilon > 0}$.

\begin{definition} \label{D: def of gamma conv}The functionals $\Gamma\text{-}\liminf \mathscr{F}_{\epsilon,\delta(\epsilon)}$ and $\Gamma\text{-}\limsup \mathscr{F}_{\epsilon,\delta(\epsilon)}$ are defined, for any bounded Lipschitz open set $U \subseteq \mathbb{R}^{d}$ and any $u \in L^{1}_{\text{loc}}(\mathbb{R}^{d})$
	\begin{align*}
		\Gamma\text{-}\liminf_{\epsilon \to 0} \mathscr{F}_{\epsilon,\delta(\epsilon)}(u;U) &= \lim_{r \to 0} \liminf_{\epsilon \to 0} \inf_{v \in L^{1}(U) \, : \, \|v - u\|_{L^{1}(U)} \leq r} \mathscr{F}_{\epsilon,\delta}(v;U), \\
		\Gamma\text{-}\limsup_{\epsilon \to 0} \mathscr{F}_{\epsilon,\delta(\epsilon)}(u;U) &= \lim_{r \to 0} \limsup_{\epsilon \to 0} \inf_{v \in L^{1}(U) \, : \, \|v - u\|_{L^{1}(U)} \leq r} \mathscr{F}_{\epsilon,\delta}(v;U).
	\end{align*}
In particular, we say that $(\mathscr{F}_{\epsilon,\delta(\epsilon)}(\cdot \, ; U))_{\epsilon > 0}$ $\Gamma$-converges to a functional $\mathscr{G}$ as $\epsilon \to 0$, written $\mathscr{F}_{\epsilon,\delta(\epsilon)}(\cdot \,; U) \overset{\Gamma}{\to} \mathcal{G}$, if the following equality holds:
	\begin{equation*}
		\mathscr{G} = \Gamma\text{-}\liminf_{\epsilon \to 0} \mathscr{F}_{\epsilon,\delta(\epsilon)}(\cdot \, ;U) = \Gamma\text{-}\limsup_{\epsilon \to 0} \mathscr{F}_{\epsilon,\delta(\epsilon)}(\cdot \, ;U) \quad \text{in} \, \, L^{1}_{\text{loc}}(\mathbb{R}^{d}).
	\end{equation*}\end{definition}

The next lemma asserts that the above definition is equivalent to subsequential $\Gamma$-convergence along an arbitrary subsequence.

\begin{lemma} Give any bounded Lipschitz open set $U \subseteq \mathbb{R}^{d}$ and any lower semi-continuous functional $\mathcal{G}$ on $L^{1}(U)$, we have that
	\begin{equation*}
		\mathscr{F}_{\epsilon,\delta(\epsilon)}(\cdot \,; U) \overset{\Gamma}{\to} \mathcal{G} \quad \text{as} \quad \epsilon \to 0
	\end{equation*}
if and only if, for any sequence $(\epsilon_{j})_{j \in \mathbb{N}} \subseteq (0,\infty)$ for which $\epsilon_{j} \to 0$ as $j \to \infty$, we have 
	\begin{equation*}
		\mathscr{F}_{\epsilon_{j},\delta(\epsilon_{j})}(\cdot \,; U) \overset{\Gamma}{\to} \mathcal{G}.
	\end{equation*}
\end{lemma}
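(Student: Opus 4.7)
The assertion amounts to the fact that, for a one-parameter family of real-valued functions, convergence as $\epsilon \to 0$ is equivalent to convergence along every sequence $\epsilon_j \to 0$. I would prove it directly from Definition~\ref{D: def of gamma conv}, applied pointwise at each $u \in L^{1}(U)$, after factoring $\Gamma$-convergence into the two inequalities
	\begin{equation*}
		G_{\epsilon}(u,r) \coloneqq \inf_{\|v - u\|_{L^{1}(U)} \leq r} \mathscr{F}_{\epsilon,\delta(\epsilon)}(v; U),
	\end{equation*}
and the monotonicity $r \mapsto G_{\epsilon}(u,r)$ is non-increasing, so that the maps $r \mapsto \liminf_{\epsilon} G_{\epsilon}(u,r)$ and $r \mapsto \limsup_{\epsilon} G_{\epsilon}(u,r)$ are non-increasing in $r$, and the $\lim_{r \to 0^{+}}$ in Definition~\ref{D: def of gamma conv} is actually a supremum over $r > 0$.

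The forward direction (continuous implies subsequential) is the easy one. For any sequence $(\epsilon_{j})$ with $\epsilon_{j} \to 0$ and any fixed $r > 0$, the obvious inequalities
	\begin{equation*}
		\liminf_{\epsilon \to 0} G_{\epsilon}(u,r) \leq \liminf_{j \to \infty} G_{\epsilon_{j}}(u,r), \quad \limsup_{j \to \infty} G_{\epsilon_{j}}(u,r) \leq \limsup_{\epsilon \to 0} G_{\epsilon}(u,r),
	\end{equation*}
survive passing to the supremum over $r > 0$.  Combined with the universal $\Gamma\text{-}\liminf \leq \Gamma\text{-}\limsup$ and $\Gamma\text{-}\liminf_{\epsilon} = \Gamma\text{-}\limsup_{\epsilon} = \mathcal{G}(u)$, this forces equality to $\mathcal{G}(u)$ along the sequence.

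The reverse direction (subsequential implies continuous) is proved by contraposition via a diagonal extraction.  Suppose the continuous $\Gamma$-convergence to $\mathcal{G}$ fails.  Then either there exists $u_{0} \in L^{1}(U)$ with $\Gamma\text{-}\liminf_{\epsilon} \mathscr{F}_{\epsilon,\delta(\epsilon)}(u_{0}; U) < \mathcal{G}(u_{0})$, or there exists $u_{0}$ with $\Gamma\text{-}\limsup_{\epsilon} \mathscr{F}_{\epsilon,\delta(\epsilon)}(u_{0}; U) > \mathcal{G}(u_{0})$. In the first case, pick $\eta > 0$ so that $\sup_{r > 0} \liminf_{\epsilon \to 0} G_{\epsilon}(u_{0},r) < \mathcal{G}(u_{0}) - \eta$; then for every $j \in \mathbb{N}$, there is $\epsilon_{j} \in (0, 1/j)$ with $G_{\epsilon_{j}}(u_{0}, 1/j) < \mathcal{G}(u_{0}) - \eta/2$.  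For any fixed $r > 0$ and all $j$ large enough that $1/j \leq r$, monotonicity yields $G_{\epsilon_{j}}(u_{0}, r) \leq G_{\epsilon_{j}}(u_{0}, 1/j) < \mathcal{G}(u_{0}) - \eta/2$; taking $\liminf_{j}$ and then $\sup_{r > 0}$ gives $\Gamma\text{-}\liminf_{j} \mathscr{F}_{\epsilon_{j},\delta(\epsilon_{j})}(u_{0}; U) \leq \mathcal{G}(u_{0}) - \eta/2$, contradicting subsequential $\Gamma$-convergence along $(\epsilon_{j})$.  In the second case, pick $\eta > 0$ and $r_{0} > 0$ so that $\limsup_{\epsilon \to 0} G_{\epsilon}(u_{0}, r_{0}) > \mathcal{G}(u_{0}) + \eta$; extract $\epsilon_{j} \to 0$ with $G_{\epsilon_{j}}(u_{0}, r_{0}) > \mathcal{G}(u_{0}) + \eta/2$, and use monotonicity in the opposite sense ($r \leq r_{0}$ gives $G_{\epsilon_{j}}(u_{0}, r) \geq G_{\epsilon_{j}}(u_{0}, r_{0})$) to conclude $\Gamma\text{-}\limsup_{j} \mathscr{F}_{\epsilon_{j},\delta(\epsilon_{j})}(u_{0}; U) \geq \mathcal{G}(u_{0}) + \eta/2$, again a contradiction.

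The argument is entirely soft and touches neither the explicit form of $\mathscr{F}_{\epsilon,\delta(\epsilon)}$ nor the construction of $d_{\Gamma}$; the only thing to be careful about is the direction of the monotonicity in $r$ and the bookkeeping between the two definitions of $\liminf$/$\limsup$ (continuous versus sequential).  No single step is a serious obstacle, though the diagonal extraction must be done carefully to preserve both $\epsilon_{j} \to 0$ and the desired strict bound.
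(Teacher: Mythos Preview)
Your argument is correct and is precisely the routine verification the paper has in mind; the paper does not give a proof but simply notes that it ``follows in a more-or-less routine way from Definition~\ref{D: def of gamma conv}, the details are left to the reader.'' Your handling of the monotonicity in $r$ and the diagonal extraction is clean, and the two cases in the contrapositive are indeed exhaustive since $\Gamma\text{-}\liminf \leq \Gamma\text{-}\limsup$ always holds.
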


Since the proof follows in a more-or-less routine way from Definition \ref{D: def of gamma conv}, the details are left to the reader.


\subsection{Convergence in Probability} We showed above that the $\Gamma$-convergence problem of interest to us can be understood via convergence in some abstract metric space $ ( \mathcal{M}, d_{ \Gamma } ) $. Here we recall the corresponding notion of convergence in probability for $ ( \mathcal{M}, d_{ \Gamma } ) $-valued random variables.

Denote by $\mathcal{B}$ the $\sigma$-algebra generated by the metric topology on $(\mathcal{M}, d_{\Gamma} )$. Furthermore, let $(\Omega,\mathcal{F},\mathbb{P})$ be a probability space. A map $ X : \Omega \to \mathcal{M} $ is called an $ ( \mathcal{M}, d_{ \Gamma } ) $-valued random variable if, for any $A \in \mathcal{B}$, we have that $X^{-1}(A) \in \mathcal{F}$. 

	\begin{definition} Given $ (\mathcal{M},d_{ \Gamma }) $-valued random variables $( \mathscr{F}_{\epsilon} )_{\epsilon > 0}$ and $ \mathscr{F} $ (on some common probability space), we say that $(\mathscr{F}_{\epsilon})_{\epsilon > 0}$ converges in probability to $ \mathscr{F} $ as $\epsilon \to 0$ provided that, for any $\nu > 0$,
		\begin{equation*}
			\lim_{\epsilon \to 0} \mathbb{P}\{d_{ \Gamma } (\mathscr{F}_{\epsilon}, \mathscr{F}) > \nu\} = 0.
		\end{equation*}
	\end{definition}

%
%
%
		
We will take advantage of the fact that, in metric spaces, convergence is determined by subsequences.  

\begin{prop}\label{P: almost sure is enough}
	Let $ ( \mathscr{F}_{ \epsilon } )_{ \epsilon > 0 } $ and $ \mathscr{F} $ be $ ( \mathcal{M}, d_{ \Gamma } ) $-valued random variables. The following are equivalent:
	\begin{enumerate}
		\item It holds $ d_{\Gamma}(\mathscr{F}_{ \epsilon },\mathscr{F}) \to 0 $ in probability as $ \epsilon \rightarrow 0 $.
	
		\item Every subsequence $ (\mathscr{F}_{k})_{ k \in \N } \subseteq ( \mathscr{F}_{ \epsilon } )_{ \epsilon > 0 } $ has a further subsequence $ (\mathscr{F}_{k_j})_{ j \in \N } $ such that $ d_{\Gamma}(\mathscr{F}_{ k_j },\mathscr{F}) \to 0$ almost surely as $ j \rightarrow \infty $.

		\item Every subsequence $ (\mathscr{F}_{k})_{ k \in \N } \subseteq ( \mathscr{F}_{ \epsilon } )_{ \epsilon > 0 } $ has a further subsequence $ (\mathscr{F}_{k_j})_{ j \in \N } $ such that $ \mathscr{F}_{ k_j } \stackrel{ \Gamma }{ \rightarrow } \mathscr{F} $ almost surely as $ j \rightarrow \infty $.
	\end{enumerate}
	In particular, if along every subsequence $ (\mathscr{F}_{k})_{ k \in \N } \subseteq ( \mathscr{F}_{ \epsilon } )_{ \epsilon > 0 } $ there is a further subsequence $(\mathscr{F}_{k_{j}})_{j \in \mathbb{N}}$ such that $ \mathscr{F}_{k_{j}} \stackrel{ \Gamma }{ \rightarrow } \mathscr{F} $ almost surely as $j \to \infty$, then $ ( \mathscr{F}_{ \epsilon } )_{ \epsilon > 0 } $ converges to $ \mathscr{F} $ in probability (w.r.t.~$ d_{ \Gamma } $) as $ \epsilon \rightarrow 0 $.
\end{prop}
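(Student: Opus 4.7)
The proposition is essentially the standard ``subsequence principle'' for convergence in probability in a metric space, combined with Proposition \ref{prop:gc-metric}, which identifies $\Gamma$-convergence of sequences with convergence in the metric $d_\Gamma$. Thus (2) $\Leftrightarrow$ (3) is immediate from Proposition \ref{prop:gc-metric}, since for each fixed $\omega$ in the almost-sure event, $d_\Gamma(\mathscr{F}_{k_j}(\omega), \mathscr{F}(\omega)) \to 0$ iff $\mathscr{F}_{k_j}(\omega) \overset{\Gamma}{\to} \mathscr{F}(\omega)$. The real content is therefore the equivalence (1) $\Leftrightarrow$ (2), which I would establish by mimicking the classical argument for real-valued random variables, suitably adapted to a general metric space.

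For (1) $\Rightarrow$ (2), I would fix any subsequence $(\mathscr{F}_k)_{k \in \N}$. Since $\mathscr{F}_k \to \mathscr{F}$ in probability by (1), for every $j \in \N$ I can choose $k_j$ with $k_j > k_{j-1}$ such that
\[
\mathbb{P}\{ d_\Gamma(\mathscr{F}_{k_j}, \mathscr{F}) > 2^{-j} \} \leq 2^{-j}.
\]
Since $\sum_j 2^{-j} < \infty$, the Borel--Cantelli lemma gives $d_\Gamma(\mathscr{F}_{k_j}, \mathscr{F}) \leq 2^{-j}$ eventually almost surely, which is stronger than the desired almost-sure convergence.

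For (2) $\Rightarrow$ (1), I would argue by contradiction. If $\mathscr{F}_\epsilon$ does not converge to $\mathscr{F}$ in probability, then there exists $\nu > 0$ and a sequence $\epsilon_k \to 0$ and $\eta > 0$ such that
\[
\mathbb{P}\{ d_\Gamma(\mathscr{F}_{\epsilon_k}, \mathscr{F}) > \nu \} \geq \eta \quad \text{for all } k.
\]
By hypothesis (2), this sequence has a further subsequence $(\mathscr{F}_{\epsilon_{k_j}})$ converging almost surely, hence in probability, to $\mathscr{F}$. But then $\mathbb{P}\{ d_\Gamma(\mathscr{F}_{\epsilon_{k_j}}, \mathscr{F}) > \nu \} \to 0$, contradicting the lower bound $\eta > 0$. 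The ``In particular'' statement at the end follows trivially from (3) $\Rightarrow$ (1) by taking every subsequence to already converge almost surely.

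\textbf{Main potential obstacle.} The only subtlety is ensuring that the measurability of the events $\{d_\Gamma(\mathscr{F}_\epsilon, \mathscr{F}) > \nu\}$ and the existence of the limit as an $(\mathcal{M}, d_\Gamma)$-valued random variable pose no issue. Since $(\mathcal{M}, d_\Gamma)$ is a compact (hence separable) metric space by Proposition \ref{prop:gc-metric}, the map $(\omega_1, \omega_2) \mapsto d_\Gamma(\mathscr{F}_\epsilon(\omega_1), \mathscr{F}(\omega_2))$ is jointly measurable, so all relevant events lie in $\mathcal{F}$ and no measure-theoretic pathology arises. The argument is therefore fully standard, and no further adaptation is needed beyond invoking Proposition \ref{prop:gc-metric}.
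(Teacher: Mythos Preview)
Your proof is correct and takes essentially the same approach as the paper. The paper's proof simply cites Kallenberg's Lemma 5.2 for the standard subsequence principle (your (1) $\Leftrightarrow$ (2)) and invokes Proposition \ref{prop:gc-metric} for (2) $\Leftrightarrow$ (3), whereas you spell out the Borel--Cantelli and contradiction arguments explicitly; both routes are the same in substance.
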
 

\begin{proof}
	Note that by the very definition, (1) is equivalent to the fact that every subsequence $ (\mathscr{F}_{k})_{ k \in \N } \subseteq ( \mathscr{F}_{ \epsilon } )_{ \epsilon > 0 } $ converges in probability (w.r.t.~$ d_{ \Gamma } $) to $ \mathscr{F} $. The equivalence to (2) and (3) follows from Lemma 5.2 in \cite{KallenbergFoMP} and Proposition \ref{prop:gc-metric}.
\end{proof}
		
\subsection{$\Gamma$-Convergence in Probability: Proof of Theorem \ref{T: planar homogenization theorem reduction}} Finally, to complete the proof of Theorem \ref{T: homogenization theorem}, we establish that if convergence in probability holds at the level of the planar cell problems, then $\Gamma$-convergence holds in probability.
	
		\begin{proof}[Proof of Theorem \ref{T: planar homogenization theorem reduction}.] The proof amounts to a concatenation of what has been proved so far in this section. The details are provided for the reader's convenience.  
		
		In view of Proposition \ref{P: almost sure is enough}, we only need to show that, given any bounded Lipschitz open set $U \subseteq \mathbb {R}^{d}$ and any sequence $(\epsilon_{k})_{k \in \mathbb{N}}$ such that $\epsilon_{k} \to 0$ as $k \to \infty$, there is a subsequence $(k_{j})_{j \in \mathbb{N}} \subseteq \mathbb{N}$ such that the limit $\mathscr{F}_{\epsilon_{k_{j}},\delta(\epsilon_{k_{j}})}(\cdot \, ; U ) \overset{\Gamma}{\to} \mathscr{E}_{\sigma}( \cdot \, ; U )$ holds with probability one as $j \to \infty$. 
		
		Toward that end, notice that the assumption \eqref{E: conv in prob part of the proof} implies that the hypotheses of Proposition \ref{P: subsequential cell problem} hold. Therefore, that proposition implies that there is a subsequence $(j_{k})_{k \in \mathbb{N}} \subseteq \mathbb{N}$ such that, with probability one, for any $(x,R,\mathcal{O}) \in \mathbb{R}^{d} \times (0,\infty) \times O(d)$, 
			\begin{align*}
				\lim_{k \to \infty} m( \F_{\epsilon_{j_k}, \delta(\epsilon_{j_k})}, Q^{\mathcal{O}}_{R}(x), q^{ \mathcal{O} } ( \epsilon_{j_k} ^{-1}( \cdot - x ) ) )= \sigma(e) R^{d-1}.
			\end{align*}
		This in turn implies, by Proposition \ref{P: general convergence}, that, with probability one, 			\begin{equation*}
				\Gamma\text{-}\lim_{k \to \infty} \mathscr{F}_{\epsilon_{j_{k}},\delta(\epsilon_{j_{k}})}(u;U) = \mathscr{E}_{\sigma}(u;U)
			\end{equation*}
		for any $u \in L^{1}(U)$. \end{proof}

\bibliographystyle{plain}
\bibliography{bibliography}

\end{document}